\documentclass[english,10pt]{amsart}
\usepackage{lmodern}
\usepackage[T1]{fontenc}
\usepackage[latin9]{inputenc}
\usepackage[centering]{geometry}

\usepackage{adjustbox}

\usepackage{extpfeil}
\usepackage[usenames,dvipsnames,svgnames,table]{xcolor}
\usepackage{mathtools}
\usepackage{amsmath}
\usepackage{amsthm}
\usepackage{amssymb}
\usepackage{mathrsfs}
\usepackage{nicefrac}
\usepackage{graphicx}
\usepackage{caption}
\usepackage{float}
\usepackage{comment}
\usepackage{tikz-cd}

\usepackage{thmtools}
\usepackage{thm-restate}

\usepackage[dvipsnames,svgnames,table]{xcolor}
\usepackage[all]{xy}
\usepackage{hyperref}
\hypersetup{bookmarksnumbered,colorlinks=true}
\hypersetup{
     colorlinks   = true,
     linkcolor	  = DarkGreen,
     citecolor    = Purple
}

\newcommand{\RomanNumeralCaps}[1]
    {\MakeUppercase{\romannumeral #1}}
\newcommand*{\rom}[1]

\newcommand{\sph}{\mathbb{S}}
\newcommand{\sphwf}{\mathbb{S}_{W(\mathbb{F)}}}

\DeclareMathOperator{\kernel}{ker}
\DeclareMathOperator{\coker}{coker}
\DeclareMathOperator{\im}{im}

\newcommand{\bS}{\mathbb{S}}
\newcommand{\rel}{\textup{rel}}

\DeclareMathOperator{\fib}{fib}

\newcommand{\highlightgabe}[1]{\ifmmode{\text{\sethlcolor{SeaGreen}\hl{$#1$}}}\else{\sethlcolor{SeaGreen}\hl{#1}}\fi}
 
\newcommand{\field}{\mathbb{F}}

\usepackage{dsfont}

\makeatletter

\def\makeautorefname#1#2{\expandafter\def\csname#1autorefname\endcsname{#2}}
\makeautorefname{equation}{Equation}%
\def\equationautorefname~#1\null{(#1)\null}

\makeautorefname{footnote}{footnote}%
\makeautorefname{figure}{Figure}%
\makeautorefname{table}{Table}%
\makeautorefname{part}{Part}%
\makeautorefname{appendix}{Appendix}%
\makeautorefname{chapter}{Chapter}%
\makeautorefname{section}{Section}%
\makeautorefname{subsection}{Section}%
\makeautorefname{subsubsection}{Section}%
\makeautorefname{theorem}{Theorem}%
\makeautorefname{thm}{Theorem}%
\makeautorefname{theo}{Theorem}%
\makeautorefname{thmx}{Theorem}%
\makeautorefname{cor}{Corollary}%
\makeautorefname{coro}{Corollary}%
\makeautorefname{lem}{Lemma}%
\makeautorefname{lemm}{Lemma}%
\makeautorefname{prop}{Proposition}%
\makeautorefname{conj}{Conjecture}%
\makeautorefname{conv}{Convention}%
\makeautorefname{defn}{Definition}%
\makeautorefname{defi}{Definition}%
\makeautorefname{notn}{Notation}
\makeautorefname{nota}{Notation}
\makeautorefname{rem}{Remark}%
\makeautorefname{rema}{Remark}%
\makeautorefname{term}{Terminology}%
\makeautorefname{exm}{Example}%
\makeautorefname{exam}{Example}%
\makeautorefname{ax}{Axiom}%
\makeautorefname{claim}{Claim}%
\makeautorefname{ass}{Assumption}%
\makeautorefname{asses}{Assumptions}%
\makeautorefname{con}{Construction}%
\makeautorefname{cons}{Construction}%
\makeautorefname{prob}{Problem}%
\makeautorefname{warn}{Warning}%
\makeautorefname{obs}{Observation}%
\makeautorefname{quest}{Question}%
\newtheorem{thm}{Theorem}[section]

\newtheorem{cor}{Corollary}[section]
\newtheorem{coro}{Corollary}[section]
\newtheorem{prop}{Proposition}[section]
\newtheorem{lem}{Lemma}[section]
\newtheorem{lemm}{Lemma}[section]

\theoremstyle{definition}
\newtheorem{defn}{Definition}[section]
\newtheorem{defi}{Definition}[section]

\newtheorem{con}{Construction}[section]
\newtheorem{cons}{Construction}[section]

\newtheorem{exam}{Example}[section]

\newtheorem{nota}{Notation}[section]
\newtheorem{rem}{Remark}[section]
\newtheorem{rema}{Remark}[section]
\newtheorem{term}{Terminology}[section]

\makeatletter
\let\c@obs=\c@thm
\let\c@cor=\c@thm
\let\c@coro=\c@thm
\let\c@prop=\c@thm
\let\c@lem=\c@thm
\let\c@lemm=\c@thm
\let\c@prob=\c@thm
\let\c@con=\c@thm
\let\c@cons=\c@thm
\let\c@conv=\c@thm
\let\c@conj=\c@thm
\let\c@defn=\c@thm
\let\c@defi=\c@thm
\let\c@notn=\c@thm
\let\c@nota=\c@thm
\let\c@exm=\c@thm
\let\c@exam=\c@thm
\let\c@ax=\c@thm
\let\c@ass=\c@thm
\let\c@warn=\c@thm
\let\c@rem=\c@thm
\let\c@rema=\c@thm
\let\c@sch=\c@thm
\let\c@term=\c@thm
\let\c@equation\c@thm
\numberwithin{equation}{section}
\makeatother

\DeclareMathOperator{\equalizer}{eq}    

\newcommand{\mot}{\textup{mot}}
\DeclareMathOperator{\TC}{TC}
\newcommand{\ntc}{\mathrm{TC}^{-}}
\DeclareMathOperator{\tp}{\ensuremath{\textup{TP}}}
\DeclareMathOperator{\tc}{\ensuremath{\textup{TC}}}
\DeclareMathOperator{\thh}{\ensuremath{\textup{THH}}}
\newcommand{\THH}{\thh}
\newcommand{\rntc}{\mathrm{TC}_{\rel}^{-}}
\newcommand{\rtp}{\mathrm{TP}_{\rel}}
\newcommand{\rtc}{\mathrm{TC}_{\rel}}
\newcommand{\rthh}{\mathrm{THH}_{\rel}}
\newcommand{\rtcpl}{\mathrm{TC}_{\rel}^+}
\newcommand{\nc}{\newcommand}
\nc{\ot}{\otimes}
\nc{\otuz}{\otimes^{\mathbb{U}}_{\z}}
\nc{\otsph}{\otimes_{\mathbb{S}}}
\nc{\I}{\mathbb{I}}
\nc{\f}{\mathbb{F}}
\nc{\fp}{\mathbb{F}_p}
\nc{\fpl}{\mathbb{F}_{p^l}}
\nc{\fplpo}{\mathbb{F}_{p^{l+1}}}
\nc{\fpc}{\overline{\mathbb{F}}_p}

\nc{\zpl}{\mathbb{Z}_{(p)}}
\nc{\fq}{\mathbb{F}_q}
\nc{\CP}{\mathbb{CP}}
\nc{\wfq}{W(\mathbb{F}_q)}
\nc{\wfpn}{W(\mathbb{F}_{p^n})}
\nc{\sphtriv}{\mathbb{S}^{\on{triv}}}
\nc{\sphp}{\mathbb{S}_{p}}
\nc{\sphpl}{\mathbb{S}_{(p)}}
\nc{\sphwq}{\mathbb{S}_{W(\mathbb{F}_q)}}
\nc{\sphwpn}{\mathbb{S}_{W(\mathbb{F}_{p^n})}}
\nc{\sphwk}{\mathbb{S}_{W(\field)}}
\nc{\id}{{\on{id}}}
\nc\spec{{\on{Spec}}}
\nc\End{{\on{End}}}
\nc{\ev}{\on{ev}}
\DeclareMathOperator*{\colim}{colim}
\nc{\alg}{\on{Alg}}
\nc{\lmod}{\on{LMod}}
\nc{\red}{\color{red}}
\nc{\green}{\color{orange}}
\nc{\blue}{\color{blue}}
\nc{\Top}{\on{Top}}
\nc{\Map}{\on{Map}}

\newcommand{\mmod}{/\!\!/}
\newcommand{\sslash}{\mmod}

\DeclareMathOperator{\cycsp}{\ensuremath{\textup{CycSp}}}
\newcommand{\filorb}{\mathrm{fil}^{*}_{\textup{orb}}}
\newcommand{\filorbo}{\ensuremath{\textup{fil}}^0_{\textup{orb}}}
\newcommand{\filorbi}{\ensuremath{\mathrm{fil}}^i_{\textup{orb}}}

\newcommand{\grorbast}{\mathrm{gr}_{\textup{orb}}^*}
\newcommand{\fil}{\mathrm{fil}}
\newcommand{\filorbast}{\ensuremath{\textup{fil}}^*_{\textup{orb}}}
\newcommand{\filorbastplusone}{\ensuremath{\textup{fil}}^{*+1}_{\textup{orb}}}
\newcommand{\filorbj}{\ensuremath{\textup{fil}}^j_{\textup{orb}}}
\newcommand{\filorbjplusone}{\ensuremath{\textup{fil}}^{j+1}_{\textup{orb}}}
\newcommand{\filorbn}{\ensuremath{\textup{fil}}^n_{\textup{orb}}}

\DeclareMathOperator{\can}{\ensuremath{\textup{can}}}
\nc{\C}{\cat C}
\nc{\D}{\cat D}
\nc{\V}{\cat V}

\nc{\bE}{\mathbb{E}}
\nc{\An}{\mathrm{An}}
\nc{\Grp}{\mathrm{Grp}}
\nc{\Mon}{\mathrm{Mon}}
\nc{\Sp}{\mathrm{Sp}}
\nc{\Ztr}{\mathfrak{Z}}
\nc{\Fun}{\mathrm{Fun}}

\nc{\Cobar}{\mathrm{Cobar}}
\nc{\Barc}{\mathrm{Bar}}
\nc{\Ab}{\mathrm{Ab}}
\nc{\Q}{\mathbb{Q}}
\nc{\MU}{\mathrm{MU}}

\newcommand{\cC}{\mathcal{C}}
\newcommand{\Fil}{\mathrm{Fil}}
\newcommand{\Tow}{\mathrm{Tow}}
\newcommand{\Alg}{\mathrm{Alg}}
\newcommand{\TP}{\mathrm{TP}}
\newcommand{\grmot}{\mathrm{gr}_{\textup{mot}}^*}

\DeclareMathOperator{\tcs}
    
    \DeclareMathOperator{\hh}{\ensuremath{\textup{HH}}}

        \newcommand{\z}{\mathbb{Z}}
                \newcommand{\N}{\mathbb{N}}
       \newcommand{\zp}{\mathbb{Z}_p}
    \newcommand{\wtw}{\tau_{\geq \bullet}}
    \newcommand{\pis}{\pi_*}

    \def\co{\colon\thinspace}

\newcommand{\lv}{\lvert}
\newcommand{\rv}{\rvert}

\DeclarePairedDelimiter\floor{\lfloor}{\rfloor}

\newcommand{\gr}{\textup{gr}^*}
\newcommand{\grzero}{\textup{gr}^0}
\newcommand{\Gr}{\textup{Gr}}

\makeatother

\date{}

\author{Gabriel Angelini-Knoll}
\address{Department of Mathematics, Applied Mathematics, and Statistics, Case
Western Reserve University, Cleveland, OH, USA}
\email{gja39@case.edu}

\author{Haldun {\"O}zg{\"u}r  Bay{\i}nd{\i}r}
\address{Departament de Matem\`atiques i Inform\`atica, Universitat de Barcelona, Barcelona, Spain}
\email{ozgurbayindir@gmail.com}

\begin{document}

\title{On the integral algebraic K-theory of Morava K-theory}

\begin{abstract}
We compute the cardinalities of the integral algebraic K-theory groups of connective Morava K-theory $k(n)$ in all degrees that are not congruent to $0$ or $1$ modulo $2p-2$. After base-changing the coefficients of $k(n)$ to the algebraic closure $\fpc$, we determine the corresponding cardinalities in all degrees and show that the groups vanish in even degrees. Our approach uses  what we call the orbit filtration on topological cyclic homology arising from the May filtration on topological Hochschild homology. Combining this with an analysis of the topological cyclic homology of formal DGAs of the form  $\mathbb{F}[x_{2m}]$, where $\mathbb{F}$ is a perfect field of characteristic $p$, we prove strong cardinality results for the topological cyclic homology of  $\mathbb{E}_1$-rings with homotopy $\mathbb{F}[x_{2m}]$. 
For  Morava K-theories over finite fields, we further use the motivic filtration of Hahn--Raksit--Wilson.
As an application of our methods, we compute the cardinalities of the algebraic K-theory groups of the truncation $W(\fpc)/p^n$ of the $p$-typical Witt vectors of $\fpc$; in particular, they vanish in even degrees. 
\end{abstract}

\maketitle
\setcounter{tocdepth}{1}
\tableofcontents{}

\section{Introduction}
In algebra, the prime division rings are the finite fields $\mathbb{F}_p$ and the rational numbers. In homotopy theory, there are more prime division rings known as Morava K-theory $K(n)$ depending on a height $n$ and a prime $p$. These are multiplicative cohomology theories with coefficients $K(n)_*=\mathbb{F}_p[v_n^{\pm 1}]$ when $0<n<\infty$, where $v_n$ has degree $2p^n-2$ exhibiting a higher height analogue of Bott periodicity. These periodic classes have been related to periodic phenomena in the homotopy groups of spheres~\cite{MRW77}, which has been investigated in the field of chromatic homotopy theory and connected to arithmetic information such as special values of the Riemann zeta function~\cite{Ada66} and integral modular forms~\cite{Beh09}.\footnote{See \cite[pp.~2658--2661]{AK23} for a more detailed discussion of these facts and the relation to algebraic K-theory.}

Algebraic K-theory contains deep arithmetic and geometric information; for example, it can be used to recover special values of Dedekind zeta functions~\cite{Lic73,Voe11} and study diffeomorphism groups of manifolds~\cite{WJR13}. We consider the algebraic K-theory of division rings and their chromatic analogues such as Morava K-theories to be of fundamental importance  because algebraic K-theory can often be reduced to understanding the algebraic K-theory of fraction fields and residue fields using localization sequences.

The computation of the algebraic K-theory groups of $\fp$ is due to Quillen~\cite{Qui72} and historically, it was the first higher algebraic K-theory computation. The rational algebraic K-theory of $\mathbb{Q}$ is computed by Borel  \cite{borel1972cohomologiereele,borel1974stablerealcohomologyofarithmeticgroups} and the torsion in the algebraic K-theory of $\mathbb{Q}$ is closely related to special values of the Riemann zeta function by the resolution of the Beilinson--Lichtenbaum conjecture~\cite{Voe11}. For the higher chromatic analogue of the prime fields $K(n)$, one considers the algebraic K-theory after smashing with a finite complex, for example Ausoni and Rognes computed $\pis \big(\textup{K}(K(1))/(p,v_1)\big)$ for $p \geq 5$ in \cite{ausoni2012kthrymoravaktheory} and the first author, Hahn and Wilson extended this computation to  $p\ge 3$~\cite{angelini2024syntomic}. These computations have played an important role in providing evidence for the redshift conjectures of Ausoni--Rognes~\cite{AR08}. 

Beyond height $n=1$, no complete computations of the algebraic K-theory of Morava K-theory have been available even with finite coefficients. However, the first author, Hahn and Wilson \cite{angelini2024syntomic} computed the mod $(p,\cdots,v_{n+1})$-syntomic cohomology of connective Morava K-theory $k(n)$ at all primes and heights, which approximates algebraic K-theory. More precisely, syntomic cohomology is the associated graded of the motivic filtration on the topological cyclic homology of $k(n)$ constructed in~\cite{hahn2022motivic} and topological cyclic homology is a close approximation to algebraic K-theory by the Dundas--Goodwillie--McCarthy theorem~\cite{DGM13}. 
This strategy allowed the first author, Hahn and Wilson to prove redshift for Morava K-theory~\cite{angelini2024syntomic}. 

In this work, we compute the cardinalities of the integral algebraic K-theory groups of $k(n)$ in infinitely many degrees for $p>2$. Our result is the first of its kind because computations of orders of infinite families of integral algebraic K-theory groups of ring spectra that are not Eilenberg--MacLane have not appeared in the literature previously. The only results of this kind that the authors are aware of are the impressive computations of the integral algebraic K-theory groups of the sphere spectrum in low degrees by Rognes~\cite{Rog03} and Blumberg--Mandell~\cite{BM19}.

In fact, we also compute the quotients $\lvert \mathrm{K}_{2k+1}(k(n))\rvert/\lvert \mathrm{K}_{2k}(k(n))\rvert$ for all integers $k$. 
Quotients of orders of algebraic K-theory groups may be desirable from an arithmetic perspective, for example Lichtenbaum's conjecture~\cite[Conjecture~2.4]{Lic73} states that when $F$ is a totally real number field 
\[ \zeta_{F}(1-2k) = (-1)^{kr_1}2^{r_1}\frac{\lvert \mathrm{K}_{4k-2}(\mathcal{O}_F)\rvert }{\lvert \mathrm{K}_{4k-1}(\mathcal{O}_F)\rvert }
\]
for $k\geq 1$ where $F$ has $r_1$ real places (this has been prove when  $\mathrm{Gal}(F/\mathbb{Q})$ is abelian by~\cite{Voe11}, see~\cite[Theorem~8.8]{Wei13} for details).

Our approach to this computation begins with an extensive study of the algebraic K-theory of general connective ring spectra with polynomial homotopy groups $\field[x_{2m}]$; where for the rest of this work $\field$ denotes a perfect field of characteristic $p$ and $\lvert x_{2m} \rvert = 2m$. For $\field = \fpc$ the algebraic closure of $\fp$, we compute the cardinalities  of the relative  K-theory groups of such ring spectra for $p \nmid m$ and establish that they vanish in even degrees. To this end, we construct what we call the orbit filtration for the  topological cyclic homology of connective ring spectra that may be of independent interest. This filtration originates from the May filtration on topological Hochschild homology~\cite{AKS18} and its twisted cyclotomic structure from~\cite{antieau2020beilinson}. Working relatively, its associated graded is governed by the $S^1$-homotopy orbits of topological Hochschild homology. We combine this approach with a different filtration, the motivic filtration of Bhatt--Morrow--Scholze \cite{bhatt2019thhandintegralpadichodge} and Hahn--Raksit--Wilson~\cite{hahn2022motivic}, in our cardinality computations of algebraic K-theory of $k(n)$. 

Computations of algebraic K-theory of truncations $W(\field)/p^n$ for $\field$ a finite field over $\fp$ have been an important recent breakthrough in the field~\cite{KS24,AKN24}. In this work, we extend our methods to the $p$-adic filtration on the truncation $W(\field)/p^n$ of the $p$-typical Witt vectors when $\field$ is an algebraically closed field of characteristic $p$. This allows us to completely compute the cardinality of the algebraic K-theory groups of $W(\fpc)/p^n$. In particular, we prove that these groups vanish in even degrees. 

The motivic pullback squares of Land--Tamme \cite{land2023kthrypushouts} relate various relative algebraic K-theory groups for discrete rings to those for certain $\z$-algebras with homotopy $\field[x_{2}]$. In applications, we combine our results with these motivic pullback squares to compute the cardinalities of relative algebraic K-theory groups for some discrete rings including the Burnside ring of the cyclic group of order $p$ over $W(\fpc)$. This could be of use for studying the $C_p$-equivariant algebraic K-theory, in the sense of~\cite{BGS20}, of the Burnside Mackey functor.\footnote{Writing $\underline{\mathrm{K}}$ for the spectral Mackey functor version of algebraic K-theory constructed in~\cite{BGS20}, $\underline{A}^{C_p}$ for the Burnside Mackey functor, and $A(C_p)$ for the Burnside ring, then $\underline{\mathrm{K}}(\underline{A}^{C_p})(C_p/C_p)=\mathrm{K}(A(C_p))$ in light of~\cite[Example~A.7]{Yan25}.}

\subsection{Results and methods}

In this work, we study algebraic K-theory through trace methods meaning we consider related invariants such as topological cyclic homology $\tc$, topological negative cyclic homology $\ntc$ and  topological periodic cyclic homology $\tp$ of $\bE_1$-rings (i.e.\ ring spectra). Each of these related invariants is constructed from the cyclotomic structure on topological Hochschild homology $\thh$. 

Given a functor $F$ from connective $\mathbb{E}_1$-rings to spectra, in this introduction we write \[F_{\rel}(A):=\fib(F(A)\to F(\pi_0A))\]
for a given connective $\bE_1$-ring $A$ with $\pis A\cong \field[x_{2m}]$, see~\autoref{nota relative versions for filtered rings} for a more general notation used in the body of the paper.

Let $\mathbb{S}_{W(\field)}$ denote the spherical Witt vectors of~\cite[Example 5.2.7]{lurie2018ellipticII} (cf.~\cite[\S~2.1]{BSY22}). Our first main theorem is as follows.
\begin{restatable}[Even vanishing of relative K-theory]{thmx}{mainthmevenvanishing}\label{mainthmevenvanish}
Let $\field$ be an algebraically closed field of characteristic $p$ and let $A$ be an $\bE_1$-ring with $\pis A = \fp[x_{2m}]$ where $p \nmid m>0$. Then 
\begin{enumerate}
\item\label{item 1 thmevenvanishing} $\pi_{2k}\mathrm{K}_{\rel}(\sphwf \otimes A)= 0$ for all $k$.
\item\label{item 2 thmevenvanishing}  $\pi_{2k+1}\mathrm{K}_{\rel}(\sphwf \otimes  A)$ is trivial for $k< m$ and infinite for $k\geq m$ 
 where it is a $p$-group with bounded $p$-torsion.
\end{enumerate}
\end{restatable}

An important example of an $\bE_1$-ring satisfying the hypothesis of \autoref{mainthmevenvanish} is  what we call the connective Morava K-theory with $\field$-coefficients defined by
\[k(n)_{\field}:= \sphwf \otimes k(n)\]
where $k(n)$ denotes the connective Morava K-theory spectrum equipped with an arbitrary $\bE_1$-ring structure; there is a ring isomorphism  $\pis (k(n)_\field) \cong \field[v_n]$ where $\lvert v_n\rvert=2p^n-2$. For the $2$-periodic analogue, we consider the quotient $E(\fp,\Gamma_n,)/\mathfrak{m}$ of the Lubin-Tate spectrum on a height $n$ formal group law $\Gamma_n$ over $\fp$ with a regular sequence generating the maximal ideal $\mathfrak{m}$ of $\pi_0E(\fp,\Gamma_n)$ and then consider the base-change $K(\field,\Gamma_n)=\mathbb{S}_{W(\field)}\otimes E(\fp,\Gamma_n)/\mathfrak{m}$ and its connective cover $k(\field,\Gamma_n)=\tau_{\ge 0}K(\field,\Gamma_n)$.

\begin{restatable}{corox}{coro evenvanishing}
 Let $\field$ be  an algebraically closed field of characteristic $p$, then the conclusion of \autoref{mainthmevenvanish} holds for the following $\mathbb{E}_1$-rings:
\begin{enumerate}
\item connective Morava K-theory $k(n)_{\field}$. 
\item $2$-periodic connective Morava K-theory $k(\field,\Gamma_n)$. 
\item $W(\field)\sslash p$, the $\bE_1$-quotient of $ W(\field)$ by $p \in W(\field)$ as in \eqref{eq defining mod mod p guy}.
\end{enumerate}
\end{restatable}

In the special case of \autoref{mainthmevenvanish} where $\field=\fpc$, we completely compute the cardinalities of the relative algebraic K-theory of $\mathbb{S}_{W(\fpc)}\otimes A$ where $A$ is an $\mathbb{E}_1$-ring with homotopy ring $\pi_*A=\field_p[x_{2m}]$ and $p\nmid m>0$, see \autoref{coro absolute k theory is odd}. For example, we completely compute the cardinality of the algebraic K-theory groups of Morava K-theory over $\bar{\mathbb{F}}_p$. 

\begin{restatable}{corox}{cardoverFpbar}\label{overFpbar}
The groups $\mathrm{K}_{2k}(k(n)_{\bar{\field}_p})$ vanish and the groups $\mathrm{K}_{0}(k(n)_{\bar{\field}_p})$ and $\mathrm{K}_{2k-1}(k(n)_{\bar{\field}_p})$ are countably infinite with bounded $p$-torsion for integers $k\geq 1$. 
\end{restatable}

For the proof of \autoref{mainthmevenvanish}, we begin by proving the following oddness result for $\rntc$ and $\rtp$ that holds without the assumption that $\field$ is algebraically closed.

\begin{restatable}[Even vanishing of relative $\ntc$ and $\tp$]{thmx}{oddnessofntcandtp}\label{oddness}
Let $\field$ be a perfect field of characteristic $p$ and let $A$ be an $\mathbb{E}_1$-ring with  $\pis A \cong \mathbb{F}[x_{2m}]$ where $p\nmid m>0$. Then  \[\pis \rntc(A) \textup{\ \ and\  \ } \pis \rtp(A)\] 
are concentrated in odd degrees.
\end{restatable}

\autoref{oddness} follows from 
the computations of $\TC^{-}$ and $\TP$ of the free $\mathbb{E}_1$ $\field$-algebra $\mathbb{F}[x_{2m}]$ on $\sph^{2m}$  from~\autoref{TC of formal DGAs} and 
the existence of a May spectral sequence for $\TC^{-}$ and $\TP$ which we construct in \autoref{Whitehead}, extending~\cite{AKS18,Kee25}. 
The computation of $\tc(\field[x_{2m}])$, $\TC^{-}(\field[x_{2m}])$ and $\TP(\field[x_{2m}])$ for $m=1$ is due to \cite{bayindir2020kthryofthh} and independently due to Land--Tamme \cite{land2023kthrypushouts}. 
Building on this, we provide the computation for $p\nmid m>0$ using the root adjunction  methods of \cite{ausoni2022adjroot}  in \autoref{TC of formal DGAs} and this suffices for our applications. This computation was already known to Land, Tamme and Speirs  and it will appear in their upcoming work joint with the second author.
We expect that this upcoming work will extend our results (with the exception of \autoref{prop root adjunction in thh}) to arbitrary integers $m>0$; removing the assumption $p \nmid m$ (see \autoref{rema on p nmid m generalization}).

One of the main theoretical inputs for our work is what we call the orbit filtration on $\rtc(A)$, which we produce as a consequence of the May filtrations on $\ntc$ and $\tp$ and the twisted cyclotomic structure from~\cite{antieau2020beilinson} in \autoref{sec:orbit-filt}. Similar filtrations have been considered previously by Brun~\cite{Bru01} and Angeltveit~\cite{Angeltveit2015kTheoryfinitewitt} using the more classical approach to topological cyclic homology. 
For instance, in light of \autoref{oddness}, the proof of \autoref{mainthmevenvanish} \eqref{item 1 thmevenvanishing} reduces to showing that $\varphi -\mathrm{can}$ is surjective when $\field$ is algebraically closed. 
A first obstacle is that both $\pi_{2i+1} \rntc(A)$ and $\pi_{2i+1} \rtp(A)$ are uncountably infinite in this situation.  A variant of the orbit filtration discussed in \autoref{sec partial orbit filtration for tcrel} allows us to define quotients of $\rntc(A)$ and $\rtp(A)$ that recover $\rtc(A)$ in sufficiently low degrees while having finite homotopy groups when $\pis A \cong \fp[x_{2m}]$. Moreover, the last statement of \autoref{mainthmevenvanish} \eqref{item 2 thmevenvanishing} is a consequence of the spectral sequence corresponding to the orbit filtration, whose $\mathrm{E}_1$-page is given by $\pis \big(\Sigma \rthh(\pis A)_{hS^1}\big)$. 

Let $\mathbb{W}_{s}(\field)$ denote the big Witt vectors of length $s$, which is trivial when $s\le 0$ by convention. For the general case of an $\bE_1$-ring with homotopy $\mathbb{F}_{p^k}[x_{2m}]$, the aforementioned quotients of $\ntc$ and $\tp$ allow for counting arguments. Through this, we prove the following analogous result.

\begin{restatable}[Quotients of cardinalities of $\mathrm{TC}$]{thmx}{theoeventrivialgivesodd}
\label{theo even trivial gives odd}
Let $\field_q$ be a finite field of characteristic $p$ and let $A$ be a connective $\bE_1$-ring such that $\pis A \cong \field_q[x_{2m}]$ where $p\nmid m>0$. Then $\lvert \pi_s \rtc(A)\rvert$ is finite for each integer $s$ and 
\[\frac{\lvert \pi_{2r+1}\rtc(A)\rvert}{\lvert \pi_{2r} \rtc(A)\rvert} = \lvert \mathbb{W}_{\floor*{\frac{r}{m}}}(\field_q) \rvert \]
for each integer $r$. 
\end{restatable}

See \cite[Theorem B]{Angeltveit2015kTheoryfinitewitt} for a similar result on the topological cyclic homology of $W(\fp)/p^n$ that relies on the oddness of  a relative version of $\textup{TF}(\fp[t]/t^n)$, see also~\cite[Proposition~1.5]{AKN24}. 

 For the cardinalities of the integral algebraic K-theory groups of $k(n)_{\field_q}$ for $\field_q$ a finite field of characteristic $p$, we utilize the computation of the mod $(p,\cdots,v_{n+1})$-syntomic cohomology of $k(n)_{\field_q}$ from the work of the first author, Hahn and Wilson \cite{angelini2024syntomic}. Using \autoref{theo even trivial gives odd} and analyzing the successive $v_i$-Bockstein spectral sequences for the syntomic cohomology of $k(n)_{\field_q}$, we produce our next main result. 

\begin{restatable}[Cardinality of K-theory of connective Morava K-theory]{thmx}{mainthmcardinality}\label{cardinality-k-theory}
Suppose $\field_{q}$ is a finite field of characteristic $p$. There are isomorphisms 
\[ 
\mathrm{K}_{2r}(k(n)_{\field_{q}
})\cong \begin{cases} \mathbb{Z} & r=0  \,,\\ 
    0  &  p^{n+1}-1\nmid r\,, \ 0<r\leq2p^{n+1}-2  \,,\\
          \mathbb{Z}/p^{m(r)} &  p^{n+1}-1\mid r\,,\ 0<r\le 2p^{n+1}-2  \,,\\
    0 & p-1 \nmid r\,, \ r> 2p^{n+1}-2  \,,\\
     A_{r}  &  p-1\mid r\,,  \ r> 2p^{n+1}-2 \,,
     \end{cases}
\]
and we have identifications 
\[ 
|\mathrm{K}_{2r+1}(k(n)_{\field_q})|=\begin{cases}
 |\mathbb{W}_{\lfloor \frac{r}{p^{n}-1}\rfloor}(\field_q)|(q^{r+1}-1) & p^{n+1}-1\nmid r \,, \ 0 \le  r\le 2p^{n+1}-2 \,,\\
       |\mathbb{W}_{\lfloor \frac{r}{p^{n}-1}\rfloor}(\field_q)|(q^{r+1}-1)\cdot p^{m(r)} & p^{n+1}-1\mid r \,, \ 0 \le r\le 2p^{n+1}-2 \,,\\
         |\mathbb{W}_{\lfloor \frac{r}{p^{n}-1}\rfloor}(\field_q)|(q^{r+1}-1) & p-1\nmid r \,, \ r> 2p^{n+1}-2 \,,\\
    |\mathbb{W}_{\lfloor \frac{r}{p^{n}-1}\rfloor}(\field_q)|(q^{r+1}-1)\cdot \lvert A_r\rvert 
    & p-1\mid r \,, \ r>2p^{n+1}-2 \,,
\end{cases}
\]
of cardinalities where $\mathbb{W}_n$ denotes the truncated big Witt vectors of length $n$, $m(r)$ is a positive integer that we leave undetermined and $A_{r}$ is a finite $p$-group that we leave undetermined. 
\end{restatable}

In particular, this determines $\lvert \mathrm{K}_{t}(k(n)_{\field_q})\rvert$ for all $t$ that are  not congruent to $0$ or $1$ modulo $2p-2$. This also determines the cardinalities  $\lvert \mathrm{K}_{t}(k(n)_{\field_q}) \rvert$ for all $t<4p^{n+1}-4$ except for $t = 2p^{n+1}-2$ and $t= 2p^{n+1}-1$ where the cardinality is only determined up to a power of $p$.
In fact, we determine the quotients $\lvert K_{2r+1}(k(n)_{\field_q})\rvert/\lvert K_{2r}(k(n)_{\field_q})\rvert$ for all integers $r$. We have a similar computation in the case of periodic Morava K-theory, see \autoref{cardinality-k-theory-periodic}.  

In \autoref{K1local}, we also compute $\pi_*L_{K(1)}\mathrm{K}(K(1))$ and observe that it is concentrated in even degrees, as a consequence of~\cite{angeltveit2008thhofainftyringspectra,DR25}. This section is independent of the rest of the paper and it is only included for context.

\subsection{On the K-theory of truncated Witt vectors}\label{truncated Witt vectors}
Work of Krause--Senger~\cite{KS24} builds on recent breakthroughs of Antieau--Krause--Nikolaus~\cite{AKN24} and Hahn--Levy--Senger~\cite{HLS24} to determine the exact even vanishing of the algebraic K-theory of $\mathbb{Z}/p^n$; improving on the even vanishing bounds established in \cite[Theorem~1.4]{AKN24}. More generally, quotients of the form $W(\mathbb{F}_{p^k})/p^n$ were considered in work of Angeltveit when $\mathbb{F}_{p^k}$ is a finite field~\cite{Angeltveit2015kTheoryfinitewitt} and an even vanishing result was proven in Antieau--Krause--Nikolaus~\cite{AKN24} to more general quotients of complete DVRs with residue field $\mathbb{F}_{p^k}$. To our knowledge, our work is the first to consider the case where $\field$ is an algebraically closed field. For the following, let $\mathrm{K}(W(\field)/p^n,\field) := \textup{fib}(\mathrm{K}(W(\field)/p^n) \to \mathrm{K}(\field))$. 

\begin{restatable}[Even vanishing of K-theory of truncated Witt vectors]{thmx}{mainthmcardinalitywitt}\label{theo k-theory of truncated witts is odd}
Let $\field$ be an algebraically closed field of characteristic $p$ and let $n>1$. Then 
\[\pi_{2r} \mathrm{K}(W(\field)/p^n,\field) = 0\]
for all $r$. Moreover,
\[\pi_{2r+1}\mathrm{K}(W(\field)/p^n,\field)\]
is infinite for all $r\geq 0$.
\end{restatable}

The proof of this theorem is a direct adaptation of the proof of \autoref{mainthmevenvanish} and it is independent of the methods and results of \cite{KS24} and \cite{AKN24}. For this case, we use the $p$-adic filtration on $W(\field)/p^n$ replacing the Whitehead tower used in the proof of \autoref{mainthmevenvanish}. As the associated graded of this $p$-adic filtration is given by the truncated polynomial ring $\field[t]/t^n$, we rely on the computation of $\tc$, $\ntc$ and $\tp$ of $\field[t]/t^n$ from \cite{hesselholt1997kthryoftruncated} and \cite{speirs2020truncatedpolynomial}. Our approach bears some resemblance to the methods of Brun~\cite{Bru00,Bru01} and Angeltveit~\cite{Angeltveit2015kTheoryfinitewitt}, though we do not directly rely on any of their results. 

In the case of $\fpc$, we can completely compute the cardinalities. 

\begin{restatable}{corox}{maincorcardinalitywitt}\label{cor k-theory of truncated witts is odd}
Let $n> 1$. The groups 
\[\pi_{2r} \mathrm{K}(W(\fpc)/p^n,\fpc) = 0\]
for all $r$. Moreover, the groups 
\[\pi_{2r+1}\mathrm{K}(W(\fpc)/p^n,\fpc)\]
are countably infinite with bounded $p$-torsion for all $r\geq 0$.
\end{restatable}
In sufficiently large degrees, this also follows from~\cite[Theorem~1.4]{AKN24} since algebraic K-theory commutes with filtered colimits.

\subsection{Applications to motivic pullback squares}\label{subsec intro on motivic pullbacks}

In \autoref{sec applications to discrete rings}, we study the consequences of \autoref{mainthmevenvanish} regarding the algebraic $K$-theory of various discrete rings. 
For this, we use the motivic pullback squares constructed by Land and Tamme~\cite{land2023kthrypushouts}. 
Our first example is what Land and Tamme call the arithmetic coordinate axis, i.e.\ the homotopy pullback $W(\field) \times_{\field} W(\field)$ that is isomorphic to the discrete ring $W(\field)[u]/(u(u-p))$. 
Indeed, this is the base-change of the Burnside ring $A(C_p)$ of the cyclic group of order $p$ to $W(\field)$. There is a pullback square~\cite[Example~4.31]{land2023kthrypushouts}:
\begin{equation*}\label{diag motivic pullback square}
\begin{tikzcd}
\mathrm{K}(W(\field) \times_{\field} W(\field)) \ar[r] \ar[d] & \mathrm{K}( W(\field))  \ar[d] \\
\mathrm{K}(W(\field)) \ar[r] & \mathrm{K}(W(\field) \sslash p).
\end{tikzcd}
\end{equation*}
Here, $W(\field)\sslash p$ is a $\z$-algebra (i.e.\ a DGA) with homotopy $\field[t_2]$. Considering this pullback as a fiber sequence identifies:
\begin{equation}\label{eq relative kthry arithmetic coordinate axis}
\mathrm{K}\big(W(\field) \times_{\field}
W(\field), W(\field) \times  W(\field)\big) \simeq  \Omega \mathrm{K}(W(\field)\sslash p).
\end{equation}

For $p=2$, the $\bE_1$-ring $W(\field) \sslash 2$ is equivalent to the free $\bE_1$ $\field$-algebra on $\Sigma^2 \field$ (by \cite[Remark 4.33]{land2023kthrypushouts} or \cite[Theorem B]{bayindir2025towards}). Therefore, its relative algebraic K-theory is known due to \cite{bayindir2020kthryofthh} or \cite[Example 4.29]{land2023kthrypushouts}, which together with \cite{Qui72} computes \eqref{eq relative kthry arithmetic coordinate axis} for $\field$ a finite field of characteristic $2$ or the algebraic closure $\overline{\mathbb{F}}_2$.  

For odd $p$, $W(\field)\sslash p$ is not equivalent to the aforementioned free algebra (\autoref{rema nonformality}) hence the theorem of \cite{bayindir2020kthryofthh} does not apply. Indeed, Davis, Frank and Patchkoria \cite{davis2025cyclichomologyofzmodmodp} computed its  periodic cyclic homology $\textup{HP}^{\z}(W(\fp) \sslash p)$ for odd $p$ considering it as a step towards computing its K-theory; with the motivation coming from \eqref{eq relative kthry arithmetic coordinate axis} above. 

Here, we obtain the cardinalities of the relative algebraic K-theory groups in \eqref{eq relative kthry arithmetic coordinate axis} for $\field = \fpc$ at all primes (\autoref{coro relative k theory of arithmetic coordinate axis}) by applying \autoref{mainthmevenvanish} for  $\mathrm{K}_*(W(\fpc) \sslash p)$. To our knowledge, \autoref{mainthmevenvanish} provides the first K-theory computation in infinitely many degrees for a DGA that is not  equivalent to its homology as an $\bE_1$-ring (i.e.\ not topologically formal in the sense of \cite{bayindir2025towards, dugger2007topologicalequivalences}); c.f.\ \cite{schlicting2002noteontriangulatedcat,dugger2009curious}. We deduce that  
\[\mathrm{K}_s\big(W(\fpc) \times_{\fpc}
W(\fpc), W(\fpc) \times W(\fpc)\big)\]
is trivial for odd $s\geq 1$,  countably infinite with bounded $p$-torsion for even $s \geq 1$ and given by $\mathrm{K}_{s+1}(\fpc)$ for $s<1$. Indeed, this is a special case of our results \autoref{coro cardinalities of kthry for discrete rings} and \autoref{coro absolute k theory is odd}, which we prove in a similar manner. 

\subsection{Notation and Conventions}\label{not-conv} We work in the setting of $\infty$-categories in the sense of~\cite{lurie2016higher}. 
\begin{enumerate}
\item We fix a prime $p$ and write $\field$ for a perfect field of characteristic $p$, $W_n(\field)$ for the $n$-th $p$-typical Witt vectors of $\field$ and $\mathbb{W}_n(\field)$ for the big Witt vectors of $\field$ for the truncation set $\{1,\cdots ,n\}$, which is trivial by convention when $n\le 0$.  
\item We write $\mathrm{Sp}$ for the symmetric monoidal $\infty$-category of spectra with symmetric monoidal unit $\mathbb{S}$ and symmetric monoidal product $\otimes$. We write $B/A$ for the cofiber of a map $A\to B$.
\item Given a symmetric monoidal $\infty$-category $\cC$, we write $\mathrm{Alg}(\cC)$ for the $\infty$-category of $\mathbb{E}_1$ algebras in $\cC$. When $\cC=\Sp$, we call objects in $\mathrm{Alg}(\Sp)$ simply $\mathbb{E}_1$-rings. We write $\mathrm{CAlg}(\cC)$ for the $\infty$-category of $\mathbb{E}_{\infty}$-algebras in $\mathcal{C}$ and when $\mathcal{C}=\Sp$ we refer to them as $\mathbb{E}_\infty$-rings. 
\item Let $\mathbb{Z}^{\textup{op}}$ denote the $1$-category whose objects are integers such that $\mathrm{Map}(a,b)=*$ if $a\ge b$ and $\mathrm{Map}(a,b)=\emptyset$ if $a<b$. Let  $\mathbb{N}$ denote the full subcategory of $\z$ given by nonnegative integers.

 Let $\mathbb{Z}^{\textup{ds}}$ denote the discrete $1$-category whose objects are integers and whose maps are identity maps and let $\mathbb{N}^{\textup{ds}}$ denote the full subcategory of $\z^{\textup{ds}}$ given by the nonnegative integers.

\item Given an $\infty$-category $\cC$, let $\mathrm{Tow}(\cC):=\mathrm{Fun}(\mathbb{N},\cC)$ and $\mathrm{Fil}(\cC):=\mathrm{Fun}(\mathbb{Z},\cC)$. 
Given $I^{\bullet}\in\mathrm{Tow}(\cC)$ or $I^{\bullet}\in\mathrm{Fil}(\cC)$, we write $I^w:=I^{\bullet}(w)$ and refer to $w$ as the \emph{weight} and $I^w$ as the \emph{weight $w$ component}. Note that $\colim I^\bullet=I^0$ if $I^{\bullet}\in \mathrm{Tow}(\mathcal{C})$ so we say that the \emph{colimit is realized as $I^0$} in this case. 
We refer to the objects of $\mathrm{Tow}(\Sp)$ as $\mathbb{N}$-filtered spectra and objects in $I^{\bullet}\in\mathrm{Fil}(\cC)$ as $\mathbb{Z}$-filtered spectra. 

Similarly, let $\mathrm{Gr}(\cC)=\mathrm{Fun}(\mathbb{N}^{\textup{ds}},\cC)$. 
We refer to the objects of $\mathrm{Gr}(\cC)$ as graded objects or $\mathbb{N}$-graded objects of $\cC$. We write $\mathrm{Fun}(\mathbb{Z}^{\textup{ds}},\cC)$ and refer to objects in this category as $\mathbb{Z}$-graded objects in $\mathcal{C}$. In each case, we write $I^i:=I^*(i)$ and refer to $i$ as the \emph{weight} and $I^i$ as the \emph{weight $i$ component}. 

We write $\gr :\mathrm{Tow}(\cC) \to \mathrm{Gr}(\cC)$ and  $\gr :\mathrm{Fil}(\cC) \to \Fun(\mathbb{Z}^{\textup{ds}},\cC)$ for the associated graded functors. 

If $\cC$ is presentably symmetric monoidal, we equip $\mathrm{Tow}(\cC)$,  $\mathrm{Fil}(\cC)$, $\mathrm{Gr}(\cC)$ and $\Fun(\mathbb{Z}^{\textup{ds}},\cC)$ with the Day convolution symmetric monoidal structure~\cite{glasman2016dayconv}. Recall that $\gr$ is symmetric monoidal with respect to Day convolution. 

\item Let $k(n)$ denote a general $\bE_1$-$\bS$-algebra form of $k(n)$ in the sense of~\cite[Definition~A.1.1]{angelini2024syntomic}. 
We then write $k(n)_{\field}:=\bS_{W(\field)}\otimes k(n)$ where $ \bS_{W(\field)}$ is the associated spherical Witt vectors to $\field$, see~\cite[Example~5.2.67]{lurie2018ellipticII} and~\cite[\S~2.1]{BSY22}. We also write $\mathrm{MU}_{W(\field)}=\bS_{W(\field)}\otimes \mathrm{MU}$. 

\item We will refer to objects in $\Sp^{BS^1}$ as $S^1$-spectra and maps in $\Sp^{BS^1}$ as $S^1$-equivariant maps. More genuine notions of $S^1$-spectra will not appear in this paper.
\item We write $\cycsp$ for the $\infty$-category of $p$-cyclotomic spectra in the sense that an object of $\cycsp$ is given by a $S^1$-spectrum $E$ and an $S^1$-equivariant map $\varphi_p :E\to E^{tC_p}$. Henceforth, we will simply refer to objects in $\cycsp$ as cyclotomic spectra. 

We then define $\varphi := \varphi_p^{hS^1}$ and 
\[ 
\TC(E):=\equalizer(
\begin{tikzcd} 
E^{hS^1}
\arrow[r,shift right,"\textup{can}",swap] 
\arrow[r,shift left,"\varphi"]  & (E^{tC_p})^{hS^1}
\end{tikzcd}
)
\]
following~\cite{nikolausscholze2018topologicalcyclic}. When $R$ is an $\mathbb{E}_1$-ring and $E=\THH(R)$, we write $\mathrm{TC}(R):=\mathrm{TC}(\THH(R))$. This notion of topological cyclic homology agrees with the usual one when $\THH(R)$ is bounded below and $R$ is $p$-local, this justifies our notation. In  addition to the standard conventions $\mathrm{TP}=\mathrm{THH}^{tS^1}$ and $\mathrm{TC}^{-}=\mathrm{THH}^{hS^1}$, we write $\TC^{+}:=\THH_{hS^1}$.
\end{enumerate}

\begin{rema}\label{rema on p nmid m generalization}
Outside \autoref{TC of formal DGAs},  we only need the assumption $p \nmid m$ when we use \autoref{theo tc ntc tp of formal dgas}. We expect that the upcoming work of the second author, Land, Tamme and Speirs will generalize \autoref{theo tc ntc tp of formal dgas} to hold without the assumption $p \nmid m$. This generalization will also remove the assumption $p \nmid m$ from all our results that are stated outside of \autoref{TC of formal DGAs}. 
\end{rema}

\subsection{Acknowledgements}
The first author would like to thank Jeremy Hahn for discussions about the motivic filtration.  
The second author would like to thank Markus Land for discussions about the computation of the algebraic K-theory of the free $\field$-algebra $\field[x_{2m}]$ and Maxime Ramzi for suggesting an argument for extending our results to periodic Morava $K$-theories. The authors would like to thank the organizers of the IWoAT workshop in 2025 in Hangzhou, which is where this project first began. The second author was  supported by the Beatriu de Pin\'os Programme (2023 BP 00043) of the Ministry of Research and Universities of the Government of Catalonia and the grant  PID2024-155646NB-I00, Agencia Estatal de Investigaci\'on. The second author holds a  Ram\'on y Cajal grant of the State Research Agency (AEI) of Spain with reference number RYC2024-049560-I.

\section{On the TC of formal DGAs with polynomial homology}\label{TC of formal DGAs}
For this section, let $p \nmid m >0$.  In this section, let $\rthh(\field[x_{2m}])$ denote the fiber of the map $\thh(\field[x_{2m}]) \to \thh(\field)$. Then $\rtc(\field[x_{2m}])$, $\rntc(\field[x_{2m}])$ and $\rtp(\field[x_{2m}])$ are defined in the same way. 
Here, we adapt the root adjunction methods of \cite{ausoni2022adjroot} to obtain a computation of the groups $\pi_*\rtc(\field[x_{2m}])$ using the computation of  $\pi_*\rtc(\field[x_{2}])$ from~\cite{bayindir2020kthryofthh}. Together with the May filtrations on $\rntc$ and $\rtp$ from \autoref{sec:may-filtrations},  this will provide a proof of \autoref{oddness}. 
\begin{rema}
The results of \cite{ausoni2022adjroot}  can be directly used to show that the map $\rtc(\field[x_{2m}]) \to \rtc(\field[x_2])$ induced by the map of $\bE_1$ $\field$-algebras $\field[x_{2m}] \to \field[x_2]$ sending $x_{2m}$ to $x_2^m$ is the inclusion of a summand  whenever $p\nmid m>0$. However, this is not sufficient for us since we need to identify the homotopy groups of this summand.  
\end{rema}

\begin{rema}\label{rema z graded spectra}
Let $\mathcal{C}$ be a presentably symmetric monoidal $\infty$-category. The inclusion $\mathbb{N}^\textup{ds} \to \mathbb{Z}^{\textup{ds}}$ induces a symmetric monoidal left adjoint 
\begin{equation}\label{eq: adjoint N to Z graded}
\Gr(\mathcal{C}) \to \textup{Fun}(\mathbb{Z}^{\textup{ds}}, \mathcal{C}) \,,
\end{equation}
that simply adds zero negative weights~\cite[Corollary 3.8]{nikolaus2016stablemultyoneda} (cf.~\cite[Proposition~2.12]{Kee25}). In particular, this functor commutes with THH. Furthermore, this left Kan extension followed by restriction (along   $\mathbb{N}^\textup{ds} \to \mathbb{Z}^{\textup{ds}}$) is equivalent to the identity functor.
\end{rema}

In this section, we work in the setting of $\z$-graded spectra, following \autoref{rema z graded spectra}, only to be compatible with the setting of \cite{bayindir2020kthryofthh} and \cite{ausoni2022adjroot}. In our usual convention, when we consider $\field[x_{2m}]$ as a $\z$-graded or $\N$-graded $\bE_1$-ring, we place $x_{2m}$ in weight $2m$ as it is often the associated graded of a Whitehead filtration.  Again, only to be compatible with the conventions of  \cite{bayindir2020kthryofthh} and \cite{ausoni2022adjroot}, we also consider the case where $x_{2m}$ is in weight $m$; in this case, we write $\field[x_{2m,m}]$ for the resulting $\z$-graded $\bE_1$-ring. For example, the underlying $\mathbb{Z}$-graded $\field$-module of $\field[x_{2}]$ is
\[  \dots  \qquad  0 \qquad 0 \qquad \field  \qquad 0  \qquad  \field  \qquad 0  \qquad \dots 
\]
whereas the underlying $\mathbb{Z}$-graded $\field$-module of $\field[x_{2,1}]$ is 
\[  \dots \qquad 0 \qquad  0 \qquad \field  \qquad  \field   \qquad  \field \qquad \field \qquad \dots 
\]

We let $f$ denote the map 
\begin{equation}\label{inclusion map}
f\co \field[x_{2m,m}] \to \field[x_{2,1}]
\end{equation}
of $\bE_1$ $\field$-algebras in $\z$-graded spectra  (sending $x_{2m,m}$ to $x_{2,1}^m$). Note that this map exists since $\field[x_{2m,m}]$ is the free $\bE_1$ $\field$-algebra in $\z$-graded spectra on a single generator in degree $2m$ and weight $m$.

Left Kan extending along the functor $\z\to 0$, one obtains the underlying spectrum of a $\z$-graded spectrum; we denote this symmetric monoidal left adjoint functor by $D$ (see \cite[Section 2.2]{ausoni2022adjroot}). For a $\z$-graded spectrum $E^\bullet$, there is an equivalence $D(E^\bullet) \simeq \bigoplus_{i \in \z} E^i$.

Note that $\THH$ can be defined in any presentably symmetric monoidal  stable $\infty$-category, in particular there is a functor
\[ \thh : \Alg(\Fun(\mathbb{Z}^{\textup{ds}},\Sp))\to \Fun(\mathbb{Z}^{\textup{ds}},\Sp)^{BS^1} \,.\]
We define $\rthh(A)$ for a 
$\mathbb{Z}$-graded spectrum $A$ with $A^i\simeq 0$ for $i<0$ to be the fiber of the $\z$-graded map $\THH(A)\to \THH(A^0)$, see~ \cite[Lemma B.0.6 (\romannumeral 4)]{hahn2020redshift}. In this way, we consider $\rntc(A)$ and $\rtp(A)$ as $\z$-graded spectra in the canonical way, i.e.\ by applying $-^{hS^1}$ and $-^{tS^1}$ respectively levelwise to $\rthh(A)$.

We recall that the twisted Frobenius on $\rthh(A)$ is given by $S^1$-equivariant maps
\begin{equation}\label{eq graded frobenius maps}
\varphi_p :\rthh(A)^i \to (\rthh(A)^{tC_p})^{ip}=(\rthh(A)^{ip})^{tC_p}
\end{equation}
where $-^{tC_p}$ is applied levelwise as usual \cite[Appendix A]{antieau2020beilinson}; this is called a twisted cyclotomic structure. We write 
\[\varphi=\varphi_p^{hS^1} :
\rtc(A)^i \to ((\rthh(A)^{ip})^{tC_p})^{hS^1}=((\rthh(A)^{tC_p})^{hS^1})^{ip}
\] 
and note that when $\rthh(A)^i$ is bounded below for each $i$ and $p$-complete the target can be identified with $\tp(A)^{ip}:=(\THH(A)^{ip})^{tS^1}$. 
After left Kan extension along $\mathbb{Z}\to 0$, this equips $D\rthh(A)$ with the structure of a cyclotomic spectrum in a canonical way which agrees with the cyclotomic structure on $\rthh(DA)$ \cite[Example A.16]{antieau2020beilinson}.  
We omit the functor $D$ from our notation when it is clear from the context and when we write $\rtc(A)$, we mean $\rtc(DA)$. 

\begin{rema}\label{rema rm and lm}
Let $R_m$ denote the functor given by the restriction of grading along the map $\z\xrightarrow{\cdot m} \z$ that multiplies by $m$. Left Kan extension along $\cdot m$ gives an adjunction $L_m\dashv R_m$ with 
\[(L_m F)^{im} = F^{i} \textup{\ \ and \ } (L_mF)^{i}= 0 \textup{\ if $i\nmid m$},\]
for a $\z$-graded spectrum $F$, see \cite[Section 2.2]{ausoni2022adjroot}.
In particular, $(L_mR_m F)^i = F^i$ if $m \mid i$ and  $(L_mR_m F)^i \simeq 0$ otherwise. 
By direct observation, one sees that $L_mR_m\rthh(A)$ is also a twisted cyclotomic object where the counit map $L_mR_m\rthh(A) \to \rthh(A)$ respects this structure. 
\end{rema}
\begin{prop}\label{prop root adjunction in thh}
        Let $p \nmid m>0$ and use the notation above. 
\begin{enumerate}
\item \label{it:twisted-cyclotomic-factorization1}
The map of twisted cyclotomic objects in $\z$-graded  spectra $\rthh(f)$ induced by \eqref{inclusion map} factors as 
\[
\rthh(\field[x_{2m,m}]) \xrightarrow{\simeq} L_mR_m \rthh(\field[x_{2,1}]) \to \rthh(\field[x_{2,1}])
\]
where the first map above is an equivalence and the second map is the counit from Remark~\ref{rema rm and lm}.  

\item \label{it:twisted-cyclotomic-factorization2} Applying the functor $D$, the map $D\rthh(f)$ of cyclotomic spectra factors as:
\[D \rthh(\field[x_{2m,m}]) \xrightarrow{\simeq} \bigoplus_{ \substack{i \in \z \\ m \mid i}} \rthh(\field[x_{2,1}])^i \to D \rthh(\field[x_{2,1}]),\]
where the first map is an equivalence. 
\end{enumerate}
\end{prop}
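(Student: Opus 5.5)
The plan is to reduce the statement to the root adjunction results of \cite{ausoni2022adjroot}, which describe $\THH$ of a $\z$-graded $\bE_1$-ring obtained by adjoining an $m$-th root in terms of $L_m R_m$. First I will identify the restriction of $f$ along the grading. Since $\field[x_{2m,m}]$ is concentrated in weights divisible by $m$, it equals $L_m$ of the $\z$-graded algebra $\field[x_{2m,1}]$, with generator in weight $1$. The map $f$ is then adjoint to an algebra map $\field[x_{2m,1}] \to R_m\field[x_{2,1}]$. Looking at homotopy weight by weight, $R_m\field[x_{2,1}]$ has $\field\{x_{2,1}^{jm}\}$ in weight $j$, so this adjoint map is an equivalence. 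Hence $f$ is identified with the counit $L_mR_m\field[x_{2,1}] \to \field[x_{2,1}]$.

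Next I will invoke the theorem of \cite{ausoni2022adjroot}: for $p\nmid m$ and an algebra $B$ in $\z$-graded spectra, the map $\THH(L_mR_m B)\to \THH(B)$ factors through an equivalence $\THH(L_mR_mB)\simeq L_mR_m\THH(B)$. This holds as $S^1$-equivariant objects, and the $p$-typical twisted Frobenius is compatible because $p\nmid m$ means multiplication by $p$ preserves the set $m\z$. In our case the statement can also be checked by hand. The map $\field[x_{2m,m}]\to\field[x_{2,1}]$ is a map of free $\bE_1$-algebras, and $\THH$ of a free algebra on $X$ is $\THH(\field)\otimes\bigoplus_{k\ge 0}(X^{\otimes k})_{hC_k}$-style cyclic bar constructions, indexed by cyclic words. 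Writing $\THH(\field[x_{2,1}])$ weightwise, the weight $j$ piece comes from cyclic words of length $j$. The weight $jm$ part of $\THH(\field[x_{2m,m}])$ is indexed by cyclic words of length $j$ in $x_{2m}$, and under $x_{2m}\mapsto x_2^m$ these map to words of length $jm$ in $x_2$.

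The main obstacle is making this comparison precise, including the $S^1$-actions and the twisted Frobenius $\varphi_p$ from \eqref{eq graded frobenius maps}, rather than only on underlying graded spectra. I would handle it by citing \cite{ausoni2022adjroot} directly, which gives the equivalence $\THH(L_mR_mB)\simeq L_mR_m\THH(B)$ compatibly with $S^1$. The Frobenius compatibility then follows as in Remark~\ref{rema rm and lm}. The Frobenius sends weight $i$ to weight $ip$, and because $m\mid ip$ exactly when $m\mid i$ (using $p\nmid m$), both $L_mR_m$ and $\varphi_p$ respect the splitting.

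For the relative version, I will pass to fibers of the maps to $\THH(\field)$, which sits in weight $0$ and is fixed by $L_mR_m$. This gives item \eqref{it:twisted-cyclotomic-factorization1}. Item \eqref{it:twisted-cyclotomic-factorization2} then follows by applying the symmetric monoidal functor $D$ and using $D(E^\bullet)\simeq\bigoplus_i E^i$ together with the formula for $L_mR_m$. The resulting maps are maps of cyclotomic spectra by \cite[Example A.16]{antieau2020beilinson}.
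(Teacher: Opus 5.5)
Your overall strategy matches the paper's: identify $f$ with the root adjunction of \cite{ausoni2022adjroot}, then pass to fibers and apply $D$. However, the key step rests on a statement that is false as you formulate it, so this is a genuine gap.

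There is no theorem saying that for $p\nmid m$ and an arbitrary $\z$-graded algebra $B$ one has $\THH(L_mR_mB)\simeq L_mR_m\THH(B)$. Take $B$ the free $\bE_1$-ring on two classes $x,y$ of weight $1$ and degree $0$, and $m=2$. Then $L_2R_2B$ is free on $xx,xy,yx,yy$, so $\pi_0$ of the weight $2$ component of $\THH(L_2R_2B)$ has rank $4$. The corresponding group for $\THH(B)$ has rank $3$, from the cyclic words $xx$, $yy$ and $xy=yx$. This holds for every $p$. Even for $B=\sph[x]$ the claim needs $m$ inverted. What the paper actually uses is the special case of the $p$-local root adjunction $\bar f\colon \sphpl[x_{2m,m}]\to\sphpl[x_{2,1}]$ from \cite[Proposition 4.17]{ausoni2022adjroot}. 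It transports this to $f$ via $f=\field\otimes\bar f$ and $D\rthh(f)\simeq\THH(\field)\otimes D(\THH(\bar f)^{>0})$.

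Your hand check omits exactly where the hypothesis enters. The free-algebra formula is $\THH(T(X))\simeq\sph\oplus\bigoplus_{k\ge1}S^1_+\otimes_{C_k}X^{\otimes k}$, which is induced from $C_k$ to $S^1$ rather than $(X^{\otimes k})_{hC_k}$. The point is not that cyclic words of length $j$ go to cyclic words of length $jm$, but that the isotropy changes from $C_j$ to $C_{jm}$. In weight $jm$ the map is $\THH(\field)$ tensored with $S^1_+\otimes_{C_j}S^{2mj}\to S^1_+\otimes_{C_{jm}}S^{2mj}$, an $m$-sheeted covering. On homology it is an isomorphism on the bottom cell and multiplication by $m$ on the top cell. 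So it is an equivalence precisely because $m$ is a unit in $\field$, and the proposition fails for $p\mid m$; your sketch never uses this.

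The place where you do invoke $p\nmid m$ does not need it: multiplication by $p$ preserves $m\z$ for every $m$. The hypothesis matters for the complementary weights, and only in the later product splitting.

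Finally, $S^1$- and Frobenius-compatibility is not the real obstacle. As in the paper, $\rthh(\field[x_{2m,m}])$ is concentrated in weights divisible by $m$, so $\rthh(f)$ factors through the counit as a map of twisted cyclotomic objects by adjunction. Whether the first map is an equivalence can then be checked on underlying graded spectra, or after the conservative functor $D$.
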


\begin{proof}
Since $\field[x_{2m,m}]$ is concentrated in weights that are divisible by $m$, by direct observation, we deduce that the same holds for $\rthh(\field[x_{2m,m}])$. Then the factorization of $\rthh(f)$ through $L_mR_m\rthh(\field[x_{2m,m}])$ follows immediately, 
and this respects the twisted cyclotomic structures. Upon applying the functor $D$, this gives the second factorization in the proposition. Since $D$ reflects equivalences, it suffices to prove that the first map in the second factorization in the proposition is an equivalence.  

Note that $f = \field \otimes \overline{f}$ for the canonical map of $\bE_1$-algebras in $\z$-graded spectra $\overline{f}\co\sphpl[x_{2m,m}] \to \sphpl[x_{2,1}]$. Then   $D\rthh(f) \simeq \thh(\field)\otimes D(\thh(\overline{f})^{> 0})$ where $(-)^{>0}$ here is the functor that removes the components $\le 0$~\cite[Lemma ~B.0.6]{hahn2020redshift}. 
Then the result follows due to  \cite[Proposition 4.17]{ausoni2022adjroot}; to adapt to the conventions of \cite[Proposition 4.17]{ausoni2022adjroot}, one notes that the  symmetric monoidal left adjoint functor $D^m$ from \cite[Section 2.2]{ausoni2022adjroot} commutes with THH and that  $(D^m F)^0 = \bigoplus_{ \substack{i \in \z \\ m \mid i}} F^i $ for a $\z$-graded spectrum $F$.
\end{proof}

\begin{rema}\label{rema passage to witt vector coefficients for formal}
There is an equivalence of $\z$-graded $\bE_1$-algebras $\sph_{W(\field)} \otimes \fp[x_{2m}] \simeq \field[x_{2m}]$. This gives the first equivalence in the composite 
\[\thh(\field[x_{2m}]) \simeq \thh(\sph_{W(\field)}) \otimes \thh(\fp[x_{2m}]) \simeq \sph_{W(\field)} \otimes \thh(\fp[x_{2m}])\]
of equivalences of $S^1$-equivariant $\z$-graded spectra. 
Here, $\sph_{W(\field)}$ on the right hand side is given the trivial $S^1$-action. The second equivalence follows as $\thh(\sph_{W(\field)}) \to \sph_{W(\field)}$ exhibits $ \sph_{W(\field)}$ as the $p$-completion of $\thh(\sph_{W(\field)})$ (which can be checked after applying $-\otimes \fp$ which is classical, see  \autoref{rema spherical witts as cyclotomic base}) and the fact that both sides are $p$-complete (as they are $\fp$-modules). It follows that this also applies to the relative case to give an equivalence 
\begin{equation}\label{eq for relthh taking witt out in the formal dga}
\rthh(\field[x_{2m}]) \simeq \sph_{W(\field)} \otimes \rthh(\fp[x_{2m}]).
\end{equation}
This gives canonical assembly maps 
\[\sph_{W(\field)} \otimes \rntc(\fp[x_{2m}]) \to \rntc(\field[x_{2m}]) \textup{\ \ and \ } \sph_{W(\field)} \otimes \rtp(\fp[x_{2m}]) \to \rtp(\field[x_{2m}])\]
using the fact that the targets are $\sph_{W(\field)}^{hS^1}$-modules and hence  $\sph_{W(\field)}$-modules through the $\bE_\infty$-map $\sph_{W(\field)} \to \sph_{W(\field)}^{hS^1}$. We prove below that the two maps above are  equivalences when $p\nmid m>0$. 
\end{rema} 

\begin{thm}\label{theo tc ntc tp of formal dgas}
Suppose $m>0$ and $p\nmid m$ and let $\field$ be a perfect field of characteristic $p$. Then we have: 
\begin{enumerate}
 \item    \label{item 1 frobenius is iso}
 $\pis(\rntc(\field[x_{2m}])^i)$ and $\pis (\rtp(\field[x_{2m}])^i)$  are concentrated in odd degrees for each $i$.  If $\field$ is finite then $\pi_{2r+1}(\rntc(\field[x_{2m}])^i)$ and $\pi_{2r+1}(\rtp(\field[x_{2m}])^i)$ are also finite for each $r$ and $i$. 
 \item \label{item 2 frobenius is iso}  If $p \nmid i$, then $\rtp(\field[x_{2m}])^{2i}\simeq 0$.
 \item \label{item 2half frobenius is iso}
 The assembly maps (from \autoref{rema passage to witt vector coefficients for formal}):
\[\sph_{W(\field)} \otimes \rntc(\fp[x_{2m}]) \xrightarrow{\simeq} \rntc(\field[x_{2m}]) \textup{\ \ and \ } \sph_{W(\field)} \otimes \rtp(\fp[x_{2m}]) \xrightarrow{\simeq} \rtp(\field[x_{2m}])\]
 are equivalences. 
 \item\label{item 3 frobenius is iso} The twisted Frobenius map
 \[\pi_k \varphi \co \pi_k\rntc(\field[x_{2m}])^{2i} \to \pi_k\rtp(\field[x_{2m}])^{2ip} \]
 is an isomorphism for every $k \geq 2i+1$. 
 \item \label{item 4 frobenius is iso}  There are isomorphisms \[\pi_{2r+1}\rtc(\field[x_{2m}]) \cong \mathbb{W}_{\floor*{\frac{r}{m}}}(\field) \textup{\ and \ } \pi_{2r}\rtc(\field[x_{2m}]) = 0.\]
\end{enumerate}
\end{thm}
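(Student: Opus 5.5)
Our plan is to reduce everything to the case $m=1$, where $\rthh$, $\rntc$, $\rtp$ and $\rtc$ of $\field[x_{2,1}]$ are known from \cite{bayindir2020kthryofthh} (and \cite{land2023kthrypushouts}), and then transport along the root adjunction of \autoref{prop root adjunction in thh}. Concretely, $\pi_*\rthh(\field[x_{2}])$ splits by weight: in weight $j\geq 1$ it is $\THH(\field)\otimes (\Sigma^{2j} \field[S^1/C_j]_+)$ up to the standard identification (the cyclic bar construction on the free monoid), so $\rthh(\field[x_2])^{j}\simeq \THH(\field)\otimes \Sigma^{2j}(S^1/C_j)_+$ with $S^1$ acting through $S^1/C_j$. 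Taking $(-)^{hS^1}$ and $(-)^{tS^1}$ weightwise gives explicit answers: for $p\nmid j$ the weight-$j$ piece of $\rtp$ vanishes (the $C_j$-orbit is free with $p$-torsion-free stabilizer after Tate), and in general one gets $W_{v_p(j)+1}(\field)$-type groups in odd degrees $\geq 2j+1$ for $\rntc$ and in all odd degrees for $\rtp$. These are the inputs I take from the $m=1$ computation.

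Step 1 (items \eqref{item 1 frobenius is iso}, \eqref{item 2 frobenius is iso}). By \autoref{prop root adjunction in thh}\eqref{it:twisted-cyclotomic-factorization1}, $\rthh(\field[x_{2m,m}])^{i}\simeq \rthh(\field[x_{2,1}])^{i}$ if $m\mid i$ and $0$ otherwise, compatibly with $S^1$-actions; regrading from weight $m$ to weight $2m$ (i.e.\ the weight $i$ of $\field[x_{2m}]$ corresponds to weight $i/2$ in $\field[x_{2,1}]$) just relabels. Since $(-)^{hS^1}$ and $(-)^{tS^1}$ are applied levelwise, each weight piece of $\rntc(\field[x_{2m}])$ and $\rtp(\field[x_{2m}])$ is a weight piece of the $m=1$ case, so oddness, finiteness for finite $\field$, and the vanishing of $\rtp^{2i}$ for $p\nmid i$ all follow from the $m=1$ computation. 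Step 2 (item \eqref{item 2half frobenius is iso}). Using \eqref{eq for relthh taking witt out in the formal dga}, each weight piece $\rthh(\field[x_{2m}])^{i}$ is $\sph_{W(\field)}\otimes$ a bounded below $p$-complete $\fp$-module-type spectrum; since $\sph_{W(\field)}$ is a $p$-completed filtered colimit of finite free $\sph_p$-modules (étale over $\sph_p$), tensoring commutes with $(-)^{hS^1}$ and $(-)^{tS^1}$ weightwise after $p$-completion, and one checks on homotopy ($W(\field)\otimes_{\zp}$ of the $\fp$ answer) that the assembly is an isomorphism. Here finiteness/bounded-belowness in each weight is what makes the limit interchange valid.

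Step 3 (item \eqref{item 3 frobenius is iso}). The twisted Frobenius $\varphi_p$ sends weight $i$ to weight $ip$, and since $p\nmid m$, $m\mid i \iff m\mid ip$; so by \autoref{prop root adjunction in thh} (compatibility with twisted cyclotomic structure) $\varphi$ on $\field[x_{2m}]$ in weight $2i$ is literally the $m=1$ Frobenius in the corresponding weight. The $m=1$ statement that $\varphi$ is an isomorphism in degrees $\geq 2i+1$ (Segal-conjecture-type statement for $\THH(\field)$, i.e.\ $\varphi\colon \tau_{\geq 0}\THH(\field)^{hS^1}\to \THH(\field)^{tS^1}$ iso in nonnegative degrees, shifted by $2j$) then gives the claim. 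This step is where the hypothesis $p\nmid m$ is essential.

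Step 4 (item \eqref{item 4 frobenius is iso}). By \autoref{prop root adjunction in thh}\eqref{it:twisted-cyclotomic-factorization2}, $\rtc(\field[x_{2m}])$ is the summand of $\rtc(\field[x_{2}])$ consisting of the Frobenius-stable set of weights divisible by $m$ (closed under multiplication by $p$ and its orbit structure). Writing $\rtc$ as the fiber of $\varphi-\can$ on $\bigoplus_i \rntc^{i}\to\prod_i\rtp^{i}$ weightwise, and using Step 3 together with the vanishing of $\rtp^{2i}$ for $p\nmid i$, the equalizer computation proceeds exactly as in \cite{bayindir2020kthryofthh}: $\varphi-\can$ is surjective (giving $\pi_{2r}=0$) and the kernel in degree $2r+1$ is $\prod_{j}W_{s_j}(\field)$ over $p$-adic chains $j p^{k}\le r$ with $m\mid j$, $p\nmid j$. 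The main obstacle is this bookkeeping: identifying $\prod_{p\nmid j,\, m\mid j}W_{\lfloor \log_p(r/j)\rfloor+1}(\field)$ with $\mathbb{W}_{\lfloor r/m\rfloor}(\field)$, which I would do by substituting $j=mj'$ (using $p\nmid m$) to get $\prod_{p\nmid j'}W_{\lfloor\log_p(\lfloor r/m\rfloor/j')\rfloor+1}(\field)$, the standard decomposition of big Witt vectors of length $\lfloor r/m\rfloor$ over a perfect $\field_{(p)}$-algebra.
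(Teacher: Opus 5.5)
Your strategy is the paper's. You regrade along $L_2$ and use \autoref{prop root adjunction in thh} to match each weight piece of $\rntc$, $\rtp$ and of the twisted Frobenius with a piece for $\field[x_{2,1}]$. You then import \cite[Proposition~6.3, Corollary~5.16]{bayindir2020kthryofthh}, split off the weights divisible by $m$ as a cyclotomic factor, reindex $j=mj'$, and apply Hesselholt's decomposition of $\mathbb{W}_{\lfloor r/m\rfloor}(\field)$.

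Step~2, however, rests on a false principle. $\sph_{W(\field)}\otimes-$ does not commute with $(-)^{hS^1}$, even after $p$-completion and even for bounded below spectra with finite homotopy groups. Take $X=\bigoplus_{k\geq 0}\Sigma^{2k}\fp$ with trivial action and $\field$ infinite. The assembly map on $\pi_0$ is then the non-surjective inclusion $\field\otimes_{\fp}\prod_k\fp\to\prod_k\field$, and its source is already $p$-complete. So neither bounded-belowness nor finiteness of $\pi_*\rthh(\fp[x_{2m}])^i$ gives the interchange. Observing that both sides are abstractly $W(\field)\otimes_{\mathbb{Z}}$ of the $\fp$-answer also does not show that the assembly map is an isomorphism.

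What does work is finiteness of the output. Since $\pi_k\rntc(\fp[x_{2m}])^i$ and $\pi_k\rtp(\fp[x_{2m}])^i$ are finite (item \eqref{item 1 frobenius is iso} over $\fp$), only finitely many filtrations of the homotopy fixed point (resp.\ Tate) spectral sequence contribute in each total degree. Tensoring the defining towers with $\sph_{W(\field)}$ gives, by flatness, spectral sequences with pages $W(\field)\otimes_{\mathbb{Z}}E_r$. These have the same finiteness and strong convergence, and comparing the resulting finite filtrations shows the assembly map is an isomorphism. The paper instead deduces item \eqref{item 2half frobenius is iso} directly from \cite[Proposition~6.3]{bayindir2020kthryofthh} via the regrading and root adjunction equivalences.

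Second, in Step~4 the source of $\varphi-\can$ must be $\prod_i\rntc^i$, not $\bigoplus_i\rntc^i$. The sum $\bigoplus_i\rthh^i$ is a product because connectivity grows with weight, and $(-)^{hS^1}$ commutes with products. But for $p\mid j$, the piece $\rntc(\field[x_{2,1}])^j$ agrees below degree $2j$ with $\rtp(\field[x_{2,1}])^j$, which is nonzero and $2$-periodic. Hence $\rntc(\field[x_{2,1}])^j$ is unbounded below, contrary to your description of the $m=1$ input, and the sum is not the product. With a sum as source, $\varphi-\can$ cannot hit elements of $\prod_v\pi_{2r+1}\rtp(\field[x_{2,1}])^{p^vj}$ with infinitely many nonzero coordinates, so $\pi_{2r}=0$ would fail. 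The equalizer computation of \cite{bayindir2020kthryofthh} that you defer to is carried out in the product, as in the paper.
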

\begin{proof}
As $\field[x_{2m}]$ is a free $\bE_1$-algebra in $\z$-graded spectra, we observe that there is an equivalence of $\z$-graded $\bE_1$-algebras 
\begin{equation}\label{eq grading shift to be compatible with bm}
L_2\field[x_{2m,m}] \simeq \field[x_{2m}],
\end{equation}
see \autoref{rema rm and lm}.
Since $L_2$ is a symmetric monoidal left adjoint, it commutes with $\thh$ and also since limits and colimits are computed levelwise in $\z$-graded spectra, it also commutes with $\ntc$ and $\tp$; in particular, 
\begin{equation}\label{eq ntc and tp for root adjunction}\rntc(\field[x_{2m}])^{2i} \simeq \rntc(\field[x_{2m,m}])^i \textup{\ \ and \ } \rtp(\field[x_{2m}])^{2i} \simeq \rtp(\field[x_{2m,m}])^i \end{equation}
with $\rntc(\field[x_{2m}])^{2i+1}\simeq \rtp(\field[x_{2m}])^{2i+1}\simeq 0$. Again, since $-^{hS^1}$ and $-^{tS^1}$ are applied levelwise in the graded setting, we deduce from \autoref{prop root adjunction in thh} \eqref{it:twisted-cyclotomic-factorization1} that there are equivalences of $\z$-graded spectra 
\begin{equation}\label{eq root adjunction equiv on ntc and ntp}
\rntc(\field[x_{2m,m}]) \simeq L_mR_m \rntc(\field[x_{2,1}])
\textup{\ \ and \ } \rtp(\field[x_{2m,m}]) \simeq L_mR_m \rtp(\field[x_{2,1}]).
\end{equation}
With this in mind, \eqref{item 1 frobenius is iso} in the statement of the theorem follows from \eqref{eq ntc and tp for root adjunction} and \cite[Proposition 6.3]{bayindir2020kthryofthh}, which implies that $\pis \rntc(\field[x_{2,1}])^i$ and $\pis \rtp(\field[x_{2,1}])^i$ are concentrated in odd degrees and that if  $\field$ is finite then they are degreewise finite. 
This also proves \eqref{item 2 frobenius is iso} since \cite[Proposition 6.3]{bayindir2020kthryofthh} also implies that $\rtp(\field[x_{2,1}])^i\simeq 0$ whenever $p\nmid i$. This finishes the proof of  \eqref{item 1 frobenius is iso} and \eqref{item 2 frobenius is iso}. Similarly, \eqref{item 2half frobenius is iso} follows by \cite[Proposition 6.3]{bayindir2020kthryofthh}  using  \eqref{eq ntc and tp for root adjunction} and \eqref{eq root adjunction equiv on ntc and ntp}.

By \cite[Lemma A.14]{antieau2020beilinson}, $L_2$ not only commutes with $\thh$, but it also respects the twisted cyclotomic structure on $\thh$. Therefore, to prove \eqref{item 3 frobenius is iso}, it suffices to show that the twisted Frobenius map 
\begin{equation}\label{eq regrading on frobenius}
\pi_k \varphi\co \pi_k\rntc(\field[x_{2m,m}])^{i} \to \pi_k\rtp(\field[x_{2m,m}])^{pi}
\end{equation}
is an isomorphism for $k\geq 2i+1$ due to \eqref{eq grading shift to be compatible with bm}. Using \autoref{prop root adjunction in thh} \eqref{it:twisted-cyclotomic-factorization1}, we deduce that the equivalences in \eqref{eq root adjunction equiv on ntc and ntp} respect the Frobenius. Therefore, the statement on \eqref{eq regrading on frobenius} boils down to the same statement for $\field[x_{2,1}]$ which is the content of  \cite[Corollary 5.16]{bayindir2020kthryofthh}. This finishes the proof of \eqref{item 3 frobenius is iso}.

    To prove \eqref{item 4 frobenius is iso}, we begin by noting that there is an equivalence of cyclotomic spectra
     \begin{equation}\label{eq cyclotomic splitting of thh from root adjunction}
        D\rthh(\field[x_{2,1}]) \simeq \Big( \bigoplus_{\substack{i\in \z \\ m \mid i }} \rthh(\field[x_{2,1}])^{i} \Big) \prod  \Big(\bigoplus_{\substack{ i\in \z \\m \nmid i }} \rthh(\field[x_{2,1}])^{i} \Big) 
        \end{equation}
      as the Frobenius multiplies the weight by $p$ and $p \nmid m$; i.e.\ Frobenius respects the product splitting above. As a cyclotomic spectrum, the left hand factor is given by
      \begin{equation}\label{eq root adj thh inclusion}
      D \rthh(\field[x_{2m,m}]) \xrightarrow{\simeq} \bigoplus_{ \substack{i \in \z \\ m \mid i}} \rthh(\field[x_{2,1}])^i \,,
      \end{equation}
see~\autoref{prop root adjunction in thh} \eqref{it:twisted-cyclotomic-factorization2}.

      From \eqref{eq cyclotomic splitting of thh from root adjunction} and \eqref{eq root adj thh inclusion}, we deduce that  $\rtc(\field[x_{2m}])$ is a factor of $\rtc(\field[x_{2}])$\footnote{Recall that for a $\z$-graded $\bE_1$-ring $B$, we are writing $\rtc(B)$ for $\rtc(DB)$, in particular there is an equivalence $\rtc(\field[x_{2m,m}]) \simeq \rtc(\field[x_{2m}])$.}. Therefore, the statement $\pi_{2r} \rtc(\field[x_{2m}]) = 0$ in \eqref{item 4 frobenius is iso} follows by  \cite[Theorem 1.2]{bayindir2020kthryofthh} (cf.~\cite[Example~4.29]{land2023kthrypushouts} and~\cite{Hes07}). What remains is to explicitly identify the factor of $\pi_{2r+1}\rtc(\field[x_{2}])$ corresponding to $\pi_{2r+1}\rtc(\field[x_{2m}])$.
      
      The coproducts in \eqref{eq cyclotomic splitting of thh from root adjunction}  are also the products of the underlying $S^1$-spectra as connectivity strictly increases with weight; as a result, the authors of \cite{bayindir2020kthryofthh} observe that these products commute with $-^{hS^1}$ and $-^{tS^1}$ and  that $\rtc(\field[x_{2}])$ is given as the fiber of 
      \[\prod_{\substack{1 \leq m'\\ p \nmid m'}} \prod_{0 \leq v} (\rthh(\field[x_{2,1}])^{p^vm'})^{hS^1} \xrightarrow{\varphi - \can} \prod_{\substack{1 \leq m'\\ p \nmid m'}} \prod_{0 \leq v} (\rthh(\field[x_{2,1}])^{p^vm'})^{tS^1}\]
      where $\varphi$  multiplies the weight by $p$ whereas $\can$ respects the weight; see the discussion after the proof of Proposition 6.3 in \cite{bayindir2020kthryofthh}. This gives a splitting of $\rtc(\field[x_{2}])$ as 
      \[\rtc(\field[x_{2}]) \simeq \prod_{\substack{1 \leq m'\\ p \nmid m'}} \textup{fib}\big(  \prod_{0 \leq v} (\rthh(\field[x_{2,1}])^{p^vm'})^{hS^1} \xrightarrow{\varphi - \can}  \prod_{0 \leq v} (\rthh(\field[x_{2,1}])^{p^vm'})^{tS^1}\big).\]
  
        What we deduce from   \eqref{eq cyclotomic splitting of thh from root adjunction} and \eqref{eq root adj thh inclusion} is that $\rtc(\field[x_{2m}])$ is given by the contribution of the factors for which $m'$ is divisible by $m$. The homotopy group $\pi_{2r+1}$ of the factor corresponding to $m'$ is computed at the end of Section 6 in~\cite{bayindir2020kthryofthh} as $W_s(\field)$ where $s$  is the smallest nonnegative integer $v$ such that $r<p^vm'$. This gives the first isomorphism in the composite 
       \[\pi_{2r+1}\rtc(\field[x_{2m}])\cong \prod_{\substack{1 \leq m' \leq r \\ p\nmid m',\  m \mid m'}} W_s(\field)\cong  \prod_{\substack{1 \leq k \leq \floor*{r/m} \\ p\nmid k}} W_{s'}(\field) \cong \mathbb{W}_{\floor*{\frac{r}{m}}}(\field).\]
    Here, $s'$ is the smallest nonnegative integer $v'$ such that $r<p^{v'}mk$, i.e.\ $\floor*{r/m}<p^{v'}k$. The second isomorphism above is obtained by running the product over $k$ such that $1\leq mk\leq r$ to remove the condition $m \mid m'$. The last isomorphism is \cite[Example 1.11]{hesselholt2015bigderhamwitt}.    
\end{proof}

\begin{rema}
The result above was already known to Land, Tamme and Speirs and work of the second author, Land, Tamme and Speirs will give an independent proof of the result above and we expect that it will generalize it to allow the case $p\mid m$. 
\end{rema}

\begin{rema}\label{rema trivial in odd weights}
In addition to \autoref{theo tc ntc tp of formal dgas} \eqref{item 2 frobenius is iso}, the graded spectra $\rthh(\field[x_{2m}])$, $\rntc(\field[x_{2m}])$ and $\rtp(\field[x_{2m}])$ are trivial in odd weights. This follows since the same is true for $\field[x_{2m}]$, which implies it for $\field[x_{2m}]^{\otimes_{\sph} k}$ for each $k\geq 0$ and hence for $\rthh$, $\rtc$ and $\rtp$ as limits and colimits are computed levelwise in graded spectra.
\end{rema}

\section{The May filtration on \texorpdfstring{$\TC^{-}$}{topological negative cyclic homology} and \texorpdfstring{$\TP$}{topological periodic cyclic homology}}\label{Whitehead}
In this section, we construct a May spectral sequence abutting to topological negative cyclic homology and topological periodic cyclic homology associated to an $\bE_1$-algebra in filtered spectra. In the case of the Whitehead filtration, we use this to prove vanishing in even degrees for relative topological negative cyclic homology and topological periodic cyclic homology of $\bE_1$-rings whose homotopy ring is $\field[x_{2m}]$ where $x_{2m}$ has degree $2m$ and $p\nmid m>0$.

\subsection{May filtrations}\label{sec:may-filtrations}
As mentioned before, $\THH$ can be defined in any presentably symmetric monoidal $\infty$-category, we consider: 
\[ \THH : \Alg(\Tow(\Sp))  \to \Tow(\Sp)^{BS^1}\,, \qquad
\text{ and } \qquad
\THH : \Alg(\Gr(\Sp))  \to \Gr(\Sp)^{BS^1}\,, \]
see~\cite[Definition~4.6,~Proposition 4.8, Definition~5.8]{Kee25} or~\cite[Appendix A]{antieau2020beilinson} for details. In this way, $\ntc$, $\tp$ and $(\thh(-)^{tC_p})^{hS^1}$ upgrade to functors of the form $ \Alg(\Tow(\Sp))  \to \Tow(\Sp)$ and  $ \Alg(\Gr(\Sp))  \to \Gr(\Sp)$. 

Let 
\begin{equation}\label{monoidal-Whitehead}
\tau_{\ge \bullet} : \mathrm{Alg}(\mathrm{Sp})\longrightarrow  \mathrm{Alg}(\mathrm{Fil}(\mathrm{Sp})) 
\end{equation}
denote the multiplicative Whitehead filtration from~\cite[1.4.3.4,7.4.3.11]{lurie2016higher} and \cite[Proposition \RomanNumeralCaps{2}.1.26]{hedenlund2020multiplicative} for example. This provides our main example of an object in $\Alg(\Tow(\Sp))$, namely $\tau_{\ge \bullet}A$ where $A$ is a connective $\mathbb{E}_1$-ring.

\begin{rema}\label{some notations for rel} 
For an  $R^\bullet \in \Alg(\Tow(\Sp))$, we construct a map in $\Alg(\Tow(\Sp))$
\[R^\bullet \to \mathrm{gr}^0R^{\bullet}\]
where (abusing notation)  $\mathrm{gr}^0R^{\bullet}$ is an object of  $\Alg(\Tow(\Sp))$  satisfying $(\mathrm{gr}^0R^{\bullet})^0=\mathrm{gr}^0R^{\bullet}$ and $(\mathrm{gr}^0R^{\bullet})^i=0$ for $i>0$, and the map above induces an equivalence after applying $\grzero$. By \cite[Lemma B.0.6 (\romannumeral 4)]{hahn2020redshift}, there is an $\bE_1$-algebra map of $\N$-graded  spectra $\gr R^\bullet \to (\gr R^\bullet)^0$ that is an equivalence in weight $0$ and the target is trivial elsewhere. Let $\zeta$ denote the right adjoint to $\gr$ described in  \cite[Remark 3.2.11]{raksit2020hochschild}. Then the desired map can be given by the  composite 
\[R^\bullet \to \zeta (\gr R^\bullet ) \to \zeta (\gr R^\bullet)^0 \]
where the first map above is the unit of the monoidal adjunction $\gr \dashv \zeta$. Note that  we sometimes abuse notation and write $\textup{gr}^0 R^\bullet$ for the $\bE_1$-ring corresponding to the $\N$-graded $\bE_1$-ring $\textup{gr}^0 R^\bullet$ described above. 
\end{rema}

\begin{nota}\label{nota relative versions for filtered rings}
Let 
$F\in \{\thh,\ntc,\tp,\tc^{+}, (\thh^{tC_p})^{hS^1},\TC\}$. 
Given a map of $\mathbb{E}_1$-rings $A\to B$, we write 
\[ F(A,B):=\mathrm{fib}\big(F(A)\to F(B) \big)\,. \]
Given $R^{\bullet} \in \Alg(\Tow(\Sp))$, we write 
\[ F_{\textup{rel}}(R^0):=F(R^0,\grzero R^{\bullet})\]
where $\mathrm{gr}^0R$ is regarded as an $\mathbb{E}_1$-ring and the filtration $R^{\bullet}$ is implicit. If $A$ is an $\mathbb{E}_1$-ring, we will implicitly equip $A$ with the Whitehead filtration $\wtw A$ so that 
\[ F_{\textup{rel}}(A):=F(A,\pi_0A) \]
unless a different filtration is explicitly stated. 
 
Now we restrict to 
$F\in \{\thh,\ntc,\tp,\tc^{+}, (\thh^{tC_p})^{hS^1}\}$.
 Given $R^{\bullet}\in \Alg(\Tow(\Sp))$, we write \[F_{\textup{rel}}(R^{\bullet}):=\mathrm{fib}\big(F(R^{\bullet})\to F(\mathrm{gr}^0R^{\bullet})\big)\]  
 where $\mathrm{gr}^0R^{\bullet}$ above is regarded as an $\mathbb{E}_1$-ring in filtered spectra as in \autoref{some notations for rel}. 
Given $R^*\in \Alg(\Gr(\Sp))$, we write $F_{\textup{rel}}(R^*):=\mathrm{fib}(F(R^*)\to F(R^0))$ using $R^* \to R^0$ from \cite[Lemma B.0.6 (\romannumeral 4)]{hahn2020redshift}.

In each case above, we refer to $F_{\textup{rel}}$ as the relative counterpart to $F$.
\end{nota}

\begin{rema}
Note that in the situation of \autoref{nota relative versions for filtered rings}, $F_{\rel}(R^0)$ depends on the tower $R^\bullet$ and not only on $R^0$.
\end{rema}

We now discuss sufficient conditions on $\thh$ of an $\mathbb{E}_1$-algebra in filtered spectra for the May filtration to converge. 
\begin{defi}\label{def:connectivit-assumption}
We say $\mathrm{conn}(X)=n$ if $n$ is the largest integer such that the canonical map $\tau_{\ge n}X\to X$ is an equivalence. We say that $R^{\bullet}\in \Alg(\Tow(\Sp))$ is $\thh$-convergent if we have $\mathrm{conn}(\thh(R^{\bullet})^t)=s(t)$ where $s(t)\to \infty$ as $t\to \infty$. We will say that $R^{\bullet}$ is strongly  $\thh$-convergent if  $\mathrm{conn}(\thh(R^{\bullet})^t)\geq t$ and $\mathrm{conn}(\grzero R^\bullet)\geq 0$. 

\end{defi}

With this definition, $\grzero R^\bullet\in \mathrm{Alg}(\Tow(\Sp))$ is THH-convergent for every $R^\bullet \in \Tow(\Sp)$ simply because $\grzero R^\bullet$ is trivial in positive  weights and therefore so is $\THH(\grzero R^\bullet)$, see~\autoref{some notations for rel}. 

\begin{exam}\label{ex-thh-convergent}
The same argument as in~\cite[Lemma~3.4.7]{AKS18} shows that when $A$ is a connective $\mathbb{E}_1$-ring, then $\tau_{\ge \bullet}A$ is strongly $\thh$-convergent.   Explicitly, note that the weight $t$ component of $(\wtw A)^{\otimes k}$ is a colimit of spectra whose connectivity is at least $t$ \cite[Notation 3.1.2]{lurie2015rotation} and hence the weight $t$ component of $(\wtw A)^{\otimes k}$ has connectivity at least $t$. The claim follows by noting that $\thh(\wtw A)^{t}$ is a colimit over the weight $t$ components of $(\wtw A)^k$.

\end{exam}

\begin{cons}\label{example-p-adic-filtration}
The $p$-adic filtration on $W(\field)/p^n$ gives a commutative monoid in the $1$-category of $\N$-filtered abelian groups; as a result, it gives an $\bE_\infty$-algebra in the corresponding $\infty$-category $\Tow(\mathcal{A}b)$. By the lax symmetric monoidal inclusion of the heart $\mathcal{A}b \to \Sp$, there is a lax symmetric monoidal functor $\Tow(\mathcal{A}b) \to \Tow(\Sp)$  \cite[Corollary 3.7]{nikolaus2016stablemultyoneda}. This gives an $\N$-filtered $\bE_\infty$-algebra in $\Tow(\Sp)$ realizing $W(\field)/p^n$ in the colimit.
\end{cons}

\begin{nota}\label{nota  p adic filtration on truncated witt}
We let $S_{\field}^{\bullet} \in \Alg(\Tow(\Sp))$ denote the $p$-adic filtration on $S_{\field}^0 := W(\field)/p^n$ from \autoref{example-p-adic-filtration} with $\gr S_{\field}^\bullet = \field[t]/t^n$ where $t$ is of weight $1$ and degree $0$. 
\end{nota}

\begin{exam}\label{exam p adic filtration is thh convergent}
The $p$-adic filtration $S_{\field}^{\bullet}$ on $W(\field)/p^n$ is THH-convergent. Since the $p$-adic filtration $S_{\field}^{\bullet}$ on $W(\field)/p^n$ is trivial for $\bullet> n-1$ we deduce that $(S_{\field}^{\bullet})^{\otimes k}$ is trivial in weight > $k(n-1)$. Therefore for a given weight $t$, non-trivial contributions to $\thh(S_{\field}^{\bullet})^t$ can only come from simplicial level $k$ (in the cyclic bar construction) such that $k\geq t/(n-1)$. Since simplicial level $k$ contributes to homotopy degree $k$ and above, we deduce that $\textup{conn}(\thh(S_{\field}^{\bullet})^t) \geq t/(n-1)$ and hence $S_{\field}^{\bullet}$ is THH-convergent.  
\end{exam}

\begin{rema}
In applications, we will only need the filtrations $\wtw A$ (for a connective $\bE_1$-ring $A$) and  $S_{\field}^{\bullet}$ from \autoref{nota  p adic filtration on truncated witt}. 
\end{rema}

\begin{prop}\label{lemm conditional convergence of filtrations}
Let  $R^{\bullet}\in \Alg(\Tow(\Sp))$ be $\thh$-convergent  and let 
  \[
   F\in \{\thh,\ntc,\tp,\tc^+,(\thh^{tC_p})^{hS^1}\} \,. \] 
 Then  
  \[ \lim F(R^{\bullet})\simeq 0 \qquad \text{ and } 
   \qquad \colim F(R^\bullet) = F(R^0), \]
    moreover,  these also hold for the relative counterparts of $F$. 
\end{prop}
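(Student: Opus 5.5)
The plan is to treat $\thh$ first and then transport the two conclusions through the constructions $(-)^{hS^1}$, $(-)^{tS^1}$, $(-)_{hS^1}$ and $((-)^{tC_p})^{hS^1}$, using the connectivity hypothesis. For the colimit statement there is nothing to prove: since $R^\bullet\in\Tow(\Sp)$ is indexed by $\mathbb{N}$, the colimit is realized as the weight $0$ component. Because $\thh$ and the other functors on $\Tow(\Sp)$ are computed levelwise in the weight (the $S^1$-action is on the filtered object, and $-^{hS^1}$, $-^{tS^1}$, $-_{hS^1}$, $-^{tC_p}$ are applied levelwise), we get $\colim F(R^\bullet)=F(R^\bullet)^0$. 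It remains to identify $\thh(R^\bullet)^0$ with $\thh(R^0)$. For this I would use that evaluation at $0$, $\Tow(\Sp)\to\Sp$, is the colimit functor. With Day convolution it is symmetric monoidal and colimit preserving, so it commutes with the cyclic bar construction and hence with $\thh$, $S^1$-equivariantly. Applying the levelwise constructions then gives $F(R^\bullet)^0\simeq F(R^0)$.

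For the limit statement, start with $F=\thh$. By $\thh$-convergence, $\mathrm{conn}(\thh(R^\bullet)^t)=s(t)\to\infty$. Hence $\lim_t \thh(R^\bullet)^t\simeq 0$: its $\pi_k$ is computed by a Milnor sequence involving $\lim$ and $\lim^1$ of $\pi_k$ and $\pi_{k+1}$, and these groups vanish for $t\gg 0$. The same connectivity argument handles $F=\tc^+$, since homotopy orbits preserve connectivity, so $\mathrm{conn}(\thh(R^\bullet)^t_{hS^1})\ge s(t)$ and the limit vanishes.

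For $\ntc$ I would argue as follows. Homotopy fixed points are a limit, so they commute with $\lim_t$, and $\lim_t \thh(R^\bullet)^t=0$ gives $\lim_t(\thh(R^\bullet)^t)^{hS^1}=(\lim_t\thh(R^\bullet)^t)^{hS^1}=0$. For $\tp$ I would use the cofiber sequence $\Sigma X_{hS^1}\to X^{hS^1}\to X^{tS^1}$ levelwise. Limits preserve cofiber sequences in spectra, and both left terms have vanishing limit, so $\lim\tp(R^\bullet)=0$. For $(\thh^{tC_p})^{hS^1}$ I would first show $\lim_t \thh(R^\bullet)^{t,tC_p}=0$, using the cofiber sequence $X_{hC_p}\to X^{hC_p}\to X^{tC_p}$. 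Here $X_{hC_p}$ is $s(t)$-connective, and $(-)^{hC_p}$ commutes with limits. Then apply $(-)^{hS^1}$, which also commutes with limits.

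The relative versions follow formally. $F_{\rel}(R^\bullet)$ is the fiber of $F(R^\bullet)\to F(\grzero R^\bullet)$, and fibers commute with limits and with colimits in stable categories. Moreover, $\grzero R^\bullet$ is $\thh$-convergent, as noted after \autoref{def:connectivit-assumption}, so both terms satisfy the two statements. The main point to check carefully is the step where evaluation at weight $0$ commutes with the $S^1$-equivariant cyclic bar construction. A smaller worry is that $\mathrm{conn}$ might be $-\infty$ for small $t$; this is harmless, since only $t\gg0$ matters for the limit.
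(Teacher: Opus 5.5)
Your proposal is correct and follows essentially the same route as the paper. You identify $\colim$ with evaluation at $0$, a symmetric monoidal functor that commutes with $\thh$ and with the levelwise $S^1$- and $C_p$-constructions. You then get vanishing of limits from the connectivity hypothesis for $\thh$ and $\tc^+$, from limits commuting with homotopy fixed points for $\ntc$, and from the norm cofiber sequences for $\tp$ and $(\thh^{tC_p})^{hS^1}$, and you handle the relative case by passing to fibers using that $\grzero R^\bullet$ is $\thh$-convergent; the only difference is that the paper phrases the colimit step via $\mathrm{ev}_0$ being both a left and a right adjoint rather than via levelwise computation.
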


\begin{proof}
The functor $\colim \co \Tow(\Sp)\to \Sp$ is a symmetric monoidal left adjoint (as it is left Kan extension along $\N\to 0$) \cite[Corollary~3.7]{nikolaus2016stablemultyoneda} and hence it commutes with $\thh$. Moreover, the functor $\mathrm{colim}: \mathrm{Tow}(\Sp)\to \Sp$ is equivalent to the evaluation functor $\mathrm{ev}_0: \mathrm{Tow}(\Sp)\to \Sp$ that restricts along the canonical inclusion $0\to \N$ it is also a right adjoint; it therefore commutes with fiber sequences as well as homotopy fixed points. 
This proves the statements on colimits using the fiber sequences 
\begin{align}\label{norm-restriction}
    \Sigma \TC^+\to \TC^{-}\to \TP \,.
\end{align}
and 
\begin{align}\label{norm-restriction-cp}
    \thh_{hC_p}\to \thh^{hC_p}\to \thh^{tC_p} 
\end{align} 
and the fact that colimits commute with homotopy orbits. 

Now we prove $\lim F(R^\bullet)\simeq 0$. It is clear from our connectivity hypothesis that $\lim \thh(R^{\bullet})\simeq 0$ and since limits commute with homotopy fixed points it is also clear that $\lim \ntc(R^{\bullet})\simeq 0$. We also know that $\lim \tc^+(R^{\bullet})\simeq 0$ since $S^1$-homotopy orbits preserve connectivity and for $\tp$ it follows from the fiber sequence \eqref{norm-restriction}.
For $(\thh^{tC_p})^{hS^1}$, we first use a similar argument as above using the fiber sequence \eqref{norm-restriction-cp}

 In particular, this holds for $\mathrm{gr}^0R^{\bullet}\in \Alg(\Tow(\Sp))$ as in \autoref{some notations for rel} which is $\thh$-convergent as it is concentrated in weight $0$ and consequently for $F_{\rel}$ as well by passing to fibers. 
\end{proof}

\begin{rema}
Following \autoref{nota relative versions for filtered rings}, the colimit of $F_{\rel} (S_{\field}^\bullet)$ is realized as $F(W(\field)/p^n,\field)$.
\end{rema}

To identify the associated graded of these May filtrations, we need the following folklore result. 

\begin{prop}\label{prop assoc graded is the formal dga}
Let $A$ be a connective $\bE_1$-ring, then $\textup{gr}^*(\tau_{\geq \bullet}A) \in \textup{Alg}(\Gr(\Sp))$ is equivalent to the formal DGA corresponding to the graded ring $\pis A$. 
\end{prop}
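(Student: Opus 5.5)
The plan is to identify $\textup{gr}^*(\tau_{\geq \bullet}A)$ as a graded $\bE_1$-ring whose weight $w$ piece is $\Sigma^w H\pi_wA$, and then show that such a graded ring is determined by its graded ring of homotopy.

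First I would describe the associated graded concretely. Since $\gr$ is symmetric monoidal, $\gr(\tau_{\geq\bullet}A)$ lies in $\Alg(\Gr(\Sp))$. Its weight $w$ component is $\tau_{\geq w}A/\tau_{\geq w+1}A\simeq \Sigma^w H\pi_w A$. Consequently it lies in the full subcategory $\Gr(\Sp)^{\heartsuit\text{-diag}}$ of graded spectra $X^*$ with each $X^w$ concentrated in homotopical degree $w$. Next I would note that this subcategory is closed under Day convolution and contains the unit. Indeed,
\[(X\otimes Y)^w=\bigoplus_{a+b=w}X^a\otimes Y^b,\]
and $\Sigma^aHM\otimes\Sigma^bHN$ is $(a+b)$-connective with $\pi_{a+b}=M\otimes N$. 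It is not, however, concentrated in a single degree. So rather than working in that subcategory directly, I would use the $t$-structure on $\Gr(\Sp)$ whose connective part consists of those $X^*$ with $X^w$ being $w$-connective. This is the \emph{diagonal} $t$-structure, which is compatible with Day convolution. Its heart is equivalent to $\N$-graded abelian groups with the symmetric monoidal structure involving the Koszul sign, via $X^*\mapsto (\pi_wX^w)_w$.

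The key step is the standard fact that for a $t$-structure compatible with a symmetric monoidal structure, $\Alg$ of the heart, with the truncated tensor product $\tau_{\le 0}(-\otimes-)$, is equivalent to the $\bE_1$-algebras in the $\infty$-category lying in the heart. This is \cite[Proposition~1.4.3.? / 7.1.3.?]{lurie2016higher}-style reasoning: $\tau_{\leq 0}$ restricted to connective objects is symmetric monoidal, and algebras in a $1$-category are just monoids. Applied here, $\gr(\tau_{\ge\bullet}A)$ is an $\bE_1$-algebra in the heart, so it is determined by the associative graded ring $\pi_*A$, with the multiplication inherited from $A$. The formal DGA associated to $\pi_*A$, namely the image of $\pi_*A$ under the lax monoidal inclusion of the heart (the Eilenberg--MacLane functor, regraded), is another algebra in the heart with the same underlying graded ring. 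The two therefore agree.

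The main obstacle is checking that the multiplication on the heart object $\gr(\tau_{\ge\bullet}A)$ induces exactly the ring structure of $\pi_*A$, signs included. I would verify this by comparing with the multiplicative structure of the Whitehead filtration, where $\tau_{\ge a}A\otimes\tau_{\ge b}A\to\tau_{\ge a+b}A$ induces the product $\pi_aA\otimes\pi_bA\to\pi_{a+b}A$ on bottom homotopy. One further point is that the formal DGA should be regarded as a $\z$-module, meaning a graded $H\z$-algebra. The heart argument gives the equivalence as $\bE_1$-rings in $\Gr(\Sp)$, which is the sense intended here.
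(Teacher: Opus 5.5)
Your proposal is correct and follows essentially the same route as the paper. Both place $\textup{gr}^*(\tau_{\geq\bullet}A)$ and the formal DGA in the heart of the diagonal $t$-structure on graded spectra (the paper uses Raksit's version on $\mathbb{Z}$-graded spectra, reached via \autoref{rema z graded spectra}), reduce via $\tau_{\leq 0}$ on connective objects to $\bE_1$-algebras in the heart, and conclude because both have underlying graded ring $\pis A$. Your check of the multiplication via $\tau_{\geq a}A\otimes\tau_{\geq b}A\to\tau_{\geq a+b}A$ simply makes explicit a point the paper asserts without comment.
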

\begin{proof}
Let $B$ denote the formal DGA in question. Following \autoref{rema z graded spectra}, it suffices to prove   $\textup{gr}^*(\tau_{\geq \bullet}A)\simeq B $ in $\textup{Alg}(\textup{Fun}(\mathbb{Z}^{\textup{ds}}, \Sp))$.

By \cite[Construction 3.3.2 (b) and Remark 3.3.3]{raksit2020hochschild}, both $\textup{gr}^*(\tau_{\geq \bullet}A)$ and $B$ lie in the heart of $\textup{Fun}(\mathbb{Z}^{\textup{ds}}, \Sp)$ under a $t$-structure that is compatible with its symmetric monoidal structure.  On the heart, this induces the usual symmetric monoidal structure on graded abelian groups (with Koszul sign rule). Applying the connective cover functor and then the $0$-truncation functor (under this $t$-structure), it suffices to show  $\textup{gr}^*(\tau_{\geq \bullet}A)\simeq B$ as $\bE_1$-algebras in the heart. Then this follows by the fact that $\pis \big(\textup{gr}^*(\tau_{\geq \bullet}A)\big) \cong \pis A$ as discrete graded rings.  
\end{proof}
Due to the proposition above, we denote $\textup{gr}^*(\tau_{\geq \bullet}A) \in \textup{Alg}(\Gr(\Sp))$ by $\pi_*A$.

\begin{prop}\label{E1page}
Let $R^{\bullet}\in \Alg(\Fil(\Sp))$. 
There are equivalences 
\begin{equation}\label{eq assoc graded of thh} \gr\thh(R^{\bullet})\simeq \thh(\gr R^{\bullet})
\end{equation}
and 
\begin{equation}\label{eq assoc graded of ntc and tp}
\gr \ntc (R^{\bullet} )\simeq \ntc(\gr R^{\bullet}) \textup{\ \ and \ } 
\gr \tp (R^{\bullet} ) \simeq \tp(\gr R^{\bullet}).
\end{equation}
These equivalences also hold for $\rthh$, $\rntc$ and $\rtp$ as well as $\tc^{+}$ and $\rtc^{+}$.
\end{prop}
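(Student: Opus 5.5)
The plan is to reduce everything to the statement for $\thh$ and then pass to $S^1$-fixed points and Tate constructions weight by weight. For \eqref{eq assoc graded of thh}, I would use that $\gr\colon \Fil(\Sp)\to \Fun(\mathbb{Z}^{\textup{ds}},\Sp)$ is a symmetric monoidal left adjoint. Indeed, it is the composite of the symmetric monoidal functor $-\otimes_{\mathbb{S}[t]}\mathbb{S}$ (in the Rees-module picture) with an equivalence. Since it is symmetric monoidal, it carries the cyclic bar construction $B^{\textup{cyc}}_\bullet(R^\bullet)=(R^\bullet)^{\otimes \bullet+1}$ to $B^{\textup{cyc}}_\bullet(\gr R^\bullet)$ compatibly with the cyclic structure maps. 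Since it is a left adjoint, it commutes with geometric realization. Together these give an $S^1$-equivariant equivalence $\gr\thh(R^\bullet)\simeq \thh(\gr R^\bullet)$, where the $S^1$-action is the one coming from the cyclic structure (cf.~\cite[Appendix A]{antieau2020beilinson}, \cite{Kee25}).

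For \eqref{eq assoc graded of ntc and tp}, I would write $\gr^w X^\bullet=\coker(X^{w+1}\to X^w)$, which is a finite colimit and hence also a finite limit in the stable setting. Homotopy fixed points $(-)^{hS^1}$ are computed levelwise in filtered spectra and are exact, so they commute with this cofiber. This gives $\gr^w\ntc(R^\bullet)\simeq (\gr^w\thh(R^\bullet))^{hS^1}\simeq \ntc(\gr R^\bullet)^w$, using \eqref{eq assoc graded of thh} and the fact that $\ntc$ of a graded object is computed levelwise. The same argument applies to $\tc^{+}=\thh_{hS^1}$, since homotopy orbits are exact and levelwise. For $\tp$, I would use the natural cofiber sequence $\Sigma\thh_{hS^1}\to\thh^{hS^1}\to\thh^{tS^1}$, taken levelwise, and the exactness of $\gr$. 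This exhibits $\gr\tp(R^\bullet)$ as the cofiber of the norm map for $\gr\thh(R^\bullet)$, which is $\tp(\gr R^\bullet)$. The one point requiring care is that the norm map commutes with the identifications. It does, because the norm is a natural transformation of exact functors applied levelwise, and the equivalence in \eqref{eq assoc graded of thh} is $S^1$-equivariant.

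For the relative versions, $\gr$ of $R^\bullet\to \grzero R^\bullet$ (as in \autoref{some notations for rel}) is the map $\gr R^\bullet\to (\gr R^\bullet)^0$ from \cite[Lemma B.0.6]{hahn2020redshift}. Taking fibers, which commute with $\gr$, and applying naturality of the above equivalences gives the claims for $\rthh$, $\rntc$, $\rtp$ and $\rtc^{+}$. The case $R^\bullet\in\Alg(\Tow(\Sp))$ follows from the $\Fil$ case via extension by constants or zeros, as in \autoref{rema z graded spectra}.

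I expect the main obstacle to be the first step. There one must identify the $S^1$-action on $\gr\thh$ with that on $\thh(\gr)$, not merely the underlying objects, in the $\infty$-categorical sense. My first attempt would be to invoke the functoriality of $\thh$ under symmetric monoidal left adjoints, as in \cite[Appendix A]{antieau2020beilinson} or \cite[Proposition 4.8]{Kee25}.
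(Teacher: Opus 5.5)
Your proposal is correct and follows essentially the same route as the paper. In both, $\gr$ is a symmetric monoidal left adjoint (the paper cites \cite[Proposition~3.2.1]{lurie2015rotation} and \cite[Lemma~3.30]{GP18} rather than the Rees picture), so it commutes with $\thh$ $S^1$-equivariantly, and the cases of $\ntc$, $\tc^{+}$, $\tp$ and the relative versions then follow from exactness of $(-)^{hS^1}$ and $(-)_{hS^1}$, the norm cofiber sequence, and the fact that $\gr$ commutes with fibers; your explicit identification of $\gr$ of $R^\bullet\to\grzero R^\bullet$ with the map of \cite[Lemma B.0.6]{hahn2020redshift} is a detail the paper leaves implicit.
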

\begin{proof}
The first part of the statement follows as in~\cite[Theorem~3.3.10]{AKS18}, the main point being that $\gr$ is a symmetric monoidal left adjoint by \cite[Proposition~3.2.1]{lurie2015rotation} and \cite[Lemma~3.30]{GP18} so it commutes with THH and one therefore obtains an equivalence of $S^1$-spectra. For $\TC^{-}$, it therefore suffices to note that we have a canonical map 
\[ 
\gr\TC^{-}(R^{\bullet})\longrightarrow \TC^{-}(\gr R^{\bullet}).
\]
and it is an equivalence since in weight $i$, $\gr\TC^{-}(R^{\bullet})$ is the cofiber of the map 
\[\TC^{-}(R^{\bullet})^{i+1}\to \TC^{-}(R^{\bullet})^{i}\] 
induced by the structure maps of the filtered object  and this cofiber commutes with homotopy fixed points. Consequently, this also implies the statement for $\TP$ using the cofiber sequence~\eqref{norm-restriction}. The case of $\tc^+$ also follows in the same way. The case of $\mathrm{gr}^0R^{\bullet}\in \Alg(\Tow(\Sp))$ in the sense of \autoref{some notations for rel} is then a special case and the last statement follows since $\gr$ commutes with fiber sequences. 
\end{proof}

We will be using the first two items of the following corollary without further mention. 
\begin{coro}\label{coro frel associated graded and strong convergence}
Let $F\in \{\thh,\ntc,\tp,\tc^{+}, (\thh^{tC_p})^{hS^1}\}$, and $R \in \Alg(\Tow(\Sp))$. We have:
\begin{enumerate}
\item \label{item 1 coro frel associated graded and strong convergence}$\gr F_{\rel}(R^\bullet) \simeq \gr F(R^\bullet)$ for $*>0$ and $\grzero F_{\rel}(R^\bullet) \simeq *$.
\item \label{item 2 coro frel associated graded and strong convergence}$F_{\rel}(R^\bullet)^i \simeq F(R^\bullet)^i$ for $i>0$ and $F_{\rel}(R^\bullet)^0 \simeq F_{\rel}(R^\bullet)^1$.
\item \label{item 3 coro frel associated graded and strong convergence}Assume that $R^\bullet$ is strongly $\thh$-convergent. Then $\mathrm{conn}(\textup{gr}^i\rthh(R^\bullet)) \geq i$ for each $i \geq 0$.
\end{enumerate}
\end{coro}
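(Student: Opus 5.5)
The plan is to argue directly from the definition of the relative counterpart as a fiber in filtered objects, together with the description of $\grzero R^\bullet$ as an object of $\Alg(\Tow(\Sp))$ from \autoref{some notations for rel}. Recall that $F_{\rel}(R^\bullet)=\fib\big(F(R^\bullet)\to F(\grzero R^\bullet)\big)$, where $\grzero R^\bullet$ has weight-$0$ part $\grzero R^\bullet$ and is zero in positive weights. The first step is to observe that $F(\grzero R^\bullet)$ is concentrated in weight $0$. For $\thh$ this holds because every tensor power in the cyclic bar construction is concentrated in weight $0$ for Day convolution of towers in which only weight $0$ is nonzero. This persists for $\ntc,\tp,\tc^+,(\thh^{tC_p})^{hS^1}$, since these are computed weightwise as limits and colimits in $\Tow(\Sp)$. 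Moreover, the map $\grzero F(R^\bullet)\to \grzero F(\grzero R^\bullet)$ is an equivalence. By \autoref{E1page}, it is identified with $F$ applied in weight $0$ to $\gr R^\bullet\to (\gr R^\bullet)^0$. The weight-$0$ part of $\thh$ of an $\N$-graded ring only sees the weight-$0$ part of the ring, so this map is an equivalence by construction in \autoref{some notations for rel}.

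For \eqref{item 1 coro frel associated graded and strong convergence}, apply $\gr$, which is exact, to the fiber sequence $F_{\rel}(R^\bullet)\to F(R^\bullet)\to F(\grzero R^\bullet)$. In positive weights the third term has vanishing associated graded, which gives the first claim. In weight $0$ the second map is an equivalence, so $\grzero F_{\rel}(R^\bullet)\simeq *$.

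For \eqref{item 2 coro frel associated graded and strong convergence}, evaluate the same fiber sequence at weight $i>0$. There $F(\grzero R^\bullet)^i=0$, so $F_{\rel}(R^\bullet)^i\simeq F(R^\bullet)^i$. Next, the cofiber of $F_{\rel}(R^\bullet)^1\to F_{\rel}(R^\bullet)^0$ is $\grzero F_{\rel}(R^\bullet)$, which vanishes by \eqref{item 1 coro frel associated graded and strong convergence}; hence $F_{\rel}(R^\bullet)^0\simeq F_{\rel}(R^\bullet)^1$.

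For \eqref{item 3 coro frel associated graded and strong convergence}, the case $i=0$ is immediate from \eqref{item 1 coro frel associated graded and strong convergence}, since the associated graded is zero. For $i>0$, we have $\textup{gr}^i\rthh(R^\bullet)\simeq \textup{gr}^i\thh(R^\bullet)=\mathrm{cofib}\big(\thh(R^\bullet)^{i+1}\to\thh(R^\bullet)^i\big)$. Strong $\thh$-convergence gives $\mathrm{conn}(\thh(R^\bullet)^{i})\geq i$ and $\mathrm{conn}(\thh(R^\bullet)^{i+1})\geq i+1$, and a cofiber of an $(i+1)$-connective spectrum mapping to an $i$-connective one is $i$-connective. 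The only point requiring care in the whole argument is the identification of weight $0$ of $F(\grzero R^\bullet)$ with weight $0$ of $F(R^\bullet)$ on associated gradeds. I would handle this via \autoref{E1page} and the fact that the weight-$0$ cyclic bar construction of a nonnegatively graded ring depends only on its weight-$0$ part.
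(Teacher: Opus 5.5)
Your proposal is correct and follows essentially the same route as the paper. Both arguments rest on the same ingredients: $F(\grzero R^\bullet)$ vanishes in positive weights, the weight-$0$ map on associated gradeds is an equivalence via \autoref{E1page} and $\thh(B^*)^0\simeq \thh(B^0)$, exactness of $\textup{gr}$ and of evaluation gives (1) and (2), and the cofiber sequence $\thh(R^\bullet)^{i+1}\to\thh(R^\bullet)^i\to \textup{gr}^i\thh(R^\bullet)$ together with strong $\thh$-convergence gives (3).
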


\begin{proof}
The map $R^\bullet \to \grzero R^\bullet$ induces an isomorphism in the weight zero component of the associated graded and the target is concentrated in weight $0$. Since $\thh(\grzero R^\bullet)$ is also concentrated in weight zero, the same holds for $F(R^\bullet) \to F(\grzero R^\bullet)$ by \autoref{E1page} and the fact that $\thh(B^*)^0 \simeq \thh(B^0)$ for every $B \in \Gr(\Sp)$. This proves \eqref{item 1 coro frel associated graded and strong convergence}. Now  the first part of \eqref{item 2 coro frel associated graded and strong convergence} follows by the aforementioned fact that $F(\grzero R^\bullet)$ is concentrated in weight $0$. The second part of \eqref{item 2 coro frel associated graded and strong convergence} follows by the second part of $\eqref{item 1 coro frel associated graded and strong convergence}$.

By using $\eqref{item 1 coro frel associated graded and strong convergence}$, the proof of \eqref{item 3 coro frel associated graded and strong convergence} boils down to proving that $\mathrm{conn}(\textup{gr}^i\thh(R^\bullet)) \geq i$ for each $i >0$. In the cofiber sequence 
\[\thh(R^\bullet)^{i+1} \to \thh(R^\bullet)^i \to \textup{gr}^i(\thh(R^\bullet))\]
the left hand side and the middle term have $\mathrm{conn}(-) \geq i$ since $R^\bullet$ is strongly THH-convergent. This gives the desired result for the right hand side by considering the induced long exact sequence. 
\end{proof}

\begin{thm}[The May spectral sequence for $\TC^{-}$ and $\TP$]\label{mayss}
Suppose $R^{\bullet}\in \Alg(\Tow(\Sp))$ is $\thh$-convergent. Then there are conditionally convergent spectral sequences
\begin{equation*} 
\begin{split}
 \pi_*(\TC^{-}(\gr R^{\bullet} ))\Longrightarrow& \pi_*(\TC^{-}(R^0)) \qquad \textup{ \ \ \ \ }\qquad  
 \pi_*(\TP(\gr R^{\bullet} )) \Longrightarrow\pi_*(\TP(R^0))\\
 \textup{\ \ \ }\pi_*(\TC^{-}_{\rel}(\gr R^{\bullet} )) \Longrightarrow & \pi_*(\rtc^{-}(R^0))    \qquad \textup{ \ \ \ }\qquad 
 \pi_*(\TP_{\rel}(\gr R^{\bullet} )) \Longrightarrow\pi_*(\rtp (R^0))\,.
\end{split}
\end{equation*}
We also have similar spectral sequences for $\thh$ and $\rthh$.
\end{thm}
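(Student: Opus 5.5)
The plan is to obtain these spectral sequences as the spectral sequences of the $\mathbb{N}$-filtered spectra $F(R^{\bullet})$ and $F_{\rel}(R^{\bullet})$ for $F\in\{\thh,\ntc,\tp\}$. This requires identifying three ingredients: the $\mathrm{E}_1$-page, the abutment, and conditional convergence.

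First, applying $\thh$, $\ntc$ and $\tp$ to $R^{\bullet}\in\Alg(\Tow(\Sp))$ produces towers of spectra
\[\cdots\to F(R^{\bullet})^{2}\to F(R^{\bullet})^{1}\to F(R^{\bullet})^{0}.\]
Any such tower has an associated (exact couple) spectral sequence with $\mathrm{E}_1$-page $\pi_*\gr F(R^{\bullet})$.

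By \autoref{E1page}, the associated graded is $\gr F(R^{\bullet})\simeq F(\gr R^{\bullet})$ for $F=\thh,\ntc,\tp$ and for their relative counterparts. Here the equivalence for $\ntc$ uses that cofibers commute with $(-)^{hS^1}$, and the one for $\tp$ uses the norm cofiber sequence \eqref{norm-restriction}. This identifies the $\mathrm{E}_1$-pages as $\pi_*\ntc(\gr R^{\bullet})$, $\pi_*\tp(\gr R^{\bullet})$ and $\pi_*\thh(\gr R^{\bullet})$. In the relative case, one uses $\gr F_{\rel}(R^{\bullet})\simeq F_{\rel}(\gr R^{\bullet})$, which follows from \autoref{coro frel associated graded and strong convergence}(1) together with the fiber sequence defining $F_{\rel}$ for graded rings.

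Second, for the abutment and convergence I invoke \autoref{lemm conditional convergence of filtrations}. The $\thh$-convergence hypothesis gives $\lim F(R^{\bullet})\simeq 0$, and the colimit of the tower is realized as $F(R^0)$, with the same statements for $F_{\rel}$. The colimit statement identifies the target of the spectral sequence as $\pi_*F(R^0)$, respectively as $\pi_*F_{\rel}(R^0)=\pi_*F(R^0,\grzero R^{\bullet})$. The vanishing of the limit is exactly Boardman's condition for conditional convergence of the spectral sequence of a tower (equivalently, of a $\mathbb{Z}$-filtration extended by the constant colimit in negative weights), so I cite Boardman's theorem.

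The only subtle point is matching the relative abutment. By \autoref{coro frel associated graded and strong convergence}(2), the weight zero term $F_{\rel}(R^{\bullet})^0$ equals $\mathrm{fib}(F(R^0)\to F(\grzero R^{\bullet}))$ by evaluation at $0$, since evaluation at $0$ is exact and commutes with $(-)^{hS^1}$ and $(-)^{tS^1}$. This is $F_{\rel}(R^0)$ as in \autoref{nota relative versions for filtered rings}. I expect no real obstacle here. The one place requiring care is the $\tp$ limit vanishing, which is handled in \autoref{lemm conditional convergence of filtrations} through \eqref{norm-restriction} using the connectivity of $\tc^+$.
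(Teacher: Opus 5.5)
Your proposal is correct and follows the paper's proof. The paper likewise reads off the $\mathrm{E}_1$-page from \autoref{E1page}, and gets conditional convergence and the abutment $F(R^0)$ (resp.\ $F_{\rel}(R^0)$) from \autoref{lemm conditional convergence of filtrations}. One small citation slip: $F_{\rel}(R^\bullet)^0\simeq F_{\rel}(R^0)$ is the colimit statement of \autoref{lemm conditional convergence of filtrations} (colimit $=$ evaluation at $0$), not item (2) of \autoref{coro frel associated graded and strong convergence}, though your evaluation-at-$0$ reasoning is exactly that argument.
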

\begin{proof}
Conditional convergence follows from \autoref{lemm conditional convergence of filtrations}. The identification of the $\mathrm{E}_1$-pages follows from \autoref{E1page}.
\end{proof}

\begin{prop}\label{lemm thh is p complete when thh gr is}
Let $R^{\bullet}\in \Alg(\Tow(\Sp))$ be $\thh$-convergent and assume that 
$\thh(\gr R^{\bullet})$ is levelwise $p$-complete (meaning $p$-complete at each weight component). Then $\thh(R^{\bullet})$ and $\rthh(R^{\bullet})$ are also levelwise $p$-complete. In particular, $\THH(R^0)$ and $\rthh(R^0)$ are $p$-complete. 
\end{prop}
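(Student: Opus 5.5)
The plan is to show that each weight component $\thh(R^{\bullet})^t$ is $p$-complete by writing it as a limit of finite extensions of $p$-complete spectra. Two facts drive this: $p$-complete spectra are closed under limits and extensions, i.e.\ they form the local subcategory for the class of maps $X\to X[1/p]$, equivalently the spectra $Y$ with $\Map(\mathbb{S}[1/p],Y)\simeq 0$. Since $\thh$ and $\gr$ commute (\autoref{E1page}), we have $\gr^i\thh(R^{\bullet})\simeq \thh(\gr R^{\bullet})^i$, and these are $p$-complete by hypothesis.

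The steps, in order:
\begin{enumerate}
\item Fix a weight $t$. For each $s\ge t$ there is a finite tower
\[\thh(R^{\bullet})^{s}\to \cdots \to \thh(R^{\bullet})^{t}\]
whose successive cofibers are $\gr^i\thh(R^{\bullet})$ for $t\le i<s$. By induction on $s$, using closure under extensions, the cofiber $\thh(R^{\bullet})^{t}/\thh(R^{\bullet})^{s}$ is $p$-complete.
\item Use $\thh$-convergence. By \autoref{lemm conditional convergence of filtrations} we have $\lim_s \thh(R^{\bullet})^s\simeq 0$; more concretely, the connectivity of $\thh(R^{\bullet})^s$ tends to infinity. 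Hence
\[\thh(R^{\bullet})^t\xrightarrow{\ \simeq\ } \lim_s \thh(R^{\bullet})^t/\thh(R^{\bullet})^s,\]
because the fiber of this map is $\lim_s \thh(R^{\bullet})^s\simeq 0$. A limit of $p$-complete spectra is $p$-complete, so $\thh(R^{\bullet})^t$ is $p$-complete. This gives levelwise $p$-completeness of $\thh(R^{\bullet})$.
\item Treat the relative case. The object $\grzero R^{\bullet}$ is concentrated in weight $0$, and $\thh(\grzero R^{\bullet})^0\simeq \thh(\gr R^{\bullet})^0$, which is $p$-complete by hypothesis; its other weights are $0$. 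Since $p$-complete spectra are closed under fibers, $\rthh(R^{\bullet})$ is levelwise $p$-complete.
\item For the final claim, the colimit of the tower is realized as the weight $0$ component, and colimits commute with $\thh$ (\autoref{lemm conditional convergence of filtrations}). So $\thh(R^0)\simeq\thh(R^{\bullet})^0$ and $\rthh(R^0)\simeq \rthh(R^{\bullet})^0$, both of which are $p$-complete by the previous steps.
\end{enumerate}

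The main obstacle is step 2: we must make sure the limit $\lim_s\thh(R^{\bullet})^s$ vanishes as a limit in spectra, not only in the sense of conditional convergence. This holds because the connectivity tends to infinity, which is exactly the $\thh$-convergence hypothesis. The Milnor sequence then shows that $\lim$ and $\lim^1$ vanish on the homotopy groups in each fixed degree. Everything else is formal closure properties of the $p$-complete subcategory.
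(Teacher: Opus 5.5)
Your proposal is correct and is essentially the paper's argument: the paper also writes $\thh(R^0)$ as the limit of the quotients $\thh(R^0)/\thh(R^{\bullet})$, shows these quotients are $p$-complete by induction using the associated graded, and invokes $\lim \thh(R^{\bullet})\simeq 0$. The differences are cosmetic. The paper runs this argument only at weight $0$ and reaches the higher weights through the fiber sequences $\thh(R^{\bullet})^{i+1}\to \thh(R^{\bullet})^{i}\to \textup{gr}^i\thh(R^{\bullet})$. It then gets the relative case from $\rthh(R^{\bullet})^i\simeq \thh(R^{\bullet})^i$ for $i>0$ and $\rthh(R^{\bullet})^0\simeq \rthh(R^{\bullet})^1$. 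You instead run the limit argument at every weight and obtain the relative case by taking fibers.
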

\begin{proof}
We first prove that $\THH(R^0)$ is $p$-complete. For this, consider the cofiber sequence of $\mathbb{N}$-filtered spectra 
\[\thh(R^{\bullet}) \to \textup{cons}(\thh(R^0)) \to \thh(R^0)/\thh(R^{\bullet}) \]
where the middle term is the constant filtration on $\thh(R^0)$. Since $\gr$ commutes with cofiber sequences, we deduce that the associated graded of $\thh(R^0)/\thh(R^{\bullet})$ is given by $\Sigma \THH(\gr R)$ and since $\lim \thh(R^\bullet)\simeq 0$ by 
\autoref{lemm conditional convergence of filtrations}, we know that 
\[ \lim \THH(R^0)/\THH(R^{\bullet})\simeq \THH(R^0)\,.\]
Since $p$-complete spectra are closed under limits, it suffices to show that $\THH(R^0)/\THH(R^{\bullet})$ is $p$-complete at each level. This follows by induction on the tower $\THH(R^0)/\THH(R^{\bullet})$ since it is trivial in weight $0$ and its associated graded $\Sigma \THH(\gr R)$ is $p$-complete at each weight by hypothesis. This proves that $\THH(R^{\bullet})$ is $p$-complete at weight $0$ by \autoref{lemm conditional convergence of filtrations}. Since its associated graded $\THH(\gr R)$ is $p$-complete by hypotheses, we deduce the $p$-completeness at each weight of $\THH(R^{\bullet})$ by induction over the tower.  Then the $p$-completeness of $\rthh(R^\bullet)$ follows by \autoref{coro frel associated graded and strong convergence} \eqref{item 2 coro frel associated graded and strong convergence}. The last statement follows by noting that $\thh(R^\bullet)^0 \simeq \thh(R^0)$ and $\rthh(R^\bullet)^0 \simeq \rthh(R^0)$ due to \autoref{lemm conditional convergence of filtrations}.
\end{proof}

\begin{rema}\label{rema tcp and then hs1 agree with ts1 when p complete} 

 \autoref{lemm thh is p complete when thh gr is} provides equivalences of $\N$-filtered spectra
\[(\thh(R^{\bullet})^{tC_p})^{hS^1}\simeq \tp(R^{\bullet}) \textup{\ \ and \ }(\rthh(R^{\bullet})^{tC_p})^{hS^1}\simeq \rtp(R^{\bullet})\] 
when $\thh(\gr R^{\bullet})$ is $p$-complete and $R^\bullet$ is $\thh$-convergent. This is due to the discussion in \cite[Section 2.3]{bhatt2019thhandintegralpadichodge} (together with  \cite[Lemma \RomanNumeralCaps{2}.4.2]{nikolausscholze2018topologicalcyclic}) which deduce that $M^{tS^1}$ is $p$-complete as soon as $M$ is bounded below and $p$-complete. Therefore, we use these equivalences without further mention and work with $\tp$ whenever possible. Note that $\thh(\pis A)$ is $p$-complete when $p=0 \in \pi_0A$. This is because, in this situation, the formal DGA $\pis A$ is an $\fp$-algebra and therefore $\thh(\pis A)$ is an   $\fp$-module (see \autoref{prop assoc graded is the formal dga}). Alternatively, it suffices to know that $\mathrm{gr}^*R^{\bullet}$ is an $\field_p$-algebra, as in the case of \autoref{nota  p adic filtration on truncated witt}. Restricting to the weight zero components of the equivalences above, we also observe
\[(\thh(R^0)^{tC_p})^{hS^1}\simeq \tp(R^0) \textup{\ \ and \ }(\rthh(R^0)^{tC_p})^{hS^1}\simeq \rtp(R^0)\] 
whenever $\thh(\gr R^{\bullet})$ is $p$-complete and $R^\bullet$ is $\thh$-convergent. 
\end{rema}

\subsection{Proof of \autoref{oddness}}\label{proof-oddness}
Now we prove \autoref{oddness} and a corollary that we will need later. 
\oddnessofntcandtp*
\begin{proof}[Proof of \autoref{oddness}]
By \autoref{mayss} there are spectral sequences 
\[
\pi_*\TC^{-}_{\rel}(\pi_*A)\implies \pi_*\TC^{-}_{\rel}(A)  \qquad \text{ and }\qquad
\pi_*\TP_{\rel}(\pi_*A)\implies \pi_*\TP_{\rel}(A) 
\]
and the $\mathrm{E}_1$-pages are concentrated in odd stems by \autoref{theo tc ntc tp of formal dgas} and consequently the abutments are trivial in even stems. 
\end{proof}

\autoref{oddness} has the following corollary. 

\begin{cor}\label{cor tc in terms of ker and coker}
    Suppose $A$ is an $\mathbb{E}_1$-ring with $\pi_*A\cong\field[x_{2m}]$  where $p\nmid m>0$.  Then for all $r$ we can identify
     \[\pi_{2r}\rtc(A) = \textup{coker}\big(\pi_{2r+1}(\varphi - \can)\big)\]
     and
     \[\pi_{2r+1}\rtc(A) = \textup{ker}\big(\pi_{2r+1}(\varphi - \can)\big).\]
\end{cor}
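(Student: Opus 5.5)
The plan is to read off both identifications from the long exact sequence of the fiber sequence that defines relative topological cyclic homology, and then use \autoref{oddness} to kill half of its terms. Equip $A$ with its Whitehead filtration, so that $\rtc(A)=\TC(A,\pi_0A)$ with $\pi_0A=\field$. Since $\TC$ is defined as an equalizer, it is the fiber of $\varphi-\can$, and passing to fibers of the maps $A\to\field$ gives a fiber sequence
\[
\rtc(A)\to \rntc(A)\xrightarrow{\varphi-\can} (\rthh(A)^{tC_p})^{hS^1}.
\]

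The first step is to replace the target by $\rtp(A)$. For this I would invoke \autoref{rema tcp and then hs1 agree with ts1 when p complete}; its hypotheses need to be checked:
\begin{itemize}
\item The Whitehead filtration is $\thh$-convergent by \autoref{ex-thh-convergent}.
\item $\thh(\pis A)$ is $p$-complete because $p=0$ in $\pi_0A=\field$.
\end{itemize}
The remark then gives $(\rthh(A)^{tC_p})^{hS^1}\simeq\rtp(A)$. We may therefore regard $\varphi-\can$ as a map $\rntc(A)\to\rtp(A)$.

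Next I take homotopy of this fiber sequence. The relevant portion of the long exact sequence reads
\[
\pi_{2r+1}\rntc(A)\xrightarrow{\varphi-\can}\pi_{2r+1}\rtp(A)\to\pi_{2r}\rtc(A)\to\pi_{2r}\rntc(A),
\]
and similarly
\[
\pi_{2r+2}\rtp(A)\to\pi_{2r+1}\rtc(A)\to\pi_{2r+1}\rntc(A)\xrightarrow{\varphi-\can}\pi_{2r+1}\rtp(A).
\]
By \autoref{oddness}, the groups $\pi_{2r}\rntc(A)$ and $\pi_{2r+2}\rtp(A)$ vanish. The first sequence then identifies $\pi_{2r}\rtc(A)$ with the cokernel of $\pi_{2r+1}(\varphi-\can)$, and the second identifies $\pi_{2r+1}\rtc(A)$ with its kernel.

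The only step that is not purely formal is the identification of the target of $\varphi$ with $\rtp(A)$. That step is exactly what the cited remark supplies, so I expect no real obstacle in this argument.
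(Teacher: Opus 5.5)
Your proposal is correct and is essentially the paper's proof: the paper cites \cite[Corollary~1.5]{nikolausscholze2018topologicalcyclic} for the fiber sequence $\rtc(A)\to\rntc(A)\xrightarrow{\varphi-\can}\rtp(A)$, whereas you derive it from the equalizer definition together with \autoref{rema tcp and then hs1 agree with ts1 when p complete}. You then read off the kernel and cokernel from the long exact sequence using \autoref{oddness}, exactly as the paper does.
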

\begin{proof}
    This follows from \autoref{oddness} in light of~\cite[Corollary~1.5]{nikolausscholze2018topologicalcyclic}. 
\end{proof}

\section{Cardinality of topological cyclic homology}\label{cardinality}

Here, we construct what we call the orbit filtration for topological cyclic homology and prove \autoref{theo even trivial gives odd} for ring spectra with polynomial homotopy.  
\subsection{Orbit filtration for TC}\label{sec:orbit-filt}

The May filtrations on $\ntc(R^0)$ and $\tp(R^0)$ of an $\mathbb{E}_1$-algebra $R^{\bullet}$ in $\mathrm{Tow}(\Sp)$ do not upgrade to a filtration on $\tc$ immediately. This is because, the Frobenius map does not respect the filtrations on $\ntc(R^{\bullet})$ and $\tp(R^{\bullet})$ in the obvious way; it is rather given by an $S^1$-equivariant map of filtered spectra \cite[Appendix~A]{antieau2020beilinson}
\[ 
 \varphi_p\co \thh(R^{\bullet})\longrightarrow R_p\thh(R^{\bullet})^{tC_p} 
\]
 where $R_p$ denotes restriction along $\cdot p \co \z \to \z$, i.e.\ it is given by $S^1$-equivariant maps 
 \[\varphi_p\co \THH(R^{\bullet})^i\longrightarrow (\THH(R^{\bullet})^{tC_p})^{ip}.\]
 We have the same situation for the the Frobenius on $\rthh(R^\bullet)$.
In contrast, the canonical maps 
\[ 
\textup{can} : \TC^{-}(R^{\bullet}) \longrightarrow \TP(R^{\bullet}) \textup{\ \ and \ } \textup{can} : \TC^{-}_\rel(R^{\bullet}) \longrightarrow \TP_\rel(R^{\bullet})
\]
are  filtered maps without modifications. To work around this issue, we use the following construction.

\begin{con}\label{natural-trans-R}
In the $\mathbb{N}$-filtered setting, it is possible to turn $\varphi_p$ into a map of filtered $S^1$-spectra by postcomposing with the canonical map $ R_p F^\bullet \to  F^\bullet$ that uses the filtration maps to reduce the weight. To make this operation functorial, note that there is a unique natural transformation from $\cdot p \co \mathbb{N} \to \mathbb{N}$ to the identity functor which gives a natural transformation $R_p \Rightarrow \textup{id}$. 

\end{con}

\begin{rem}
The argument above relies on the fact that we are working in the category $\mathrm{Tow}(\Sp)$ and not $\mathrm{Fil}(\Sp)$ since there is no natural transformation from the multiplication by $p$ functor $\cdot p : \mathbb{Z}\to \mathbb{Z}$ on the integers to the identity functor. 
\end{rem}

\begin{defn}\label{defi filtered cyclotomic thh}
For $R^{\bullet}\in \Alg(\Tow(\Sp))$, we let 
 $\thh(R^{\bullet}) \in \mathrm{Tow}(\cycsp)$ 
 denote the tower of cyclotomic spectra whose Frobenius is the canonical composite
 \[\widetilde{\varphi}_p \co \thh(R^{\bullet})  \xrightarrow{\varphi_p} R_p\thh(R^{\bullet})^{tC_p} \to \thh(R^{\bullet})^{tC_p}\]
 where the second map is given by the natural transformation constructed above.  We then write 
 \[ \widetilde{\varphi}:=\widetilde{\varphi}_p^{hS^1}: \mathrm{TC}^{-}(R^{\bullet})\longrightarrow (\mathrm{THH}(R^{\bullet})^{tC_p})^{hS^1}\,.
 \]
In particular, for a connective $\mathbb{E}_1$-ring $A$, 
 $\thh(\tau_{\geq \bullet} A) \in \mathrm{Tow}(\cycsp)$ denotes the tower of cyclotomic spectra whose Frobenius is the canonical composite 
 \[\widetilde{\varphi}_p \co \thh(\tau_{\geq \bullet} A)  \xrightarrow{\varphi_p} R_p\thh(\tau_{\geq \bullet} A)^{tC_p} \to \thh(\tau_{\geq \bullet} A)^{tC_p}\]
 where the second map is given by the natural transformation constructed above. The definitions of  $\rthh(\tau_{\geq \bullet} A)\in  \mathrm{Tow}(\cycsp) $ and $\rthh(R^{\bullet})$ are given in the same way.
\end{defn}

\begin{prop}\label{lemm phi vanish in the associated graded}
Let $R^{\bullet}\in \Alg(\Tow(\Sp))$ and $A$ be a connective $\bE_1$-ring. The Frobenius 
$\widetilde{\varphi}_p$
 on $\rthh(R^{\bullet}) \in \mathrm{Tow}(\cycsp)$ as above induces a null-homotopic map of $S^1$-spectra in its associated graded. The colimit of the map $\widetilde{\varphi}_p$ is the usual Frobenius map $\varphi_p$ on $\rthh(R^0)$. 
 Moreover, the Frobenius map 
 \[ \widetilde{\varphi} : \rntc(R^{\bullet})\to (\mathrm{THH}_{\textup{rel}}(R^{\bullet})^{tC_p})^{hS^1}
 \]
 is null-homotopic on associated graded and it has colimit the usual Frobenius map $\varphi_p^{hS^1}$.
\end{prop}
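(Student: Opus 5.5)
The plan is to work weight by weight and use the factorization of $\widetilde{\varphi}_p$ from \autoref{defi filtered cyclotomic thh}. In weight $i$ the map is the composite
\[
\rthh(R^\bullet)^i \xrightarrow{\varphi_p} (\rthh(R^\bullet)^{tC_p})^{ip} \to (\rthh(R^\bullet)^{tC_p})^{i},
\]
where the second map is the transition map of the tower. Since $\gr$ is functorial in maps of towers, the induced map on $\mathrm{gr}^i$ should factor through the map $\mathrm{gr}^i(R_p X) \to \mathrm{gr}^i X$ induced by $R_p\Rightarrow \textup{id}$, where $X=\rthh(R^\bullet)^{tC_p}$. We have $\mathrm{gr}^i(R_pX) = \mathrm{cofib}(X^{(i+1)p}\to X^{ip})$. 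For $i\ge 1$ the map to $\mathrm{cofib}(X^{i+1}\to X^i)$ is induced by the square of transition maps $X^{(i+1)p}\to X^{i+1}$ and $X^{ip}\to X^i$. Because $ip\ge i+1$ when $i\ge1$ and $p\ge 2$, the map $X^{ip}\to X^i$ factors through $X^{i+1}\to X^i$. Hence the composite $X^{ip}\to X^i\to \mathrm{gr}^iX$ is null, compatibly with the map from $X^{(i+1)p}$, so the induced map on cofibers is null.

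The main obstacle is doing this coherently rather than just on objects. I would make it functorial by noting that the natural transformation $\cdot p\Rightarrow \textup{id}$ on $\mathbb{N}$ factors, away from weight $0$, through the transformation $\cdot p\Rightarrow (+1)$, which exists on $\{i\geq 1\}$. So on $i\ge1$ the map $R_pX\to X$ factors through the shift $X^{\bullet+1}\to X^\bullet$, and $\gr$ of the shift map is canonically null. This is because $\gr$ is the cofiber of exactly that map, so $\mathrm{gr}$ of $X^{\bullet+1}\to X^{\bullet}$ is null-homotopic as a map of graded objects, $S^1$-equivariantly since everything is natural in $X\in \Tow(\Sp)^{BS^1}$.

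In weight $0$ the factorization argument fails, since $0\cdot p=0$. Here I would use \autoref{coro frel associated graded and strong convergence}\eqref{item 1 coro frel associated graded and strong convergence}: in the relative setting $\mathrm{gr}^0\rthh(R^\bullet)\simeq *$, so any map out of it is null. This is exactly why the statement is phrased for $\rthh$.

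For the colimit statement, \autoref{lemm conditional convergence of filtrations} (its proof) identifies $\colim$ with $\mathrm{ev}_0$, which commutes with $-^{tC_p}$ and $-^{hS^1}$. In weight $0$ the natural transformation $R_p\Rightarrow\textup{id}$ is the identity, because $0\cdot p=0$. So $\widetilde{\varphi}_p$ in weight $0$ is the twisted Frobenius $\rthh(R^\bullet)^0\to (\rthh(R^\bullet)^{tC_p})^0$. By \cite[Appendix~A]{antieau2020beilinson}, the colimit of the twisted Frobenius is the ordinary Frobenius of $\rthh(R^0)$, using that $\colim$ is symmetric monoidal and commutes with $\thh$.

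For the $\rntc$ statement, I would apply $(-)^{hS^1}$ levelwise. It commutes with $\gr$, since it commutes with cofibers, as in \autoref{E1page}, and with $\mathrm{ev}_0$. So nullity on the associated graded and the colimit identification pass from $\widetilde{\varphi}_p$ to $\widetilde{\varphi}=\widetilde{\varphi}_p^{hS^1}$.
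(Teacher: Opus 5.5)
Your proposal is correct and follows the paper's overall strategy. Weight $0$ is handled by $\mathrm{gr}^0\rthh(R^\bullet)\simeq *$. Positive weights are handled by factoring $\cdot p\Rightarrow \mathrm{id}$ through an intermediate reindexing. The colimit statement uses $\colim=\mathrm{ev}_0$ together with the fact that $R_p\Rightarrow\mathrm{id}$ is the identity in weight $0$, and the $\rntc$ statement follows by applying $(-)^{hS^1}$ levelwise.

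The difference is the choice of intermediate reindexing. The paper factors $\cdot p\Rightarrow\mathrm{id}$ on all of $\mathbb{N}$ through two monotone maps $h_1,h_2$, both fixing $0$. They are chosen so that the associated graded of the restricted towers is literally zero in positive even, respectively positive odd, weights. The map on $\mathrm{gr}^i$ then passes through a zero object simply because its source or target vanishes. You instead restrict to $\mathbb{N}_{\geq1}$ and factor through the shift $n\mapsto n+1$. This needs the additional fact that $\mathrm{gr}^{i+1}X\to\mathrm{gr}^iX$ is null.

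That fact is true, but not ``because $\mathrm{gr}$ is the cofiber of exactly that map''. For a general transformation $\eta\colon T\Rightarrow \mathrm{id}$, applying $\mathrm{cofib}(\eta)$ to $\eta$ need not give a null map: multiplication by $2$ on $\mathbb{S}/2$ is nonzero. The correct reason is that the relevant morphism of arrows $(i+2\geq i+1)\to(i+1\geq i)$ in the poset $\mathrm{Fun}([1],\mathbb{N})$ factors through the identity arrow $(i+1\geq i+1)$, whose cofiber is zero. Since this is a factorization in a poset, it is automatically coherent and natural in $X\in\mathrm{Tow}(\Sp^{BS^1})$.

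The same observation repairs your first, objectwise argument directly. The morphism $((i+1)p\geq ip)\to(i+1\geq i)$ factors through $(i+1\geq i+1)$, because $ip\geq i+1$ for $i\geq 1$. So your coherence worry is already resolved there, and neither the shift nor the paper's $h_1,h_2$ is needed. The paper's choice only buys that the nullity is visible as vanishing of objects rather than as a factorization of a map.
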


\begin{proof}
We begin by proving the first statement. Since we are working in the reduced setting, the associated graded is trivial in weight $0$ (both for the target and source of $\widetilde{\varphi}_p$) by \autoref{coro frel associated graded and strong convergence} \eqref{item 1 coro frel associated graded and strong convergence}; therefore, it suffices to prove this for positive weight. 

The (unique) natural transformation  $\cdot p \Rightarrow \textup{id}$ defining $R_p \Rightarrow \textup{id}$ above factors through natural transformations between appropriately defined maps $\mathbb{N} \to \mathbb{N}$ (that we specify at the end),  which in turn factor the  map  $R_p F^\bullet \to F^\bullet$ (for $F^\bullet \in \mathrm{Tow}(\Sp)$) through the towers:
\begin{equation}\label{eq restriction along h1}
\cdots \to F^8 \to F^6 \xrightarrow{\textup{id}} F^6 \to F^4 \xrightarrow{\textup{id}} F^4 \to F^2\to F^0
\end{equation}
and 
\begin{equation}\label{eq restriction along h2}
\cdots \to F^6 \xrightarrow{\textup{id}} F^6 \to  F^4 \xrightarrow{\textup{id}} F^4 \to  F^2 \xrightarrow{\textup{id}} F^2\to F^0.
\end{equation}
The first tower has trivial associated graded in positive even weight and the second has trivial associated graded in positive odd weight. In particular, the associated graded of the map $R_p F^{\bullet} \to F^{\bullet}$ is trivial in the positive degrees. Since this map is used in the definition of $\tilde{\varphi}_p$, this proves the desired result. 

Indeed, \eqref{eq restriction along h1} correspond to restriction along the map $h_1\co \N \to \N$ given by $h_1(0) = 0$, and $h_1(n) = n+2$ for even $n\geq 2$ and $h_1(n) = n+1$ for odd $n \geq 1$. Furthermore, \eqref{eq restriction along h2} is given by restriction along $h_2(n) = n$ for even $n$ and $h_2(n) = n+1$ for $n$ odd. Since there is a unique map between any two objects in $\N$, and since $pn\geq h_1(n)\geq h_2(n)\geq n$, the unique natural transformation $R_p \Rightarrow \textup{id}$ factors uniquely through restriction along $h_1$ and restriction along $h_2$. 

The second statement follows since the colimit is realized as the weight zero part, the Tate construction is applied levelwise and the second map in the composite $\widetilde{\varphi}_p$ is the identity in weight zero. The other statements follow from the first and second statements. 
\end{proof}

\begin{nota}
From now on, we always regard $\mathrm{THH}(R^{\bullet})$ and $\mathrm{THH}_{\textup{rel}}(R^{\bullet})$ as cyclotomic objects in filtered spectra using the cyclotomic structure from \autoref{defi filtered cyclotomic thh}.
\end{nota}

\begin{defn}\label{defi orbitS filtration on tc}
Given $R^{\bullet}\in \Alg(\Tow(\Sp))$, let 
\[\filorb \tc(R^0) := \tc(\thh(R^{\bullet})) \in \mathrm{Tow}(\Sp)\]
where $\tc(-)$ above applies levelwise $\tc$ to the tower of cyclotomic spectra in \autoref{defi filtered cyclotomic thh}. Similarly, we let
\[\filorb \rtc(R^0) := \tc(\rthh(R^{\bullet})) \in \mathrm{Tow}(\Sp) \,.\]
If we write $\filorb \tc(A)$ or $\filorb \rtc(A)$ for a connective $\bE_1$-ring $A$, then we mean $\filorb \tc((\wtw A)^0)$ or $\filorb \rtc((\wtw A)^0)$ respectively unless another filtration is explicitly stated. 
\end{defn}

Equivalently,  $\filorb \rtc(R^0)$ is given by the fiber of the map of filtered spectra $\widetilde{\varphi} -\textup{can}$; i.e. we have:
\begin{equation}\label{eq fiber sequence defining orbit filtration}
\filorb \rtc(R^0) := \textup{Fib}\big(\rntc(R^{\bullet})  \xrightarrow{\widetilde{\varphi}-\textup{can}} (\rthh(R^{\bullet})^{tC_p})^{hS^1})\big)
\end{equation}
and we write 
\begin{equation}\label{eq orbit assoc graded}
\grorbast \rtc(R^0):=\filorbast \rtc(R^0)/\filorbastplusone \rtc(R^0)\,.
\end{equation}

\begin{rema}
Note that $\filorb \tc(R^0)$, $\grorbast \tc(R^0)$, $\filorb \rtc(R^0)$ and $\grorbast \rtc(R^0)$ depend on the filtration $R^\bullet$ not only on $R^0$.
\end{rema}

\begin{thm}[Orbit filtration for relative TC]\label{theo orbit filtration}
Let $R^{\bullet}\in \Alg(\Tow(\Sp_{(p)}))$ be $\thh$-convergent. 
The $\N$-filtered spectrum $\filorb \rtc(R^0)$ satisfies the following:
\begin{enumerate}
\item \label{Orbit-filt-1} the colimit of  $\filorb \rtc(R^0)$ is realized by  $\rtc(R^0)$.
\item \label{Orbit-filt-2} $\filorb \rtc(R^0)$ is complete, i.e.\ 
\[\lim \filorb \rtc(R^0) \simeq 0.\]
\item  \label{Orbit-filt-3}
If $\rthh(\gr R^{\bullet})$ is $p$-complete, then 
\[\grorbast \rtc(R^0) \simeq \Sigma \rtcpl(\gr R^{\bullet}).\]
More generally, there is an equivalence after $p$-completion:
\[
\grorbast (\rtc(R^0))_p\simeq \Sigma \rtcpl(\gr R^{\bullet})_p\, .
\]

\item  \label{Orbit-filt-4} If $\rthh(\gr R^{\bullet})$ is $p$-complete and $R^{\bullet}$ is strongly $\thh$-convergent then 
\[\pi_s \filorbi \rtc(R^0) = 0\]
for every $i\geq s$.
\end{enumerate}
\end{thm}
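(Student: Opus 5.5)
We work throughout with the fiber sequence \eqref{eq fiber sequence defining orbit filtration}, which presents $\filorb \rtc(R^0)$ as the fiber of $\widetilde{\varphi}-\can$ between $\rntc(R^\bullet)$ and $(\rthh(R^\bullet)^{tC_p})^{hS^1}$. Since colimits, limits and associated graded of $\N$-filtered spectra commute with fibers, each item reduces to the corresponding statement about the source and target of this map.

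For \eqref{Orbit-filt-1}, the colimit is evaluation at weight $0$. By \autoref{lemm phi vanish in the associated graded}, the colimit of $\widetilde{\varphi}$ is the usual Frobenius $\varphi_p^{hS^1}$, and the colimit of $\can$ is the usual canonical map. By \autoref{lemm conditional convergence of filtrations}, the colimits of the source and target are $\rntc(R^0)$ and $(\rthh(R^0)^{tC_p})^{hS^1}$. The fiber of $\varphi-\can$ between these is $\rtc(R^0)$. For \eqref{Orbit-filt-2}, \autoref{lemm conditional convergence of filtrations} gives $\lim \rntc(R^\bullet)\simeq 0$ and $\lim (\rthh(R^\bullet)^{tC_p})^{hS^1}\simeq 0$ in the relative form, so the limit of the fiber also vanishes.

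For \eqref{Orbit-filt-3}, apply $\gr$ to the fiber sequence. By \autoref{lemm phi vanish in the associated graded}, $\gr\widetilde{\varphi}$ is null-homotopic, so $\gr(\widetilde{\varphi}-\can)\simeq -\gr\can$. When $\rthh(\gr R^\bullet)$ is $p$-complete, \autoref{rema tcp and then hs1 agree with ts1 when p complete} identifies the target with $\rtp(R^\bullet)$. \autoref{E1page} then identifies $\gr\can$ with $\can\co \rntc(\gr R^\bullet)\to\rtp(\gr R^\bullet)$. The fiber of $\can$ is $\Sigma\rtcpl(\gr R^\bullet)$ by the norm sequence \eqref{norm-restriction}; the sign $-1$ does not change the fiber. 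This gives the first claim.

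For the $p$-complete version without the hypothesis, I would first $p$-complete the whole fiber sequence. Here $R^\bullet$ is $p$-local, and $(\rthh^{tC_p})^{hS^1}$ agrees with $\rtp$ after $p$-completion, levelwise on the bounded-below pieces, via \cite[Lemma \RomanNumeralCaps{2}.4.2]{nikolausscholze2018topologicalcyclic}. The main obstacle is checking that this identification holds at the level of the associated graded. The two facts needed are that $\gr$ commutes with $p$-completion, since $p$-completion is exact, and that $(X^{tC_p})^{hS^1}_p\simeq (X_p)^{tS^1}$ for bounded-below $X$. I expect the bounded-below input to need the connectivity from THH-convergence, or more carefully an argument that $\gr^i\rthh$ is bounded below; I would prove the latter from the cyclic bar construction.

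For \eqref{Orbit-filt-4}, argue by the long exact sequence. Under strong convergence, $\mathrm{conn}(\rthh(R^\bullet)^i)\ge i$, so $\rntc(R^\bullet)^i$ is $i$-connective, because homotopy fixed points of a connective object stay connective in nonnegative degrees. The target $\rtp(R^\bullet)^i=(\rthh(R^\bullet)^i)^{tS^1}$ is not connective, so direct bounds fail there. Instead, induct down the tower using completeness from \eqref{Orbit-filt-2} and the graded pieces from \eqref{Orbit-filt-3}. By \autoref{coro frel associated graded and strong convergence} \eqref{item 3 coro frel associated graded and strong convergence}, $\gr^j\rthh$ is $j$-connective. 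Homotopy orbits preserve connectivity, so $\Sigma\rtcpl(\gr^j)$ is $(j+1)$-connective. Hence $\filorbi\rtc(R^0)$ is an iterated extension of $(j+1)$-connective pieces for $j\ge i$, and is complete. By the Milnor sequence it has vanishing $\pi_s$ for $s\le i$, since the $\lim^1$ terms also vanish by connectivity.
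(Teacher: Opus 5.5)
Your proposal is correct and essentially follows the paper's proof. Items (1)--(3) come from the defining fiber sequence, the null-homotopy of $\gr\widetilde{\varphi}$, and Nikolaus--Scholze's Lemma II.4.2, just as you say. In (4), your Milnor-sequence argument for $\filorbi\rtc(R^0)\simeq\lim_j \filorbi\rtc(R^0)/\filorbj\rtc(R^0)$ is a repackaging of the paper's argument, which combines surjectivity of $\pi_s\filorbjplusone\rtc(R^0)\to\pi_s\filorbj\rtc(R^0)$ for $j\ge s$ with $\lim_j\pi_s\filorbj\rtc(R^0)=0$; both rest on the same $(j+1)$-connectivity of $\Sigma\rtcpl(\gr R^\bullet)^j$.

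One correction: your aside that $\rntc(R^\bullet)^i$ is $i$-connective is false, since homotopy fixed points do not preserve connectivity (e.g.\ $\pi_{-2}\mathbb{Z}^{hS^1}\cong\mathbb{Z}$). Your final argument never uses it, so the proof is unaffected.
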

\begin{proof}
The first statement follows from \autoref{lemm phi vanish in the associated graded}. 

For the second statement, we take the limit of  the fiber sequence in \eqref{eq fiber sequence defining orbit filtration}. By \autoref{lemm conditional convergence of filtrations}, second and the third terms in this fiber sequence have vanishing limits which implies the desired result as fiber sequences commute with limits.

For the third statement, we take the associated graded of the fiber sequence in \eqref{eq fiber sequence defining orbit filtration}. As $\widetilde{\varphi}$ vanishes in the associated graded due to \autoref{lemm phi vanish in the associated graded}, we obtain 
\begin{equation}\label{eq associated graded of the orbit filtration}
\grorbast \rtc(R^0) \simeq \textup{Fib}\big(\rntc(\gr R^{\bullet}) \xrightarrow{-\textup{can}} (\rthh(\gr R^{\bullet})^{tC_p})^{hS^1}\big).
\end{equation}
This fiber is given by $\Sigma \rtcpl(\gr R^{\bullet})$ as the target of $-\textup{can}$ here is equivalent to $\rthh(\gr R^{\bullet})^{tS^1}$ (see \autoref{rema tcp and then hs1 agree with ts1 when p complete}). For the more general case, we know by~\cite[Lemma~II.4.2]{nikolausscholze2018topologicalcyclic} that the target of $-can$ is given by  $(\rthh(\gr R^{\bullet})^{tC_p})^{hS^1}\big)_p\simeq \rthh(\gr R^{\bullet})^{tS^1}_p$ and the $p$-completion of the fiber can be identified with $\Sigma\mathrm{TC}^+_\rel(\gr R^{\bullet})_p$. 

Now we prove the last statement. For this, note that $\pi_s \rthh(\gr R^{\bullet})^j = 0$ whenever $j >s$  by \autoref{E1page} and \autoref{coro frel associated graded and strong convergence} \eqref{item 3 coro frel associated graded and strong convergence}. Since homotopy orbits preserve connectivity, we deduce $\pi_{s} \big(\Sigma \rtcpl (\gr R^{\bullet})^j\big) = 0$ whenever $j\geq s$. By \eqref{Orbit-filt-3}, this implies that the maps
\begin{equation}\label{eq surjective structure maps in the orbit filtration}
\pi_s \filorbjplusone
\rtc(R^0)\to \pi_s \filorbj \rtc(R^0)\end{equation}
are surjective  for $j\geq s$. By \eqref{Orbit-filt-2}, the filtration is complete so the non-derived limit
\[\lim_j \pi_s \filorbj \rtc(R^0)\]
 is also trivial for each integer $s$ by the Milnor sequence. Assume to the contrary that there is $0 \neq x \in \pi_s \filorbi \rtc(R^0)$ for some $i \geq s$, then by the surjectivity of \eqref{eq surjective structure maps in the orbit filtration}, one may inductively choose $x_j \in \pi_s \filorbj \rtc(R^0)$ for $j \geq i$ so that $x_i = x$ and $x_{j+1}$ is sent to $x_j$ under \eqref{eq surjective structure maps in the orbit filtration} which gives a nontrivial element of the non-derived limit $\lim_j \pi_s \filorbj \rtc(R^0)$ and this  contradicts the aforementioned vanishing of this limit.
\end{proof}

\begin{rema}
Without the $p$-completeness assumption on $\thh(\gr R^{\bullet})$, the associated graded of the orbit filtration  is given by: 
\begin{equation}
\grorbast \rtc(R^0) \simeq \textup{Fib}\big(
\rntc(\gr R^{\bullet}) \xrightarrow{-\can}  \rtp(\gr R^\bullet)\to (\rthh(\gr R^{\bullet})^{tC_p})^{hS^1}\big),
\end{equation}
as mentioned in \eqref{eq associated graded of the orbit filtration}. The target of the map above is $\rtp(\gr R^{\bullet})_p^\wedge$ when $\rthh(\gr R^{\bullet})$ is bounded below in each weight component due to \cite[Lemma \RomanNumeralCaps{2}.4.2]{nikolausscholze2018topologicalcyclic}.
\end{rema}

\begin{rema} 
Using \autoref{ex-thh-convergent} and \autoref{theo orbit filtration} \eqref{Orbit-filt-3} in the case $R^{\bullet}=\tau_{\ge \bullet}A$ where $A$ is a connective $\mathbb{E}_1$-ring provides an independent proof that $p$-complete relative topological cyclic homology commutes with filtered colimits of connective $\mathbb{E}_1$-rings, which is also implied by the stronger result of Clausen--Mathew--Morrow~\cite[Corollary~2.15]{CMM21}. 
\end{rema}

\begin{cor}\label{coro orbit spectral sequence}
Let $R^{\bullet}\in \Alg(\Tow(\Sp))$ be $\mathrm{THH}$-convergent and assume that $\rthh(\gr R^{\bullet})$ is $p$-complete. Then there is a conditionally convergent spectral sequence 
\begin{equation}\label{eq orbit spectral sequence}
\mathrm{E}_1=\pi_{*-1}\mathrm{TC}_{\textup{rel}}^{+}(\gr R^{\bullet})\Longrightarrow \pi_*\rtc(R^0) \,.
\end{equation}
\end{cor}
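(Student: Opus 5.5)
The plan is to take the spectral sequence associated to the $\N$-filtered spectrum $\filorb \rtc(R^0)$ of \autoref{defi orbitS filtration on tc}, i.e.\ the standard spectral sequence of a tower
\[\cdots \to \filorbn \rtc(R^0) \to \cdots \to \filorbi \rtc(R^0)\to \cdots \to \filorbo \rtc(R^0),\]
whose $\mathrm{E}_1$-page is $\pi_*\grorbast \rtc(R^0)$. The proof then reduces to three inputs already established in \autoref{theo orbit filtration}.

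First, I will identify the $\mathrm{E}_1$-page. Since $\rthh(\gr R^{\bullet})$ is assumed $p$-complete, \autoref{theo orbit filtration} \eqref{Orbit-filt-3} gives $\grorbast \rtc(R^0)\simeq \Sigma \rtcpl(\gr R^{\bullet})$. Hence $\pi_*\grorbast \rtc(R^0)\cong \pi_{*-1}\rtcpl(\gr R^{\bullet})$, weight by weight, which is exactly the claimed $\mathrm{E}_1$-term.

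Second, I will identify the abutment. By \autoref{theo orbit filtration} \eqref{Orbit-filt-1}, the colimit of the tower is realized as its weight zero term $\filorbo \rtc(R^0)\simeq \rtc(R^0)$. This uses that the colimit of an $\N$-indexed tower is its zeroth term, together with \autoref{lemm phi vanish in the associated graded}.

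Third, I will establish conditional convergence in Boardman's sense. This requires $\lim \filorb \rtc(R^0)\simeq 0$, which is \autoref{theo orbit filtration} \eqref{Orbit-filt-2}; that result in turn comes from \autoref{lemm conditional convergence of filtrations} applied to $\rntc$ and $(\rthh^{tC_p})^{hS^1}$ in the fiber sequence \eqref{eq fiber sequence defining orbit filtration}.

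The only potential subtlety is bookkeeping. \autoref{theo orbit filtration} was stated for $R^\bullet$ in $p$-local spectra. Under the $p$-completeness hypothesis, I would check that the proofs of parts \eqref{Orbit-filt-1}--\eqref{Orbit-filt-3} never used $p$-locality, since they rely only on $\thh$-convergence and \autoref{rema tcp and then hs1 agree with ts1 when p complete}. Failing that, I would invoke the $p$-complete version of \eqref{Orbit-filt-3}. Beyond this check, the corollary is a formal consequence of the theorem.
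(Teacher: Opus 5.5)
Your proposal is correct and matches the paper's intended argument. The paper states the corollary without a separate proof, as the spectral sequence of the tower $\filorb\rtc(R^0)$: the $\mathrm{E}_1$-page comes from \autoref{theo orbit filtration}~\eqref{Orbit-filt-3}, the abutment from \eqref{Orbit-filt-1}, and conditional convergence from the completeness in \eqref{Orbit-filt-2}. Your bookkeeping check also goes through. The proofs of those three parts use only $\thh$-convergence, which makes each weight bounded below, together with the $p$-completeness of $\rthh(\gr R^{\bullet})$; they never use $p$-locality.
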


\begin{rema}
We can also produce a spectral sequence by using $\thh$ instead of $\rthh$, but then $\widetilde{\varphi}$ is non-zero in filtration zero and we have a contribution of $\TC(\textup{gr}^0R^\bullet)$ to the $\mathrm{E}_1$-term. 
\end{rema}

\begin{rema}
A similar spectral sequence to \eqref{eq orbit spectral sequence} was considered by Brun~\cite{Bru01} and Angeltveit~\cite{Angeltveit2015kTheoryfinitewitt}, though we have not directly compared their filtrations to ours.
\end{rema}

\subsection{The partial orbit filtration for relative TC}\label{sec partial orbit filtration for tcrel}

Here, we define a quotient of the filtration $\rthh(R^{\bullet})$ for $R^{\bullet}\in \Alg(\Tow(\Sp))$ that allows us to recover $\rtc(R^0)$ in low degrees when $R^{\bullet}$ is strongly THH-convergent. For a stable presentably symmetric monoidal $\infty$-category  $\mathcal{C}$, let 
\[C_{\leq n} \co \mathrm{Tow}(\mathcal{C}) \to \mathrm{Tow}(\mathcal{C})\]
denote the functor given by restriction along the map $c: \mathbb{N} \to \mathbb{N}$ that we define as $c(m) = m$ if $m>n$ and $c(m) = n$ if $m \leq n$. In particular, ${C}_{\leq n}F^\bullet$ is given by
\[\cdots \to  F^{n+2} \to F^{n+1} \to F^n \xrightarrow{\textup{id}} F^{n} \xrightarrow{\textup{id}} \cdots \xrightarrow{\textup{id}} F^n. \]
There is a unique natural transformation $c \Rightarrow \textup{id}$ which provides a natural transformation ${C}_{\leq n} \Rightarrow \textup{id}$. We let ${Q}_n$ denote the cofiber of this tranformation so that we have a cofiber sequence
\begin{equation}\label{eq cofiber sequence for qn}
{C}_{\leq n} F^\bullet \to F^\bullet \to {Q}_n F^\bullet.
\end{equation}
Then ${Q}_n F^\bullet$ is given by 
\[\cdots \to 0 \to 0 \to F^{n-1}/F^n \to F^{n-2}/F^n \to \cdots \to F^0/F^n.\]
We will sometimes write ${Q}_n^i F^\bullet :=F^i/F^n$.\footnote{The construction $Q_nF^{\bullet}$ also appears in~\cite[Construction~3.17]{Ant24} with the notation $\mathrm{gr}_{F}^{[0,n)}$. Note that we are considering $\mathbb{N}$-filtered objects here whereas Antieau considers $\mathbb{Z}$-filtered objects, but these can be compared using \autoref{rema z graded spectra}.}

\begin{lem}\label{lemm associated graded of qn}
In the situation above, We have $\textup{gr}^t(Q_nF^\bullet) \simeq 0$ for $t\geq n$ and $\textup{gr}^t(Q_nF^\bullet) \simeq \textup{gr}^t(F^\bullet)$ otherwise.
\end{lem}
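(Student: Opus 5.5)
We compute the associated graded directly from the explicit descriptions of $Q_nF^\bullet$ given above, using the fact that $\textup{gr}$ is exact. Apply $\textup{gr}^t$ to the cofiber sequence \eqref{eq cofiber sequence for qn}. This gives a cofiber sequence
\[\textup{gr}^t(C_{\leq n}F^\bullet)\to \textup{gr}^t(F^\bullet)\to \textup{gr}^t(Q_nF^\bullet),\]
so it suffices to compute $\textup{gr}^t(C_{\leq n}F^\bullet)$ together with the map to $\textup{gr}^t(F^\bullet)$.

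\emph{Case $t\geq n$.} Here $(C_{\leq n}F^\bullet)^t=F^t$ and $(C_{\leq n}F^\bullet)^{t+1}=F^{t+1}$. The component of the natural transformation $C_{\leq n}\Rightarrow \textup{id}$ in these weights is the map induced by the identity morphism $c(t)=t\to t$. Hence the transformation is the identity in weights $t$ and $t+1$, and it induces an equivalence $\textup{gr}^t(C_{\leq n}F^\bullet)\xrightarrow{\simeq}\textup{gr}^t(F^\bullet)$. It follows that $\textup{gr}^t(Q_nF^\bullet)\simeq 0$.

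\emph{Case $t<n$.} Both $(C_{\leq n}F^\bullet)^{t+1}$ and $(C_{\leq n}F^\bullet)^{t}$ equal $F^n$, and the structure map between them is the identity. Therefore $\textup{gr}^t(C_{\leq n}F^\bullet)\simeq 0$, and hence $\textup{gr}^t(Q_nF^\bullet)\simeq \textup{gr}^t(F^\bullet)$. The map is indeed the one induced from the cofiber sequence.

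As a sanity check one can argue directly. For $t\le n-2$ we have $\textup{gr}^t(Q_nF^\bullet)=\textup{cof}(F^{t+1}/F^n\to F^t/F^n)\simeq F^t/F^{t+1}$ by the octahedral axiom. For $t=n-1$ we get $\textup{cof}(0\to F^{n-1}/F^n)=\textup{gr}^{n-1}F^\bullet$. For $t\geq n$ both terms vanish.

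The only point needing any care is identifying the components of the natural transformation $c\Rightarrow\textup{id}$ as identities where $c$ is the identity. This is automatic because $\mathbb{N}$ is a poset, so all mapping spaces are contractible or empty.
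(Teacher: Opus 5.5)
Your proof is correct and is the paper's argument: apply $\textup{gr}^t$ to the cofiber sequence $C_{\leq n}F^\bullet \to F^\bullet \to Q_nF^\bullet$, and note that $\textup{gr}^t(C_{\leq n}F^\bullet)$ vanishes for $t<n$ and maps isomorphically onto $\textup{gr}^t(F^\bullet)$ for $t\geq n$. You spell out the ``direct observation'' that the paper leaves implicit, and your check via the explicit description of $Q_nF^\bullet$ is also valid.
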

\begin{proof}
Since $\gr$ commutes with cofiber sequences, the result follows by considering the cofiber sequence \eqref{eq cofiber sequence for qn} together with direct observation on the associated graded of $C_{\leq n} F^\bullet$.
\end{proof}

\begin{exam}\label{exam qn and cn of may filtration}
Let $R^{\bullet}\in \Alg(\Tow(\Sp))$. 
Applying the discussion above to the case of $\mathcal{C} = \cycsp$, we have  $Q_n\rthh(R^{\bullet}) \in \mathrm{Tow}(\cycsp)$ and $C_{\leq n}\rthh(R^{\bullet}) \in \mathrm{Tow}(\cycsp)$. 
For the Frobenius, we have the following commuting diagram of $S^1$-spectra.
\begin{equation}\label{eq frobenius of cn and qn}
\begin{tikzcd}
{C}_{\leq n} \rthh(R^{\bullet})\ar[r,"{C}_{\leq n}\varphi_p"]  \ar[d]& \ar[d] C_{\leq n} R_p \rthh(R^{\bullet})^{tC_p} \ar[r]& \ar[d] C_{\leq n} \rthh(R^{\bullet})^{tC_p}\\
\rthh(R^{\bullet}) \ar[r,"\varphi_p"]\ar[d] &  R_p \rthh(R^{\bullet})^{tC_p} \ar[r] \ar[d]& \ar[d] \rthh(R^{\bullet})^{tC_p}\\
Q_n \rthh(R^{\bullet}) \ar[r,"Q_n\varphi_p"]& Q_n  R_p \rthh(R^{\bullet})^{tC_p} \ar[r] & Q_n \rthh(R^{\bullet})^{tC_p} 
\end{tikzcd}
\end{equation}
Here, the vertical composites are cofiber sequences and the unmarked horizontal arrows are induced by the natural transformation from \autoref{natural-trans-R}. Now the bottom horizontal composite gives us a cyclotomic structure map $Q_n \widetilde{\varphi}_p$.
\end{exam}

We will use the following lemma freely without reference. 

\begin{lem}\label{lemma qn of orbit is tc of qn}
Let $R^{\bullet}\in \Alg(\Tow(\Sp))$. Then there is an equivalence 
\[Q_n \filorb \rtc(R^0) \simeq \tc\big(Q_n\rthh(R^{\bullet})\big)\]
where $\tc$ on the right hand side  applies $\tc$ levelwise to $Q_n\rthh(R^{\bullet}) \in \mathrm{Tow}(\cycsp)$.

Similarly, the functors $-^{hS^1}$, $-^{tS^1}$ and $-^{tC_p}$ commute with $Q_n$.
\end{lem}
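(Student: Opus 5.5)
The plan is to reduce everything to the formal statement that $Q_n$ commutes with any exact functor applied levelwise. First, I would observe that $Q_n$ is defined as the cofiber of the natural transformation $C_{\le n}\Rightarrow \mathrm{id}$. Here $C_{\le n}$ is restriction along $c\colon \mathbb{N}\to\mathbb{N}$, and restriction always commutes with any functor $G\colon \mathcal{C}\to\mathcal{D}$ applied levelwise: $G\circ C_{\le n}F^\bullet = C_{\le n}(G\circ F^\bullet)$ on the nose, and the natural transformation $C_{\le n}\Rightarrow\mathrm{id}$ is carried to the corresponding one. Consequently, if $G$ is exact (preserves cofiber sequences), applying $G$ levelwise to the cofiber sequence \eqref{eq cofiber sequence for qn} yields a cofiber sequence $C_{\le n}G(F^\bullet)\to G(F^\bullet)\to G(Q_nF^\bullet)$. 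This identifies $G(Q_nF^\bullet)\simeq Q_nG(F^\bullet)$ naturally. Cofibers in $\mathrm{Tow}$ are computed levelwise, so it suffices that $G$ preserve cofiber sequences.

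Applying this with $G$ equal to $-^{hS^1}$, $-^{tS^1}$, $-^{tC_p}$, all exact functors between stable $\infty$-categories, immediately gives the second assertion.

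For the first assertion, I would use \eqref{eq fiber sequence defining orbit filtration}. This describes $\filorb\rtc(R^0)$ as the fiber of $\widetilde{\varphi}-\can\colon \rntc(R^\bullet)\to (\rthh(R^\bullet)^{tC_p})^{hS^1}$. Levelwise $\tc$ of $Q_n\rthh(R^\bullet)$ is likewise the fiber of $Q_n\widetilde\varphi-\can$ from $Q_n\rthh(R^\bullet)^{hS^1}$ to $((Q_n\rthh(R^\bullet))^{tC_p})^{hS^1}$. Equivalently, one can view $\tc\colon \cycsp\to\Sp$ as an exact functor (it is a mapping spectrum out of the unit in the stable $\infty$-category $\cycsp$) and apply the general principle with $\mathcal{C}=\cycsp$ from \autoref{exam qn and cn of may filtration}. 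Then $\tc(Q_n\rthh(R^\bullet))\simeq Q_n\tc(\rthh(R^\bullet)) = Q_n\filorb\rtc(R^0)$ by \autoref{defi orbitS filtration on tc}.

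The main point to check is that the cyclotomic structure on $Q_n\rthh(R^\bullet)$ defined via diagram \eqref{eq frobenius of cn and qn} agrees with the cofiber taken in $\mathrm{Tow}(\cycsp)$. Concretely, one needs that cofibers in $\cycsp$ are computed on underlying $S^1$-spectra with the induced Frobenius. That holds because $\cycsp$ is a lax equalizer whose forgetful functor to $\Sp^{BS^1}$ preserves finite colimits, since $-^{tC_p}$ is exact. The rows of \eqref{eq frobenius of cn and qn} are precisely this induced map, using that $C_{\le n}$ and $Q_n$ commute with $R_p$-postcomposition and with the natural transformation $R_p\Rightarrow\mathrm{id}$, by the same naturality. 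With this in place the equivalence is natural, and it is compatible with the maps $\widetilde\varphi$ and $\can$.
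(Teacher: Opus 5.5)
Your proposal is correct and follows the paper's proof. The paper likewise notes that the equivalence is clear for $C_{\le n}$ (a restriction, hence compatible with any levelwise functor) and then passes to the cofiber $Q_n$ using that $\tc\colon\cycsp\to\Sp$ (resp.\ $-^{hS^1}$, $-^{tS^1}$, $-^{tC_p}$) preserves cofiber sequences. Your extra check that the Frobenius on $Q_n\rthh(R^\bullet)$ from \eqref{eq frobenius of cn and qn} is the one induced on the cofiber in $\mathrm{Tow}(\cycsp)$ is a welcome detail the paper leaves implicit. That check needs only functoriality of $C_{\le n}$ and $Q_n$ applied to the composite $\widetilde{\varphi}_p$, not any commutation $Q_nR_p\simeq R_pQ_n$, which fails in general.
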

\begin{proof}
Its clear that the equivalence above holds for $Q_n$ replaced by $C_{\leq n}$, then the result follows by the cofiber sequence in \eqref{eq cofiber sequence for qn} since $\tc(-)\co \cycsp \to \Sp$ commutes with cofiber sequences. The second statement follows in the same manner. 
\end{proof}

The definition of $Q_n$ is justified by the following proposition which states that $Q_n \filorb \rtc(R^0)$ filters the spectrum $Q_n^0 \filorb \rtc(R^0)$  that approximates $\rtc(R^0)$ in sufficiently low  degrees.

\begin{prop}\label{prop partial filtration aprroximate tc}
Let $R^{\bullet}\in \Alg(\Tow(\Sp))$ be strongly $\THH$-convergent and assume that the graded spectrum $\thh(\gr R^{\bullet})$ is $p$-complete. Then the natural map 
\[\pi_s \filorbo\rtc(R^{0}) \to \pi_s Q_n^0 \filorb \rtc(R^0)
\]
is an isomorphism for $s\leq n$ and surjective for $s=n+1$. Since the colimit is realized as the weight zero component there is a natural map
\[\pi_s \rtc(R^0) \to \pi_s Q_n^0 \filorb \rtc(R^0)
\]
that is an isomorphism for $s\leq n$ and surjective for $s=n+1$.
\end{prop}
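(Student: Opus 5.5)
By the cofiber sequence \eqref{eq cofiber sequence for qn} applied to the tower $\filorb \rtc(R^0)$, there is a cofiber sequence in weight zero
\[ (C_{\leq n}\filorb \rtc(R^0))^0 \to \filorbo \rtc(R^0) \to Q_n^0 \filorb \rtc(R^0). \]
By the definition of $C_{\leq n}$, the left-hand term is $\filorbn \rtc(R^0)$. The map in the statement is therefore the second map of a cofiber sequence
\[ \filorbn\rtc(R^0) \to \filorbo\rtc(R^0) \to Q_n^0 \filorb\rtc(R^0). \]
Alternatively, using \autoref{lemma qn of orbit is tc of qn}, one may regard $Q_n^0\filorb\rtc(R^0)$ as $\tc$ of $\rthh(R^\bullet)^0/\rthh(R^\bullet)^n$; the first description suffices.

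The claim then reduces to a connectivity statement: it suffices that $\pi_s \filorbn \rtc(R^0)=0$ for $s\leq n$. Given this, the long exact sequence
\[ \pi_s\filorbn \to \pi_s\filorbo \to \pi_s Q_n^0 \to \pi_{s-1}\filorbn \]
yields injectivity for $s\le n$, since $\pi_s\filorbn=0$. It yields surjectivity for $s\le n+1$, since $\pi_{s-1}\filorbn=0$ whenever $s-1\le n$. This is exactly the isomorphism range $s\le n$ together with surjectivity at $s=n+1$.

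The vanishing is precisely \autoref{theo orbit filtration} \eqref{Orbit-filt-4} with $i=n\geq s$. Its hypotheses are strong $\thh$-convergence, which is assumed, and $p$-completeness of $\rthh(\gr R^\bullet)$. The latter follows from the assumed $p$-completeness of $\thh(\gr R^\bullet)$, since $\rthh(\gr R^\bullet)$ is the fiber of $\thh(\gr R^\bullet)\to\thh(\grzero R^\bullet)$ and the target is the weight zero part of the source, hence $p$-complete. The only point needing care is the weight bookkeeping: one must confirm that the $C_{\leq n}$ term in weight zero is $\filorbn$ rather than an adjacent weight, and that \eqref{Orbit-filt-4} indeed covers $i=s=n$. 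Both hold by the definitions, so I expect no real obstacle there.

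Finally, by \autoref{theo orbit filtration} \eqref{Orbit-filt-1}, the colimit of $\filorb\rtc(R^0)$ is realized in weight zero, so $\filorbo\rtc(R^0)\simeq\rtc(R^0)$. This identification transfers the first statement to the second.
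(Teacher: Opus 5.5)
Your proposal is correct and is essentially the paper's proof. Both arguments identify the weight-zero part of $Q_n\filorb\rtc(R^0)$ with $\rtc(R^0)/\filorbn\rtc(R^0)$ via \autoref{theo orbit filtration}\eqref{Orbit-filt-1}, and both feed the vanishing $\pi_{s}\filorbn\rtc(R^0)=0$ for $s\le n$ from \eqref{Orbit-filt-4} into the long exact sequence of the resulting cofiber sequence. Your deduction of $p$-completeness of $\rthh(\gr R^\bullet)$ from that of $\thh(\gr R^\bullet)$ is a correct detail that the paper leaves implicit.
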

\begin{proof}
Due to  \autoref{theo orbit filtration} \eqref{Orbit-filt-1}, the weight $0$ component of $Q_n\filorb \rtc(R^0)$ is given by 
\[\rtc(R^0)/\filorbn\rtc(R^0).\]
By \autoref{theo orbit filtration} \eqref{Orbit-filt-4}, $\pi_{s'} \filorbn\rtc(R^0) = 0$ for all $s'\leq n$ which gives the desired result using the cofiber sequence
\[\filorbn\rtc(R^0) \to \rtc(R^0) \to \rtc(R^0)/\filorbn\rtc(R^0)\,.\] 
\end{proof}

\begin{rem}
Although $Q_n \filorb(\rtc(R^0))$ recovers $\rtc(R^0)$ in low degrees for strongly THH-convergent $R^{\bullet}\in \Alg(\mathrm{Tow}(\Sp))$, the same is not generally true for $Q_n \rntc(R^{\bullet})$ and $Q_n \rtp (R^{\bullet})$. For instance,  we observe in \autoref{prop cardinalities of quotients of ntc and ntp} below that the homotopy groups of $Q_n^0 \rntc(\tau_{\geq \bullet} \fp[x_2])$ and  $Q_n^0 \rtp (\tau_{\geq \bullet} \fp[x_2])$ are finite (in each degree) whereas we know that the same does not hold for (the non-graded) $\pis \rntc (D\fp[x_2])$ and $\pis \rtp(D\fp[x_2])$ due to \cite[Proposition 6.3]{bayindir2020kthryofthh}. This is precisely the reason why we need this quotient filtration; it allows us to compute TC using a fiber sequence involving spectra with smaller homotopy groups. 
\end{rem}

\subsection{Surjectivity of Frobenius}

Here, our goal is to prove the surjectivity of the Frobenius map in sufficiently large degrees on the quotient filtrations on $\rntc$ and $\rtp$ in the case of an $\bE_1$-ring with homotopy $\field[x_{2m}]$. This is accomplished in \autoref{coro frobenius is surjective in homotopy}, which follows from the next two results. 

\begin{lemm}\label{lemm weight reducing map is surjective for tp}
Let $A$ be an $\bE_1$-ring with $\pis A \cong \field[x_{2m}]$ where $p \nmid m>0$. Then after applying $-^{hS^1}$ and restricting to weight 0, the horizontal map in the lower right corner of \eqref{eq frobenius of cn and qn} 
\begin{equation}\label{lower right corner of eq frobenius of cn and qn}
Q_n^0R_p \rtp(\wtw A) \to Q_n^0 \rtp(\wtw A)
\end{equation}
is surjective on $\pi_*$.
\end{lemm}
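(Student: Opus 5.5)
The map \eqref{lower right corner of eq frobenius of cn and qn} should simply be the canonical quotient map between two finite stages of the May tower on $\rtp(A)$. I would then deduce surjectivity on $\pi_*$ from the fact that every finite subquotient of that tower has homotopy concentrated in odd degrees. Write $F^\bullet := \rtp(\wtw A)$, an $\N$-filtered spectrum. By \autoref{rema tcp and then hs1 agree with ts1 when p complete} we may use $\rtp$ in place of $(\rthh^{tC_p})^{hS^1}$, since $\thh(\pis A)$ is an $\fp$-module. This uses that $p=0$ in $\pi_0A=\field$.

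\emph{Step 1: identify the map.} From the definitions, $(R_pF)^i = F^{pi}$ and $Q_n^0 G = G^0/G^n$. Hence
\[Q_n^0 R_p F = F^0/F^{pn} \qquad\text{and}\qquad Q_n^0F = F^0/F^n.\]
The natural transformation $R_p\Rightarrow \mathrm{id}$ of \autoref{natural-trans-R} is the identity in weight $0$ and in weight $n$ is the structure map $F^{pn}\to F^n$. So \eqref{lower right corner of eq frobenius of cn and qn} is the canonical map $F^0/F^{pn}\to F^0/F^n$, and its fiber is $F^n/F^{pn}$. We therefore obtain a cofiber sequence
\[F^n/F^{pn}\to F^0/F^{pn}\to F^0/F^n,\]
and it suffices to show that the connecting map $\pi_k(F^0/F^n)\to \pi_{k-1}(F^n/F^{pn})$ vanishes for every $k$.

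\emph{Step 2: finite subquotients are odd.} By \autoref{E1page} we have $\gr F^\bullet\simeq \rtp(\pis A)$, and \autoref{prop assoc graded is the formal dga} identifies $\pis A$ with the formal DGA $\field[x_{2m}]$. By \autoref{theo tc ntc tp of formal dgas} \eqref{item 1 frobenius is iso}, each weight component $\rtp(\field[x_{2m}])^i$ has homotopy concentrated in odd degrees. (In weight $0$ it is trivial by \autoref{coro frel associated graded and strong convergence}.) For $i\le j$, the spectrum $F^i/F^j$ has a finite filtration with graded pieces $\mathrm{gr}^t F$ for $i\le t<j$. Inducting on $j-i$ via the long exact sequences of $F^{t+1}/F^j\to F^t/F^j\to \mathrm{gr}^tF$, a spectrum that is an extension of two odd-concentrated spectra is odd-concentrated. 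Hence $\pi_*(F^i/F^j)$ is concentrated in odd degrees.

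\emph{Step 3: conclude.} In the long exact sequence of the cofiber sequence from Step 1, the connecting map goes from $\pi_k(F^0/F^n)$ to $\pi_{k-1}(F^n/F^{pn})$. By Step 2, one of these two groups is zero whenever $k$ is even or $k$ is odd. So the connecting map vanishes and $\pi_*(F^0/F^{pn})\to\pi_*(F^0/F^n)$ is surjective.

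The only step needing care is Step 1: checking that the weight-$0$ component of $Q_n$ applied to the natural transformation $R_p\Rightarrow\mathrm{id}$ really is the evident quotient map $F^0/F^{pn}\to F^0/F^n$. This is a bookkeeping check from the explicit description of $C_{\le n}$, $Q_n$ and the unique natural transformation $\cdot p\Rightarrow \mathrm{id}$ on $\N$. It is the point where I would expect indexing errors rather than any genuine obstruction.
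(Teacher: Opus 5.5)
Your proposal is correct and follows essentially the same route as the paper. Both arguments identify the map with the quotient $\rtp(\wtw A)^0/\rtp(\wtw A)^{pn}\to \rtp(\wtw A)^0/\rtp(\wtw A)^{n}$, use the cofiber sequence whose remaining term is $\Sigma\, \rtp(\wtw A)^{n}/\rtp(\wtw A)^{pn}$, and show that every finite subquotient of the tower is odd-concentrated via finite filtrations whose graded pieces are weights of $\rtp(\field[x_{2m}])$. The only difference is bookkeeping: you induct on $j-i$, whereas the paper uses $Q_{np}$, $Q_n$, and $Q_{np-n}$ of the shifted tower.
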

\begin{proof}
By the definitions of $Q_n$ and $R_p$, this map is given by the quotient map  
\[
\rtp(\wtw A)^{0}/\rtp(\wtw A)^{pn} \to \rtp(\wtw A)^{0}/\rtp(\wtw A)^{n} \,.
\]
This map sits in a cofiber sequence 
\[\rtp(\wtw A)^{0}/\rtp(\wtw A)^{pn} \to \rtp(\wtw A)^{0}/\rtp(\wtw A)^{n} \to \Sigma \rtp(\wtw A)^{n}/\rtp(\wtw A)^{pn}.\]
Therefore, it suffices to prove that the left hand and the middle terms above are concentrated in odd degrees and the right hand side above is concentrated in even degrees. 

The oddness of $\rtp(\wtw A)^{0}/\rtp(\wtw A)^{pn} $ follows from the fact that this quotient has a finite filtration given by $Q_{np}\rtp(\wtw A)$ whose associated graded is concentrated in odd degrees due to \autoref{lemm associated graded of qn} and \autoref{theo tc ntc tp of formal dgas} \eqref{item 1 frobenius is iso}. The same argument using  $Q_{n}\rtp(\wtw A)$ instead of $Q_{np}\rtp(\wtw A)$ proves that the middle term in the cofiber sequence above is also concentrated in odd degrees. 

What remains is to prove that $\rtp(\wtw A)^{n}/\rtp(\wtw A)^{np}$ is concentrated in odd degrees. This follows in the same way but by starting with the $\N$-filtration $F^\bullet$ on $\rtp(\wtw A)^{n}$ given by $F^i := \rtp(\wtw A)^{i+n}$ and using the filtration $Q_{np-n}F^{\bullet}$ on $ \rtp(\wtw A)^{n}/\rtp(\wtw A)^{pn}$.

\end{proof}

\begin{nota}\label{Qnvarphi}
Let $Q_n^0\varphi$ denote $(Q_n^0\varphi_p)^{hS^1}$ where $Q_n^0\varphi_p$ is the weight zero component of the map $Q_n\varphi_p$ as in \autoref{exam qn and cn of may filtration}.
\end{nota}
\begin{lem}\label{lemm frobenius is surjective on the quotient}
Let $A$ be an $\bE_1$-ring with $\pis A \cong \field[x_{2m}]$ and $p \nmid m>0$. The map
\begin{equation}\label{map described in notation}
Q_n^0 \varphi \co Q_n^0 \rntc(\tau_{\geq \bullet }A) \to Q_n^0R_p\rtp(\tau_{\geq \bullet }A)
\end{equation}
described in \autoref{Qnvarphi} is an isomorphism in $\pi_k$ for $k\geq n-1$. 
\end{lem}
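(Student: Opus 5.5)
We reduce to the associated graded, where the May spectral sequence inputs from \autoref{theo tc ntc tp of formal dgas} apply, and then climb back up the finite filtration $Q_n$ by a five lemma argument.

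By \autoref{lemma qn of orbit is tc of qn}, the functors $-^{hS^1}$ and $-^{tS^1}$ commute with $Q_n$. The map \eqref{map described in notation} is therefore the weight zero component of a map of finite towers
\[
Q_n \rntc(\wtw A) \longrightarrow Q_n R_p \rtp(\wtw A),
\]
where $R_p\rtp(\wtw A)$ has weight $i$ component $\rtp(\wtw A)^{ip}$ (using \autoref{rema tcp and then hs1 agree with ts1 when p complete} to replace $(\rthh^{tC_p})^{hS^1}$ by $\rtp$). Both towers vanish in weights $\geq n$. By \autoref{lemm associated graded of qn}, their associated gradeds in weights $0\le i<n$ are $\gr^i\rntc(\wtw A)\simeq \rntc(\pis A)^i$ and $\gr^i R_p\rtp(\wtw A)$, using \autoref{E1page} and \autoref{prop assoc graded is the formal dga}.

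The first thing to check is what the untwisted Frobenius $\varphi_p$ does on associated gradeds. It is a filtered map $\rthh(R^\bullet)\to R_p\rthh(R^\bullet)^{tC_p}$, so on associated graded it is the twisted Frobenius $\rthh(\pis A)^i \to (\rthh(\pis A)^{ip})^{tC_p}$ of the graded ring $\pis A$. The only wrinkle is that $\gr^i$ of $R_p F^\bullet$ is $F^{ip}/F^{ip+p}$ rather than $\gr^{ip}F$. In the Whitehead grading, $\rthh(\pis A)$ is concentrated in weights divisible by $2m$ (see \autoref{rema trivial in odd weights}, and more precisely weights that are multiples of $2m$). So $F^{ip}/F^{ip+p}$ has a finite filtration with graded pieces $\gr^{ip+j}$ for $0\le j<p$, and only the pieces with $ip+j\equiv 0 \pmod{2m}$ can be nonzero. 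I would verify that the Frobenius lands in the $j=0$ piece. The hardest bookkeeping is this identification of $\gr(R_p\rtp(\wtw A))$ with $\rtp(\pis A)^{ip}$ plus extra pieces; those extra pieces have weight not divisible by $p$.

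Granting this identification, \autoref{theo tc ntc tp of formal dgas}\eqref{item 3 frobenius is iso} gives, in weight $i=2i'$, that $\pi_k\varphi$ is an isomorphism $\pi_k\rntc(\field[x_{2m}])^{2i'}\to \pi_k\rtp(\field[x_{2m}])^{2i'p}$ for all $k\ge 2i'+1$. In particular it is an isomorphism for $k\ge i+1$, and hence for all $k\geq n$ since $i\le n-1$. The extra pieces $\gr^{ip+j}\rtp$ with $p\nmid ip+j$ are killed by \autoref{theo tc ntc tp of formal dgas}\eqref{item 2 frobenius is iso}, or are zero for weight parity reasons by \autoref{rema trivial in odd weights}.

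Next I induct up the finite tower from weight $n-1$ down to weight $0$, using the cofiber sequences $\gr^i \to Q_n^i \to Q_n^{i+1}$ and the five lemma on long exact sequences. All terms are concentrated in odd degrees by \autoref{theo tc ntc tp of formal dgas}\eqref{item 1 frobenius is iso}, together with the finite filtration argument used in the proof of \autoref{lemm weight reducing map is surjective for tp}. So the long exact sequences split into short exact sequences in odd degrees, and the five lemma applies degreewise. The graded pieces are isomorphisms for $k\ge n$.

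The remaining case is $k=n-1$. When $n-1$ is even both sides vanish. When $n-1$ is odd, $k=n-1\ge 2i'+1$ holds for every weight $i=2i'\le n-1$ that can contribute, since weight $n-1$ is odd and hence trivial. So the graded statement holds for $k\ge n-1$, and the induction gives the isomorphism on $Q_n^0$. The main obstacle I expect is making the $R_p$ associated graded identification precise, together with this parity and edge-degree bookkeeping.
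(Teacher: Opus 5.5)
Your proposal is correct and is essentially the paper's argument. The paper also identifies $\textup{gr}(R_p\rtp(\wtw A))$ with $R_p\rtp(\field[x_{2m}])$, because $\rtp(\field[x_{2m}])$ vanishes in all weights not of the form $2pk$ (\autoref{rema trivial in odd weights} together with \autoref{theo tc ntc tp of formal dgas}\eqref{item 2 frobenius is iso}, the latter being what kills odd weights of $R_p\rtp$ when $p=2$); it then uses odd concentration and $k\geq n-1\geq 2i$ with $k$ odd to invoke \autoref{theo tc ntc tp of formal dgas}\eqref{item 3 frobenius is iso}, and it phrases your five-lemma climb as a comparison of collapsing, finitely filtered spectral sequences.

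Two cosmetic slips. The cofiber sequences run $Q_n^{i+1}\to Q_n^i\to \textup{gr}^i$, not in the reverse order. And once the pieces $\textup{gr}^{ip+j}$ with $0<j<p$ vanish, there is nothing further to check about where the Frobenius ``lands''.
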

\begin{proof}

In general, the functor $R_p$ (restriction along $ \N \xrightarrow{\cdot p} \N$) may not commute with taking the associated graded. However, it does in the situation above. This is because the associated graded of $\rtp(\wtw A)$, i.e.\ $\rtp(\field[x_{2m}])$, is trivial in odd weights (\autoref{rema trivial in odd weights}) and also trivial in even weights $2i$ for which $p \nmid i$ by \autoref{theo tc ntc tp of formal dgas} \eqref{item 2 frobenius is iso}. In particular, the structure maps 
\[\rtp(\wtw A)^{i+1} \to \rtp(\wtw A)^i\]
are equivalences except when $i = 2pk$ for some $k>0$. In summary, we deduce
\begin{equation}\label{eq rp commutes with associated graded}
\gr(R_p\rtp(\wtw A)) \simeq R_p\rtp(\field[x_{2m}])\end{equation}
where  $R_p$ on the right hand side denotes restriction along $\N^{\textup{ds}} \xrightarrow{\cdot p} \N^{\textup{ds}}$.

With this in mind, both   $Q_n^0 \rntc(\tau_{\geq \bullet }A)$ and $Q_n^0R_p\rtp(\tau_{\geq \bullet }A)$ have finite $\N$-filtrations given by   $Q_n \rntc(\tau_{\geq \bullet }A)$ and $Q_nR_p\rtp(\tau_{\geq \bullet }A)$ respectively where the associated graded is concentrated in odd degrees due to \autoref{lemm associated graded of qn} and \autoref{theo tc ntc tp of formal dgas} \eqref{item 1 frobenius is iso}. Then the corresponding spectral sequences are concentrated in finitely many filtrations 
and they are concentrated in odd total degrees; we deduce that  both $Q_n^0R_p\rtp(\tau_{\geq \bullet }A)$ and $Q_n^0 \rntc(\tau_{\geq \bullet }A)$ are concentrated in odd degrees. Therefore, we may assume that $k$ is odd. 

In more detail, the map between the $\mathrm{E}_1$-pages 
\begin{equation}\label{map-E1-Q} \pi_*\gr Q_n \rntc(\tau_{\geq \bullet }A)\to \pi_*\gr Q_nR_p\rtp(\tau_{\geq \bullet }A)
\end{equation}
of these spectral sequences is given by the Frobenius 
\begin{equation}\label{eq map of E1pages}
\pi_k\rntc(\field[x_{2m}])^{j} \to \pi_k \rtp(\field[x_{2m}])^{pj}
\end{equation}
at the $(k,j)$-spot where $k$ is total degree and $j$ is weight, 
when $j\leq n-1$ and both source and target of the map \eqref{map-E1-Q} of are trivial for $j \geq n$ by \autoref{lemm associated graded of qn}.
Again due to  \autoref{rema trivial in odd weights} (and \autoref{theo tc ntc tp of formal dgas} \eqref{item 2 frobenius is iso} for $p=2$), both source and target of \eqref{eq map of E1pages} are trivial for odd $j$, so we only need to consider 
\[\pi_k\rntc(\field[x_{2m}])^{2i} \to \pi_k \rtp(\field[x_{2m}])^{2pi}\]
for  $2i\leq n-1$.  Since $k\geq n-1$ by hypothesis we obtain $k\geq 2i$, but since $k$ is odd this means $k\geq 2i+1$. Now this map is an isomorphism for $k \geq 2i+1$ by \autoref{theo tc ntc tp of formal dgas} \eqref{item 3 frobenius is iso}. Therefore, the induced map of spectral sequences is an isomorphism in total degree $k$. Both spectral sequences collapse on the first page and are concentrated in finitely many filtrations, so both source and target of the map of spectral sequences strongly converge and we deduce the desired result.

\end{proof}

\begin{nota}
We let $Q_n\widetilde{\varphi}$ denote $(Q_n\widetilde{\varphi}_p)^{hS^1}$ and similarly let  $Q_n^0\widetilde{\varphi}$ denote the weight zero component $(Q_n^0\widetilde{\varphi}_p)^{hS^1}$; see \autoref{exam qn and cn of may filtration}.
\end{nota}
\begin{prop}\label{coro frobenius is surjective in homotopy}
Let $A$ be an $\bE_1$-ring with $\pis A \cong \field[x_{2m}]$ and $p\nmid m>0$. Then the map 
\[Q_n^0\widetilde{\varphi} \co Q_n^0\rntc(\wtw A) \to Q_n^0 \rtp(\wtw A)\]
is surjective in $\pi_k$ for $k \geq n-1$.
\end{prop}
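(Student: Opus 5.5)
The map $Q_n^0\widetilde{\varphi}$ is by construction the weight-zero component, after applying $-^{hS^1}$, of the bottom horizontal composite in \eqref{eq frobenius of cn and qn}. So I would write it as a composite of two maps:
\[
Q_n^0\rntc(\wtw A)\xrightarrow{Q_n^0\varphi} Q_n^0R_p\rtp(\wtw A)\longrightarrow Q_n^0\rtp(\wtw A).
\]
Here the first map is the one from \autoref{Qnvarphi}. The second is the weight-reducing map \eqref{lower right corner of eq frobenius of cn and qn}, obtained from the natural transformation $R_p\Rightarrow\mathrm{id}$ of \autoref{natural-trans-R}.

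To make this decomposition legitimate, I need two facts.
\begin{enumerate}
\item Taking homotopy $S^1$-fixed points commutes with $Q_n$ and with evaluation at weight $0$ (\autoref{lemma qn of orbit is tc of qn}).
\item The targets $(Q_n\rthh(\wtw A)^{tC_p})^{hS^1}$ and $(R_p\rthh(\wtw A)^{tC_p})^{hS^1}$ may be replaced by their $\rtp$ counterparts.
\end{enumerate}
The second fact holds by \autoref{rema tcp and then hs1 agree with ts1 when p complete}. That remark applies because $\wtw A$ is $\thh$-convergent (\autoref{ex-thh-convergent}) and $\thh(\pis A)$ is $p$-complete, since $\pis A=\field[x_{2m}]$ is an $\fp$-algebra. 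The identification is compatible with $R_p$ and $Q_n$, because both are restrictions and cofibers computed levelwise.

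Once the factorization is in place, I take $\pi_k$ with $k\ge n-1$. The first map is an isomorphism on $\pi_k$ by \autoref{lemm frobenius is surjective on the quotient}. The second map is surjective on $\pi_*$ by \autoref{lemm weight reducing map is surjective for tp}. A composite of an isomorphism followed by a surjection is surjective, which gives the claim.

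The only real obstacle is bookkeeping: checking that the factorization agrees with the definition of $Q_n\widetilde{\varphi}_p$ in \autoref{exam qn and cn of may filtration}. Concretely, applying $-^{hS^1}$ to the bottom row of \eqref{eq frobenius of cn and qn} and then restricting to weight $0$ should give exactly $Q_n^0\varphi$ followed by \eqref{lower right corner of eq frobenius of cn and qn}. This is immediate because $-^{hS^1}$ is applied levelwise and is functorial in the filtered $S^1$-spectra.
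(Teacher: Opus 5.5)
Your proposal is correct and matches the paper's proof. The paper likewise writes $Q_n^0\widetilde{\varphi}$ as the composite of $Q_n^0\varphi$ from \eqref{map described in notation}, which is an isomorphism on $\pi_k$ for $k\geq n-1$ by \autoref{lemm frobenius is surjective on the quotient}, followed by the weight-reducing map \eqref{lower right corner of eq frobenius of cn and qn}, which is surjective on $\pi_*$ by \autoref{lemm weight reducing map is surjective for tp}; your identification of $(\rthh(\wtw A)^{tC_p})^{hS^1}$ with $\rtp(\wtw A)$ via \autoref{rema tcp and then hs1 agree with ts1 when p complete} is left implicit there but is accurate.
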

\begin{proof}
This map is the composite of the maps \eqref{lower right corner of eq frobenius of cn and qn} and 
\eqref{map described in notation} so the result follows from 
\autoref{lemm weight reducing map is surjective for tp} and \autoref{lemm frobenius is surjective on the quotient}.
\end{proof}

\subsection{Cardinality of TC of \texorpdfstring{$\bE_1$-rings}{ring spectra} with polynomial homotopy}

Here, our goal is to prove \autoref{theo even trivial gives odd}.

\begin{prop}\label{prop cardinalities of quotients of ntc and ntp}
Let  $A$ be an  $\bE_1$-ring with $\pi_*A \cong \field[x_{2m}]$ where $p\nmid m>0$ and $\field$ be a finite field of characteristic $p$. In this situation, we have 
\begin{enumerate}
\item $\pi_* Q_n^0 \rntc(\tau_{\geq \bullet}A)$ and $\pi_* Q_n^0 \rtp(\tau_{\geq \bullet}A)$ are concentrated in odd degrees\footnote{The reader may notice that these results were already proven within the proof of \autoref{lemm weight reducing map is surjective for tp} and \autoref{lemm frobenius is surjective on the quotient} however, we include and prove them here in order to improve the flow of the exposition.} and they are finite in each odd degree. 
\item We have the following equalities:
\begin{equation*}
\begin{split}
\lvert \pi_\ell Q_n^0 \rntc(\tau_{\geq \bullet}A)\rvert =& \lvert \pi_\ell Q_n^0 \rntc(\tau_{\geq \bullet} \field[x_{2m}]) \rvert\\
\lvert \pi_\ell Q_n^0 \rtp(\tau_{\geq \bullet}A)\rvert =& \lvert \pi_\ell Q_n^0 \rtp(\tau_{\geq \bullet} \field[x_{2m}]) \rvert \,.
\end{split}
\end{equation*}

\end{enumerate}
\end{prop}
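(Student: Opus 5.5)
The plan is to use the finite filtrations $Q_n\rntc(\tau_{\geq \bullet}A)$ and $Q_n\rtp(\tau_{\geq \bullet}A)$ on the weight-zero pieces $Q_n^0\rntc(\tau_{\geq \bullet}A)$ and $Q_n^0\rtp(\tau_{\geq \bullet}A)$. By \autoref{lemm associated graded of qn}, the associated graded of $Q_n F^\bullet$ agrees with $\gr F^\bullet$ in weights $t<n$ and vanishes in weights $t \geq n$. By \autoref{E1page} and \autoref{prop assoc graded is the formal dga}, together with \autoref{coro frel associated graded and strong convergence}, the relevant associated gradeds are $\rntc(\field[x_{2m}])^t$ and $\rtp(\field[x_{2m}])^t$ for $0\le t<n$. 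The filtration therefore gives a spectral sequence supported in finitely many filtrations $0\le t\le n-1$, and it converges strongly.

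For part (1), I apply \autoref{theo tc ntc tp of formal dgas} \eqref{item 1 frobenius is iso}. Since $\field$ is finite, each weight piece $\rntc(\field[x_{2m}])^t$ and $\rtp(\field[x_{2m}])^t$ has homotopy concentrated in odd degrees and finite in each degree. The $\mathrm{E}_1$-page therefore has only odd total degrees, so every differential (which changes total degree by one) is zero and the spectral sequence collapses. The abutment is then concentrated in odd degrees. Each $\pi_\ell$ has a finite filtration with finite graded pieces, so it is finite.

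For part (2), I run the identical argument for the formal DGA $\field[x_{2m}]$ with its own Whitehead filtration. By \autoref{prop assoc graded is the formal dga}, $\gr \tau_{\geq\bullet}\field[x_{2m}] \simeq \field[x_{2m}] \simeq \gr\tau_{\geq\bullet}A$, so both spectral sequences have the same $\mathrm{E}_1$-page, and both collapse. Because each $\pi_\ell$ has a finite filtration with finite graded quotients, its order is the product of the orders of those quotients. Hence
\[
\lvert \pi_\ell Q_n^0 \rntc(\tau_{\geq \bullet}A)\rvert=\prod_{t=0}^{n-1}\lvert \pi_\ell \rntc(\field[x_{2m}])^t\rvert=\lvert \pi_\ell Q_n^0 \rntc(\tau_{\geq \bullet}\field[x_{2m}])\rvert,
\]
and the same holds for $\rtp$. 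The groups themselves may differ because of extension problems; only the cardinalities agree.

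The only point needing care is identifying the $\mathrm{E}_1$-page in the relative setting. This means checking that $\gr Q_n$ commutes with $-^{hS^1}$ and $-^{tS^1}$ (\autoref{lemma qn of orbit is tc of qn}). It also means using the $p$-completeness from \autoref{rema tcp and then hs1 agree with ts1 when p complete}, which holds because $p=0$ in $\pi_0 A$. Weight $0$ contributes nothing in the relative setting, and beyond these checks the argument is formal.
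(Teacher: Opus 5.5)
Your proposal is correct and follows essentially the same route as the paper. The paper likewise uses the finite filtration $Q_n$, whose associated graded is the weight-$<n$ part of $\rntc(\field[x_{2m}])$ (resp.\ $\rtp(\field[x_{2m}])$). It gets collapse of the resulting spectral sequence from odd-degree concentration, and it takes the product of the orders of the finite graded pieces for both $A$ and $\field[x_{2m}]$. The $p$-completeness and the commutation of $Q_n$ with $-^{hS^1}$, $-^{tS^1}$ that you flag are harmless but not needed, since \autoref{E1page} identifies the associated graded of $\rntc(\wtw A)$ and $\rtp(\wtw A)$ directly.
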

\begin{proof}
We prove the statements on topological negative cyclic homology, the ones on topological periodic homology follow in the same manner. 

By \autoref{lemm associated graded of qn}, $\gr(Q_n \rntc(\tau_{\geq \bullet}A))$ is trivial in weights $\geq n$ and given by
\[
\gr(\rntc(\tau_{\geq \bullet}A))\simeq \rntc(\field[x_{2m}])
\]
in weights $< n$ by \autoref{E1page}. Therefore, 
$\pis \gr(Q_n \rntc(\tau_{\geq \bullet}A))$ 
is concentrated in odd degrees by \autoref{theo tc ntc tp of formal dgas} \eqref{item 1 frobenius is iso}. 
In particular, the corresponding spectral sequence is concentrated in odd total degrees, hence it collapses on the first page, and  this proves the first part of \eqref{item 1 frobenius is iso}. 
Moreover, this spectral sequence is concentrated in finitely many filtrations (due to the definition of $Q_n$) and hence, for odd $\ell$,  one obtains a filtration
\begin{equation}\label{eq filtration on homotopy of qn ntc}
0 \subseteq M^{n-1} \subseteq M^{n-2} \subseteq \cdots \subseteq M^1 \subseteq M^0:= \pi_\ell Q_n^0\big(\rntc(\tau_{\geq \bullet} A)\big)
\end{equation}
of $\pi_\ell Q_n^0\big(\rntc(\tau_{\geq \bullet} A)\big)$. Indeed, this is just $\pi_\ell-$ applied levelwise to the tower $Q_n(\rntc(\wtw A))$. Here, we know that the structure maps $M^{i} \to M^{i-1}$ are inclusions by \autoref{theo tc ntc tp of formal dgas} \eqref{item 1 frobenius is iso}; i.e.\ because the associated graded of $Q_n(\rntc(\wtw A))$ is concentrated in odd degrees, see \autoref{lemm associated graded of qn}. Moreover, we can identify 
\begin{equation}\label{eq quotient of the filtration on homotopy of qn}
M^{i-1}/M^{i}\cong \pi_{\ell} (\rntc(\field[x_{2m}])^{i-1})
\end{equation}
for $i\leq n$ where $\rntc(\field[x_{2m}])^{i-1}$ denotes the $i-1$ weight component of $\rntc(\field[x_{2m}])$. In particular, each quotient $M^{i-1}/M^{i}$ is finite by \autoref{theo tc ntc tp of formal dgas} \eqref{item 1 frobenius is iso}; this finishes the proof of \eqref{item 1 frobenius is iso}. By a decreasing induction over the tower $M^\bullet$, we deduce
\[\lvert \pi_\ell Q_n^0\big(\rntc(\tau_{\geq \bullet} A)\big)\rvert  = \prod_{i \leq n} \lvert \pi_\ell \rntc(\field[x_{2m}])^{i-1}\rvert\,.\]
Replacing $A$ above with $\field[x_{2m}]$, gives
\[\lvert \pi_\ell Q_n^0\big(\rntc(\tau_{\geq \bullet} \field[x_{2m}])\big)\rvert  = \prod_{i \leq n} \lvert \pi_\ell \rntc(\field[x_{2m}])^{i-1}\rvert\,.\]
Since the right hand side of these equalities are the same, we deduce \eqref{item 2 frobenius is iso}.
\end{proof}

We will use the following elementary observation for our cardinality results.

\begin{lem}\label{lemm general counting}
    Let $f \co G \to H$ be a homomorphism between finite abelian groups. Then 
    \[
    \lv \kernel(f) \rv = \frac{\lv G \rv}{\lv H \rv} \lv \coker(f) \rv 
    \]
\end{lem}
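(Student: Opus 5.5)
The identity follows from the first isomorphism theorem together with two short exact sequences of finite abelian groups. I will use multiplicativity of orders: for a short exact sequence $0\to K\to G\to I\to 0$ of finite groups we have $\lv G\rv=\lv K\rv\cdot\lv I\rv$, because $I\cong G/K$ and Lagrange's theorem gives $\lv G/K\rv=\lv G\rv/\lv K\rv$.

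Let $I=\im(f)\subseteq H$.

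First, consider the exact sequence
\[0\to \kernel(f)\to G\to I\to 0.\]
Multiplicativity gives $\lv G\rv=\lv\kernel(f)\rv\cdot\lv I\rv$.

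Second, consider the exact sequence
\[0\to I\to H\to \coker(f)\to 0,\]
where $\coker(f)=H/I$. Multiplicativity gives $\lv H\rv=\lv I\rv\cdot\lv\coker(f)\rv$.

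All groups involved are subgroups or quotients of the finite groups $G$ and $H$, so they are finite and every order is a positive integer. Solving the first equation gives $\lv\kernel(f)\rv=\lv G\rv/\lv I\rv$. Solving the second gives $\lv I\rv=\lv H\rv/\lv\coker(f)\rv$. Substituting the second into the first yields
\[\lv\kernel(f)\rv=\frac{\lv G\rv}{\lv H\rv}\,\lv\coker(f)\rv,\]
which is the claim.

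There is no genuine obstacle here. The only point needing care is that every order is finite and nonzero, which is why the finiteness hypothesis on $G$ and $H$ is required.
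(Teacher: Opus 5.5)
Your proof is correct and follows the paper's argument: the paper uses the same two short exact sequences $0\to\kernel(f)\to G\to\im(f)\to 0$ and $0\to\im(f)\to H\to\coker(f)\to 0$, and combines the two resulting product formulas for $\lv G\rv$ and $\lv H\rv$.
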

\begin{proof}
    There are short exact sequences 
    \[0 \to \kernel (f) \to G \to \im(f) \to 0\]
    and 
    \[0 \to \im(f) \to H \to \coker(f)\to 0.\]
    The first one gives $\lv G \rv = \lv \kernel (f)\rv \lv \im(f)\rv$ and the second gives $\lv H \rv = \lv \im(f) \rv \lv \coker(f) \rv$. 
\end{proof}

\begin{lemm}\label{lemm odd and even tc group cardinality in terms of quotients of quotient ntc and tp}
Let $A$ be an $\bE_1$-ring with $\pis A \cong \field[x_{2m}]$ where $p \nmid m >0$  and $\field$ be a finite field of characteristic $p$. 
Then both $\lvert \pi_{2r+1}\rtc (A) \rvert$ and $\lvert \pi_{2r} \rtc (A)\rvert$ are finite and we have:
\[
\lvert \pi_{2r+1}\rtc (A) \rvert = \frac{\lvert \pi_{2r+1} Q_n^0 \rntc(\tau_{\geq \bullet} \field[x_{2m}]) \rvert}{\lvert \pi_{2r+1} Q_n^0 \rtp(\tau_{\geq \bullet} \field[x_{2m}]) \rvert} \lvert \pi_{2r} \rtc (A)\rvert 
\]
as soon as $2r+1\leq n$.
\end{lemm}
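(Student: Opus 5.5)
The plan is to work entirely inside the weight zero component of the partial orbit filtration $Q_n\filorb\rtc(A)$, where all homotopy groups are finite and concentrated in a controlled range. First, by \autoref{lemma qn of orbit is tc of qn}, $Q_n^0\filorb\rtc(A)$ is the fiber of
\[
Q_n^0\widetilde{\varphi}-\can\co Q_n^0\rntc(\wtw A)\longrightarrow Q_n^0\rtp(\wtw A).
\]
Here I use $Q_n$ commuting with $-^{hS^1}$, $-^{tC_p}$ and fibers, and \autoref{rema tcp and then hs1 agree with ts1 when p complete} to identify the target with $Q_n^0\rtp$; the latter applies since $\thh(\field[x_{2m}])$ is an $\fp$-module and $\wtw A$ is strongly $\thh$-convergent by \autoref{ex-thh-convergent}. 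By \autoref{prop cardinalities of quotients of ntc and ntp}, source and target have finite homotopy concentrated in odd degrees. The long exact sequence then gives
\[
\pi_{2r+1}Q_n^0\filorb\rtc(A)=\ker\pi_{2r+1}(Q_n^0\widetilde{\varphi}-\can),\qquad \pi_{2r}Q_n^0\filorb\rtc(A)=\coker\pi_{2r+1}(Q_n^0\widetilde{\varphi}-\can),
\]
and both groups are finite.

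Second, I apply \autoref{lemm general counting} to $f=\pi_{2r+1}(Q_n^0\widetilde{\varphi}-\can)$. This gives $|\ker f|=\frac{|G|}{|H|}|\coker f|$, where $G=\pi_{2r+1}Q_n^0\rntc(\wtw A)$ and $H=\pi_{2r+1}Q_n^0\rtp(\wtw A)$. By \autoref{prop cardinalities of quotients of ntc and ntp}(2), $|G|$ and $|H|$ equal the corresponding cardinalities for $\tau_{\ge\bullet}\field[x_{2m}]$, which is exactly the ratio in the statement.

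Third, I compare with $\rtc(A)$ using \autoref{prop partial filtration aprroximate tc}: the map $\pi_s\rtc(A)\to\pi_sQ_n^0\filorb\rtc(A)$ is an isomorphism for $s\le n$. Since $2r<2r+1\le n$, both $\pi_{2r}$ and $\pi_{2r+1}$ are identified. This yields finiteness of $\pi_{2r}\rtc(A)$ and $\pi_{2r+1}\rtc(A)$, and the claimed formula.

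For degrees where $2r+1>n$, finiteness follows by simply enlarging $n$, since $n$ is arbitrary. The only subtle point is checking the hypotheses of \autoref{prop partial filtration aprroximate tc}, namely strong $\thh$-convergence and $p$-completeness of $\thh(\gr)$. Surjectivity of Frobenius (\autoref{coro frobenius is surjective in homotopy}) is not needed here, because counting with \autoref{lemm general counting} suffices.
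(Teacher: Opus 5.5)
Your proposal is correct and follows essentially the same argument as the paper. You identify $Q_n^0\filorb\rtc(A)$ as the fiber of $Q_n^0(\widetilde{\varphi}-\can)$, use odd concentration and degreewise finiteness from \autoref{prop cardinalities of quotients of ntc and ntp} to read off $\pi_{2r+1}$ and $\pi_{2r}$ as kernel and cokernel, count with \autoref{lemm general counting}, and transfer back to $\rtc(A)$ via \autoref{prop partial filtration aprroximate tc} for $2r+1\le n$.
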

\begin{proof}
Let $n\geq2r+1$, due to \autoref{prop partial filtration aprroximate tc}, it suffices to prove this result for $Q_n^0 \filorb \rtc(A)$ instead of  $\rtc(A)$. By \autoref{lemma qn of orbit is tc of qn}, there is a fiber sequence 
\[Q_n^0 \filorb \rtc(A) \to Q_n^0 \rntc(\tau_{\geq \bullet} A) \xrightarrow{Q_n^0(\widetilde{\varphi}-\can)} Q_n^0 \rtp(\wtw A)\]
for each $A$ as in the statement of the lemma (since $\thh(\pis A)$ is $p$-complete). 

By \autoref{prop cardinalities of quotients of ntc and ntp}, the center and the right hand terms above have homotopy groups concentrated in odd degrees. This provides the following. 
\begin{equation*}
\begin{split}
\pi_{2r+1}\big( Q_n^0 \filorb \rtc(A)\big) =& \textup{ker}\big(\pi_{2r+1}(Q_n^0(\widetilde{\varphi}-\can)\big) \\
\pi_{2r}\big( Q_n^0 \filorb \rtc(A)\big) = &\textup{coker}\big(\pi_{2r+1}(Q_n^0(\widetilde{\varphi}-\can)\big)
\end{split}
\end{equation*}

Again by \autoref{prop cardinalities of quotients of ntc and ntp},  the  homotopy groups of $Q_n^0 \rntc(\tau_{\geq \bullet} A)$ and $Q_n^0 \rtp(\tau_{\geq \bullet} A)$ are degreewise finite. Therefore, we can apply \autoref{lemm general counting} for $\pi_{2r+1}(Q_n^0(\widetilde{\varphi}-\can))$ together with \autoref{prop cardinalities of quotients of ntc and ntp}  to obtain the desired result.
\end{proof}

\theoeventrivialgivesodd*

\begin{proof}

 Applying \autoref{lemm odd and even tc group cardinality in terms of quotients of quotient ntc and tp} to $\field_{q}[x_{2m}]$, we have 
\[\lvert \pi_{2r+1}\rtc (\field_q[x_{2m}]) \rvert = \frac{\lvert \pi_{2r+1} Q_n^0 \rntc(\tau_{\geq \bullet} \field_q[x_{2m}]) \rvert}{\lvert \pi_{2r+1} Q_n^0 \rtp(\tau_{\geq \bullet} \field_q[x_{2m}]) \rvert} \lvert \pi_{2r} \rtc (\field_q[x_{2m}])\rvert \,.\]
By \autoref{theo tc ntc tp of formal dgas} \eqref{item 4 frobenius is iso}, we know that $\lvert \pi_{2r} \rtc (\field_q[x_{2m}])\rvert = 1$ and $\lvert \pi_{2r+ 1}\rtc (\field_q[x_{2m}]) \rvert  = \lvert \mathbb{W}_{\floor*{\frac{r}{m}}}(\field_q)\rvert$. We deduce that 
\[ \lvert \mathbb{W}_{\floor*{\frac{r}{m}}}(\field_q)\rvert=\frac{\lvert \pi_{2r+1} Q_n^0 \rntc(\tau_{\geq \bullet} \field_q[x_{2m}]) \rvert}{\lvert \pi_{2r+1} Q_n^0 \rtp(\tau_{\geq \bullet} \field_q[x_{2m}]) \rvert}\,.\]
Combining this last equality with \autoref{lemm odd and even tc group cardinality in terms of quotients of quotient ntc and tp} gives the desired result.

\end{proof}
The following corollary is immediate from \autoref{theo even trivial gives odd}.  
\begin{coro}
Let $A$ be an $\bE_1$-ring with $\pis A \cong \field[x_{2m}]$ where $\field$ is a finite field of characteristic $p$ and assume $p \nmid m>0$. Then 
\[ \lvert \pi_{2r+1}\rtc(A)\rvert \ge  \lvert \mathbb{W}_{\floor*{\frac{r}{m}}}(\field) \rvert  \,.
\]
\end{coro}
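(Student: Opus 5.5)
The corollary follows from \autoref{theo even trivial gives odd}, so no new homotopical input is needed, only a cardinality comparison.

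\textbf{Step 1: finiteness.} Apply \autoref{theo even trivial gives odd} with $\field_q=\field$; its hypotheses ($\field$ finite, $p\nmid m>0$) are exactly ours. It gives that $\lvert \pi_s \rtc(A)\rvert$ is finite for every integer $s$, together with the identity
\[\lvert \pi_{2r+1}\rtc(A)\rvert = \lvert \mathbb{W}_{\floor*{\frac{r}{m}}}(\field)\rvert\cdot \lvert \pi_{2r}\rtc(A)\rvert,\]
obtained by clearing denominators, which is legitimate since all quantities are finite.

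\textbf{Step 2: the even group has order at least one.} The group $\pi_{2r}\rtc(A)$ is a group, hence nonempty, so $\lvert \pi_{2r}\rtc(A)\rvert\geq 1$. Substituting this into the identity of Step 1 gives the claimed inequality.

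Equality holds exactly when $\pi_{2r}\rtc(A)=0$, as in the formal case of \autoref{theo tc ntc tp of formal dgas}. The only point needing care is the finiteness in Step 1, and this is already part of the statement of \autoref{theo even trivial gives odd}. There is no genuine obstacle.
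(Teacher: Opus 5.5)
Your proof is correct and matches the paper, which states that this corollary is immediate from \autoref{theo even trivial gives odd}. The argument is exactly as you give it: use the finiteness from that theorem to clear the denominator, then apply $\lvert \pi_{2r}\rtc(A)\rvert \geq 1$.
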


\section{Coefficients in algebraically closed fields}

Here, our goal is to prove \autoref{mainthmevenvanish}. Throughout this section, we let  $A$ denote an $\bE_1$-ring with $\pis A \cong \fp[x_{2m}]$ for  $p\nmid m>0$.

\subsection{Spherical Witt vectors and topological cyclic homology}
In this section, we prove \autoref{prop tc fiber sequence for the witt case} which allows us to study $\rtc(\sphwf\otimes A)$ only using the cyclotomic structure on $\sphwf$ and $\filorb \rtc(A)$.

We first discuss how to produce the structure of a cyclotomic $\bE_\infty$-ring on $\mathbb{S}_{W(\field)}$ in a compatible way with the usual Frobenius map on $W(\field)$. 
\begin{rema}\label{cyclotomic-structure-sphwf}
By \cite[Proposition 6.6]{yuan2023integralmodels} the natural map (with the trivial $S^1$-action on $\sphwf$) 
\[c \co \sphwf \to \sphwf^{hC_p} \to \sphwf^{tC_p}
\] 
is  an equivalence of  $\bE_\infty$-rings in $S^1$-spectra. Let $\phi$ denote the Tate valued Frobenius~\cite[Definition~IV.1.1]{nikolausscholze2018topologicalcyclic} on $\sphwf$ and consider
\[F \co \sphwf \xrightarrow{\phi} \sphwf^{tC_p} \xrightarrow[\simeq]{c^{-1}} \sphwf.\]
By \cite[Example 6.14]{yuan2023integralmodels}, $F$ is the map of $\bE_\infty$-rings induced by the Frobenius of $W(\field)$. Equipping with the trivial action, $F$ lifts to an $S^1$-equivariant map of  $\bE_\infty$-rings and therefore the $\bE_\infty$-ring map   $\phi$ also lifts to an  $S^1$-equivariant map of $\bE_\infty$-rings. This equips $\sphwf$ with the structure of an $\bE_\infty$-algebra in cyclotomic spectra. Abusing notation, we often consider the structure map of this cyclotomic structure as the map $F \co \sphwf \to \sphwf$. 
\end{rema}

The remark above and the following proposition are well known, cf. \cite[Proposition 2.10]{LW22}.
\begin{prop}\label{rema spherical witts as cyclotomic base}
The counit map $\thh(\sphwf) \to \sphwf$ is a map of  $\bE_\infty$ algebras in cyclotomic spectra and it exhibits $\sphwf$ as the $p$-completion of $\thh(\sphwf)$.
\end{prop}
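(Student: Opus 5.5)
We need to prove that the counit $\thh(\sphwf)\to\sphwf$ is a map of $\bE_\infty$-algebras in cyclotomic spectra, where $\sphwf$ carries the cyclotomic structure of \autoref{cyclotomic-structure-sphwf}, and that it exhibits $\sphwf$ as the $p$-completion of $\thh(\sphwf)$. We treat the two claims separately.

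\emph{Cyclotomic compatibility.} For an $\bE_\infty$-ring $R$, $\thh(R)$ is the initial $\bE_\infty$-ring with $S^1$-action under $R$. So the counit $\thh(R)\to R$, for $R$ with trivial $S^1$-action, is the $S^1$-equivariant $\bE_\infty$-map extending the identity. By \cite[\S IV]{nikolausscholze2018topologicalcyclic}, the cyclotomic Frobenius of $\thh(R)$ is the unique $S^1$-equivariant $\bE_\infty$-map $\thh(R)\to\thh(R)^{tC_p}$ extending the composite $R\to\thh(R)\xrightarrow{\ \Delta_p\ }(\thh(R)^{\otimes p})^{tC_p}\to\thh(R)^{tC_p}$, i.e.\ the Tate diagonal followed by multiplication.

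The key step is to compare the two maps $\thh(\sphwf)\to\sphwf^{tC_p}$. The first is the counit followed by $\phi$. The second is the Frobenius of $\thh(\sphwf)$ followed by $(\mathrm{counit})^{tC_p}$. Both are $S^1$-equivariant $\bE_\infty$-maps out of $\thh(\sphwf)$, so by the universal property it suffices to compare their restrictions along $\sphwf\to\thh(\sphwf)$ as plain $\bE_\infty$-maps. Naturality of the Tate diagonal with respect to the $\bE_\infty$-map $\thh(\sphwf)\to\sphwf$ identifies the second restriction with $\sphwf\xrightarrow{\Delta_p}(\sphwf^{\otimes p})^{tC_p}\to\sphwf^{tC_p}$, which is exactly the Tate-valued Frobenius $\phi$ of \cite[Definition~IV.1.1]{nikolausscholze2018topologicalcyclic}. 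This agrees with the first restriction, and the identification holds coherently in $\bE_\infty$-cyclotomic spectra because everything is produced by the functorial constructions of \emph{loc.\ cit.}

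\emph{$p$-completion.} Both sides are connective, so it suffices to check that the counit is an equivalence after $-\otimes\fp$. We compute $\thh(\sphwf)\otimes\fp\simeq\thh(\sphwf)\otimes_{\sphwf}(\sphwf\otimes\fp)$. Using $\sphwf\otimes\fp\simeq\field$ and base change for $\thh$ of $\bE_\infty$-rings, this becomes $\thh(\field/\fp)$, i.e.\ $\mathrm{HH}$ of $\field$ relative to $\fp$, formed over $\mathbb{S}$ as $\field\otimes_{\field\otimes\field}\field$-type relative terms. For perfect $\field$, $\mathrm{HH}(\field/\fp)\simeq\field$ because the cotangent complex $L_{\field/\fp}$ vanishes, which is the classical input. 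Concretely, the map $\sphwf\otimes_{\mathbb{S}}\sphwf\to\sphwf$ becomes, mod $p$, $\field\otimes_{\fp}\field\to\field$, which is flat with vanishing cotangent complex. Hence $\thh(\sphwf)\otimes\fp\simeq\field\simeq\sphwf\otimes\fp$ via the counit. The main obstacle is getting this base-change identification exactly right; I would phrase it via $\thh(\sphwf)\otimes_{\mathbb{S}}\fp\simeq\thh(\field)\otimes_{\thh(\fp)}\fp$ and use that $\thh(\field)\simeq\field\otimes_{\fp}\thh(\fp)$ for perfect $\field$, which is the étale/perfect base-change for THH.
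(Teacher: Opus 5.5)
Your proposal is correct and is essentially the paper's argument. The universal property of $\thh$ reduces the cyclotomic compatibility to the restriction along $\sphwf\to\thh(\sphwf)$, where Nikolaus--Scholze's Corollary IV.2.4 identifies it with $\phi$. The $p$-completion claim reduces to the mod $p$ homology map $\thh(\sphwf)\otimes\fp\simeq\mathrm{HH}(\field/\fp)\to\field$ being an equivalence.

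One incidental slip: $\field\otimes_{\fp}\field\to\field$ is not flat in general. If $t\in\field$ is transcendental over $\fp$, the nonzerodivisor $t\otimes 1-1\otimes t$ lies in the kernel. Nothing in your argument depends on this, since $L_{\field/\fp}\simeq 0$ already gives $\mathrm{HH}(\field/\fp)\simeq\field$ via the HKR filtration.
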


\begin{proof} 
For the first statement, we need to prove that the square of $S^1$-equivariant $\bE_\infty$-algebras within the diagram below commutes (see \autoref{cyclotomic-structure-sphwf}). 
\[
\begin{tikzcd}
\sphwf \arrow{r}   & \THH(\sphwf) \arrow{r} \arrow{d}{\varphi_p} & \sphwf  \arrow[d,"\phi"] \\  
 &  \THH(\sphwf)^{tC_p}  \arrow{r} & \sphwf^{tC_p}  \,
\end{tikzcd}
\]
By the universal property of $\thh$, it suffices to prove that the two composites in this square  agree as maps of $\bE_\infty$-rings  after precomposing with the top left horizontal arrow. Since the top horizontal composite is the identity map, this follows by \cite[Corollary~\RomanNumeralCaps{4}.2.4]{nikolausscholze2018topologicalcyclic}.

To see that the map $\THH(\sphwf) \rightarrow \sphwf $ is a $p$-complete equivalence, it suffices to note that it is an equivalence on mod $p$ homology which is the map $\thh(\field/\mathbb{F}_p)\xrightarrow{\simeq} \field$ that  is known to be an equivalence.
\end{proof}

By left Kan extension along the canonical inclusion $0 \to \mathbb{N}$, one obtains  an $\bE_\infty$-algebra in $\mathbb{N}$-filtered spectra that we also denote by $\sphwf$. Given an $\mathbb{N}$-filtered spectrum $F$, we write $\sphwf\otimes F$ for the Day convolution tensor product of these two $\mathbb{N}$-filtered spectra, which satisfies  $(\sphwf\otimes F)^i=\sphwf\otimes F^i$. We can similarly regard $\sphwf$ as an $\mathbb{E}_\infty$-algebra in graded spectra and given a graded spectrum $G$ we can consider the Day convolution $\sphwf\otimes G$, which satisfies $(\sphwf\otimes G)^i=\sphwf\otimes G^i$. 

\begin{rema}\label{rema base change for p adic filtration on the truncated witt}
As left Kan extension along $0\to \N$ is left adjoint to restriction, and since $S_{\field}^0 = W(\field)/p^n$ receives an $\bE_\infty$-map from $\sphwf$, we deduce by this adjunction that there is an $\bE_\infty$-map  $\sphwf \to S_{\field}^{\bullet}$ in $\Tow(\Sp)$; in particular, $S_{\field}^\bullet$ is an $\bE_\infty$ $\sphwf$-algebra. By direct observation, we see that there is an  equivalence  $\bE_\infty$ $\sphwf$-algebras
\begin{equation}\label{eq base change for p adic filtration}
\sphwf \otimes S_{\fp}^{\bullet} \xrightarrow{\simeq} S_{\field}^\bullet 
\end{equation}
given by extending scalars along the canonical map $S_{\fp}^{\bullet} \to S_{\field}^\bullet$.

\end{rema}
\begin{lemm}
Let $F$ be an $\mathbb{N}$-filtered spectrum. 
Then the canonical map
\begin{equation}\label{eq associated graded and sphwf otimes}
\sphwf \otimes \gr(F) \overset{\simeq}{\longrightarrow} \gr\big(\sphwf \otimes F\big) 
\end{equation}
is an equivalence. When $F$ is a $\mathbb{N}$-filtered twisted cyclotomic spectrum, then this is an equivalence of twisted cyclotomic graded spectra.
\end{lemm}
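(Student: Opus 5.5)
The plan is to identify both sides levelwise and reduce to the fact that $\sphwf\otimes-$ preserves cofiber sequences. Recall that for an $\mathbb{N}$-filtered spectrum $F$, $\mathrm{gr}^i F$ is the cofiber of the structure map $F^{i+1}\to F^i$. By the formula $(\sphwf\otimes F)^i=\sphwf\otimes F^i$ for the Day convolution with the filtered object $\sphwf$ (left Kan extended from weight $0$), the structure maps of $\sphwf\otimes F$ are $\sphwf\otimes(F^{i+1}\to F^i)$. Hence
\[
\mathrm{gr}^i(\sphwf\otimes F)\simeq \mathrm{cofib}\big(\sphwf\otimes F^{i+1}\to \sphwf\otimes F^i\big)\simeq \sphwf\otimes \mathrm{gr}^iF,
\]
because $\sphwf\otimes-$ is exact on spectra. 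Since the graded Day convolution also satisfies $(\sphwf\otimes G)^i=\sphwf\otimes G^i$, this gives the levelwise identification.

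To see that it is the \emph{canonical} map, I would argue as follows. The functor $\mathrm{gr}$ is symmetric monoidal, and $\mathrm{gr}$ of the filtered $\sphwf$ (concentrated via left Kan extension from weight $0$) is the graded $\sphwf$ concentrated in weight $0$. So the canonical map is the lax/strong monoidal comparison $\mathrm{gr}(\sphwf)\otimes\mathrm{gr}(F)\to\mathrm{gr}(\sphwf\otimes F)$, which is an equivalence by symmetric monoidality. The levelwise computation above confirms the weightwise description.

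For the twisted cyclotomic statement, the comparison map is natural in $F$ and $\mathrm{gr}$ is symmetric monoidal, so it is compatible with $S^1$-actions. The remaining point is compatibility with the Frobenius maps $\varphi_p\colon X^i\to (X^{ip})^{tC_p}$. On $\sphwf\otimes F$, the twisted cyclotomic structure is the tensor product of the cyclotomic structure on $\sphwf$ from \autoref{cyclotomic-structure-sphwf} (in weight $0$) with that of $F$, composed with the lax monoidal map $\sphwf^{tC_p}\otimes (F^{ip})^{tC_p}\to(\sphwf\otimes F^{ip})^{tC_p}$. I would check that $\mathrm{gr}$, applied levelwise, commutes with $-^{tC_p}$, since the Tate construction is exact and applied levelwise. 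The lax structure maps of $-^{tC_p}$ then commute with taking cofibers of structure maps, so the square relating $\mathrm{gr}$ of the Frobenius of $\sphwf\otimes F$ with $\mathrm{id}\otimes\mathrm{gr}$ of the Frobenius of $F$ commutes naturally.

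The main obstacle is making this last compatibility coherent rather than merely levelwise. The intended route is to view twisted cyclotomic objects as a lax equalizer of symmetric monoidal functors, following \cite[Appendix A]{antieau2020beilinson}. In that setting $\mathrm{gr}$ is a symmetric monoidal functor compatible with $R_p$ and with $-^{tC_p}$, so it induces a symmetric monoidal functor on twisted cyclotomic objects. The equivalence then follows formally from symmetric monoidality applied to $\sphwf\otimes F$.
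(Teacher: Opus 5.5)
Your proposal is correct and follows the same route as the paper: the equivalence is the levelwise statement that $\sphwf\otimes-$ is exact and hence commutes with the cofibers defining $\mathrm{gr}$. The twisted cyclotomic upgrade comes from the compatibility of $\mathrm{gr}$ with twisted cyclotomic structures, which the paper simply cites as \cite[Proposition~C.5.2]{hahn2020redshift} rather than rebuilding it from the lax-equalizer description.

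One precision: $\mathrm{gr}$ does not commute with $R_p$ in general. The Frobenius on $\mathrm{gr}(F)$ therefore involves the natural comparison $\mathrm{gr}\,R_p\Rightarrow R_p\,\mathrm{gr}$, and ``compatible with $R_p$'' in your last paragraph (and your levelwise Frobenius check) should be read in this lax sense.
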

\begin{proof}
Applying $\sphwf \otimes -$ in $\N$-filtered spectra applies $\sphwf \otimes -$ levelwise which commutes with cofibers and hence with the associated graded. Since $\gr$ is compatible with twisted cyclotomic spectra by~\cite[Proposition~C.5.2]{hahn2020redshift} this is also an equivalence of twisted cyclotomic spectra when $F$ is a twisted cyclotomic spectrum. 
\end{proof}

\begin{lemm}\label{lemm sphwf otimes thhwtwa is pcomplete}
Let $A$ be an $\bE_1$-ring with $\pis A \cong \fp[x_{2m}]$ for some $m>0$. Then the $\N$-filtered spectrum 
\[\sphwf \otimes \rthh(\wtw A)\]
is (levelwise) $p$-complete. 

Similarly, $\sphwf \otimes \rthh(S_{\fp}^\bullet)$ is levelwise $p$-complete where $S_{\fp}^\bullet$ is as in \autoref{nota  p adic filtration on truncated witt}.
\end{lemm}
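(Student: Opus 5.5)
The plan is to reduce to a statement about the associated graded and then run an induction up the tower, using completeness of the filtration. The inputs are \autoref{lemm thh is p complete when thh gr is}, \autoref{lemm conditional convergence of filtrations} and the equivalence \eqref{eq associated graded and sphwf otimes}.

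First, the tower $\sphwf \otimes \rthh(\wtw A)$ is complete, meaning its limit vanishes. By \autoref{ex-thh-convergent} and \autoref{coro frel associated graded and strong convergence}, the weight $t$ component of $\rthh(\wtw A)$ has connectivity at least $t$ for $t>0$; in weight $0$ it agrees with weight $1$ by \autoref{coro frel associated graded and strong convergence}\eqref{item 2 coro frel associated graded and strong convergence}. Since $\sphwf$ is connective, tensoring with it preserves these connectivity bounds. Therefore $\sphwf\otimes \rthh(\wtw A)^t$ is $t$-connective and the limit vanishes.

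Second, the associated graded is $p$-complete in each weight. By \eqref{eq associated graded and sphwf otimes} and \autoref{E1page}, this graded piece is $\sphwf\otimes \rthh(\pis A)$, where $\pis A=\fp[x_{2m}]$ is the formal DGA of \autoref{prop assoc graded is the formal dga}. This formal DGA is an $\fp$-algebra, so $\rthh(\pis A)$ is an $\fp$-module in each weight. Hence $\sphwf \otimes \rthh(\pis A)$ is a module over $\sphwf\otimes \fp \simeq \field$ (cf.\ \autoref{rema passage to witt vector coefficients for formal}), so it is $p$-torsion and in particular $p$-complete in each weight.

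Third, I run the same argument as in \autoref{lemm thh is p complete when thh gr is}. For each $i$, the cofiber $F^i/F^{j}$ is built from finitely many $p$-complete graded pieces, so it is $p$-complete. We have $F^i \simeq \lim_j F^i/F^j$ because $\lim_j F^j\simeq 0$, and $p$-complete spectra are closed under limits, so each $F^i$ is $p$-complete. This proves the levelwise $p$-completeness of $\sphwf \otimes \rthh(\wtw A)$.

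For $S_{\fp}^\bullet$ the argument is the same, with one change in the completeness step. Here $\textup{conn}(\thh(S_{\fp}^\bullet)^t)\geq t/(n-1)$ by \autoref{exam p adic filtration is thh convergent}, and this bound is still preserved by $\sphwf\otimes -$, so the limit again vanishes. The associated graded is $\sphwf\otimes\rthh(\fp[t]/t^n)$, which is an $\field$-module, hence $p$-complete.

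The main subtlety I expect is that $\sphwf\otimes-$ does not commute with limits, so completeness cannot be transported from $\rthh(\wtw A)$ formally. This is why the argument goes through connectivity bounds, which do pass through the tensor product, rather than through the limit itself.
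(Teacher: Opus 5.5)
Your argument is correct, but it is organized differently from the paper's.

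\textbf{The paper's route.} The paper first proves $p$-completeness in weight $0$ by a direct computation. The May spectral sequence for $\rthh(A)$ has $\mathrm{E}_1$-page $\pi_*\rthh(\fp[x_{2m}])$, which by \eqref{eq thh of formal dga} is finite in each total degree, so each $\pi_k\rthh(A)$ is a finite $p$-group. Flatness of $\sphwf$ then gives that $\pi_k(\sphwf\otimes\rthh(A))\cong W(\field)\otimes_{\mathbb{Z}}\pi_k\rthh(A)$ has bounded $p$-torsion, so $\sphwf\otimes\rthh(A)$ is $p$-complete. The paper then climbs the tower using the fiber sequences $F^{i+1}\to F^i\to \textup{gr}^iF$, where $\textup{gr}^iF$ is an $\fp$-module. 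For $S_{\fp}^\bullet$ the finiteness input comes from Speirs' computation together with B\"okstedt periodicity.

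\textbf{Your route.} You verify directly for the tensored tower the two hypotheses that drive \autoref{lemm thh is p complete when thh gr is}: vanishing limit and $p$-complete associated graded. You obtain the vanishing limit from connectivity bounds, which, unlike limits, survive $\sphwf\otimes-$. You then write each $F^i\simeq\lim_j F^i/F^j$ as a limit of finite extensions of $p$-complete pieces.

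\textbf{Comparison.} Your route needs no computation of $\thh$ of the associated graded and no finiteness input. It therefore applies verbatim to any $\thh$-convergent filtration whose associated graded is an $\fp$-algebra. The paper's route gives extra, more concrete information: the homotopy groups of the weight-zero term have bounded $p$-torsion.
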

\begin{proof}
We begin by showing that  $\pi_k \rthh(A)$ has bounded $p$-torsion for each $k$. The first page of the May spectral sequence computing $\pis \rthh(A)$ is given by  $\pis \rthh(\fp[x_{2m}])$, see \autoref{mayss}. There are isomorphisms 
\begin{equation}\label{eq thh of formal dga}
\begin{split}
\pi_*\thh(\fp[x_{2m}]) \cong \ & \pis\big( \thh(\fp) \otimes \thh(\sph[x_{2m}]) \big)\\
\cong \ & \pis \big(\thh(\fp) \otimes_{\fp} (\fp \otimes \thh(\sph[x_{2m}]))\\
\cong \ & \pis \big(\thh(\fp) \otimes_{\fp} \textup{HH}^{\fp}(\fp[x_{2m}])\big)\\
\cong \ & \fp[u_2] \otimes \fp[x_{2m}] \otimes \Lambda (y_{2m+1})\,.
\end{split}
\end{equation}
In the last line above, subscripts denote total degrees and removing the sub $\fp$-vector space $\fp[u_2]$ gives $ \pis \rthh(\fp[x_{2m}])$. In each total degree, this spectral sequence is finite, and hence, we deduce that $\pi_k\rthh(A)$ has bounded $p$-torsion. By the flatness of $\sphwf$, we deduce the same for $\sphwf \otimes \rthh(A)$; in particular,  $\sphwf \otimes \rthh(A)$ is $p$-complete.

By \eqref{eq associated graded and sphwf otimes}, the associated graded of $\sphwf \otimes \rthh(\wtw A)$ is $p$-complete at each weight since it is given by the $\fp$-module $\sphwf \otimes \rthh(\fp[x_{2m}])$.  As we already proved that the weight zero component of $\sphwf \otimes \rthh(\wtw A)$ (given by $\sphwf \otimes \rthh(A)$) is $p$-complete, the $p$-completeness of the associated graded now allows us to inductively deduce that $\sphwf \otimes \rthh(\wtw A)$ is $p$-complete at each weight. 

The proof of the last sentence follows the same lines. To see that $\rthh(S_{\fp}^0)$ has bounded $p$-torsion,  note that the corresponding May spectral sequence has $\mathrm{E}_1$-page $\pis \rthh(\fp[t]/t^n)$ as $\gr S_{\fp}^\bullet = \fp[t]/t^n$. Letting $\Pi_n$ be the pointed topological monoid $\{0,1,t...,t^{n-1}\}$, we have 
\begin{equation}\label{eq splitting for thh of truncated polynomial}
    \begin{split}
        \pis \thh(\fp[t]/t^n) &\cong \pis \big(\thh(\fp\otimes \Sigma^{\infty} \Pi_n)\big) \\
        &\cong \pis\big(\thh(\fp) \otimes \thh(\Sigma^{\infty} \Pi_n)\big)\\
        &\cong \pis\big(\thh(\fp) \otimes_{\fp} ( \fp \otimes \thh(\Sigma^{\infty} \Pi_n)) \big)\\
        &\cong \pis \big(\thh(\fp) \otimes_{\fp} \thh^{\fp}(\fp[t]/t^n) \big).
    \end{split}
\end{equation}

Then by \cite[Proposition 3]{speirs2020truncatedpolynomial} and B\"okstedt periodicity, this is finite for each $*$ and hence $\pis (\thh_{\rel}(S_{\fp}^0))$ has bounded $p$-torsion in each degree and so does $\pis(\sphwf \otimes \rthh(S_{\fp}^0))$; in particular, $\sphwf \otimes \rthh(S_{\fp}^\bullet)$ is $p$-complete in weight $0$. As before, $\gr (\sphwf\otimes \rthh(S_{\fp}^\bullet)) = \sphwf \otimes \rthh(\fp[t]/t^n)$ is an $\fp$-module and hence $p$-complete. Then the result follows by an induction over the tower $\sphwf \otimes \rthh(S_{\fp}^\bullet)$.
\end{proof}

\begin{lemm}\label{lemm twisted cyclotomic moving witt out}
Let $A$ be an $\bE_1$-ring with $\pis A \cong \fp[x_{2m}]$ where $m>0$. Then there is an equivalence of twisted cyclotomic objects:
\[\rthh\big(\tau_{\geq \bullet}(\sphwf \otimes A)\big) \simeq \sphwf \otimes \rthh(\tau_{\geq \bullet }A).\]
Similarly, there is an equivalence of twisted cyclotomic objects 
\[\rthh(S_{\field}^{\bullet}) \simeq \sphwf \otimes \rthh(S_{\fp}^\bullet).\]
\end{lemm}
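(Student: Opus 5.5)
We first identify the filtered rings. The Whitehead tower is compatible with smashing with $\sphwf$: since $\sphwf$ is flat over $\sph_p$ (indeed $\pi_*(\sphwf\otimes A)\cong W(\field)\otimes_{\z_p}\pi_*A$ for $p$-complete $A$, and $\pi_*(\sphwf\otimes A)\cong \field[x_{2m}]$ here), the functor $\sphwf\otimes -$ is $t$-exact on the relevant objects. Hence the canonical map of $\N$-filtered $\bE_1$-rings
\[
\sphwf\otimes \tau_{\geq \bullet}A \longrightarrow \tau_{\geq \bullet}(\sphwf\otimes A)
\]
is an equivalence, where $\sphwf$ is viewed as an $\bE_\infty$-algebra in $\Tow(\Sp)$ concentrated via left Kan extension from weight $0$. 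To build the map, we use the universal property of the Whitehead tower: $\sphwf\otimes\tau_{\geq\bullet}A$ has weight $i$ term $i$-connective. We then check levelwise on homotopy that it is an equivalence. For the $p$-adic filtration, the analogous equivalence $\sphwf\otimes S_{\fp}^\bullet\simeq S_{\field}^\bullet$ is \eqref{eq base change for p adic filtration}.

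Next, $\thh$ on $\Alg(\Tow(\Sp))$ is symmetric monoidal, so
\[
\thh(\sphwf\otimes R^\bullet)\simeq \thh(\sphwf)\otimes \thh(R^\bullet).
\]
This is an equivalence of twisted cyclotomic $\N$-filtered spectra, since the twisted Frobenius of \cite[Appendix A]{antieau2020beilinson} is symmetric monoidal; here $\thh(\sphwf)$ sits in weight $0$, where the twist is trivial. The counit $\thh(\sphwf)\to\sphwf$ is a map of $\bE_\infty$ cyclotomic spectra and a $p$-adic equivalence by \autoref{rema spherical witts as cyclotomic base}. This gives a map of twisted cyclotomic objects
\[
\thh(\sphwf\otimes R^\bullet)\to \sphwf\otimes\thh(R^\bullet),
\]
where $\sphwf$ carries the cyclotomic structure of \autoref{cyclotomic-structure-sphwf}. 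We pass to relative versions by taking fibers: $\grzero$ commutes with $\sphwf\otimes-$ by \eqref{eq associated graded and sphwf otimes}, and the same argument applies to $\grzero R^\bullet$.

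It remains to show this map is an equivalence, which is the main point. The map is $\thh(\sphwf)\otimes M\to\sphwf\otimes M$ with $M=\rthh(R^\bullet)$. Its cofiber is $C\otimes M$, where $C$ is the cofiber of the counit. Since $C\otimes \fp\simeq 0$, and $M$ is bounded below in each weight, we get $C\otimes M/p\simeq0$. So the map is a levelwise $p$-adic equivalence.

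The source is $p$-complete levelwise by \autoref{lemm thh is p complete when thh gr is}, because $\thh(\gr)$ of the filtered ring is an $\fp$-module. The target is $p$-complete levelwise by \autoref{lemm sphwf otimes thhwtwa is pcomplete}, which covers both the Whitehead and $p$-adic cases. A $p$-adic equivalence between $p$-complete objects is an equivalence, and equivalences of twisted cyclotomic objects are detected on underlying filtered spectra. The delicate step is making sure the compatibility with Frobenius is genuinely a map of twisted cyclotomic objects. This rests on the counit being cyclotomic, and on the monoidality of the twisted cyclotomic structure under Day convolution. We would justify the latter by citing \cite[Appendix A]{antieau2020beilinson}, using that an object concentrated in weight $0$ is fixed by $R_p$.
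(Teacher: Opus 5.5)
Your proposal is correct and follows essentially the same route as the paper. You identify the base-changed filtrations ($\sphwf\otimes\tau_{\geq\bullet}A\simeq\tau_{\geq\bullet}(\sphwf\otimes A)$ and \eqref{eq base change for p adic filtration}), use symmetric monoidality of $\thh$ to split off $\thh(\sphwf)$ and pass to relative versions, and then show that the counit $\thh(\sphwf)\to\sphwf$ induces a $p$-adic equivalence between levelwise $p$-complete objects via \autoref{lemm thh is p complete when thh gr is} and \autoref{lemm sphwf otimes thhwtwa is pcomplete}. One small slip is harmless here: the flatness formula should read $\pi_*(\sphwf\otimes X)\cong W(\field)\otimes_{\z}\pi_*X$ (flatness over $\mathbb{S}$), not $W(\field)\otimes_{\z_p}\pi_*X$ for $p$-complete $X$, which already fails in degree $0$ for $X=\mathbb{S}_p$; you only use it when $\pi_*X$ consists of $\fp$-vector spaces, where it gives the right answer.
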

\begin{proof}
Restricting $\tau_{\geq \bullet}\sphwf$ through $0 \to \N$ gives the $\bE_\infty$-ring $\sphwf$. Since left Kan extension along $0\to \N$ is left adjoint to this restriction, we obtain a map of $\bE_\infty$-algebras in $\N$-filtered spectra $\sphwf \to \tau_{\geq \bullet} \sphwf$.

Using this map, and the lax symmetric monoidal structure of the functor  $\tau_{\geq \bullet}\co \Sp_{\geq 0} \to\Fil(\Sp)$, we obtain a map of $\bE_1$-algebras 
\begin{equation}\label{eq intermediate 10}
\sphwf  \otimes \tau_{\geq \bullet} A\xrightarrow{\simeq} \tau_{\geq \bullet }(\sphwf \otimes A)
\end{equation}
which can be seen to be a levelwise equivalence by induction over these towers since it is clearly an equivalence in weight $0$ and it is an equivalence on the associated graded due to $\eqref{eq associated graded and sphwf otimes}$ and the flatness of $\sphwf$. From this equivalence, we obtain an equivalence of twisted cyclotomic objects in $\N$-filtered spectra $\thh(\tau_{\geq \bullet}(\sphwf \otimes A)) \simeq \thh(\sphwf) \otimes \thh(\wtw A)$. This can be shown to upgrade to the relative setting to give an equivalence of twisted cyclotomic objects:
\begin{equation}\label{eq symmetric monoidal thh}
\rthh(\tau_{\geq \bullet}(\sphwf \otimes A)) \simeq \thh(\sphwf) \otimes \rthh(\wtw A).
\end{equation}

Therefore, it suffices to show that the canonical map of twisted cyclotomic objects 
\[\thh(\sphwf) \otimes \rthh(\wtw A) \xrightarrow{\simeq} \sphwf \otimes \rthh(\wtw A)\]
is an equivalence. Now $\thh(\sphwf) \to \sphwf$ is an equivalence after $p$-completion by  \autoref{rema spherical witts as cyclotomic base}. Therefore, the map above is also an equivalence (levelwise) after $p$-completion and hence, it suffices to prove that both sides above are $p$-complete. The $p$-completeness of the left hand side follows by \autoref{lemm thh is p complete when thh gr is} and \eqref{eq symmetric monoidal thh}. The $p$-completeness of the right hand side follows by \autoref{lemm sphwf otimes thhwtwa is pcomplete}. 

The proof of the last sentence  is similar. We begin with using the equivalence $\sphwf \otimes S_{\fp}^{\bullet} 
\simeq S_{\field}^{\bullet}$ from \eqref{eq base change for p adic filtration} instead of \eqref{eq intermediate 10}. This gives an equivalence of twisted cyclotomic objects $\thh(S_{\field}^\bullet) \simeq \thh(\sphwf) \otimes \thh(S_{\fp}^\bullet)$ which upgrades to an equivalence of twisted cyclotomic objects $\rthh(S_{\field}^{\bullet}) \simeq \thh(\sphwf) \otimes \rthh(S_{\fp}^\bullet)$. Therefore, it suffices to prove that the canonical map 
\[\thh(\sphwf) \otimes \rthh(S_{\fp}^\bullet) \xrightarrow{\simeq} \sphwf \otimes \rthh(S_{\fp}^\bullet)\]
is an equivalence. This map is an equivalence after $p$-completion  and the right hand side is $p$-complete due to \autoref{lemm sphwf otimes thhwtwa is pcomplete}.  The left hand side is $p$-complete by \autoref{lemm thh is p complete when thh gr is} whose hypothesis is satisfied due to \autoref{exam p adic filtration is thh convergent} and the fact that $\gr \thh(S_{\field}^\bullet) \simeq \thh(\field[t]/t^n)$ is an $\fp$-algebra. 
\end{proof}

Given $R^{\bullet}\in \Alg(\Tow(\Sp))$, there are assembly maps 
\begin{equation}\label{eq moving witts out of fixed points}
\begin{split}
\textup{ass}^{h}  : \sphwf \otimes \rthh(R^{\bullet})^{hS^1} \to &\big(\sphwf \otimes \rthh(R^{\bullet})\big)^{hS^1} \textup{\ \ and \ } \\ 
\textup{ass}^{t}  : \sphwf \otimes \rthh(R^{\bullet})^{tS^1} \to &\big(\sphwf \otimes \rthh(R^{\bullet})\big)^{tS^1}
\end{split}
\end{equation}
that comes from the lax symmetric monoidal structures of $-^{hS^1}$ and $-^{tS^1}$ and the $\mathbb{E}_\infty$-ring maps $\sphwf \to \sphwf^{hS^1}$ and $\sphwf \to \sphwf^{tS^1}$. Moreover, there are commuting diagrams 
\[
\begin{tikzcd}
\sphwf \otimes \rthh(R^{\bullet})^{hS^1} \arrow[r,"\textup{ass}^{h}" ] \arrow[d,"\textup{id} \otimes \textup{can} " ] &  \big(\sphwf \otimes \rthh(R^{\bullet})\big)^{hS^1}  \arrow[d,"\textup{can}"]\\ 
\sphwf \otimes \rthh(R^{\bullet})^{tS^1} \arrow[r,"\textup{ass}^{t}" ] &  \big(\sphwf \otimes \rthh(R^{\bullet})\big)^{tS^1} 
\end{tikzcd}
\]
and 
\[
\begin{tikzcd}
\sphwf \otimes \rthh(R^{\bullet})^{hS^1} \arrow[r,"\textup{ass}^{h}" ] \arrow[d,"F \otimes \widetilde{\varphi} " ] &  \big(\sphwf \otimes \rthh(R^{\bullet})\big)^{hS^1}  \arrow[d,"\widetilde{\varphi}"]\\ 
\sphwf \otimes \rthh(R^{\bullet})^{tS^1} \arrow[r,"\textup{ass}^{t}" ] &  \big(\sphwf \otimes \rthh(R^{\bullet})\big)^{tS^1}  
\end{tikzcd}
\]
where $F$ is defined as in \autoref{cyclotomic-structure-sphwf}. 

\begin{lemm}\label{lemm moving witt out of fixed points}
Let $A$ be an $\bE_1$-ring with $\pis A\cong \fp[x_{2m}]$ for some $p\nmid m>0$. Then for each $e$, the assembly maps in \eqref{eq moving witts out of fixed points} for $R^{\bullet}=\tau_{\ge \bullet}A$ induce equivalences after applying $Q_e^0$. 

The same statement holds for  $R^\bullet = S_{\fp}^\bullet$, see \autoref{nota  p adic filtration on truncated witt}.
\end{lemm}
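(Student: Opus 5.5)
We argue by induction over the finite tower $Q_e$, reducing to the associated graded, where the claim becomes a statement about the formal DGA $\fp[x_{2m}]$ (respectively $\fp[t]/t^n$).

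First, both sides of each assembly map are compatible with $Q_e$. Indeed, $Q_e$ commutes with $-^{hS^1}$ and $-^{tS^1}$ by \autoref{lemma qn of orbit is tc of qn}. It also commutes with the levelwise exact functor $\sphwf\otimes -$, since $Q_e^i F^\bullet = F^i/F^e$ is a cofiber. Hence $Q_e^0(\textup{ass}^h)$ identifies with the assembly map
\[\sphwf\otimes \big(\rthh(R^\bullet)^0/\rthh(R^\bullet)^e\big)^{hS^1}\to \big(\sphwf\otimes \rthh(R^\bullet)^0/\rthh(R^\bullet)^e\big)^{hS^1},\]
and similarly for $tS^1$. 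The $S^1$-spectrum $\rthh(R^\bullet)^0/\rthh(R^\bullet)^e$ has a finite filtration, namely $Q_e\rthh(R^\bullet)$, with associated graded pieces $\gr^j\rthh(R^\bullet)\simeq\rthh(\gr R^\bullet)^j$ for $j<e$ (\autoref{lemm associated graded of qn}, \autoref{E1page}). Since $\sphwf\otimes-$, $-^{hS^1}$ and $-^{tS^1}$ are exact, and the assembly maps are natural transformations between exact functors, the five lemma applied inductively over this finite filtration reduces the claim to showing that each
\[\sphwf\otimes \rthh(\gr R^\bullet)^{j,hS^1}\to (\sphwf\otimes\rthh(\gr R^\bullet)^j)^{hS^1}\]
is an equivalence, together with its $tS^1$ analogue.

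For $R^\bullet=\wtw A$ we have $\gr R^\bullet=\fp[x_{2m}]$ by \autoref{prop assoc graded is the formal dga}. By \eqref{eq for relthh taking witt out in the formal dga}, the target is $\rntc(\field[x_{2m}])^j$, respectively $\rtp(\field[x_{2m}])^j$, and the map is the weight $j$ component of the assembly map of \autoref{rema passage to witt vector coefficients for formal}. That map is an equivalence by \autoref{theo tc ntc tp of formal dgas}\eqref{item 2half frobenius is iso}; the hypothesis $p\nmid m$ is used exactly here. The two assembly maps should agree, since both arise from the lax monoidal structure of $-^{hS^1}$ and the unit $\sphwf\to\sphwf^{hS^1}$. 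Note that $-^{hS^1}$ and $-^{tS^1}$ are computed levelwise in graded spectra, so the weight $j$ component is harmless.

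For $R^\bullet=S_{\fp}^\bullet$ the graded piece is $\fp[t]/t^n$, and no analogue of item \eqref{item 2half frobenius is iso} is stated. Here I would argue directly, and the same argument should also give a second proof in the first case. The spectrum $X=\rthh(\fp[t]/t^n)^j$ is an $\thh(\fp)$-module, and by \eqref{eq splitting for thh of truncated polynomial} and \cite{speirs2020truncatedpolynomial} it has finite homotopy groups in each degree and is bounded below. On the other hand, $\sphwf\otimes X\simeq W(\field)\otimes_{\z} X$ (Tor-wise), because $X$ is an $\fp$-module and $\fp\otimes\sphwf\simeq\field$. For $-^{hS^1}$, use the homotopy fixed point spectral sequence. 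It has finite groups in each bidegree, and $\field\otimes_{\fp}-$ is exact and commutes with the relevant products: in each total degree only finitely many terms are nonzero modulo limits, and products of finite-dimensional $\fp$-vector spaces tensored with $\field$ agree with products because the bidegrees are finite. Thus the $E_2$-pages match, and the map is an equivalence on $\mathrm{E}_2$ and hence, by conditional convergence, on abutments. The $tS^1$ case then follows from the norm cofiber sequence \eqref{norm-restriction}, using that $\sphwf\otimes(-)_{hS^1}$ commutes with orbits.

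I expect this last step to be the main obstacle: $\field\otimes_{\fp}\prod$ versus $\prod(\field\otimes_{\fp}-)$ fails for infinite products of infinite-dimensional spaces. The argument must therefore use the degreewise finiteness of $\pi_*\rthh(\gr R^\bullet)^j$, to ensure that $\pi_k$ of the fixed points is a finite iterated extension of finite groups in each degree, so that tensoring with $W(\field)$ is exact and commutes with the relevant limit.
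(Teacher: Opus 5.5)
Your reduction is the same as the paper's. You commute $Q_e$ past $\sphwf\otimes-$, $(-)^{hS^1}$ and $(-)^{tS^1}$, induct over the finite tower $Q_e$, and reduce to the assembly maps on the graded pieces $\rthh(\gr R^\bullet)^j$. For $\wtw A$ this is exactly \autoref{theo tc ntc tp of formal dgas}\eqref{item 2half frobenius is iso}, as in the paper.

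The gap is in the case $R^\bullet=S_{\fp}^\bullet$. There the paper cites Speirs' computation \cite[Propositions 12 and 13]{speirs2020truncatedpolynomial}, and you instead argue with the homotopy fixed point spectral sequence; that argument fails.
\begin{itemize}
\item The spectrum $X=\rthh(\fp[t]/t^n)^j$ is a $\thh(\fp)$-module that is not bounded above: for $n\nmid j$ its homotopy is $\fp[u]\{a_{2d},b_{2d+1}\}$. So every total degree of the $\mathrm{E}_2$-page has infinitely many nonzero entries.
\item The spectral sequence you use for the source $\sphwf\otimes X^{hS^1}$ comes from applying $\sphwf\otimes-$ to the homotopy fixed point tower. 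Since $\sphwf\otimes-$ does not commute with that inverse limit, nothing makes this spectral sequence converge conditionally to $\sphwf\otimes X^{hS^1}$. An isomorphism of $\mathrm{E}_2$-pages therefore says nothing about the assembly map.
\item Your claim that $\field\otimes_{\fp}-$ commutes with products of finite-dimensional spaces is false once $[\field:\fp]=\infty$, for instance $\field=\fpc$, which is the case the lemma is used for. The map $\field\otimes_{\fp}\prod_{\mathbb{N}}\fp\to\prod_{\mathbb{N}}\field$ is not surjective.
\end{itemize}
As a sanity check, your argument applies verbatim to $X=\thh(\fp)$, which is bounded below with finite homotopy groups. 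It would give $\sphwf\otimes\ntc(\fp)\simeq\ntc(\field)$, which is false on $\pi_0$: $W(\field)\otimes_{\z}\zp\not\cong W(\field)$, and already $\zp\otimes_{\z}\zp\neq\zp$. So degreewise finiteness of $\pis\rthh(\gr R^\bullet)^j$, the remedy you suggest at the end, does not give finiteness of $\pi_k$ of the fixed points.

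The missing input is finiteness of the groups $\pi_k\rntc(\fp[t]/t^n)^j$ themselves. This depends on the differentials and is part of Speirs' computation, which in fact identifies $\rntc(\field[t]/t^n)^j$ and $\rtp(\field[t]/t^n)^j$ with finite-length Witt vectors of $\field$, naturally in the perfect field $\field$; this is what the paper uses. Given that finiteness, your approach can be completed as follows.
\begin{enumerate}
\item Write $X^{hS^1}\simeq\lim_N(\tau_{\le N}X)^{hS^1}$.
\item For bounded-above $\tau_{\le N}X$ the assembly map is an equivalence, because in each degree the skeletal tower for $(-)^{hS^1}$ is constant after finitely many stages.
\item The groups $G_N=\pi_k(\tau_{\le N}X)^{hS^1}$ are finite with finite limit $\pi_kX^{hS^1}$. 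Their stable images therefore stabilize and agree with the limit for large $N$.
\item This property is preserved by the exact functor $W(\field)\otimes_{\z}-$. Hence $\lim^1_N\bigl(W(\field)\otimes_{\z}G_N\bigr)=0$ and $\lim_N\bigl(W(\field)\otimes_{\z}G_N\bigr)\cong W(\field)\otimes_{\z}\pi_kX^{hS^1}$.
\end{enumerate}
The $tS^1$ case then follows from \eqref{norm-restriction}, as you say.
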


\begin{proof}
We only prove this for the first map in $\eqref{eq moving witts out of fixed points}$, the proof for the second map follows in the same way. Recall from \eqref{eq associated graded and sphwf otimes} that $\sphwf \otimes -$ commutes with the associated graded and that homotopy fixed points also commute with the associated graded functor since it commutes with cofibers. From this, we deduce that the associated graded of $\textup{ass}^h$ is given by the map 
\[\gr(\textup{ass}^h)\co \sphwf \otimes \rthh(\fp[x_{2m}])^{hS^1} \xrightarrow{\simeq} \big(\sphwf \otimes \rthh(\fp[x_{2m}]) \big)^{hS^1}\]
in \autoref{rema passage to witt vector coefficients for formal} (by using \eqref{eq for relthh taking witt out in the formal dga}) which is an equivalence due to \autoref{theo tc ntc tp of formal dgas} \eqref{item 2half frobenius is iso}. 

By \autoref{lemm associated graded of qn}, $\gr(Q_e \textup{ass}^h)$ is a weight truncation of $\gr(\textup{ass}^h)$ and hence, $\gr(Q_e \textup{ass}^h)$ is also an equivalence. Since $Q_e \textup{ass}^h$ is a finite filtration of $Q_e^0 \textup{ass}^h$, we deduce (by induction over the tower $Q_e \textup{ass}^h$) that $Q_e^0 \textup{ass}^h$ is an equivalence. 

For the last sentence, the result again boils down to showing that $\gr \textup{ass}^h$  is an equivalence. As mentioned earlier, $\gr \rthh(S_{\fp}^\bullet) = \rthh(\fp[t]/t^n)$ and hence this follows by \cite[Propositions 12 and 13]{speirs2020truncatedpolynomial}. 
\end{proof}

\begin{prop}\label{prop tc fiber sequence for the witt case}
Let $A$ be an $\bE_1$-ring with $\pis A\cong \fp[x_{2m}]$ for $p\nmid m >0$. Then for each $e\geq 0$, there is a fiber sequence:
\begin{equation}\label{Qn0tc}
Q_e^0 \filorb \rtc(\sphwf \otimes A) \to \sphwf \otimes Q_e^0\rntc(\wtw A) \xrightarrow{F \otimes Q_e^0 \widetilde{\varphi} - \textup{id} \otimes \textup{can}} \sphwf\otimes  Q_e^0 \rtp(\wtw A) \,.
\end{equation}

Similarly, there is a fiber sequence 
\begin{equation*}
Q_e^0 \filorb \rtc(S_{\field}^0) \to \sphwf \otimes Q_e^0\rntc(S_{\fp}^\bullet) \xrightarrow{F \otimes Q_e^0 \widetilde{\varphi} - \textup{id} \otimes \textup{can}} \sphwf\otimes  Q_e^0 \rtp(S_{\fp}^\bullet) \,.
\end{equation*}
\end{prop}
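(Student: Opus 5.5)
We start from the defining fiber sequence \eqref{eq fiber sequence defining orbit filtration} for the filtration $\tau_{\geq \bullet}(\sphwf\otimes A)$ and apply $Q_e^0$. By \autoref{lemma qn of orbit is tc of qn}, this gives a fiber sequence
\[Q_e^0 \filorb \rtc(\sphwf\otimes A)\to Q_e^0\rntc(\tau_{\geq\bullet}(\sphwf\otimes A)) \xrightarrow{Q_e^0(\widetilde{\varphi}-\can)} Q_e^0\big(\rthh(\tau_{\geq \bullet}(\sphwf\otimes A))^{tC_p}\big)^{hS^1}.\]
The rest of the proof identifies the middle and right terms, together with the map between them, with the ones in \eqref{Qn0tc}.

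\textbf{Step 1: replace the filtered ring.} By \autoref{lemm twisted cyclotomic moving witt out}, $\rthh(\tau_{\geq\bullet}(\sphwf\otimes A))\simeq \sphwf\otimes\rthh(\wtw A)$ as twisted cyclotomic objects, with $\sphwf$ carrying the cyclotomic structure $F$ of \autoref{cyclotomic-structure-sphwf}. Because the equivalence is twisted cyclotomic, it is compatible with the weight-reducing map of \autoref{natural-trans-R}, so it also intertwines $\widetilde{\varphi}_p$.

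\textbf{Step 2: identify the target.} By \autoref{lemm sphwf otimes thhwtwa is pcomplete}, $\sphwf\otimes\rthh(\wtw A)$ is levelwise $p$-complete and bounded below, so by \autoref{rema tcp and then hs1 agree with ts1 when p complete} the target $(-^{tC_p})^{hS^1}$ can be replaced by $(\sphwf\otimes\rthh(\wtw A))^{tS^1}$. The same applies levelwise, and $Q_e$ commutes with these constructions by \autoref{lemma qn of orbit is tc of qn}.

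\textbf{Step 3: pull $\sphwf$ outside.} Applying $Q_e^0$ to the assembly maps \eqref{eq moving witts out of fixed points} gives equivalences by \autoref{lemm moving witt out of fixed points}. Since $Q_e^0$ is a cofiber of weight components and $\sphwf\otimes-$ is exact and levelwise, we have $Q_e^0(\sphwf\otimes X)\simeq\sphwf\otimes Q_e^0X$. This identifies the middle term with $\sphwf\otimes Q_e^0\rntc(\wtw A)$ and the right term with $\sphwf\otimes Q_e^0\rtp(\wtw A)$.

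\textbf{Step 4: identify the map.} The two commuting squares displayed before \autoref{lemm moving witt out of fixed points} show that, under the assembly equivalences, $\can$ corresponds to $\mathrm{id}\otimes\can$ and $\widetilde{\varphi}$ corresponds to $F\otimes\widetilde{\varphi}$. Taking the difference and applying $Q_e^0$ yields the map in \eqref{Qn0tc}.

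The main obstacle is making Step 4 precise. The squares are stated for $\rthh(R^\bullet)$, but we need them after composing with the Step 1 equivalence, and we need the Frobenius square to be compatible with the $tC_p$-to-$tS^1$ identification of Step 2. The approach is to check that the Frobenius on $\sphwf\otimes\rthh(\wtw A)$ is the tensor product of the Frobenius $\phi$ on $\sphwf$ with $\widetilde{\varphi}_p$, followed by the lax monoidal map $(-)^{tC_p}\otimes(-)^{tC_p}\to(-\otimes-)^{tC_p}$. Then $c^{-1}\circ\phi=F$ recovers the stated formula, and all of this is functorial before applying $Q_e^0$.

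The $S_{\field}^\bullet$ case runs identically. It uses the second halves of \autoref{lemm twisted cyclotomic moving witt out}, \autoref{lemm sphwf otimes thhwtwa is pcomplete} and \autoref{lemm moving witt out of fixed points}, and the $p$-completeness of $\rthh(\field[t]/t^n)$ follows because it is an $\fp$-module.
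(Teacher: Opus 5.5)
Your proposal is correct and follows essentially the same route as the paper. The paper also commutes $\sphwf\otimes-$ with $Q_e$, uses the twisted cyclotomic equivalence $\rthh(\tau_{\geq\bullet}(\sphwf\otimes A))\simeq\sphwf\otimes\rthh(\tau_{\geq\bullet}A)$ together with the $Q_e^0$-assembly equivalences and the two commuting squares preceding them, and then reduces to the identification $Q_n\filorb\rtc\simeq\tc(Q_n\rthh)$. Your Steps 2 and 4 just spell out the $tC_p$-versus-$tS^1$ identification and the Frobenius compatibility, which the paper leaves to ``the discussion before'' the assembly lemma.
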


\begin{proof}
We begin with the first statement. Since $\sphwf \otimes -$ is applied levelwise in $\N$-filtered spectra and since it commutes with cofibers, it also commutes with the functor $Q_e$. Therefore, the map $F \otimes Q_e^0\widetilde{\varphi} - \textup{id} \otimes \textup{can}$ above is given by a map
\[Q_e^0\big( \sphwf \otimes \rntc(\wtw A) \big)\xrightarrow{}  Q_e^0 \big(\sphwf\otimes  \rtp(\wtw A)\big).\]
By \autoref{lemm moving witt out of fixed points} (together with the discussion before) and  \autoref{lemm twisted cyclotomic moving witt out}, this boils down to \autoref{lemma qn of orbit is tc of qn}.

The second statement follows in the same way by using the equivalence $S_{\field}^\bullet\simeq \sphwf \otimes S_{\fp}^\bullet$ from \eqref{eq base change for p adic filtration}.
\end{proof}

\subsection{Algebraic lemmas}

The proof of Theorem \ref{mainthmevenvanish} \eqref{item 1 thmevenvanishing} boils down to proving that for algebraically closed $\field$, the second map in the composite \eqref{Qn0tc} of \autoref{prop tc fiber sequence for the witt case} is surjective in $\pi_{k}$ for each odd $k$  and $e$ such that $k \leq e$. For this, we prove here a few purely algebraic lemmas. \autoref{lemm H  is surjective before modding by p} is the key lemma that we will use later.  

\begin{nota}
We write $F$ for each of the Frobenius maps $ W(\field) \to W(\field)$ and $\field \to \field$. It will be clear from the context which one is meant. 
\end{nota}

\begin{rema}
Since we work in the $\infty$-category of spectra, all our tensor products denote the corresponding derived tensor product. On the other hand, when we write $W(\field) \otimes_{\z} M $ for a discrete $\z$-module $M$, we note that this agrees with the corresponding underived tensor product since $W(\field)$ is flat. Similarly, for discrete $\fp$-modules, we do not need to distinguish between the derived and the underived versions for the tensor product $\otimes_{\fp} $. 
\end{rema}

\begin{lemm}\label{lemm H mod p is surjective in fp closure}
Let  $\phi\co W \to V$ be a surjective map between finite discrete $\fp$-modules and $c\co W\to V$ be a map of $\fp$-modules. Then the map 
\[W(\field) \otimes_{\mathbb{Z}} W \xrightarrow{F \otimes \phi - \textup{id} \otimes c} W(\field) \otimes_{\mathbb{Z}}  V\]
is surjective whenever $\field$ is an algebraically closed field of characteristic $p$. 
\end{lemm}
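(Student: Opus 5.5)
Since $W$ and $V$ are $\fp$-modules, $W(\field)\otimes_{\z} W \cong \field\otimes_{\fp} W$ and likewise for $V$. Under this identification the map becomes $F\otimes\phi - \textup{id}\otimes c \colon \field\otimes_{\fp} W \to \field\otimes_{\fp} V$, where $F$ is now the Frobenius $x\mapsto x^p$ on $\field$; this holds because the Witt vector Frobenius reduces mod $p$ to the Frobenius of $\field$. The plan is to choose $\fp$-bases, write the equation for a preimage as a system of polynomial equations over $\field$, and solve it using that $\field$ is algebraically closed.

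First I use surjectivity of $\phi$ to split $W \cong V\oplus K$ with $K=\ker\phi$, so that $\phi$ becomes the projection onto $V$. It suffices to solve on the summand $\field\otimes V$: find $x\in \field\otimes V$ with $(F\otimes \textup{id})(x) - (\textup{id}\otimes c')(x) = y$, where $c'=c|_V\colon V\to V$ is some $\fp$-linear endomorphism. Choosing a basis $e_1,\dots,e_d$ of $V$ and writing $x=\sum x_i e_i$ and $y=\sum y_ie_i$, this is the system
\[ x_i^p - \sum_{j} a_{ij} x_j = y_i, \qquad i=1,\dots,d, \]
where $(a_{ij})$ is the matrix of $c'$ with entries in $\fp$.

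The key step, and the only real obstacle, is solvability of this Artin--Schreier--Lang type system. The system has $d$ equations in $d$ unknowns, and the map $x\mapsto x^{(p)}-Ax$ on $\mathbb{A}^d_{\field}$ is finite and étale, since its differential is $-A$ plus zero and the leading terms are $x_i^p$. More directly, the coordinate ring $\field[x_1,\dots,x_d]/(x_i^p - \sum_j a_{ij}x_j - y_i)$ is a nonzero finite $\field$-algebra of dimension $p^d$, because the $x_i^p$ form a Gröbner basis of leading terms. A nonzero finite-dimensional algebra has a maximal ideal, and since $\field$ is algebraically closed its residue field is $\field$, which gives a solution. So I will argue via this dimension count (Nullstellensatz) rather than étaleness, as it handles a nilpotent or singular $A$ uniformly. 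Finally, tensoring with $W(\field)$ commutes with the identifications above, so surjectivity on $\field\otimes V$ yields surjectivity of the original map.
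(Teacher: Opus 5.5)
Your proposal is correct. It follows the paper's reduction but solves the resulting polynomial system by a different method.

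The reduction is the same. The paper chooses $V'\subset W$ with $\phi|_{V'}$ an isomorphism and replaces $c$ by $c'=\phi|_{V'}^{-1}\circ c|_{V'}$, which is your splitting $W\cong V\oplus K$. It then identifies $W(\field)\otimes_{\mathbb{Z}}V'$ with $\field\otimes_{\fp}V'$ and passes to coordinates, arriving at the same system $x_i^p-\sum_j a_{ij}x_j=y_i$.

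The difference is in how the system is solved. The paper proves a separate elementary lemma (\autoref{lemm system of polynomials solution}) by induction on the number of variables. It strengthens the statement to systems of the shape $x_i^p+g_i(x_k^p)+f_i=0$ for $i<k$ and $g_k(x_k^p)+f_k=0$, with $\deg g_i<\deg g_k$. It then eliminates one variable at a time, using a linear term in the last equation, until it reaches a one-variable polynomial, which has a root because $\field$ is algebraically closed.

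You instead use a degree-compatible monomial order, under which the leading terms $x_i^p$ are pairwise coprime. By Buchberger's product criterion the $f_i$ then form a Gröbner basis, so the quotient algebra has $\field$-dimension $p^d\neq 0$. A nonzero finite-dimensional algebra over the algebraically closed field $\field$ has a maximal ideal with residue field $\field$, which gives a solution.

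Your route is shorter and more conceptual. It handles singular or nilpotent $(a_{ij})$ uniformly and shows the solution set is finite, with $p^d$ points counted with multiplicity. The paper's route avoids Gröbner bases and commutative algebra, at the cost of a more delicate bookkeeping induction.

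One small correction: your passing claim that $x\mapsto x^{(p)}-Ax$ is étale holds only when $A$ is invertible; for $A=0$ it is the purely inseparable Frobenius. You do not use this claim, so the argument is unaffected.
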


\begin{proof}
Let $V'\subset W$ be a subspace  so that the restriction $\phi_{\mid V'}$ is an isomorphism. Such a subspace exists since $W\to V$ is a surjective map of finite discrete $\mathbb{F}_p$-modules and therefore it is split surjective. 
With this, we consider
\[H \co W(\field) \otimes_{\mathbb{Z}} V' \xrightarrow{F \otimes \textup{id} - \textup{id} \otimes c'}W(\field)  \otimes_{\mathbb{Z}} V'\]
where $c'=\phi_{|{V'}}^{-1}\circ c_{|{V'}}$ and $H=(\textup{id}\otimes \phi_{|V'}^{-1})\circ (F\otimes\phi_{|{V'}}-\textup{id}\otimes c_{|{V'}})$. By the commuting diagram 
\[
\begin{tikzcd}
W(\field) \otimes_{\mathbb{Z}} V' \ar[r,"\textup{id}\otimes \textup{inc}"] \ar[d,"H",swap] & W(\field) \otimes_{\mathbb{Z}} W \ar[d,"F\otimes \phi -\textup{id}\otimes c"] \\ 
W(\field) \otimes_{\mathbb{Z}} V'\ar[r,"\textup{id}\otimes \phi_{|{V'}}"]\ar[r,swap,"\cong"] &  W(\field) \otimes_{\mathbb{Z}} V 
\end{tikzcd}
\]
it suffices to show $H$ is a surjection. 

Since $V'$ is an $\fp$-module, this map is also written as 
\[H \co \field \ot_{\fp} V' \xrightarrow{F \otimes \textup{id} - \textup{id} \otimes c'}\field \ot_{\fp} V'\]
where $F\co \field \to \field$ above also denotes the Frobenius. To prove that this map is surjective, we pick a basis for $V'$ which gives an $\field$-basis for $\field\otimes_{\fp} V'$ for which $F \otimes \textup{id}$ carries a vector $(x_1,\cdots,x_k)$ to $(x_1^p,\cdots,x_k^p)$. As it is $\field$-linear, $\textup{id}\otimes c$ carries $(x_1,\cdots,x_k)$ to $(g_1,\cdots,g_k)$ for some linear polynomials $g_i \in \field[x_1,\cdots,x_k]$. In particular, 
\[H(x_1,\cdots,x_k) = (x_1^p-g_1(x_1,\cdots,x_k),\cdots,x_k^p-g_k(x_1,\cdots,x_k)).\]
Given a vector $(c_1,\cdots,c_k)$ in the target of $H$, showing that $H$ hits $(c_1,\cdots,c_k)$ boils down to the solution of the system of polynomial equations given in \autoref{lemm system of polynomials solution} by letting $f_i = g_i+c_i$ which proves that $(c_1,\cdots,c_k)$ is in the image of $H$. Therefore, $H$ is surjective as desired.
\end{proof}

\begin{lem}\label{lemm system of polynomials solution}
Let $\field$ be an algebraically closed field of characteristic $p$ and let $f_i(x_1,\cdots,x_k)$ for $1\leq i\leq k$ be degree 1 polynomials in $\field$. Then the system equations
\[x_i^p-f_i(x_1,\cdots,x_k) = 0\]
over $1\leq i\leq k$ have at least one solution.
\end{lem}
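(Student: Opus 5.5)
We prove \autoref{lemm system of polynomials solution} by a projective-closure / intersection-theory argument, in the form of B\'ezout's theorem, or more elementarily by a finiteness argument via commutative algebra. The plan is to show that the $\field$-algebra
\[B:=\field[x_1,\dots,x_k]/(x_1^p-f_1,\dots,x_k^p-f_k)\]
is nonzero; then by the Nullstellensatz (using that $\field$ is algebraically closed) it has a maximal ideal with residue field $\field$, giving a common zero.

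The key step is that $B$ is a finite-dimensional $\field$-vector space of dimension exactly $p^k$, in particular nonzero. Consider the degree filtration on $\field[x_1,\dots,x_k]$ and the induced filtration on $B$. Since each $f_i$ has degree at most $1<p$, the leading forms of the relations are $x_1^p,\dots,x_k^p$. These form a regular sequence, and in fact a Gr\"obner basis with respect to any graded monomial order, because their leading monomials are pairwise coprime (Buchberger's criterion). Hence the monomials $x_1^{a_1}\cdots x_k^{a_k}$ with $0\le a_i<p$ form an $\field$-basis of $B$, so $\dim_\field B=p^k\neq 0$. Concretely, one can also argue directly: $\mathrm{gr}\,B$ is a quotient of $\field[x_1,\dots,x_k]/(x_1^p,\dots,x_k^p)$, so $B$ is spanned by these $p^k$ monomials, which already gives finiteness. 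Nonvanishing then needs the regular-sequence or Gr\"obner input, which guarantees that no lower-order cancellation forces $1\in I$.

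Given $B\neq 0$ and finite-dimensional, pick a maximal ideal $\mathfrak m\subset B$. Then $B/\mathfrak m$ is a finite field extension of $\field$, hence equal to $\field$ since $\field$ is algebraically closed. The images of $x_i$ give $a_i\in\field$ with $a_i^p=f_i(a_1,\dots,a_k)$.

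The only real obstacle is justifying that the ideal is proper. I would do this by the Buchberger criterion: all S-polynomials of $x_i^p-f_i$ and $x_j^p-f_j$ reduce to zero because the leading terms are coprime. If one prefers to avoid Gr\"obner bases, an alternative is B\'ezout in $\mathbb{P}^k$. Homogenize to $x_i^p-z^{p-1}\tilde f_i$; on the hyperplane $z=0$ the only solution is $x=0$, which is excluded projectively. Hence the $k$ hypersurfaces meet in a finite nonempty set, since $k$ hypersurfaces in $\mathbb{P}^k$ always intersect, and that set lies entirely in the affine chart $z\neq0$.
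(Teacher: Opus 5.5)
Your proof is correct, but it takes a different route from the paper's.

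\textbf{The paper's route.} The paper argues by explicit elimination. It strengthens the statement to systems of the shape $x_i^p+g_i(x_k^p)+f_i=0$ for $i<k$ and $g_k(x_k^p)+f_k=0$, with $\deg g_i<\deg g_k$, and inducts on $k$. If $f_k$ involves only $x_k$, it solves the last equation directly. Otherwise it solves the last equation linearly for another variable $x_{k-1}$ and substitutes. Additivity of Frobenius, $(a+b)^p=a^p+b^p$, is what keeps the substituted system in the same shape. The only external input is that a one-variable polynomial of positive degree has a root.

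\textbf{Your route.} You show the ideal $(x_1^p-f_1,\dots,x_k^p-f_k)$ is proper. The leading monomials $x_i^p$ are pairwise coprime for a degree-compatible order, so Buchberger's product criterion makes the generators a Gr\"obner basis. Hence $\dim_{\field}B=p^k$, and a maximal ideal of the finite-dimensional algebra $B$ has residue field $\field$. Your projective alternative is also sound: there are no common zeros on $z=0$, and $k$ hypersurfaces in $\mathbb{P}^k$ must meet. The input there is the projective dimension theorem rather than B\'ezout proper.

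\textbf{What each buys.} Your argument is shorter and more robust. It never uses characteristic $p$, it works verbatim for systems $x_i^{d_i}-f_i$ with $\deg f_i<d_i$, and it bounds the number of solutions by $p^k$. The paper's argument is entirely elementary and self-contained, avoiding Gr\"obner bases and dimension theory. The cost is fairly delicate inductive bookkeeping, which the paper only sketches.
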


We expect that this is known, but we nevertheless give a direct proof.
\begin{proof}
Let $g_i(-)$ for $1\leq i\leq k$ be polynomials in single variable such that $\textup{deg}(g_k)>0$ and   $\textup{deg}(g_i)< \textup{deg}(g_k)$ for each $i <k$. We inductively prove the stronger statement that the system of equations 
\begin{equation}\label{eq system of equations}
x_i^p + g_i(x_k^p) + f_i(x_1,\cdots,x_k) = 0
\end{equation}
over $i\leq k-1$ and 
\begin{equation}\label{eq last of the system of equations}
g_k(x_k^p) + f_k(x_1,\cdots,x_k) = 0
\end{equation}
admits a solution.

The base case of $k=1$ follows since $\field$ is algebraically closed. Now assume such systems have solutions for all $k'<k$ and we prove this for $k$. 

If $f_k(x_1,..,x_k) = c_1x_k + c_2$ for some $c_i \in \field$, then the last equation is a single variable polynomial in $x_k$, which has a solution. Plugging the solution to the previous equations, one deduce to the induction hypothesis. 

Therefore, assume (without loss  of generality) that a unit multiple of $x_{k-1}$ appears as a nontrivial summand  in $f_k(x_1,\cdots,x_k)$. Then we can solve for $x_{k-1}$  in \eqref{eq last of the system of equations} and plug in to the equations in \eqref{eq system of equations}. After this, one uses \eqref{eq system of equations} for $i<k-1$ to deduce the $i= k-1$ case of \eqref{eq system of equations} to a form compatible with the induction hypothesis. Indeed, this gives  a system of $k-1$ equations compatible with the induction hypothesis which finishes the proof.
\end{proof}

\begin{lemm}\label{lemm H  is surjective before modding by p}
Let  $\phi\co M \to N$ be a surjective map between finite discrete $\zpl$-modules and $c\co M\to N$ be a map of $\zpl$-modules. Then the map of discrete $\zpl$-modules
\[
W(\field)  \otimes_{\mathbb{Z}} M \xrightarrow{F \otimes \phi - \textup{id} \otimes c} W(\field)  \otimes_{\mathbb{Z}}N
\]
is surjective whenever $\field$ is an algebraically closed field of characteristic $p$.  Here, $F \co W(\field) \to W(\field)$ denotes the Frobenius map.
\end{lemm}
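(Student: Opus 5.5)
We reduce to \autoref{lemm H mod p is surjective in fp closure} by filtering $M$ and $N$ by powers of $p$. Write $G:=F\otimes\phi-\textup{id}\otimes c\colon W(\field)\otimes_{\z} M\to W(\field)\otimes_{\z} N$. Since $M$ and $N$ are finite $\zpl$-modules, they are finite abelian $p$-groups, so $p^aN=0$ for some $a$. Because $W(\field)$ is flat over $\z$, $W(\field)\otimes_{\z}-$ is exact, and $F$ is additive and commutes with multiplication by $p$. Hence $G$ is $W(\field)$-semilinear in the sense that $G(px)=pG(x)$.

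The main step is to show that the image of $G$ is all of the target modulo $p$, and then lift. The map $G$ induces
\[
\bar G\colon W(\field)\otimes_{\z} M/p \longrightarrow W(\field)\otimes_{\z} N/p,
\]
which has the form $F\otimes\bar\phi-\textup{id}\otimes\bar c$. Here $\bar\phi\colon M/p\to N/p$ is surjective, since $\phi$ is. The groups $M/p$ and $N/p$ are finite $\fp$-modules. So \autoref{lemm H mod p is surjective in fp closure} applies and $\bar G$ is surjective. By right exactness of $W(\field)\otimes_{\z}-$, this means
\[
\im(G)+p\,(W(\field)\otimes_{\z} N)=W(\field)\otimes_{\z} N.
\]

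Next we lift surjectivity. Let $T:=W(\field)\otimes_{\z} N$ and $S:=W(\field)\otimes_{\z} M$. Given $y\in T$, write $y=G(x_0)+py_1$ with $y_1\in T$. Apply the same decomposition to $y_1$, writing $y_1=G(x_1)+py_2$. Using $pG(x_1)=G(px_1)$, we get $y=G(x_0+px_1)+p^2y_2$. Iterating $a$ times gives $y=G(x_0+px_1+\cdots+p^{a-1}x_{a-1})+p^ay_a$. Since $p^aT=0$, this shows $y\in\im(G)$. The argument only uses additivity of $G$ and commutation with $p$, so no nilpotent-ideal Nakayama argument is needed beyond this finite iteration.

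The only delicate point is verifying the identification of $\bar G$. Concretely, we need $(W(\field)\otimes_{\z} M)/p\cong W(\field)\otimes_{\z} M/p$. We also need that under this identification, $F\otimes\phi$ reduces to $F\otimes\bar\phi$, where $F$ on $W(\field)/p=\field$ is the Frobenius of $\field$. This holds because the Witt vector Frobenius lifts the $p$-th power map. This compatibility is exactly how \autoref{lemm H mod p is surjective in fp closure} was phrased, where $W(\field)\otimes_{\z}V$ was rewritten as $\field\otimes_{\fp}V$, so we expect no obstacle there.

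Alternatively, one can induct on $a$ using the short exact sequences $0\to pN\to N\to N/p\to0$ together with the restriction of $\phi$ to $pM\to pN$, which is still surjective. The iteration above is more direct, though, and avoids needing $c(pM)\subseteq pN$ separately; that inclusion holds anyway since $c$ is $\zpl$-linear.
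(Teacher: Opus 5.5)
Your argument is correct, and it takes a somewhat different route from the paper's.

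The paper filters $M$ and $N$ by $p^tM$ and $p^tN$ and checks that $\phi$ restricts to surjections $p^tM\to p^tN$. It then applies \autoref{lemm H mod p is surjective in fp closure} to every subquotient $p^{t-1}M/p^tM\to p^{t-1}N/p^tN$. Finally it runs a decreasing induction on $t$, with a diagram chase (or the snake lemma, using flatness of $W(\field)$) across the rows $W(\field)\otimes_{\z}p^tM\to W(\field)\otimes_{\z}p^{t-1}M\to W(\field)\otimes_{\z}p^{t-1}M/p^tM\to 0$. This is exactly the alternative you sketch in your last paragraph.

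You instead apply the mod $p$ lemma only once, to $M/p\to N/p$. You then lift using two facts: $G$ is additive, hence commutes with multiplication by $p$, and $p$ is nilpotent on $W(\field)\otimes_{\z}N$. This is Nakayama's lemma for the nilpotent ideal $(p)\subset\z$ applied to the $\z$-submodule $\im(G)$. That is worth noticing, because the paper remarks that Nakayama for $(p)\subset W(\field)$ does not apply directly since $G$ need not be $W(\field)$-linear. Your argument shows that $\z$-linearity of $G$ is all that is needed.

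Your route is shorter and uses only right exactness of $W(\field)\otimes_{\z}-$; the flatness you mention is never used. The paper's layered induction is the same mechanism unrolled one $p$-adic layer at a time, at the cost of applying the mod $p$ lemma to each layer.

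The point you flag as delicate is not. The reduction of $G$ modulo $p$ is $F\otimes\bar\phi-\textup{id}\otimes\bar c$ on $W(\field)\otimes_{\z}M/p$ simply by naturality of $W(\field)\otimes_{\z}-$. Moreover, \autoref{lemm H mod p is surjective in fp closure} is already stated in exactly that form, with the Witt vector Frobenius. So no compatibility with the Frobenius of $\field$ needs to be checked on your side.
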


\begin{rema}
For the proof of this lemma, one might consider applying Nakayama's lemma for $(p) \subset W(\field)$ to reduce to the case of \autoref{lemm H mod p is surjective in fp closure}. However,  Nakayama's lemma  doesn't apply immediately in this case since $F \otimes \phi-\textup{id}\otimes c$ may not be a map of $W(\field)$-modules. 
\end{rema}
\begin{proof}[Proof of \autoref{lemm H  is surjective before modding by p}]
For this, we use the $p$-adic filtration on $M$
\[
0 \subset p^\ell M \subset p^{\ell-1}M \subset \cdots \subset pM \subset M
\]
which eventually terminates on the left hand side as $M$ is a finite $\zpl$-module (hence it only has $p$-torsion). We have the same situation for the $p$-adic filtration of $N$. Note that $c$ and $\phi$ respect these $p$-adic filtrations (as they are maps of abelian groups) and moreover, it is a straightforward observation that the restricted and corestricted map 
\begin{equation}\label{eq phi is surjective on the padic filtration}
\phi \co p^t M \to p^t N
\end{equation}
is also surjective (for each $t \geq 0$) as $\phi \co M \to N$ is surjective. Also writing $c$ for the map $c\co p^t M\to p^tN$, we have maps:
\[W(\field)  \otimes_{\mathbb{Z}} p^t M \xrightarrow{F \otimes \phi - \textup{id} \otimes c} W(\field)  \otimes_{\mathbb{Z}} p^tN,\]
filtering the map $F \otimes \phi - \textup{id} \otimes c$ given in the lemma.

Since the $p$-adic towers of $M$ and $N$ terminate, we do a decreasing induction on $t$ showing that the map above is surjective for each $t\geq 0$. The base case of the induction  follows by the fact that these filtrations are finite. Now assume that the map above is surjective for $t$ and we need to prove it for $t-1$. We have the following commuting diagram
\begin{equation*}
\begin{tikzcd}
W(\field) \otimes_{\mathbb{Z}}p^t M \ar[d,"F\otimes_{\mathbb{Z}} \phi - \textup{id} \otimes c"] \ar[r] & W(\field) \otimes_{\mathbb{Z}} p^{t-1} M \ar[r] \ar[d,"F \otimes_{\mathbb{Z}}\phi - \textup{id} \otimes c"]  & W(\field) \otimes_{\mathbb{Z}} p^{t-1}M /(p^{t} M) \ar[d,"F \otimes_{\mathbb{Z}} \phi' - \textup{id} \otimes c'"]\ar[r] & 0 \\
W(\field) \otimes_{\mathbb{Z}} p^t N\ar[r] &W(\field) \otimes_{\mathbb{Z}} p^{t-1} N \ar[r] & W(\field) \otimes_{\mathbb{Z}}  p^{t-1}N /(p^{t} N) \ar[r] & 0
\end{tikzcd}
\end{equation*}
where the quotients on the right hand column are the non-derived quotients and $\phi'$ and $c'$ are the induced maps on quotients. Since $W(\field)\otimes_{\mathbb{Z}}-$ is right exact, we get that the  horizontal rows are exact. The induced map $\phi' \co p^{t-1}M/p^tM \to p^{t-1}N/p^tN$ is also surjective (since $\phi \co p^{t-1}M \to p^{t-1}N$ is surjective, see \eqref{eq phi is surjective on the padic filtration}) and it is a map between finite $\fp$-modules. Therefore, \autoref{lemm H mod p is surjective in fp closure}  applies to prove that the right vertical arrow is surjective. The left vertical arrow is surjective due to the induction hypothesis. Now a simple diagram chase proves the surjectivity of the middle vertical map by using the exactness of the rows and the surjectivity of the left and  right  vertical arrows. Or alternatively, since $W(\field)$ is flat,  the rows above are short exact sequences and we can use the snake lemma to deduce the surjectivity of the middle vertical arrow. This finishes the induction whose $t=0$ case is  the desired result.
\end{proof}

\subsection{Finalizing the proof of \autoref{mainthmevenvanish}} 
We are now prepared to prove \autoref{mainthmevenvanish}.
\mainthmevenvanishing*

\begin{proof}
We use~\cite[Theorem~7.2.2.1]{DGM13} and prove the same statement for $\rtc$. We begin with proving \eqref{item 1 thmevenvanishing}. For a given even integer $\ell>0$, we need to show that $\pi_{\ell}\rtc(\sphwf \otimes A) \cong 0$. By \autoref{prop partial filtration aprroximate tc}, it suffices to prove that 
\[\pi_{\ell} Q_{\ell}^0 \filorb\rtc(\sphwf \otimes A) \cong 0. \]

\autoref{prop tc fiber sequence for the witt case} gives a fiber sequence of spectra:
\[Q_{\ell}^0 \filorb \rtc(\sphwf \otimes A) \to \sphwf \otimes Q_{\ell}^0\rntc(\wtw A) \xrightarrow{F \otimes Q_{\ell}^0 \widetilde{\varphi} - \textup{id} \otimes \textup{can}} \sphwf\otimes  Q_{\ell}^0 \rtp(\wtw A).\]

By the flatness of $\sphwf$ and  \autoref{prop cardinalities of quotients of ntc and ntp}, the middle and the right hand terms above are concentrated in odd homotopy groups. Therefore, it suffices to prove that  the second map above is surjective in $\pi_{\ell+1}$ since $\ell$ is even. Again by the flatness of $\sphwf$, this map is given by 
\[
W(\field) \otimes_\z \pi_{\ell+1} Q_{\ell}^0\rntc(\wtw A) \xrightarrow{F \otimes \pi_{\ell+1} Q_{\ell}^0 \widetilde{\varphi} -\textup{id} \otimes \pi_{\ell+1} \textup{can}} W(\field) \otimes_\z \pi_{\ell + 1}Q_{\ell}^0\rtp(\wtw A) \,.
\]

By \autoref{coro frobenius is surjective in homotopy},  $ Q_{\ell}^0 \widetilde{\varphi}$ is surjective on $\pi_{\ell+1}$. Moreover,  $Q_{\ell}^0\rntc(\wtw A) $ and $ Q_{\ell}^0 \rtp(\wtw A)$  have degreewise finite homotopy groups due to \autoref{prop cardinalities of quotients of ntc and ntp} and they are $p$-local since $A$ is. Therefore, we can apply \autoref{lemm H  is surjective before modding by p} to deduce that the map above is surjective as desired.  This finishes the proof of \eqref{item 1 thmevenvanishing}.

Now we prove the statement in \eqref{item 2 thmevenvanishing} on the cardinalities of the relative algebraic K-theory groups. For $k< m$, the vanishing of these groups follow by the fact that algebraic K-theory increases connectivity by $1$ by Waldhausen~\cite{Wal78} (cf.~\cite[Lemma~2.4]{LT19}). 

For the case $k\geq m$, let $K \to K'$ be a ring map between perfect fields of characteristic $p$. We consider the following  map of fiber sequences (\autoref{prop tc fiber sequence for the witt case}):

\begin{equation}\label{eq proving odd groups are infinite}
\adjustbox{scale=.9,center}{
\begin{tikzcd}
Q_{2k+1}^0\filorb\rtc(\sph_{W(K)}\otimes A) \ar[r] \ar[d] &\sph_{W(K)} \otimes Q_{2k+1}^0 \rntc(\wtw A) \ar[r] \ar[d]& \sph_{W(K)} \otimes Q_{2k+1}^0 \rtp(\wtw A) \ar[d]\\
Q_{2k+1}^0\filorb\rtc(\sph_{W(K')}\otimes A) \ar[r] & \sph_{W(K')} \otimes Q_{2k+1}^0 \rntc(\wtw A) \ar[r] & \sph_{W(K')} \otimes Q_{2k+1}^0 \rtp(\wtw A)  \,.
\end{tikzcd}
}
\end{equation}

Applying $\pi_{2k+1}$ to this diagram, and letting $M:= \pi_{2k+1 }Q_{2k+1}^0 \rntc(\wtw A)$ and letting $N:= \pi_{2k+1 }Q_{2k+1}^0 \rtp(\wtw A)$, we obtain another commuting diagram
\begin{equation}\label{eq proving odd groups are infinite 2} 
\begin{tikzcd}
\pi_{2k+1}\rtc(\sph_{W(K)} \otimes A) 
\ar[d,hook] \ar[r,hook]& W(K) \otimes_\z M \ar[r] \ar[d,hook]&  W(K) \otimes_\z N\ar[d] \\
\pi_{2k+1}\rtc(\sph_{W(K')} \otimes A) 
 \ar[r,hook] &W(K') \otimes_\z M \ar[r]  &W(K') \otimes_\z N  
\end{tikzcd}
\end{equation}
where the left hand side is as given above due to  \autoref{prop partial filtration aprroximate tc}. Since the middle and the right hand terms in \eqref{eq proving odd groups are infinite} are concentrated in odd homotopy groups by \autoref{prop cardinalities of quotients of ntc and ntp}, we deduce that the left horizontal maps are the inclusions of the kernels of the respective right hand horizontal maps.  

Now $M$ and $N$ are finite due to \autoref{prop cardinalities of quotients of ntc and ntp} and $p$-local since $A$ is. By the classification of finite abelian groups, we deduce that the   middle vertical arrow is a coproduct of the inclusions $W(K)/p^l \to W(K')/p^l$ which implies that the middle vertical  arrow is injective. From this, we deduce that the left hand vertical arrow is also injective. 

If $K$ is finite, then there is a map $K\to \field$ since $\field$ is algebraically closed. By the injectivity of the left vertical map in \eqref{eq proving odd groups are infinite 2} for the case $K' = \field$, we deduce:
\[\lv \pi_{2k+1}\rtc(\sph_{W(K)} \otimes A)\rv \leq \lv \pi_{2k+1}\rtc(\sph_{W(\field)} \otimes A)\rv.\]
 By \autoref{theo even trivial gives odd}, the left hand side above can be chosen to be  arbitrarily  large  proving the infiniteness of  \eqref{item 2 thmevenvanishing}.

 For the last statement of \eqref{item 2 thmevenvanishing}, we use the orbit spectral sequence for $\rtc(\sphwf \otimes A)$ from \autoref{theo orbit filtration} \eqref{Orbit-filt-3} noting $\thh_{\rel}(\pis (\sphwf \otimes A))$ is $p$-complete as it is an $\field$-module. For this, we first consider the $S^1$-homotopy orbit spectral sequence computing $\rtc^+(\pis (\sphwf \otimes A))$. This is a first quadrant spectral sequence whose first page is given by $\pis \big(\rthh(\pis (\sphwf \otimes A))\big)[z]$ with $z$ in total degree $2$. Each entry of the first page of this spectral sequence is a   $p$-group  with bounded $p$-torsion. We deduce that the same is true for the $\mathrm{E}_\infty$-page as this is invariant under taking kernels and cokernels. Hence, $\pi_k\rtc^+(\pis (\sphwf \otimes A))$  has a finite filtration with quotients given by $p$-groups with bounded $p$-torsion. It follows by an induction over this finite filtration that $\pi_k\rtc^+(\pis (\sphwf \otimes A))$ is also a $p$-group with bounded $p$-torsion. 

 Now applying the same argument for the orbit spectral sequence for $\rtc(\sphwf \otimes A)$ gives the desired result as this is also a first quadrant spectral sequence and as we demonstrated above, each entry of its first page is a $p$-group with bounded $p$-torsion.
\end{proof}

For $\field = \fpc$, we have control over the cardinalities of the infinite groups above. 

\begin{coro}\label{coro countable relative k theory groups}
Let $A$ be an $\bE_1$-ring with $\pis A \cong \fp[x_{2m}]$ where $p \nmid m >0$. Then  each \[\pi_{k}\mathrm{K}_{\rel}(\sph_{W(\fpc)} \otimes A)\] is countable.
\end{coro}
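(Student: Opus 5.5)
We reduce to $\rtc$ via the Dundas--Goodwillie--McCarthy theorem~\cite[Theorem~7.2.2.1]{DGM13}, exactly as in the proof of \autoref{mainthmevenvanish}. It then suffices to show that $\pi_k\rtc(\sph_{W(\fpc)}\otimes A)$ is countable for each $k$. By \autoref{prop partial filtration aprroximate tc}, for $e\geq k$ this group is isomorphic to $\pi_k Q_e^0\filorb\rtc(\sph_{W(\fpc)}\otimes A)$. So we only need countability of the homotopy of the fiber appearing in \autoref{prop tc fiber sequence for the witt case}.

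The fiber sequence of \autoref{prop tc fiber sequence for the witt case} has middle and right terms $\sph_{W(\fpc)}\otimes Q_e^0\rntc(\wtw A)$ and $\sph_{W(\fpc)}\otimes Q_e^0\rtp(\wtw A)$. By flatness of $\sph_{W(\fpc)}$, their homotopy groups are $W(\fpc)\otimes_\z M$ and $W(\fpc)\otimes_\z N$. Here $M,N$ are the homotopy groups of $Q_e^0\rntc(\wtw A)$ and $Q_e^0\rtp(\wtw A)$, which are finite $p$-local groups by \autoref{prop cardinalities of quotients of ntc and ntp}, since $\pis A\cong\fp[x_{2m}]$ with $\fp$ finite. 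Every finite $p$-group is a finite sum of cyclic groups $\z/p^l$. Hence $W(\fpc)\otimes_\z M$ is a finite sum of copies of $W(\fpc)/p^l=W_l(\fpc)$.

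The key point is that $W_l(\fpc)$ is countable. The field $\fpc=\bigcup_r\field_{p^r}$ is countable, and as a set $W_l(\fpc)\cong\fpc^{\,l}$. Therefore the middle and right terms have countable homotopy groups. The long exact sequence of the fiber sequence then exhibits $\pi_k$ of the fiber as an extension of a subgroup of $W(\fpc)\otimes_\z M$ by a quotient of $W(\fpc)\otimes_\z N$, so it is countable.

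An alternative route avoids the fiber sequence. One writes $\sph_{W(\fpc)}\otimes A$ as the filtered colimit of $\sph_{W(\field_{p^r})}\otimes A$. Relative $\tc$ of these commutes with filtered colimits by~\cite[Corollary~2.15]{CMM21}, and each term has finite homotopy by \autoref{theo even trivial gives odd}, so the colimit is countable.

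The only delicate step is confirming that the identification of \autoref{prop partial filtration aprroximate tc} applies over $\fpc$, i.e.\ that its hypotheses hold for $\tau_{\geq\bullet}(\sph_{W(\fpc)}\otimes A)$. This is already established in the proof of \autoref{mainthmevenvanish}, using \autoref{ex-thh-convergent} and the fact that $\thh(\pis(\sph_{W(\fpc)}\otimes A))$ is an $\fpc$-module and hence $p$-complete.
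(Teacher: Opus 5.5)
Your main argument is correct, but it takes a different route from the paper's. The paper does not use $\ntc$, $\TP$, the Frobenius, or the base-change fiber sequence. It runs the orbit spectral sequence of \autoref{coro orbit spectral sequence} for $\wtw(\sph_{W(\fpc)}\otimes A)$, whose $\mathrm{E}_1$-page is $\pi_{*-1}\rtcpl(\fpc[x_{2m}])$. These groups are countable because the $S^1$-homotopy orbit spectral sequence starts from $\pi_*\rthh(\fpc[x_{2m}])[z]$, which by \eqref{eq thh of formal dga} is a finite-dimensional $\fpc$-vector space in each degree. Strong $\thh$-convergence (\autoref{theo orbit filtration}) then makes the filtration on each $\pi_k\rtc$ finite, so countability passes through finitely many subquotients.

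You instead go through the truncated fiber sequence of \autoref{prop tc fiber sequence for the witt case}, i.e.\ the same machinery as part (1) of \autoref{mainthmevenvanish}. That means the assembly equivalences of \autoref{lemm moving witt out of fixed points} (which rest on \autoref{theo tc ntc tp of formal dgas}, hence on $p\nmid m$) and the finiteness over $\fp$ from \autoref{prop cardinalities of quotients of ntc and ntp}.

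The trade-off is as follows. The paper's argument is softer and more portable: it only needs countability of $\thh$ of the associated graded. That is why it transfers verbatim to the truncated Witt vectors and never uses $p\nmid m$. Yours needs heavier inputs but gives more. Exhibiting $\pi_k$ as an extension of a subgroup of $W(\fpc)\otimes_{\z}M$ by a quotient of $W(\fpc)\otimes_{\z}N$, with $M,N$ finite $p$-groups, also gives the bounded $p$-torsion of part (2) of \autoref{mainthmevenvanish} at once.

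Your alternative filtered-colimit route needs one repair. \cite[Corollary~2.15]{CMM21} only gives commutation with filtered colimits after $p$-completion. The derived $p$-completion of a countable torsion group can be uncountable (e.g.\ $\bigoplus_{n}\z/p^n$), so countability of the colimit does not transfer on its own. The step closes if you instead use that $\mathrm{K}_{\rel}$ commutes with filtered colimits (since $\mathrm{K}$ and $\pi_0$ do). Then apply \cite[Theorem~7.2.2.1]{DGM13} only at the finite stages, together with \autoref{theo even trivial gives odd}.

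The identification $\sph_{W(\fpc)}\otimes A\simeq\colim_r\sph_{W(\field_{p^r})}\otimes A$ also needs a word of justification. $\sph_{W(\fpc)}$ is only the $p$-completion of $\colim_r\sph_{W(\field_{p^r})}$, so the identification uses that $\pi_*A$ consists of $\fp$-vector spaces.
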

\begin{proof}
This is similar to the last part of the proof of \autoref{mainthmevenvanish}.  One observes that the $\mathrm{E}_2$-page of the $S^1$-homotopy orbit spectral sequence given by $\pis \big(\rthh(\pis (\sphwf \otimes A))\big)[z]$ with $z$ in total degree $2$  consists of entries that are countable by \eqref{eq thh of formal dga}\footnote{note that \eqref{eq thh of formal dga} also applies to $\fpc$ in place of $\fp$}, so does the $\mathrm{E}_\infty$-page giving a finite filtration of each homotopy group $\pi_k\rtc^{+}(\pis (\sphwf \otimes A))$ with countable quotients. An induction over this finite filtration shows that these homotopy groups are also countable. Now the same argument applies with the orbit spectral sequence from \autoref{theo orbit filtration} computing the groups $\pi_k \rtc(\sphwf \otimes A)$. The result then follows from~\cite[Theorem~7.0.0.2]{DGM13}. 
\end{proof}

In the case $\field = \fpc$, we can use Quillen's computation of $\mathrm{K}(\fpc)$ to deduce the cardinalities of the following absolute K-theory groups.

\begin{coro}\label{coro absolute k theory is odd}
Let $A$ be an $\bE_1$-ring with $\pis A \cong \fp[x_{2m}]$ where $p \nmid m >0$. Then $\mathrm{K}_*(\sph_{W(\fpc)} \otimes A)$ is  trivial for even  $*\geq 2$ and countably infinite with bounded $p$-torsion for odd $* \geq 1$ and $*=0$.  
\end{coro}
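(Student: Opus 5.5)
The plan is to combine the fiber sequence $\mathrm{K}_{\rel}(\sph_{W(\fpc)}\otimes A)\to \mathrm{K}(\sph_{W(\fpc)}\otimes A)\to \mathrm{K}(\pi_0(\sph_{W(\fpc)}\otimes A))$ with Quillen's computation of $\mathrm{K}(\fpc)$. First I identify $\pi_0(\sph_{W(\fpc)}\otimes A)\cong W(\fpc)\otimes_{\z}\fp\cong \fpc$, using the flatness of $\sphwf$. By Quillen~\cite{Qui72}, $\mathrm{K}_0(\fpc)=\z$, $\mathrm{K}_{2i}(\fpc)=0$ for $i\geq 1$, and $\mathrm{K}_{2i-1}(\fpc)\cong \bigoplus_{\ell\neq p}\mathbb{Q}_\ell/\z_\ell$ for $i\geq 1$; these odd groups are countable and have no $p$-torsion. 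The map $\sph_{W(\fpc)}\otimes A\to \fpc$ admits no section in general, so I will argue only through the long exact sequence and not by splitting.

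For even degrees $2k\geq 2$, consider the segment $\mathrm{K}_{2k}^{\rel}\to \mathrm{K}_{2k}\to \mathrm{K}_{2k}(\fpc)$. The left term vanishes by \autoref{mainthmevenvanish}\eqref{item 1 thmevenvanishing} and the right term vanishes by Quillen, so $\mathrm{K}_{2k}(\sph_{W(\fpc)}\otimes A)=0$. For odd degrees $2k-1\geq 1$, the segment reads $0=\mathrm{K}_{2k}(\fpc)\to \mathrm{K}^{\rel}_{2k-1}\to \mathrm{K}_{2k-1}\to \mathrm{K}_{2k-1}(\fpc)\to \mathrm{K}^{\rel}_{2k-2}=0$. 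This is a short exact sequence
\[0\to \pi_{2k-1}\mathrm{K}_{\rel}\to \mathrm{K}_{2k-1}(\sph_{W(\fpc)}\otimes A)\to \mathrm{K}_{2k-1}(\fpc)\to 0.\]
Countability follows from \autoref{coro countable relative k theory groups} together with the countability of Quillen's groups.

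Infiniteness needs care. For $2k-1\geq 2m+1$, the relative group is infinite by \autoref{mainthmevenvanish}\eqref{item 2 thmevenvanishing}. For small odd degrees the relative group may vanish, but $\mathrm{K}_{2k-1}(\fpc)$ is infinite, so the middle term is infinite in either case.

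The statement ``bounded $p$-torsion'' I read, as in \autoref{mainthmevenvanish}, as: the $p$-power torsion is of bounded exponent. The relative part is a $p$-group of bounded exponent. The quotient $\mathrm{K}_{2k-1}(\fpc)$ has no $p$-torsion and is $p$-divisible. Hence if $x$ is $p^j$-torsion in the middle, its image is $p^j$-torsion in the quotient and therefore zero, so $x$ lies in the relative subgroup. This shows the $p$-torsion of $\mathrm{K}_{2k-1}$ equals that of the relative group and is bounded.

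For degree $0$, the sequence ends $\mathrm{K}^{\rel}_0=0\to \mathrm{K}_0\to \mathrm{K}_0(\fpc)=\z\to \pi_{-1}\mathrm{K}_{\rel}=0$, using connectivity of $\mathrm{K}_{\rel}$ by Waldhausen. Since $\pi_1\mathrm{K}_{\rel}=0$ when $m\geq 1$, this gives $\mathrm{K}_0\cong \z$, which is countably infinite with no $p$-torsion.

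The main obstacle is identifying $\pi_0$ correctly and controlling the extension problem for torsion. The divisibility and torsion-freeness argument above handles the latter, so I expect the proof to be short once these identifications are in place.
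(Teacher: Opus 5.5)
Your proof is correct and follows the paper's argument. Both use the fiber sequence $\mathrm{K}_{\rel}(\sph_{W(\fpc)}\otimes A)\to \mathrm{K}(\sph_{W(\fpc)}\otimes A)\to \mathrm{K}(\fpc)$ together with Quillen's computation, \autoref{mainthmevenvanish} and \autoref{coro countable relative k theory groups} to get even vanishing and a short exact sequence in odd degrees with countable ends. The differences are minor. For bounded $p$-torsion, the paper observes that this short exact sequence splits, because the relative term is $p$-local while $\mathrm{K}_{2k-1}(\fpc)$ vanishes $p$-locally; you instead show directly that all $p$-power torsion lies in the relative subgroup. You also spell out infiniteness and the case $*=0$, which the paper leaves implicit.
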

\begin{proof}
The first claim follows by considering the fiber sequence 
\[\mathrm{K}_{\rel}(\sph_{W(\fpc)}\otimes A) \to \mathrm{K}(\sph_{W(\fpc)}\otimes A) \to  \mathrm{K}(\fpc) \]
since both extremes are trivial in even positive degrees due to \autoref{mainthmevenvanish}  and \cite[Section~12]{Qui72}. This also means applying $\pi_k-$ to the sequence above for odd $k$, one obtains a short exact sequence. Both extremes of this short exact sequence are countable due to  \cite[Section~12]{Qui72} and \autoref{coro countable relative k theory groups}; this proves that the central term is also countable.  Moreover, $\mathrm{K}_*(\fpc)$ is  $p$-torsion free in each positive odd degree and vanishes $p$-locally. 
Since $\mathrm{K}_{\rel}(\sph_{W(\fpc)}\otimes A)\simeq \mathrm{TC}_{\rel}(\sph_{W(\fpc)}\otimes A)$, we know that it is $p$-local this implies that the short exact sequence in $\pi_k-$ splits. Together with \autoref{mainthmevenvanish} \eqref{item 2 thmevenvanishing}, this implies the boundedness of $p$-torsion. 
\end{proof}

The following corollary is immediate. 

\cardoverFpbar*

\section{On the algebraic K-theory of truncated witt vectors}

The goal of this section is to prove \autoref{theo k-theory of truncated witts is odd}. We first discuss results related to the the algebraic K-theory truncated polynomial algebras,  which follow from work of Speirs~\cite{speirs2020truncatedpolynomial}. We then adapt some of results from the previous section to the case of the $p$-adic filtration on $W(\field)/p^n$ whose associated graded is the truncated polynomial algebra $\field[t]/t^n$ in order to prove \autoref{theo k-theory of truncated witts is odd}. 

\subsection{Recollections on the algebraic K-theory of the truncated polynomial algebra}
The computation of the algebraic K-theory of $\field[t]/t^n$ is due to Hesselholt--Madsen \cite{hesselholt1997kthryoftruncated}; however, here we closely follow the more recent approach to this computation by Speirs \cite{speirs2020truncatedpolynomial}.

Let $n>1$ be an integer. We regard $\field[t]/t^n$ as an $\mathbb{N}$-graded ring with $t$ in weight $1$ and degree $0$, which results in  $\N$-gradings of its $\thh$, $\ntc$ and $\tp$. Consequently, following  \autoref{nota relative versions for filtered rings}, we define
\[\rthh(\field[t]/t^n):= \textup{fib} \big(\thh(\field[t]/t^n)\to \thh(\field)\big)\] 
which simply removes the weight zero component of $\thh(\field[t]/t^n)$. The same applies for $\rntc$ and $\rtp$. 

Using the pointed monoid $\Pi_n:= \{0,1,t,...,t^{n-1}\}$ we have $\field[t]/t^n  = \field[\Pi_n]$.  Following the notation of \cite{speirs2020truncatedpolynomial}, we let $m$ denote the weight and let 
\[d:= \Big\lfloor{\frac{m-1}{n}\Big\rfloor}\,. \]

\begin{prop}\label{prop speirs can iso}
The map 
\[\pi_{2r+1} \rntc(\field[t]/t^n)^m \xrightarrow{\pi_{2r+1} \textup{can}} \pi_{2r+1} \rtp(\field[t]/t^n)^m\]
is an isomorphism for 
\[d>r \textup{\ \ if \  } n \nmid m\]
and it is an isomorphism for 
\[d \geq r  \textup{\ \ if  \ } n \mid m\,.\]
\end{prop}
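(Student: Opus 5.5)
We will extract this from Speirs' description of $\thh(\field[t]/t^n)$ as an $S^1$-spectrum, one weight at a time. By the splitting \eqref{eq splitting for thh of truncated polynomial} together with \cite[Proposition 3]{speirs2020truncatedpolynomial}, the weight $m$ component of $\rthh(\field[t]/t^n)$ is $\thh(\field)\otimes_{\fp}(\text{weight } m \text{ part of } \fp\otimes\thh(\Sigma^\infty\Pi_n))$. This weight $m$ part is an induced $S^1$-object: for $n\nmid m$ it is $\Sigma^{2d}$ of $\fp\otimes (S^1/C_m)_+$ (Speirs' notation, with $d=\lfloor (m-1)/n\rfloor$); for $n\mid m$ it is a cofiber of two such induced pieces, with shifts $2d$ and $2d+1$ or thereabouts. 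We will take these statements from Speirs as given, together with his formulas (\cite[Propositions 12 and 13]{speirs2020truncatedpolynomial}) for $\pi_*\rntc(\field[t]/t^n)^m$ and $\pi_*\rtp(\field[t]/t^n)^m$ as $W_{v}(\field)$-modules. Here $v=v_p(m)+1$ or so, and the groups are concentrated in odd degrees in the appropriate range.

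The key structural input is the norm cofiber sequence \eqref{norm-restriction} in weight $m$:
\[
\Sigma\rtcpl(\field[t]/t^n)^m\to\rntc(\field[t]/t^n)^m\xrightarrow{\can}\rtp(\field[t]/t^n)^m .
\]
Injectivity and surjectivity of $\pi_{2r+1}\can$ therefore follow once $\pi_{2r+1}$ and $\pi_{2r}$ of $\Sigma\rtcpl(\field[t]/t^n)^m$ vanish, i.e.\ once $\pi_{2r}$ and $\pi_{2r-1}$ of $\rthh(\field[t]/t^n)^m_{hS^1}$ vanish. For an induced object $(X\otimes S^1/C_{m}{}_+)_{hS^1}\simeq X_{hC_m}$ we get homotopy orbits of a $C_m$-action on $\Sigma^{2d}\thh(\field)$. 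Since $\thh(\field)$ is an $\fp$-module with trivial-ish action, these are concentrated in degrees $\ge 2d$, and they are in fact governed by $\Sigma^{2d}\thh(\field)_{hC_{p^{v-1}}}$. This could be nonzero in all degrees $\ge 2d$, so a pure connectivity argument is too crude.

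Hence the better plan is to compare the explicit Speirs computations directly. In the range $d>r$ (resp.\ $d\ge r$ for $n\mid m$), both $\pi_{2r+1}\rntc^m$ and $\pi_{2r+1}\rtp^m$ lie below the bottom cell of the homotopy-orbit term. This is the connectivity statement we do have: $\pi_j$ of $\Sigma\rtcpl{}^m$ vanishes for $j<2d+1$ when $n\nmid m$. In the case $n\mid m$ the bottom cell shifts up by one, giving vanishing for $j<2d+2$. So for $2r+1<2d+1$, i.e.\ $r<d$, both $\pi_{2r+1}$ and $\pi_{2r}$ of the fiber vanish and $\can$ is an isomorphism in degree $2r+1$. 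In the case $n\mid m$, the bound $2r+1\le 2d+1$ gives $r\le d$.

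The main obstacle is pinning down the exact bottom degree of $\rthh(\field[t]/t^n)^m$ in each case: $2d$ versus $2d+1$ for $n\mid m$. This requires the precise cofiber description in Speirs' Proposition 3. I would first carefully reread that description of the weight $m$ cyclic bar construction of $\Pi_n$, namely $S^{\lambda_d}\otimes S^1/C_m{}_+$ for $n\nmid m$ and the cofiber of $S^{\lambda_{d}}\otimes S^1/C_{m/n}{}_+\to S^{\lambda_d}\otimes S^1/C_m{}_+$ suitably shifted when $n\mid m$, to confirm the connectivity. If the $n\mid m$ boundary case $r=d$ is not covered by connectivity, I would instead check it directly with Speirs' explicit $W$-module formulas for both sides. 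I would use that $\can$ is injective there because the relevant $\Sigma\rtcpl$ group lies in even degree.
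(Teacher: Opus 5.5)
Your argument in the third paragraph is correct and is essentially the paper's proof: bound the bottom degree of $\rthh(\field[t]/t^n)^m$ using the splitting and Speirs' description of the weight-$m$ cyclic bar construction. That bottom degree is $2d$ if $n\nmid m$ and $2d+1$ if $n\mid m$. Since $S^1$-homotopy orbits preserve connectivity, the fiber $\Sigma\rtcpl(\field[t]/t^n)^m$ of $\can$ then vanishes in degrees $2r$ and $2r+1$ throughout the stated range.

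Your worry that connectivity is ``too crude'' is unfounded here, because connectivity is exactly what the stated range requires. The $n\mid m$ bottom degree you flag is precisely what the paper reads off from Speirs' Lemma~4: the integral homology of the relative weight-$m$ part is $\z/n$, concentrated in degree $2d+1$. So no fallback to the explicit $W$-module formulas is needed.
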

\begin{proof}
For $n\nmid m$, it follows from \cite[Lemma 4]{speirs2020truncatedpolynomial} that $\pis \thh^\z(\z[\Pi_n])^m$ is given by $\z$ in degrees $2d$ and $2d+1$ and trivial elsewhere. Therefore, 
\begin{equation}\label{chain-of-equiv}
\thh(\field[\Pi_n])^m \simeq \thh(\field) \otimes \thh(\sph[\Pi_n])^m \simeq \thh(
\field) \otimes_{\z}\thh^\z(\z[\Pi_n])^m
\end{equation}
is trivial on $\pis$ for $*<2d$. Then  $\Sigma \tc^+(\field[\Pi_n])^m$, the fiber of $\textup{can}$, is trivial on $\pis $ for $*<2d+1$; which proves the $n \nmid m$ case. 

For $n \mid m$, $\pi_*(\rthh^\z(\z[\Pi_n])^m)$ is concentrated in degree $2d+1$ where it is given by $\z/n$ by \cite[Lemma 4]{speirs2020truncatedpolynomial}. By the chain of equivalences \eqref{chain-of-equiv}, one sees that $\pis \rthh(\field[\Pi_n])^m$ is trivial for $*\leq 2d$ and this gives the desired result in the same way.
\end{proof}

\begin{prop}[{\cite[Corollary 9]{speirs2020truncatedpolynomial}}]\label{prop speirs frobenius iso}
The Frobenius map 
\[\pi_{2r+1} \rntc(\field[t]/t^n)^m \xrightarrow{\pi_{2r+1} \varphi} \pi_{2r+1} \rtp(\field[t]/t^n)^{pm}\]
is an isomorphism for 
\[d \leq r \textup{\ \ if  \ } n \nmid m\]
and an isomorphism for 
\[d < r \textup{\ \ if \  } n \mid m \,.\]
\end{prop}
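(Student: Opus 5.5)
This statement is cited from \cite[Corollary 9]{speirs2020truncatedpolynomial}, so the plan is to recover it from Speirs' description of the Frobenius on the weight-graded pieces, reducing to connectivity estimates for $\thh$ analogous to those in the proof of \autoref{prop speirs can iso}. The key input is that $\varphi_p^{hS^1}$ on weight $m$ factors, via the Segal conjecture type statement, as $(\thh^m)^{hS^1}\to ((\thh^{pm})^{tC_p})^{hS^1}\simeq (\thh^{pm})^{tS^1}$. Here the map $\varphi_p\co \thh(\field[\Pi_n])^m\to (\thh(\field[\Pi_n])^{pm})^{tC_p}$ is an equivalence in sufficiently high degrees. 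Concretely, using \eqref{chain-of-equiv} we write $\thh(\field[\Pi_n])\simeq \thh(\field)\otimes \thh(\sph[\Pi_n])$. The Frobenius is then the tensor product of the Frobenius on $\thh(\field)$ and the unstable Frobenius on the cyclic bar construction $B^{cy}(\Pi_n)$.

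First, I would treat the $\thh(\field)$ factor. By Bökstedt periodicity, $\varphi_p\co \thh(\field)\to\thh(\field)^{tC_p}$ is the connective cover, i.e.\ an equivalence in degrees $\geq -1$. Second, for the monoid factor, the Frobenius $\Sigma^\infty B^{cy}(\Pi_n)^m\to (\Sigma^\infty B^{cy}(\Pi_n)^{pm})^{tC_p}$ is induced by the $p$-fold subdivision diagonal $B^{cy}(\Pi_n)^m\to (B^{cy}(\Pi_n)^{pm})^{C_p}$, which is a homeomorphism. Via the Tate diagonal and the Segal conjecture for $C_p$, one gets an identification after tensoring with $\thh(\field)$: $(\thh(\field)\otimes X^{pm})^{tC_p}\simeq \thh(\field)^{tC_p}\otimes X^m$, for the finite $C_p$-spaces appearing (Speirs' Lemma in his Section 3). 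Combining the two, $\varphi_p$ on weight $m$ is identified with $\tau_{\geq ?}$-type connective cover map tensored with $X^m$. Hence it is an equivalence in degrees above roughly $\mathrm{conn}(X^m)-1$. By \cite[Lemma 4]{speirs2020truncatedpolynomial}, $X^m$ has homology in degrees $2d,2d+1$ when $n\nmid m$, and only $2d+1$ when $n\mid m$. So $\varphi_p$ on weight $m$ is an equivalence in degrees $\geq 2d-1$, respectively $\geq 2d$.

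Third, I pass to $S^1$-fixed points. The cofiber of $\varphi_p$ is then concentrated in degrees $\leq 2d-2$ (resp.\ $\leq 2d-1$) and is built from $\thh(\field)^{tC_p}$ in negative degrees tensored with $X^m$. Applying $(-)^{hS^1}$ preserves upper bounds on coconnectivity only up to the fact that homotopy fixed points of a coconnective-above-$k$ spectrum stay coconnective above $k$. Therefore the fiber and cofiber of $\varphi$ on weight $m$ live in degrees $\leq 2d-1$ (resp.\ $\leq 2d$), which gives the claimed isomorphism on $\pi_{2r+1}$ for $2r+1\geq 2d+1$ when $n\nmid m$ and for $2r+1> 2d+1$ when $n\mid m$. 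Finally, I would use parity and the explicit evenness/oddness of $\pi_*\rntc$ and $\pi_*\rtp$ from Speirs' Propositions 12--13 to sharpen the boundary cases by one degree where needed.

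The main obstacle is the precise degree bookkeeping in the second step: pinning down exactly where $\varphi_p$ stops being an equivalence on $\thh(\field)\otimes X^m$, and in particular treating the $n\mid m$ case, where the extra $\z/n$ class shifts the bound. I would simply follow Speirs' explicit computation of $\pi_*(\thh^{pm})^{tC_p}$ there rather than rederive it.
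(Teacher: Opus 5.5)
The paper gives no argument of its own for this statement; it imports Speirs' Corollary~9. Your outline is essentially the mechanism behind that corollary:
\begin{itemize}
\item split $\thh(\field[t]/t^n)\simeq \thh(\field)\otimes \Sigma^\infty B^{cy}(\Pi_n)$;
\item use $(X^{pm})^{C_p}\cong X^m$ and the vanishing of $(-)^{tC_p}$ on free $C_p$-cells to identify the weight-$m$ Frobenius with $\varphi_p^{\thh(\field)}\otimes \mathrm{id}_{X^m}$;
\item pass to $(-)^{hS^1}$.
\end{itemize}
That route works. However, the degree count in your second step is off by one. The bounds you state in the third step are the correct ones, but they do not follow from your second step as written.

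The truncation level of $\mathrm{cof}(\varphi_p)\simeq \tau_{\le -2}\big(\thh(\field)^{tC_p}\big)\otimes X^m$ is governed by the \emph{top} cell of $X^m$, not by $\mathrm{conn}(X^m)$.
\begin{itemize}
\item Case $n\nmid m$. Stably $X^m\simeq \Sigma^{2d}\sph\vee\Sigma^{2d+1}\sph$, so the cofiber is $(2d-1)$-truncated, with $\pi_{2d-1}\cong \pi_{-2}\thh(\field)^{tC_p}\cong\field\neq 0$. In particular $\varphi_p$ is \emph{not} an isomorphism on $\pi_{2d-1}$ (the source vanishes there), contrary to your claim.
\item Case $n\mid m$. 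Here $X^m\simeq\Sigma^{2d+1}\sph/n$ has top cell in degree $2d+2$, so the cofiber is $2d$-truncated. If $p\nmid n$, everything vanishes.
\end{itemize}
Truncated spectra are closed under limits, so $\mathrm{fib}(\varphi)=\big(\Sigma^{-1}\mathrm{cof}(\varphi_p)\big)^{hS^1}$ is $(2d-2)$-truncated, respectively $(2d-1)$-truncated. Hence $\pi_j\varphi$ is an isomorphism for $j\ge 2d$, respectively $j\ge 2d+1$. Taking $j=2r+1$ gives $d\le r$ in both cases. This implies the stated bounds, and is slightly stronger when $n\mid m$. No parity sharpening from Speirs' Propositions 12--13 is needed; just replace the connectivity heuristic by this top-cell count.
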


\begin{coro}\label{coro phi and can for truncated witt}
Let $r \geq 0$, then the canonical map 
\[\pi_{2r+1} \rntc(\field[t]/t^n)^m \xrightarrow{\pi_{2r+1} \textup{can}} \pi_{2r+1} \rtp(\field[t]/t^n)^m\]
is an isomorphism for $m \geq n(r+1)$ and the Frobenius
\[\pi_{2r+1} \rntc(\field[t]/t^n)^m \xrightarrow{\pi_{2r+1} \varphi} \pi_{2r+1} \rtp(\field[t]/t^n)^{pm}\]
is an isomorphism whenever $pm <n(r+1)$.
\end{coro}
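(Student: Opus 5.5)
This is a direct translation of the inequalities in \autoref{prop speirs can iso} and \autoref{prop speirs frobenius iso} into conditions on the weight $m$, using $d=\lfloor (m-1)/n\rfloor$. The only input is elementary floor arithmetic; no new homotopical input is needed.

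\emph{The canonical map.} Suppose $m\geq n(r+1)$. Then $m-1\geq n(r+1)-1$, so $d=\lfloor (m-1)/n\rfloor\geq r$. We refine this according to divisibility.
\begin{enumerate}
\item If $n\mid m$, then $d\geq r$ is exactly the hypothesis of \autoref{prop speirs can iso} in that case.
\item If $n\nmid m$, then $m\neq n(r+1)$, so $m\geq n(r+1)+1$. Hence $m-1\geq n(r+1)$ and $d\geq r+1>r$, which is the hypothesis of \autoref{prop speirs can iso} in that case.
\end{enumerate}
So $\pi_{2r+1}\mathrm{can}$ is an isomorphism in weight $m$.

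\emph{The Frobenius.} Suppose $pm<n(r+1)$. Since $p\geq 2$, we have $m\leq pm<n(r+1)$, so $m-1<n(r+1)-1<n(r+1)$. Therefore $d=\lfloor (m-1)/n\rfloor\leq r$, and in fact $d<r+1$.
\begin{enumerate}
\item If $n\nmid m$, then $d\leq r$ is the required hypothesis of \autoref{prop speirs frobenius iso}.
\item If $n\mid m$, write $m=nj$. Then $d=j-1$, and $nj=m<pm<n(r+1)$ gives $j<r+1$, so $j\leq r$. Hence $d=j-1<r$, as required.
\end{enumerate}
In fact $m<n(r+1)$ alone suffices in both subcases, so the factor $p$ is not really needed. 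The case $m=0$ does not arise, because the relative groups vanish in weight $0$.

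The only step needing care is the boundary case $m$ a multiple of $n$. There the two propositions impose the strict inequality on opposite sides, so the divisibility split above is essential. With that split made, the proof is routine.
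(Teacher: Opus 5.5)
Your proof is correct and follows the paper's route. Both parts are floor arithmetic on $d=\lfloor (m-1)/n\rfloor$ fed into the two cited propositions of Speirs, and for the canonical map your divisibility split amounts to the paper's split into $m=n(r+1)$ (where $n\mid m$ and $d=r$) and $m>n(r+1)$ (where $d>r$).

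The one difference is in the Frobenius case. The paper separates $r=0$ from $r>0$ and uses $p\geq 2$ to get $d<r$ uniformly when $r>0$. Your divisibility split does without the factor $p$ and correctly shows that $m<n(r+1)$ already suffices.
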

\begin{proof}
For the first isomorphism, note that for $m = n(r+1)$, $n \mid m$ and $d= r$ so the isomorphism follows by \autoref{prop speirs can iso}. For $m > n(r+1)$, we have $d>r$ hence the isomorphism again follows by \autoref{prop speirs can iso}. 

For the second isomorphism, note that for $r = 0$, when $pm <n(r+1)$ one observes that $d \leq r$ and  therefore the result follows by \autoref{prop speirs frobenius iso} since $n \nmid m$. 
For $r>0$, one computes that $d<r$ under the hypothesis $pm <n(r+1)$ hence the isomorphism is again given by \autoref{prop speirs frobenius iso}. To see that $d<r$, one needs to show $\frac{m-1}{n}<r$ for which it suffices to show that $\frac{pm-p}{n}<pr$, i.e.\ $pm-p<prn$  holds under the assumption $pm<n(r+1)$ as soon as $r>0$. This assumption gives the first line below
\begin{equation*}
\begin{split}
pm -p&< n(r+1)-p\\
& <n(r+1)\\
& \leq pnr
\end{split}
\end{equation*}
where the last line follows since $r>0$. 
\end{proof}

\subsection{The $p$-adic filtration on $W(\field)/p^n$}
Recall from \autoref{nota  p adic filtration on truncated witt}, that there is the $p$-adic filtration $S_{\field}^\bullet \in \mathrm{CAlg}(\Tow(\Sp))$   on $W(\field)/p^n$ whose associated graded is given by $\field[t]/t^n$ with $t$ in weight $1$ and degree $0$ as before. 
Also, recall that the THH-convergence (\autoref{def:connectivit-assumption}) of $S_{\field}^{\bullet}$ is established in \autoref{exam p adic filtration is thh convergent}.

Recall that $\ntc(W(\field)/p^n,\field)$  denotes the fiber of the map 
\[\ntc(W(\field)/p^n) \to \ntc(\field)\]
and similarly for $\ntc$, $\tp$ and $\tc$. By \autoref{nota relative versions for filtered rings} and due to \autoref{lemm conditional convergence of filtrations},  $\rntc(S_{\field}^{\bullet})^0 \simeq \ntc(W(\field)/p^n,\field)$, we have the same for $\tp$.
\begin{prop}\label{prop speirs odd ntc and ntp}
The homotopy groups of $\rntc(\field[t]/t^n)$ and $\rtp(\field[t]/t^n)$ are concentrated in odd degrees. As a result, 
the homotopy groups of $\ntc(W(\field)/p^n,\field)$ and $\tp(W(\field)/p^n,\field)$ as well as $Q_{\ell}^0\rntc(S_{\field}^\bullet)$ and $Q_{\ell}^0 \rtp(S_{\field}^{\bullet})$ are also concentrated in odd degrees. 

\end{prop}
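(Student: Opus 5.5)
We first establish oddness for the associated graded $\field[t]/t^n$, and then transfer it to $W(\field)/p^n$ and to the quotients $Q_\ell^0$ via the $p$-adic filtration.

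\textbf{Step 1: the graded input.} We reduce to $\field=\fp$. By the splitting \eqref{eq splitting for thh of truncated polynomial},
\[
\rthh(\field[t]/t^n)\simeq \thh(\field)\otimes_{\z}\rthh^{\z}(\z[\Pi_n]) ,
\]
and the same argument as in \autoref{rema passage to witt vector coefficients for formal} gives an equivalence with $\sphwf\otimes \rthh(\fp[t]/t^n)$. Speirs' computation \cite[Propositions 12 and 13]{speirs2020truncatedpolynomial}, already used in \autoref{lemm moving witt out of fixed points}, lets us pull $\sphwf$ out of $-^{hS^1}$ and $-^{tS^1}$ weightwise. It therefore suffices to know that each weight component $\pis\rntc(\fp[t]/t^n)^m$ and $\pis\rtp(\fp[t]/t^n)^m$ is odd. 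This is part of Speirs' explicit computation, and the underlying reason is the following. By \cite[Lemma 4]{speirs2020truncatedpolynomial}, $\pis \thh^{\z}(\z[\Pi_n])^m$ is $\z$ in degrees $2d,2d+1$ when $n\nmid m$, and $\z/n$ in degree $2d+1$ when $n\mid m$. Tensoring with $\thh(\fp)$, whose homotopy is $\fp[u_2]$, gives even and odd classes, and one runs the homotopy fixed point and Tate spectral sequences weightwise. I would quote \cite{speirs2020truncatedpolynomial} (cf.\ \cite{hesselholt1997kthryoftruncated}) for the result that in each weight the even classes are killed or assembled into odd-degree groups, namely $\pi_{2r+1}\rtp(\field[t]/t^n)^m\cong W_h(\field)$-type groups and zero in even degrees. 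Since limits and colimits in graded spectra are computed levelwise, oddness in each weight gives oddness of the graded objects.

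\textbf{Step 2: transfer along the May filtration.} By \autoref{exam p adic filtration is thh convergent}, $S_\field^\bullet$ is $\thh$-convergent. \autoref{mayss} then provides conditionally convergent spectral sequences
\[
\pi_*\rntc(\field[t]/t^n)\Rightarrow \pi_*\ntc(W(\field)/p^n,\field), \qquad \pi_*\rtp(\field[t]/t^n)\Rightarrow \pi_*\tp(W(\field)/p^n,\field),
\]
using \autoref{E1page} and $\gr S^\bullet_\field=\field[t]/t^n$. The $\mathrm{E}_1$-pages are concentrated in odd total degrees, so all differentials vanish. The main obstacle is upgrading conditional convergence to the vanishing of even homotopy of the abutment. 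Since every differential vanishes, $\lim^1$ of the $\mathrm{E}_r$-terms vanishes, so the spectral sequences converge strongly (Boardman). Concretely, for each fixed degree $s$ the maps $\pi_s\rntc(S^\bullet_\field)^{i+1}\to\pi_s\rntc(S^\bullet_\field)^{i}$ are injective for even $s$. Moreover, $\pi_{s+1}\to\pi_{s+1}$ on the odd neighbours is surjective, so the $\lim^1$ term vanishes. Since $\lim=0$ by \autoref{lemm conditional convergence of filtrations}, the Milnor sequence shows that even-degree classes in weight $0$ vanish.

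\textbf{Step 3: the quotients $Q_\ell^0$.} This part is easier. By \autoref{lemm associated graded of qn}, $Q_\ell \rntc(S^\bullet_\field)$ and $Q_\ell\rtp(S^\bullet_\field)$ are finite filtrations of $Q_\ell^0$ whose associated graded objects are weight truncations of $\rntc(\field[t]/t^n)$ and $\rtp(\field[t]/t^n)$, which are odd by Step 1. Here we use that $-^{hS^1}$ and $-^{tS^1}$ commute with $Q_\ell$ (\autoref{lemma qn of orbit is tc of qn}) and \autoref{coro frel associated graded and strong convergence}. Induction over this finite tower, using the long exact sequences, shows that $Q_\ell^0\rntc(S^\bullet_\field)$ and $Q_\ell^0\rtp(S^\bullet_\field)$ have homotopy concentrated in odd degrees.
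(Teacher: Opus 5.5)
Your proposal follows the paper's route. The paper cites Speirs for the oddness of $\rntc(\field[t]/t^n)$ and $\rtp(\field[t]/t^n)$, transfers it to $W(\field)/p^n$ through the May spectral sequence of the $p$-adic filtration, and obtains the $Q_\ell^0$ statements from the finite, weight-truncated filtrations. Your Step 3 is correct as written.

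The reduction to $\fp$ in Step 1 is superfluous. Speirs' Proposition 12 is stated over any perfect field. Moreover, your justification for pulling $\sphwf$ out of $-^{hS^1}$ and $-^{tS^1}$ already uses Speirs' computation for $\field$ itself, so you may as well quote it directly for $\field$.

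In Step 2, the Boardman argument is correct and is enough. Collapse gives vanishing derived $\mathrm{E}_\infty$-term, so the conditionally convergent spectral sequence converges strongly. The even-degree abutment then has a complete Hausdorff filtration with zero graded pieces, hence vanishes.

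The ``concrete'' version you then give has the directions reversed, and as written it does not prove the claim. Write $F^\bullet$ for $\rntc(S_{\field}^\bullet)$ or $\rtp(S_{\field}^\bullet)$. The exact sequence $\pi_{s+1}\textup{gr}^iF\to\pi_sF^{i+1}\to\pi_sF^i\to\pi_s\textup{gr}^iF$ shows two things. For even $s$ the transition maps are \emph{surjective}, not injective. For odd $s$ they are injective, not surjective.

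So you have no Mittag--Leffler control on the odd groups. Also, injectivity together with a vanishing limit would not force $\pi_sF^0=0$; consider $\cdots\subset 4\z\subset 2\z\subset\z$.

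The correct elementary argument runs as follows. From $\lim_iF^i\simeq 0$ and the Milnor sequence you get $\lim_i\pi_sF^i=0$, with no $\lim^1$-vanishing needed. A tower of surjections with zero limit is identically zero, since any class lifts step by step to an element of the limit. Hence $\pi_sF^0=0$ for even $s$. This is exactly the argument used in the last part of the proof of \autoref{theo orbit filtration}.
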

\begin{proof}
The first sentence is \cite[Proposition 12]{speirs2020truncatedpolynomial}. The second sentence follows by the first and the spectral sequences corresponding to the filtered spectra $\rntc(S_{\field}^\bullet)$ and $\rtp(S_{\field}^\bullet)$ from \autoref{mayss}. For the third statement, note that due to \autoref{lemm associated graded of qn} the corresponding  spectral sequences are weight truncations of the spectral sequences for $\rntc(S_{\field}^\bullet)$ and $\rtp(S_{\field}^\bullet)$ hence they are also concentrated in odd total degrees. 
\end{proof}

As in \autoref{defi filtered cyclotomic thh}, $\rthh(S_{\field}^{\bullet})$ has the structure of a filtered object in cyclotomic spectra 
\[\rthh(S_{\field}^{\bullet}) \in \Tow(\cycsp)\]
whose Frobenius map is given by 
\[\widetilde{\varphi} \co \mathrm{TC}^{-}(S_{\field}^{\bullet})\longrightarrow R_p (\mathrm{THH}(S_{\field}^{\bullet})^{tC_p})^{hS^1} \longrightarrow (\mathrm{THH}(S_{\field}^{\bullet})^{tC_p})^{hS^1}\]
where the second map above is given by the natural transformation $R_p \Rightarrow \mathrm{id}$ from \autoref{natural-trans-R} and the first map comes from the twisted cyclotomic structure on $\rthh(S_{\field}^{\bullet})$~\cite[Appendix~A]{antieau2020beilinson}. Then by \autoref{defi orbitS filtration on tc}, there is a filtration 
\[\filorb \rtc(S_{\field}^0)\]
of $\tc(W(\field)/p^n,\field)$  by \autoref{theo orbit filtration} \eqref{Orbit-filt-1}.

Since $S_{\field}^{\bullet}$ may not be strongly THH-convergent, we need to prove the following analogue of \autoref{prop partial filtration aprroximate tc}.

\begin{prop}\label{prop tc apprxmation with qn for truncated witt}
There is an isomorphism 
\[\pi_{*}Q_{n(r+1)}^0 \filorb  \rtc(S_{\field}^0) \cong \pi_{*} \tc(W(\field)/p^n,\field).\]
for all $*\leq 2r+1$.
\end{prop}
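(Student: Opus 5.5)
Following the proof of \autoref{prop partial filtration aprroximate tc}, the weight zero component of $Q_{n(r+1)}\filorb\rtc(S_{\field}^0)$ is $\rtc(S_{\field}^0)/\filorb^{n(r+1)}\rtc(S_{\field}^0)$ by \autoref{theo orbit filtration} \eqref{Orbit-filt-1}. Here $\rtc(S_{\field}^0)\simeq \tc(W(\field)/p^n,\field)$. Using the cofiber sequence
\[\filorb^{n(r+1)}\rtc(S_{\field}^0)\to \rtc(S_{\field}^0)\to Q_{n(r+1)}^0\filorb\rtc(S_{\field}^0),\]
it suffices to show that $\pi_s\filorb^{j}\rtc(S_{\field}^0)=0$ for all $s\le 2r+2$ and all $j\ge n(r+1)$. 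This gives isomorphisms in degrees $\le 2r+1$, and we only need $s\le 2r+1$ together with $s=2r+2$ for injectivity at the top degree. In the strongly convergent case this came from \autoref{theo orbit filtration} \eqref{Orbit-filt-4}. Here the substitute for the connectivity estimate $\mathrm{conn}\ge t$ is the weaker bound from \autoref{exam p adic filtration is thh convergent}, refined by Speirs' computation.

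First I would compute the associated graded. By \autoref{theo orbit filtration} \eqref{Orbit-filt-3}, which applies since $\rthh(\field[t]/t^n)$ is an $\fp$-module, we have $\mathrm{gr}^j_{\mathrm{orb}}\simeq \Sigma\rtcpl(\field[t]/t^n)^j$. I expect this to be the fiber of $-\mathrm{can}\colon \rntc(\field[t]/t^n)^j\to\rtp(\field[t]/t^n)^j$. The weight $j$ part of $\rthh(\field[t]/t^n)$ is $\thh(\field)\otimes_{\z}\rthh^{\z}(\z[\Pi_n])^j$ by \eqref{chain-of-equiv}. By \cite[Lemma 4]{speirs2020truncatedpolynomial} this is concentrated in degrees $\ge 2d$ with $d=\lfloor (j-1)/n\rfloor$, and in degrees $\ge 2d+1$ when $n\mid j$. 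For $j\ge n(r+1)$ we have $d\ge r$, with equality only when $j=n(r+1)$, which is divisible by $n$. So $\rthh(\field[t]/t^n)^j$ is $(2r+1)$-connective for $j\ge n(r+1)$, and is $(2r+2)$-connective once $j>n(r+1)$. Hence $\Sigma\rtcpl(\field[t]/t^n)^j$ has vanishing $\pi_s$ for $s\le 2r+1$ when $j\ge n(r+1)$.

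To handle degree $2r+2$ in weight $j=n(r+1)$, I would use \autoref{coro phi and can for truncated witt}. It gives that $\pi_{2r+1}\mathrm{can}$ is an isomorphism for $m\ge n(r+1)$. Together with oddness (\autoref{prop speirs odd ntc and ntp}), the fiber of $\mathrm{can}$ then has $\pi_{2r+2}=\coker(\pi_{2r+3}\mathrm{can})$ and $\pi_{2r+1}=0$. More cleanly, I would just aim for surjectivity in degree $2r+2$ and injectivity in degree $2r+1$, which only needs $\pi_s\filorb^j=0$ for $s\le 2r+1$ and $j\ge n(r+1)$. The cofiber sequence gives $\pi_{2r+1}\rtc\to\pi_{2r+1}Q^0$ surjective, since the next term $\pi_{2r}\filorb^j$ vanishes. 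For injectivity in degree $2r+1$ I need $\pi_{2r+1}\filorb^{n(r+1)}=0$, which is covered.

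Next, with the graded pieces vanishing in degrees $\le 2r+1$ for weights $j\ge n(r+1)$, the structure maps $\pi_s\filorb^{j+1}\to\pi_s\filorb^{j}$ are surjective for $s\le 2r+1$. Completeness (\autoref{theo orbit filtration} \eqref{Orbit-filt-2}) and the Milnor sequence argument used in the proof of \eqref{Orbit-filt-4} then give $\pi_s\filorb^j=0$. I expect the main obstacle to be this step: verifying that the connectivity bound holds uniformly for all large weights, not only in a range. Speirs' bound $d\to\infty$ as $j\to\infty$ does ensure this uniformity.
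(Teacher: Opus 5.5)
Your proof is correct and follows the paper's argument. The paper uses the fiber sequence $C_{\leq n(r+1)}\filorb\rtc(S_{\field}^0)\to\filorb\rtc(S_{\field}^0)\to Q_{n(r+1)}\filorb\rtc(S_{\field}^0)$, whose left term has weight-zero component $\mathrm{fil}^{n(r+1)}_{\textup{orb}}\rtc(S_{\field}^0)$. It shows that the graded pieces $\Sigma\rtcpl(\field[t]/t^n)^m$ vanish in degrees $\leq 2r+1$ for $m\geq n(r+1)$, and then concludes by completeness, which is your Milnor-sequence step. As you eventually note, vanishing in degrees $\leq 2r+1$ already suffices, so the detour through degree $2r+2$ can be dropped.

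The only difference is how you obtain the graded vanishing. The paper uses \autoref{coro phi and can for truncated witt} together with the oddness in \autoref{prop speirs odd ntc and ntp}. You read it off directly from the connectivity of $\rthh(\field[t]/t^n)^m$ given by \cite[Lemma 4]{speirs2020truncatedpolynomial}. That is the same estimate underlying \autoref{prop speirs can iso}, so your route is slightly more direct and does not need the oddness input.
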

\begin{proof}

There is a fiber sequence (see \eqref{eq cofiber sequence for qn}) 
\[C_{\leq n(r+1)}\filorb\rtc(S_{\field}^0) \to \filorb\rtc(S_{\field}^0) \to Q_{n(r+1)}\filorb \rtc(S_{\field}^0)\]
where each filtration above have vanishing limit by \autoref{theo orbit filtration} \eqref{Orbit-filt-2}. By construction, the associated graded of the left hand side above is  given by the associated graded of $\filorb\rtc(S_{\field}^0)$ in weights $\geq n(r+1)$ and is trivial otherwise. 
In particular, the associated graded is given by $\Sigma \rtc^+(\field[t]/t^n)^m$ for $m \geq n(r+1)$. This vanishes in $\pis$ for $*\leq 2r+1$ due to \autoref{coro phi and can for truncated witt} and the fact that $\rntc(\field[t]/t^n)^m$ and $\rtp(\field[t]/t^n)^m$ are concentrated in odd degrees (\autoref{prop speirs odd ntc and ntp}). In other words, the associated graded of $C_{\leq n(r+1)}\filorb\rtc(S_{\field}^0)$ vanishes in homotopy degree $2r+1$ and below which implies by the completeness of this filtered spectrum that $\pis C_{\leq n(r+1)}^0\filorb\rtc(S_{\field}^0) = 0$ for $*\leq 2r+1$. Now the result follows by the weight $0$ component of the fiber sequence above. 
\end{proof}  

Next, we need to prove an analogue of \autoref{coro frobenius is surjective in homotopy} to establish the surjectivity of the map $\pi_{2r+1}Q_{n(r+1)}^0\widetilde{\varphi}$. The argument in this case is slightly different since the bounds we establish for the bijectivity of $\textup{can}$ and $\varphi$ in \autoref{coro phi and can for truncated witt} is different than those we used in the proof of \autoref{mainthmevenvanish}. Because of this, we consider the following factorization of $\pi_{2r+1}Q_{n(r+1)}^0\widetilde{\varphi}$.

\begin{rema}\label{rema factoring frobenius for truncated witt}
We factor
\begin{equation}\label{map-to-be-factored}
Q_{n(r+1)} \widetilde{\varphi} \co Q_{n(r+1)}\rntc(S_{\field}^{\bullet}) \to Q_{n(r+1)}R_p\rtp(S_{\field}^{\bullet}) \to Q_{n(r+1)}\rtp(S_{\field}^{\bullet})
\end{equation}
as 
\begin{multline}\label{eq long composite frobenius truncated witt}
Q_{n(r+1)}\rntc(S_{\field}^{\bullet}) \to Q_{n(r+1)}R_p\rtp(S_{\field}^{\bullet}) \to Q_{n(r+1)} R_p Q_{n(r+1)}\rtp(S_{\field}^{\bullet})\\ \to Q_{n(r+1)}Q_{n(r+1)}\rtp(S_{\field}^{\bullet}) \simeq Q_{n(r+1)}\rtp(S_{\field}^{\bullet})
\end{multline}
(up to homotopy) where the second map in the composite \eqref{eq long composite frobenius truncated witt} uses the natural transformation $\mathrm{id} \Rightarrow Q_{n(r+1)} $, the third map uses natural transformation $R_p \Rightarrow \mathrm{id}$ and the equivalence on the right follows from the construction of $Q_{n(r+1)}$. The fact that the map \eqref{map-to-be-factored} factors as \eqref{eq long composite frobenius truncated witt} again uses the natural transformation $\mathrm{id} \Rightarrow Q_{n(r+1)}$. 
\end{rema}

\begin{lemm}\label{lemm truncated witt a quotient is surjective}
The weight zero component
\[ Q_{n(r+1)}^0 R_p Q_{n(r+1)}\rtp(S_{\field}^{\bullet}) \xrightarrow{\simeq} Q_{n(r+1)}^0Q_{n(r+1)}\rtp(S_{\field}^{\bullet})\]
of  the third map in \eqref{eq long composite frobenius truncated witt} is an equivalence. 
\end{lemm}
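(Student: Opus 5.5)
Write $N:=n(r+1)$ and $G^\bullet := Q_N\rtp(S_{\field}^{\bullet})$. The plan is to compute both sides of the map directly from the definitions of $Q_N$, $R_p$ and the natural transformation $R_p\Rightarrow \mathrm{id}$. Recall that $Q_N$ kills weights $\geq N$ and sends weight $i<N$ to $F^i/F^N$. Recall also that $R_p$ is restriction along $\cdot p\colon \N\to\N$. Applying this to $G^\bullet$, the weight zero component of $R_pG^\bullet$ is $G^{0}$. Thus
\[Q_N^0R_pG^\bullet = (R_pG)^0/(R_pG)^N = G^0/G^{pN}.\]
Since $pN\geq N$, the term $G^{pN}$ vanishes, so this equals $G^0 = \rtp(S_{\field}^\bullet)^0/\rtp(S_{\field}^\bullet)^N$.

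On the other side, $Q_N^0Q_NG^\bullet = G^0/G^N = G^0$, again because $G^N\simeq 0$. This is also the equivalence $Q_NQ_N\simeq Q_N$ used in \autoref{rema factoring frobenius for truncated witt}, evaluated in weight $0$.

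It remains to see that the map between them is the identity of $G^0$. The natural transformation $R_p\Rightarrow \mathrm{id}$ in weight $i$ is the structure map $G^{pi}\to G^i$. In weight $0$ this is the map $G^0\to G^0$ induced by $0\le 0$, which is the identity. Similarly, in weight $N$ the component $G^{pN}\to G^{N}$ is a map between zero objects. Applying $Q_N^0$, which is the cofiber of weight $N$ mapping to weight $0$, the induced map is therefore the identity on $G^0$ modulo zero, hence an equivalence.

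The only point needing care is bookkeeping: checking that $Q_N$ applied to the transformation $R_pG\to G$ is computed levelwise by these structure maps. This follows from the cofiber sequence \eqref{eq cofiber sequence for qn} and naturality of $C_{\le N}\Rightarrow\mathrm{id}$. It is formal, and I expect no real obstacle beyond it.
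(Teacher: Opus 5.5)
Your proof is correct and follows essentially the same route as the paper: there too the argument is that $R_p\Rightarrow\mathrm{id}$ is the identity in weight $0$ and that the relevant objects vanish in weight $n(r+1)$, so the induced map on $Q^0_{n(r+1)}$ is an equivalence. Your version additionally makes explicit two points the paper leaves implicit: that $Q^0_{n(r+1)}$ is the cofiber of weight $n(r+1)$ mapping into weight $0$, and that the target also vanishes in weight $n(r+1)$.
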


\begin{proof}
Firstly, $R_p \Rightarrow \mathrm{id}$ is the identity map in the weight zero component. Then the result follows by noting that   $R_p Q_{n(r+1)}\rtp(S_{\field}^{\bullet})$ is trivial in weight $n(r+1)$.
\end{proof}

\begin{prop}\label{prop truncated witt surjective frobenius}
The map
\[\pi_{2r+1} Q_{n(r+1)}^0\widetilde{\varphi} \co \pi_{2r+1}Q_{n(r+1)}^0\rntc(S_{\field}^{\bullet}) \to \pi_{2r+1}Q_{n(r+1)}^0\rtp(S_{\field}^{\bullet})\]
induced by the Frobenius is surjective. 
\end{prop}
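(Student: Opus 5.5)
We will use the factorization of \autoref{rema factoring frobenius for truncated witt}. On weight zero components, $Q_{n(r+1)}^0\widetilde{\varphi}$ is the composite of three maps:
\[
Q_{n(r+1)}^0\rntc(S_{\field}^{\bullet}) \xrightarrow{\ a\ } Q_{n(r+1)}^0R_p\rtp(S_{\field}^{\bullet}) \xrightarrow{\ b\ } Q_{n(r+1)}^0R_pQ_{n(r+1)}\rtp(S_{\field}^{\bullet}) \xrightarrow{\ c\ } Q_{n(r+1)}^0\rtp(S_{\field}^{\bullet}).
\]
Here $c$ is an equivalence by \autoref{lemm truncated witt a quotient is surjective}. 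It therefore suffices to show that $\pi_{2r+1}(b\circ a)$ is surjective. We plan to show that $\pi_{2r+1}(b)$ is surjective and that $\pi_{2r+1}(a)$ is surjective onto the relevant part.

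First we make the middle term explicit. By construction, $Q_{n(r+1)}^0R_pQ_{n(r+1)}\rtp(S_{\field}^{\bullet})$ equals $\rtp(S_{\field}^\bullet)^0/\rtp(S_{\field}^\bullet)^{N}$, where $N$ is the least multiple of $p$ with $N\ge n(r+1)$ (informally, weights of the form $pj$ with $pj< n(r+1)$). Equivalently, its weight filtration is $Q_{n(r+1)}R_p$ applied to $Q_{n(r+1)}\rtp$, and its associated graded consists of $\rtp(\field[t]/t^n)^{w}$ for $w<n(r+1)$. The map $b$ is a quotient map from a filtered object whose associated graded involves weights $pj$ for $j<n(r+1)$. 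Both source and target have associated graded concentrated in odd degrees by \autoref{prop speirs odd ntc and ntp} together with \autoref{lemm associated graded of qn}. Its fiber is a finite-stage quotient of $\rtp$ between two weights, so it is also odd by the same spectral sequence argument used in \autoref{lemm weight reducing map is surjective for tp}. The long exact sequence then gives surjectivity of $b$ on $\pi_{2r+1}$.

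Next we compare $a$ and $b\circ a$ weight by weight via the finite filtrations. Both source and target spectral sequences are concentrated in odd total degree and in finitely many weights, so they collapse at $\mathrm{E}_1$. The map $b\circ a$ is filtered, and on associated graded in weight $j$ it is the Frobenius
\[
\pi_{2r+1}\rntc(\field[t]/t^n)^j\to \pi_{2r+1}\rtp(\field[t]/t^n)^{pj},
\]
for $j$ with $pj<n(r+1)$; it lands in zero for the remaining $j<n(r+1)$. By \autoref{coro phi and can for truncated witt}, the Frobenius is an isomorphism precisely when $pj<n(r+1)$, so every graded piece of the target is hit. Since both filtrations are finite and the relevant groups are odd (so there are no extension or differential issues), a five-lemma-style induction over the weight filtration shows that $\pi_{2r+1}(b\circ a)$ is surjective. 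Composing with the equivalence $c$ gives the claim.

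The main obstacle is the bookkeeping of the weight regrading under $R_p$ combined with $Q_{n(r+1)}$. Concretely, one must verify that the associated graded of $Q_{n(r+1)}R_pQ_{n(r+1)}\rtp$ in weight $j$ is exactly $\rtp(\field[t]/t^n)^{pj}$ when $pj<n(r+1)$ and zero otherwise, and that the map from $\gr$ of $Q_{n(r+1)}\rntc$ is the twisted Frobenius there. Unlike \autoref{lemm frobenius is surjective on the quotient}, $R_p$ need not commute with $\gr$ here, since $\rtp(\field[t]/t^n)$ is nonzero in all weights. We avoid this issue by applying $Q_{n(r+1)}$ inside first. This makes the only weights surviving in the target those of the form $pj<n(r+1)$, and these are exactly the weights in which the Frobenius is an isomorphism.
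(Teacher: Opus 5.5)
Your architecture is the paper's. You factor $Q^0_{n(r+1)}\widetilde{\varphi}$ as in \autoref{rema factoring frobenius for truncated witt} and drop the equivalence $c$. You then get surjectivity of $b\circ a$ on $\pi_{2r+1}$ from a map of finite, odd-concentrated weight filtrations whose $\mathrm{E}_1$-map is the twisted Frobenius, an isomorphism in degree $2r+1$ for $pj<n(r+1)$ by \autoref{coro phi and can for truncated witt}. Your separate proof that $\pi_{2r+1}(b)$ is surjective is correct but is not used.

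The gap is exactly at the step you flag as the main obstacle. Write $T^w:=\rtp(S_{\field}^{\bullet})^w$. Unwinding the definitions, the weight $j$ graded piece of $Q_{n(r+1)}R_pQ_{n(r+1)}\rtp(S_{\field}^{\bullet})$ is $T^{pj}/T^{\min(p(j+1),\,n(r+1))}$ for $pj<n(r+1)$, and zero otherwise. This piece has a finite filtration with layers $\rtp(\field[t]/t^n)^w$ for $pj\le w<\min(p(j+1),n(r+1))$. Applying $Q_{n(r+1)}$ ``inside first'' only removes weights $\ge n(r+1)$; it does nothing to the layers with $pj<w<p(j+1)$. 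Meanwhile, the twisted Frobenius out of $\rntc(\field[t]/t^n)^j$ only reaches the layer $w=pj$. So if, as you assert, $\rtp(\field[t]/t^n)$ were nonzero in all weights, the $\mathrm{E}_1$-page of the target would contain classes your argument has no way to hit, and surjectivity could not be concluded. The same problem affects your description of the middle term as $T^0/T^N$, with $N$ the least multiple of $p$ above $n(r+1)$: literally it is $T^0/T^{n(r+1)}$, and the two agree only under the vanishing below.

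The missing input is that your assertion is false. By \cite[Propositions 12 and 13]{speirs2020truncatedpolynomial}, $\rtp(\field[t]/t^n)^m\simeq 0$ whenever $p\nmid m$. Consequently the structure maps $T^{w+1}\to T^w$ are equivalences for $p\nmid w$. So $R_p$ commutes with the associated graded exactly as in the proof of \autoref{lemm frobenius is surjective on the quotient}, and $T^{pj}/T^{\min(p(j+1),n(r+1))}\simeq \rtp(\field[t]/t^n)^{pj}$. This vanishing is the first step of the paper's proof. Once you insert it, and drop the claim that applying $Q_{n(r+1)}$ first circumvents the issue, the rest of your argument goes through as written.
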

\begin{proof}
By the factorization  \eqref{eq long composite frobenius truncated witt} and \autoref{lemm truncated witt a quotient is surjective}, it suffices to prove that the composite map 
\begin{equation}\label{eq a shorter composite for truncated witt frobenius}
Q_{n(r+1)}^0\rntc(S_{\field}^{\bullet}) \to Q_{n(r+1)}^0R_p\rtp(S_{\field}^{\bullet}) \to Q_{n(r+1)}^0 R_p Q_{n(r+1)}\rtp(S_{\field}^{\bullet})\end{equation}
induced by the first two maps in \eqref{eq long composite frobenius truncated witt} is surjective in $\pi_{2r+1}$. 

We first note that by \cite[Propositions 12 and 13]{speirs2020truncatedpolynomial}, \begin{equation}\label{eq truncated witt tate vanish prime to p}
\rtp(\field[t]/t^n)^m \simeq 0 
\end{equation}
when  $p \nmid m$. Therefore, both $R_p$ above commute with the associated graded functor; see the discussion in the proof of \autoref{lemm frobenius is surjective on the quotient}.

 Also recall that the functor $Q_{\ell}$ truncates the associated graded at  weight $\ell$ due to \autoref{lemm associated graded of qn}. Therefore, the spectral sequence corresponding to the domain of \eqref{eq a shorter composite for truncated witt frobenius} is trivial in weight $m\geq n(r+1)$ and given by $\pis \rntc(\field[t]/t^n)^m$ otherwise.
 The  spectral sequence corresponding to the target of the composite \eqref{eq a shorter composite for truncated witt frobenius} is trivial in weight $m$ for which $pm\geq n(r+1)$ and given by $\pis \rtp(\field[t]/t^n)^{pm}$ in weight $m$ with $pm<n(r+1)$.  Both  spectral sequences are concentrated in odd total degrees due to \autoref{prop speirs odd ntc and ntp} and they come from finite filtrations. Therefore, it suffices to show that this map of spectral sequences is surjective on the first page in total degree $2r+1$. This follows by the fact that the Frobenius 
\begin{equation}\label{eq map of spectral sequences for frobenius for truncated witt}
\pis \rntc(\field[t]/t^n)^m \to \pis \rtp(\field[t]/t^n)^{pm}\end{equation}
is an isomorphism for $* = 2r+1$ and $pm<n(r+1)$ due to \autoref{coro phi and can for truncated witt}.
\end{proof}

\begin{prop}\label{prop finiteness for ntc and tp of truncated witt}
Let $\field$ be finite, then the following abelian groups
\[ \pi_{k}Q_\ell^0 \rntc(S_{\field}^{\bullet}) \textup{\ \ and \ } \pi_{k}Q_\ell^0 \rtp(S_{\field}^{\bullet}) \]
are finite for each $\ell$.
\end{prop}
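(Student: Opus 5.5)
We will mirror the proof of \autoref{prop cardinalities of quotients of ntc and ntp}, replacing the Whitehead filtration by the $p$-adic filtration $S_{\field}^{\bullet}$.

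\textbf{The finite filtration.} By \autoref{lemm associated graded of qn}, the tower $Q_\ell\rntc(S_{\field}^{\bullet})$ has associated graded that vanishes in weights $\geq \ell$. Combining this with \autoref{E1page}, its weight $m$ piece for $m<\ell$ is $\rntc(\gr S_{\field}^\bullet)^m \simeq \rntc(\field[t]/t^n)^m$. Since $Q_\ell$ of a tower vanishes in weights $\geq \ell$, this exhibits $Q_\ell^0\rntc(S_{\field}^\bullet)$ as an iterated extension of the finitely many spectra $\rntc(\field[t]/t^n)^m$ with $0<m<\ell$. The weight $0$ piece is trivial by \autoref{coro frel associated graded and strong convergence}. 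The same holds for $\rtp$. Hence, by the long exact sequences in homotopy (or by strong convergence of the finite spectral sequence), it suffices to show that $\pi_k\rntc(\field[t]/t^n)^m$ and $\pi_k\rtp(\field[t]/t^n)^m$ are finite for each $k$ and each fixed weight $m$. Finite groups are closed under extensions and under subquotients, so finiteness then propagates through the filtration.

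\textbf{Finiteness of the graded pieces.} For fixed $m$, we use the chain of equivalences \eqref{chain-of-equiv}:
\[
\thh(\field[\Pi_n])^m \simeq \thh(\field)\otimes_{\z}\thh^{\z}(\z[\Pi_n])^m .
\]
By \cite[Lemma 4]{speirs2020truncatedpolynomial}, $\thh^{\z}(\z[\Pi_n])^m$ has homotopy given by finitely generated groups in at most two degrees. By B\"okstedt periodicity, $\pi_*\thh(\field)=\field[u_2]$, which is finite in each degree when $\field$ is finite. Therefore $\pi_*\rthh(\field[t]/t^n)^m$ is degreewise finite and bounded below (its connectivity is roughly $2d$). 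It is also $p$-complete, being an $\fp$-module.

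From this we deduce finiteness for the fixed points. The homotopy fixed point spectral sequence for $(\rthh(\field[t]/t^n)^m)^{hS^1}$ has $\mathrm{E}_2$-page $\pi_*\rthh(\field[t]/t^n)^m[t^{\pm}]$, restricted to the appropriate half-plane. Its total degree $k$ line contains only finitely many nonzero entries: the input is bounded below, and in fact concentrated in finitely many degrees for fixed $m$, since $\thh(\field)$ has a single class in each even degree. Each entry is finite, so $\pi_k$ is finite. The same argument applies to the Tate spectral sequence, and these spectral sequences converge strongly because only finitely many filtrations contribute in each total degree. Alternatively, one can use the norm cofiber sequence $\Sigma\tc^+\to\ntc\to\tp$ together with finiteness of $\pi_k(\rthh^m)_{hS^1}$.

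\textbf{Main obstacle.} The main obstacle is controlling the fact that, before the weight splitting, $\thh(\field)$ is an infinite (though degreewise finite) object. Consequently the fixed point spectral sequences for the full graded object are not finite in each degree. This must be handled by working one weight at a time, which is why the $Q_\ell$ truncation is essential. Should the direct use of \cite[Lemma 4]{speirs2020truncatedpolynomial} be insufficient, we would instead cite the explicit descriptions of $\pi_*\rntc(\field[t]/t^n)^m$ and $\pi_*\rtp(\field[t]/t^n)^m$ as finite length Witt vector modules over $\field$ from \cite[Propositions 12 and 13]{speirs2020truncatedpolynomial}. These are finite when $\field$ is finite.
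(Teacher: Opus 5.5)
Your reduction step is the paper's. $Q_\ell$ gives a finite filtration of $Q_\ell^0\rntc(S_{\field}^{\bullet})$ (resp.\ $Q_\ell^0\rtp(S_{\field}^{\bullet})$) whose graded pieces are $\rntc(\field[t]/t^n)^m$ (resp.\ $\rtp(\field[t]/t^n)^m$) for $0<m<\ell$. So it suffices that these pieces have degreewise finite homotopy. The problem is your ``direct'' argument for that claim, which does not work.

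\textbf{The direct count fails.} You assert that $\pi_*\rthh(\field[t]/t^n)^m$ is concentrated in finitely many degrees. Your own computation shows the opposite: when nonzero, it is a sum of at most two shifted copies of $\pi_*\thh(\field)=\field[u_2]$, so it is nonzero in infinitely many degrees. The total-degree-$k$ line of the homotopy fixed point (or Tate) spectral sequence then contains $t^jx$ with $x\in\pi_{k+2j}$ for every $j\ge 0$. That is infinitely many nonzero entries. Boundedness below only controls homotopy orbits, not fixed points or Tate.

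The principle you are implicitly using is false: a bounded below, degreewise finite $S^1$-spectrum need not have degreewise finite homotopy fixed points. For example, $\pi_0\thh(\field)^{hS^1}\cong W(\field)$ is infinite for $\field$ finite.

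\textbf{The norm-sequence alternative also fails.} Using $\Sigma\tc^{+}\to\ntc\to\tp$, degreewise finiteness of $\tc^{+}$ only shows that $\rntc$ and $\rtp$ differ by something finite. It does not show either one is finite; the same example applies.

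\textbf{What actually gives finiteness.} In a fixed weight this is a genuine computation. The weight-$m$ part of the cyclic bar construction of $\Pi_n$ is built from $S^1$-spaces induced from finite cyclic subgroups. The $S^1$-fixed points and Tate constructions therefore reduce to finite-group ones, where differentials cut the answer down to finite-length Witt vectors over $\field$. This is exactly the content of \cite[Propositions 12 and 13]{speirs2020truncatedpolynomial}, which is what the paper cites for each entry. So your proof is correct only via the fallback in your last sentence, and that fallback should be the main argument.

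Relatedly, your ``main obstacle'' is misplaced. Working one weight at a time does not make the fixed point spectral sequences finite. The truncation $Q_\ell$ is needed because infinitely many weights contribute to each fixed degree of $\rntc(\field[t]/t^n)$ and $\rtp(\field[t]/t^n)$.
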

\begin{proof}
We prove the $\rntc$ case, the $\rtp$ case follows in the same way. By \autoref{lemm associated graded of qn}, the spectral sequence corresponding to $Q_\ell \rntc(S_{\field}^{\bullet})$ computing $Q_\ell^0 \rntc(S_{\field}^{\bullet})$ is given by a weight truncation of $\pis \rntc(\field[t]/t^n)$ hence it has finitely many entries contributing to total degree $k$. Furthermore, this spectral sequence is concentrated in odd total degrees due to \autoref{prop speirs odd ntc and ntp} and each entry is finite due to \cite[Propositions 12 and 13]{speirs2020truncatedpolynomial}. This proves the desired result. 
\end{proof}

\begin{rema}\label{rema angeltveit quotient}
In \cite{Angeltveit2015kTheoryfinitewitt}, Angeltveit proves the following analogous formula to \autoref{theo even trivial gives odd}. 
\[\frac{\lvert \pi_{2r+1}\tc(W(\field_q)/p^n,\field)\rvert }{\lvert \pi_{2r} \tc (W(\field_q)/p^n,\field)\rvert } = q^{(n-1)(r+1)} \]
Here, $r\geq 0$ and  $q$ is a power of $p$ and as noted in \cite{Angeltveit2015kTheoryfinitewitt},  this is given by $\lvert \pi_{2r+1}\tc(\field_q[t]/t^n,\field_q)\rvert $ due to \cite{hesselholt1997kthryoftruncated}. Indeed, we expect that it is also possible to prove this result using our methods from \autoref{theo even trivial gives odd}, but we do not pursue this here since an independent proof is given in \cite[Proposition 1.5]{AKN24}.
\end{rema}

\mainthmcardinalitywitt*
\begin{proof}
This follows as in \autoref{mainthmevenvanish}. Note that by~\cite[Theorem~7.2.2.1]{DGM13}, the trace map $\mathrm{K}(W(\field)/p^n,\field) \simeq \tc(W(\field)/p^n,\field)$ is an equivalence.

We  begin with the first statement. 
By \autoref{prop speirs odd ntc and ntp} and \autoref{prop tc apprxmation with qn for truncated witt} it suffices to prove that the map $F \otimes Q_{n(r+1)}^0 \widetilde{\varphi} - \textup{id} \otimes \textup{can}$ in the $e = n(r+1)$ case of \autoref{prop tc fiber sequence for the witt case} is surjective in $\pi_{2r+1}$. This now follows by \autoref{prop truncated witt surjective frobenius} and \autoref{lemm H  is surjective before modding by p} whose finiteness condition is satisfied by \autoref{prop finiteness for ntc and tp of truncated witt}.

The proof of the second statement follows as in the proof of the second part of \autoref{mainthmevenvanish} by replacing $\sphwf \otimes A$ with $S_{\field}^0 \simeq \sphwf \otimes S_{\fp}^{0}$, $\wtw A$ with $S_{\fp}^\bullet$, $Q_{2k+1}$ with $Q_{n(r+1)}$ and using \autoref{rema angeltveit quotient} instead of \autoref{theo even trivial gives odd}.
\end{proof}

\maincorcardinalitywitt*

\begin{proof}
This follows as in the proofs of \autoref{coro countable relative k theory groups} and \autoref{mainthmevenvanish} \eqref{item 2 thmevenvanishing}, as we now explain. For a fixed integer $k$, the $\fp$-module $\pi_{k} \rthh(\fpc[t]/t^n)^m$ is countable and $\tau_{\leq k}\rthh(\fpc[t]/t^n)^m \simeq *$ for sufficiently large $m$; this follows by \eqref{eq splitting for thh of truncated polynomial} and \cite[Proposition 3]{speirs2020truncatedpolynomial}. By the  $S^1$-homotopy orbit spectral sequence 
\[\pis \big(\rthh(\fpc[t]/t^n)^m\big)[z] \implies \pis \rtc^+(\fpc[t]/t^n)^m\]
with $\lvert z\rvert = 2$, we deduce that each 
$\pi_k \rtc^+(\fpc[t]/t^n)^m$ is countable, has bounded $p$-torsion and trivial for sufficiently large $m$. Since  $\pi_k \rtc^+(\fpc[t]/t^n)^m$ is the $\mathrm{E}_1$-page of the orbit spectral sequence computing $\tc(W(\fpc)/p^n,\field)$, this gives the desired result. 
\end{proof}

\section{On the algebraic K-theory of motivic pullback squares}\label{sec applications to discrete rings}

The motivic pullback squares of Land and Tamme \cite{land2023kthrypushouts} show that certain relative algebraic K-theory groups for discrete rings are given by the algebraic K-theory of a corresponding $\z$-algebra $\odot$ with homotopy $\field[t_2]$. When the $\bE_1$-ring $\odot$ is the free $\field$-algebra on $\Sigma^2 \field$ (i.e.\ $\odot$ is topologically formal \cite{bayindir2025towards}), then \cite{bayindir2020kthryofthh} provides the algebraic K-theory of $\odot$ and hence the relative algebraic K-theory groups of the discrete rings in question; for this reason, the formality question for $\odot$ is studied extensively in \cite{bayindir2025towards}. However, there are also examples where $\odot$ is not topologically formal \cite[Example~4.31]{land2023kthrypushouts}.  To our knowledge, \autoref{mainthmevenvanish} provides the first  K-theory computation in infinitely many degrees for a $\z$-algebra that is not topologically formal.

For the following, $u$ denotes a class of degree $0$ so that $\zp[u]$ denotes the usual  discrete polynomial ring.  Given a map $A\to B$ of $\mathbb{E}_1$-rings, we write $\mathrm{K}(A,B)$ for the fiber of the map $\mathrm{K}(A\to B)$ and refer to this as relative algebraic K-theory. The following is a special case of the motivic pullback squares constructed by Land and Tamme in \cite{land2023kthrypushouts}.

\begin{coro}\label{coro general motivic pullback square}
Let $g \in \zp[u]$ be a polynomial with $g(0) = p$.
There is a pullback square of spectra
\begin{equation*}
\begin{tikzcd}
\mathrm{K}(W(\field)[u]/(ug)) \ar[d] \ar[r] & \mathrm{K}(W(\field)[u]/(g)) \ar[d]\\
\mathrm{K}(W(\field)) \ar[r] & \mathrm{K}(\odot)
\end{tikzcd}
\end{equation*}
where $\odot$ is an $\bE_1$-ring with $\pis \odot \cong \field[x_2]$ and $\odot \simeq \sph_{W(\field)}\otimes \odot'$ for another $\bE_1$-ring $\odot'$ with $\pis \odot' \cong \fp[x_2]$. In particular,
\[\mathrm{K}(W(\field)[u]/(ug),W(\field) \times W(\field)[u]/(g)) \simeq \Omega \mathrm{K}(\odot) \,.\]
\end{coro}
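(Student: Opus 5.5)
We will deduce the statement from the main theorem of Land--Tamme \cite{land2023kthrypushouts} on pullback squares of $\bE_1$-rings. The input is the Milnor square of discrete commutative rings
\[
W(\field)[u]/(ug) \to W(\field)[u]/(g), \qquad W(\field)[u]/(ug)\to W(\field)=W(\field)[u]/(u),
\]
which lies over $W(\field)[u]/(u,g)\cong W(\field)/g(0)=W(\field)/p=\field$. The ideals $(u)$ and $(g)$ intersect in $(ug)$, because $u$ is a nonzerodivisor modulo $g$ (as $g(0)=p\neq 0$). Hence this is a pullback of rings.

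Land--Tamme then produce a pullback square in K-theory whose lower right corner is $\mathrm{K}(\odot)$. Here $\odot = W(\field)\otimes^{}_{W(\field)[u]/(ug)} W(\field)[u]/(g)$ is the $\odot$-ring, the tensor product formed as an $\bE_1$-ring. To compute its homotopy, we resolve $W(\field)$ over $W(\field)[u]/(ug)$ by the two-periodic resolution
\[
\cdots \xrightarrow{g} R \xrightarrow{u} R \xrightarrow{g} R \xrightarrow{u} R,
\]
with $R=W(\field)[u]/(ug)$. Tensoring with $R/(g)$ gives $\pi_{2k}\odot\cong R/(u,g)\cong\field$ and vanishing odd homotopy. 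Multiplicativity then gives $\pis\odot\cong\field[x_2]$; for this we use the Land--Tamme identification of the multiplication on $\odot$ as in \cite[Example~4.31]{land2023kthrypushouts}, or the Tor-algebra structure, where $\mathrm{Tor}^R_*(R/u,R/g)$ is polynomial on a degree two class.

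For the base change statement, all the rings involved are already defined over $\zp$. We set $\odot'$ to be the analogous $\odot$-ring built from the square for $\zp[u]/(ug)$ with $\zp$ in place of $W(\field)$. The square over $W(\field)$ is obtained from the one over $\zp$ by applying $\sph_{W(\field)}\otimes_{\sph_p}-$ (p-adically), since $W(\field)\simeq \sph_{W(\field)}\otimes \zp$ after $p$-completion. Because $\sph_{W(\field)}\otimes-$ is symmetric monoidal and flat, it commutes with the relative tensor product defining $\odot$, so $\odot\simeq\sph_{W(\field)}\otimes\odot'$. The computation above then gives $\pis\odot'\cong\fp[x_2]$.

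The main obstacle is this base-change step. It requires checking that the Land--Tamme construction, which is a relative tensor product of $\bE_1$-rings, is compatible with $\sph_{W(\field)}\otimes-$, and that the discrete rings agree after this base change without any $p$-completion issues. For the latter we note that $W(\field)[u]/(ug)$ is $p$-complete in the relevant sense, since finite quotients appear.

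Finally, the last displayed equivalence follows by comparing vertical fibers in the pullback square. Writing the square as a fiber sequence gives
\[
\mathrm{K}(W(\field)[u]/(ug))\to \mathrm{K}(W(\field))\times\mathrm{K}(W(\field)[u]/(g))\to \mathrm{K}(\odot),
\]
after we subtract the map from $\mathrm{K}(W(\field))$. This identifies the relative term with $\Omega$ of the fiber of $\mathrm{K}(W(\field))\to\mathrm{K}(\odot)$, where the product map now starts from the other corner. This matches the Land--Tamme formulation used in \eqref{eq relative kthry arithmetic coordinate axis}.
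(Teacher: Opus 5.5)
Your proposal has a genuine gap in the base-change step. It comes from treating the Land--Tamme ring as if it were the relative tensor product \emph{as a ring}.

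The $\odot$-ring $W(\field)\odot^{W(\field)[u]/(ug)}_{\field}W(\field)[u]/(g)$ has underlying spectrum $W(\field)\otimes_{W(\field)[u]/(ug)}W(\field)[u]/(g)$. Its $\bE_1$-multiplication, however, is built from the whole square (it depends on the corner $\field$), and it is not the multiplication coming from the tensor product.

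The tensor-product multiplication is that of a derived tensor product of commutative rings, and its homotopy ring is the Tor-algebra. This carries divided powers, so the degree $2$ generator $t$ satisfies $t^p=p!\,\gamma_p(t)=0$; the Tor-algebra is therefore \emph{not} polynomial. Two consequences follow:
\begin{itemize}
\item Citing Land--Tamme for the multiplication on $\odot$ is fine, but your alternative justification of $\pis\odot\cong\field[x_2]$ via the Tor-algebra is false.
\item The argument that $\sphwf\otimes-$ is symmetric monoidal and flat, hence commutes with relative tensor products, only gives $\odot\simeq\sphwf\otimes\odot'$ as spectra (or bimodules), not as $\bE_1$-rings.
\end{itemize}
The $\bE_1$-equivalence is what the statement asserts. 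It is also what is used later, when results about $\sphwf\otimes A$ with $\pis A\cong\fp[x_2]$ are applied to $\odot$ in \autoref{coro cardinalities of kthry for discrete rings}.

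The paper never has to identify the $\odot$-ring over $W(\field)$. It applies \cite[Lemma~4.30]{land2023kthrypushouts} once, to $R=\zp[u]$, $x=g$, $y=u$. This gives a motivic pullback square over $\zp$ whose corner $\odot'$ has $\pis\odot'\cong\fp[x_2]$. It then applies $\sphwf\otimes-$ to the \emph{whole} motivic pullback square, which stays motivic by \cite[Proposition~2.16]{land2023kthrypushouts}.

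The new corner is $\sphwf\otimes\odot'$ by construction, and by flatness its homotopy ring is $W(\field)\otimes_{\z}\fp[x_2]\cong\field[x_2]$. The statement only asks for \emph{some} such square, so neither your direct application of Land--Tamme over $W(\field)$ nor your resolution computation is needed. If you want to keep your route, the missing input is the compatibility of the $\odot$-construction with this base change. That compatibility is essentially the cited proposition, not a formal property of relative tensor products.

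Two smaller remarks.
\begin{itemize}
\item Tensoring over $\sph_p$, as you do, is the right way to get the discrete rings on the nose. Flatness of $W(\field)$ over $\zp$ gives $W(\field)\otimes_{\zp}\zp[u]/(ug)=W(\field)[u]/(ug)$ with no completion needed; over $\sph$ one would instead get $W(\field)\otimes_{\z}\zp[u]/(ug)$. So your appeal to $p$-completeness ``since finite quotients appear'' is unnecessary.
\item In your last paragraph, the relative term is not $\Omega$ of the fiber of $\mathrm{K}(W(\field))\to\mathrm{K}(\odot)$. Your displayed fiber sequence already identifies the fiber of $\mathrm{K}(W(\field)[u]/(ug))\to\mathrm{K}(W(\field))\times\mathrm{K}(W(\field)[u]/(g))$ directly with $\Omega\mathrm{K}(\odot)$.
\end{itemize}
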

\begin{proof}
Recall from \cite{land2023kthrypushouts} that a motivic pullback square of $\bE_1$-rings results in a pullback square of spectra after applying algebraic K-theory. 
One begins with the motivic pullback square given in \cite[Lemma~4.30]{land2023kthrypushouts} for $R = \zp[u]$, $y = u$ and $x = g$ and this gives a motivic pullback square that results in the one above for $\field = \fp$. Applying $\sphwf \otimes -$ to this square gives another motivic pullback square by \cite[Proposition 2.16]{land2023kthrypushouts} which is precisely the one given in the corollary. The last statement follows by the definition of the relative algebraic K-theory and by translating the pullback square in the corollary into the fiber sequence.
\end{proof}

In particular, \autoref{coro absolute k theory is odd} applies to $ \odot$ above to deduce the following when $\field = \fpc$. 

\begin{coro}\label{coro cardinalities of kthry for discrete rings}
In the situation of \autoref{coro general motivic pullback square}, 
\[\mathrm{K}_*\big(W(\fpc)[u]/(ug),W(\fpc) \times W(\fpc)[u]/(g)) \big)\]
is trivial for odd $*\geq 1$,  infinite for even $* \geq 1$ and given by $\mathrm{K}_{*+1}(\fpc)$ for $*<1$.
\end{coro}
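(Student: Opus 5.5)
We read off the relative K-theory of the pullback square from \autoref{coro general motivic pullback square} and then apply \autoref{coro absolute k theory is odd} to $\odot$ with $\field=\fpc$. That corollary gives an equivalence
\[\mathrm{K}\big(W(\fpc)[u]/(ug),W(\fpc)\times W(\fpc)[u]/(g)\big)\simeq \Omega\mathrm{K}(\odot),\]
so $\mathrm{K}_*$ of the left-hand side is $\mathrm{K}_{*+1}(\odot)$. Moreover $\odot\simeq \sph_{W(\fpc)}\otimes\odot'$ with $\pis\odot'\cong\fp[x_2]$. This is exactly the input of \autoref{coro absolute k theory is odd} with $m=1$, and $p\nmid 1$ holds at every prime.

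By \autoref{coro absolute k theory is odd}, $\mathrm{K}_j(\odot)$ is trivial for even $j\geq 2$. It is countably infinite, with bounded $p$-torsion, for odd $j\geq 1$ and for $j=0$. Setting $j=*+1$ handles the positive range. For odd $*\geq 1$ we have $j\geq 2$ even, so the group vanishes. For even $*\geq 2$ (the statement's ``even $*\geq 1$'') we have $j\geq 3$ odd, so the group is infinite.

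For $*<1$ we need $\mathrm{K}_{*+1}(\odot)\cong\mathrm{K}_{*+1}(\fpc)$ in degrees $*+1\leq 1$. The plan is to use the fiber sequence
\[\mathrm{K}_{\rel}(\odot)\to\mathrm{K}(\odot)\to\mathrm{K}(\fpc)\]
together with the connectivity of $\mathrm{K}_{\rel}(\odot)$. \autoref{mainthmevenvanish}\eqref{item 2 thmevenvanishing} with $m=1$ says $\pi_{2k+1}\mathrm{K}_{\rel}$ is trivial for $k<1$, so $\pi_1\mathrm{K}_{\rel}(\odot)=0$. Part \eqref{item 1 thmevenvanishing} kills the even groups. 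Negative degrees vanish because $\mathrm{K}_{\rel}(\odot)$ is $0$-connective by the Waldhausen connectivity argument cited there. Hence $\pi_j\mathrm{K}_{\rel}(\odot)=0$ for $j\leq 1$ and $\pi_2\mathrm{K}_{\rel}(\odot)=0$. The long exact sequence then gives $\mathrm{K}_j(\odot)\cong\mathrm{K}_j(\fpc)$ for $j\leq 1$, which is the claim for $*<1$.

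The only point needing care is the bookkeeping at $*=0$, i.e. $j=1$. Here one has to check that the boundary map from $\pi_2\mathrm{K}(\fpc)=0$ causes no issue and that $\pi_1\mathrm{K}_{\rel}(\odot)=0$; both hold by the above. The rest is a direct substitution.
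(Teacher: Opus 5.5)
Your proposal is correct and follows the paper's argument: you rewrite the relative groups as $\mathrm{K}_{*+1}(\odot)$ via \autoref{coro general motivic pullback square}, then apply \autoref{coro absolute k theory is odd} with $m=1$ and $\field=\fpc$. Your treatment of the range $*<1$, using the vanishing of $\pi_j\mathrm{K}_{\rel}(\odot)$ for $j\le 2$ from \autoref{mainthmevenvanish}, fills in a step the paper leaves implicit; your remark about the boundary map out of $\pi_2\mathrm{K}(\fpc)$ is harmless but not needed.
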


\begin{exam}
We consider the case $g = u+p$. Then $W(\fpc)[u]/u(u+p)$, which is isomorphic to the Burnside ring $W(\fpc)[u]/u(u-p)$ of the cyclic group of order $p$ over $W(\fpc)$, is given by the homotopy pullback $W(\fpc) \times_{\fpc} W(\fpc)$, see \cite[Example~4.31]{land2023kthrypushouts}. 
\end{exam}

We deduce the following from \autoref{coro cardinalities of kthry for discrete rings}.

\begin{coro}\label{coro relative k theory of arithmetic coordinate axis}
The relative algebraic K-theory groups
\[\pis \mathrm{K}\big(W(\fpc)\times_{\fpc} W(\fpc), W(\fpc) \times W(\fpc) \big) \]
are trivial for odd $*\geq 1$,  infinite for even $* \geq 1$ and given by $\mathrm{K}_{*+1}(\fpc)$ for $*<1$. 
\end{coro}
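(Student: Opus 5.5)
The plan is to specialize \autoref{coro cardinalities of kthry for discrete rings} to $g=u+p$ and then identify the resulting rings.

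\emph{Step 1: identify the rings.} With $g=u+p$, $W(\fpc)[u]/(g)\cong W(\fpc)$, via $u\mapsto -p$. The example preceding the statement (citing \cite[Example~4.31]{land2023kthrypushouts}) identifies $W(\fpc)[u]/(u(u+p))$ with the pullback $W(\fpc)\times_{\fpc}W(\fpc)$. We need to check that the map $W(\fpc)[u]/(ug)\to W(\fpc)\times W(\fpc)[u]/(g)$ appearing in \autoref{coro general motivic pullback square} becomes the canonical inclusion $W(\fpc)\times_{\fpc}W(\fpc)\to W(\fpc)\times W(\fpc)$.

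\emph{Step 2: check the two components.} The map in question is the product of the two quotient maps: $u\mapsto 0$ onto $W(\fpc)[u]/(u)$, and $u\mapsto -p$ onto $W(\fpc)[u]/(u+p)$. Under the pullback description, an element $(a,b)$ with $a\equiv b \bmod p$ corresponds to a polynomial $\alpha+\beta u$ with $\alpha=a$ and $\alpha-p\beta=b$. So the two projections of the pullback are exactly these two evaluations. Relative K-theory depends only on the map of rings up to isomorphism, so
\[\mathrm{K}\big(W(\fpc)\times_{\fpc}W(\fpc),W(\fpc)\times W(\fpc)\big)\simeq \mathrm{K}\big(W(\fpc)[u]/(ug),W(\fpc)\times W(\fpc)[u]/(g)\big).\]

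\emph{Step 3: conclude.} \autoref{coro cardinalities of kthry for discrete rings} then gives the claimed groups: trivial in odd degrees $\geq 1$, infinite in even degrees $\geq 1$, and $\mathrm{K}_{*+1}(\fpc)$ below degree $1$. That corollary rests on \autoref{coro absolute k theory is odd} applied to $\odot\simeq \sph_{W(\fpc)}\otimes\odot'$ with $m=1$; here $p\nmid 1$ holds for every prime, so no restriction on $p$ is needed.

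The only mild obstacle is Step 2, making sure the abstract isomorphism from Land--Tamme is compatible with the two maps, so that we are computing the right relative term. The explicit polynomial description above settles this.
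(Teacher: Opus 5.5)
Your proposal is correct and matches the paper's argument. The paper deduces the statement directly from \autoref{coro cardinalities of kthry for discrete rings} with $g=u+p$, using Land--Tamme's identification $W(\fpc)[u]/(u(u+p))\cong W(\fpc)\times_{\fpc}W(\fpc)$. Your explicit check that the evaluations $u\mapsto 0$ and $u\mapsto -p$ are the two projections of the fiber product (using that $W(\fpc)$ is $p$-torsion free) spells out a compatibility that the paper leaves implicit.
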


\begin{rema}\label{rema nonformality}
As mentioned in \autoref{subsec intro on motivic pullbacks}, the $\mathbb{E}_1$-ring $\odot$  from \autoref{coro general motivic pullback square}
corresponding to the  case of \autoref{coro relative k theory of arithmetic coordinate axis} is  $W(\fpc)\sslash p$. This is defined by the pushout of $\bE_1$-algebras:
\begin{equation}\label{eq defining mod mod p guy}
W(\field) \sslash p := W(\field) \coprod_{W(\field)[z]} W(\field)
\end{equation}
where we use two maps $W(\field)[z] \to W(\field)$ sending $z$ to $0$ and $p$ respectively.

We claim that  for every perfect field $\field$ of characteristic $p>2$, the $\bE_1$-ring $W(\field) \sslash p$ is not equivalent to the free $\bE_1$ $\field$-algebra on $\Sigma^2\field$ and hence the results of~\cite{bayindir2020kthryofthh} do not apply to compute its algebraic K-theory. This follows as in~\cite[Remark 4.33]{land2023kthrypushouts}. We have $W(\field) \sslash p:= W(\field) \otimes_{\z} \z\sslash p$, from this and \cite[Corollary 2.6]{davis2025cyclichomologyofzmodmodp}, we deduce that $\hh^{\z}_2(W(\field) \sslash p) = W(\field)/p^2$. However, by~\cite[Proposition IV.4.2]{nikolausscholze2018topologicalcyclic},  
 we also  have $\thh_2(W(\field) \sslash p) = W(\field)/p^2$, which means that the $\bE_1$-ring $W(\field)\sslash p$ cannot be equivalent to an $\field$-algebra since $p\neq 0 \in \thh_2(W(\field)\sslash p)$. 
\end{rema}

\section{Bockstein spectral sequences and syntomic cohomology}\label{syntomic}

\subsection{Background}
Here we introduce motivic filtrations on topological Hochschild homology and related invariants as in \cite{hahn2022motivic} and \cite{angelini2024syntomic}. For simplicity, we will only define these filtrations for certain $\mathrm{MU}$-algebras since that will suffice for our purposes. Recall that $\MU_{W(\field)}:=\mathbb{S}_{W(\field)}\otimes \MU$ and $k(n)_{\field}:=\mathbb{S}_{W(\field)}\otimes k(n)$. 

\begin{defi}\label{defin-motivic}
For $R\in \{\mathrm{MU}_{W(\field)},k(n)_{\field},W(\field),\field\}$. We define 
\[ \mathrm{fil}_{\textup{mot}}^{*}F(R)_p=\lim_{\Delta}\tau_{\ge 2*}F(R/\mathrm{MU}^{\otimes \bullet+1})_p
\] 
and we write 
\[ 
\mathrm{gr}_{\textup{mot}}^{*}F(R)_p= \mathrm{fil}_{\textup{mot}}^{*}F(R)_p/ \mathrm{fil}_{\textup{mot}}^{*+1}F(R)_p
\]
for $F\in \{\mathrm{THH},\mathrm{TC}^{-}, \mathrm{TP}\}$. When $R\in \{k(n)_{\field},\field\}$, then in light of \autoref{lemm thh is p complete when thh gr is} and \autoref{rema tcp and then hs1 agree with ts1 when p complete} we omit the $p$-completion from the notation and simply write $\mathrm{fil}_{\textup{mot}}^{*}F(R)$ and $\mathrm{gr}_{\textup{mot}}^{*}F(R)$. 
\end{defi}

\begin{rema}
To translate notations, note that $\THH(\MU)\to \MU$ is an evenly free,  map of $S^1$-spectra by~\cite[Example~4.2.3]{hahn2022motivic}; see~\cite[Definition 2.2.15]{hahn2022motivic} for the definition of an evenly free map. Let $F\in \{\THH, \TC^{-},\TP\}$ and $q\ge 0$. Since $\THH(\MU)\to \MU$ is evenly free, we know that $F(\MU/\mathrm{MU}^{\otimes q+1})_p$ is even and by direct computation it has bounded $p$-power torsion. By the same arguments as~\cite[Proposition~2.2.18,~Theorem~4.5.2]{angelini2024syntomic}, we determine that 
$F(k(n)_{\field} /\mathrm{MU}^{\otimes q+1})_p$ is even and has bounded $p$-power torsion. Adapting the argument from~\cite[Theorem~E]{hahn2020redshift}, we deduce that the spectra $F(\field/\mathrm{MU}^{\otimes q+1})_p$ and $F(W(\field)/\mathrm{MU}^{\otimes q+1})_p$ are even and have bounded $p$-power torsion. 
Consequently, by~\cite[Corollary~A.2.6]{hahn2022motivic},
we have equivalences (cf.~\cite[Definition~4.2.1]{hahn2022motivic} and \cite[\S~2.1.1]{angelini2024syntomic})
\begin{align*}
\mathrm{fil}_{\textup{mot}}^*\mathrm{THH}(R)_p  &\simeq \mathrm{fil}_{\textup{ev}/\mathrm{THH}(\mathrm{MU})_p,p}^*\mathrm{THH}(R)_p  \,,  \\
\mathrm{fil}_{\textup{mot}}^*\mathrm{TC}^{-}(R)_p  &\simeq \mathrm{fil}_{\textup{ev}/\mathrm{THH}(\mathrm{MU})_p,p,hS^1}^*\mathrm{THH}(R)_p  \,, \\
\mathrm{fil}_{\textup{mot}}^*\mathrm{TP}(R)_p   &\simeq \mathrm{fil}_{\textup{ev}/\mathrm{THH}(\mathrm{MU})_p,p,tS^1}^*\mathrm{THH}(R)_p \,. 
\end{align*}
\end{rema}

\begin{rema}
Let $R\in \{\mathrm{MU}_{W(\field)},k(n)_{\field},W(\field),\field\}$. Since $\mathrm{MU}$ is chromatically quasisyntomic~\cite[Example~1.3.3,~Definition~4.1.12]{hahn2022motivic}, we can determine that there are filtered maps 
\[ 
\widetilde{\varphi},\widetilde{\mathrm{can}} : \mathrm{fil}_{\textup{mot}}^*\mathrm{TC}^{-}(R)_p\to \mathrm{fil}_{\textup{mot}}^*\mathrm{TP}(R)_p
\]
that converge to the usual $\varphi$ and $\mathrm{can}$ from~\cite{nikolausscholze2018topologicalcyclic} by~\cite[Corollary~A.2.10,~Remark~4.2.10]{hahn2022motivic}.  We will abuse notation and write $\mathrm{can}$ and $\varphi$ for these maps and the induced maps 
\[ 
\varphi,\mathrm{can} : \mathrm{gr}_{\textup{mot}}^*\mathrm{TC}^{-}(R)_p\to \mathrm{gr}_{\textup{mot}}^*\mathrm{TP}(R)_p \,.
\]

\end{rema}
\begin{defi}\label{syntomic-coh-def}
Let $R\in \{\mathrm{MU}_{W(\field)},k(n)_{\field},W(\field),\field\}$. We write 
\[ \mathrm{fil}_{\mot}^*\mathrm{TC}(R)_p \]
for the equalizer 
\[ 
\mathrm{eq}(
\begin{tikzcd}
\mathrm{fil}_{\textup{mot}}^*\mathrm{TC}^{-}(R)_p
\arrow[r,shift right,"\textup{can}",swap] 
\arrow[r,shift left,"\varphi"]  & \mathrm{fil}_{\textup{mot}}^*\mathrm{TP}(R)_p
\end{tikzcd}
) 
\]
and we write 
\[ \mathrm{gr}_{\mot}^*\mathrm{TC}(R)_p= \mathrm{fil}_{\mot}^*\mathrm{TC}(R)_p/ \mathrm{fil}_{\mot}^{*+1}\mathrm{TC}(R)_p\,.
\]
As before, we omit the $p$-completion from the notation in the case of $R\in \{k(n)_{\field},\field\}$ in light of \autoref{lemm thh is p complete when thh gr is} and \autoref{rema tcp and then hs1 agree with ts1 when p complete}. 
\end{defi}

In this section, we use the following terminology throughout. 
\begin{term}
Let $M$ be a graded spectrum and consider $x\in \pi_sM^w$, then we write 
\[\| x\|=(s,2w-s)\] 
and refer to $s$ as the \emph{degree}, $w$ as the \emph{weight} and $2w-s$ as the \emph{Adams weight}. 
\end{term}

\begin{rema}\label{rema:vn}
Recall that by~\cite[Corollary~2.2.21]{hahn2022motivic}
\[\mathrm{fil}_{\textup{ev}}^*\mathbb{S}:=\lim_{\Delta} \tau_{\ge 2*}\mathrm{MU}^{\otimes \bullet+1} 
\]
acts on all of the filtered spectra from \autoref{defin-motivic} and consequently the associated graded 
\[\mathrm{gr}_{\textup{ev}}^*\mathbb{S}:=\mathrm{fil}_{\textup{ev}}^*\mathbb{S}/\mathrm{fil}_{\textup{ev}}^{*+1}\mathbb{S}
\]
acts on all of the graded spectra from \autoref{defin-motivic}. Since there is a symmetric monoidal equivalence between $(\mathrm{gr}_{\textup{ev}}^*\mathbb{S})_p$-modules in graded spectra and the stable derived category of 
$\mathrm{BP}_*\mathrm{BP}$-comodules, by \cite[Corollary~1.1.6]{hahn2022motivic} and \cite[Corollary~1.2]{GWZ21},
we can associate an $\mathbb{E}_{\infty}$ algebra in $\mathrm{gr}_{\textup{ev}}^*\mathbb{S}_p$  modules $\mathrm{gr}_{\textup{ev}}^*\mathbb{S}/(p,\cdots ,v_m)$ to the commutative algebra in $\mathrm{BP}_*\mathrm{BP}$-comodules $\mathrm{BP}_*/(p,\cdots ,v_m)$. We can then compute that 
\[ \pi_*\mathrm{gr}_{\textup{ev}}^*\mathbb{S}/(p,\cdots ,v_m)= \mathrm{Ext}_{\mathrm{BP}_*\mathrm{BP}}^*(\pi_*\mathrm{BP},\pi_*\mathrm{BP}/(p,\cdots ,v_m))\]
which contains $\mathbb{F}_p[v_{m+1}]$ as a subring in cohomological degree zero, see~\cite[ Theorem~4.3.2]{Rav86}, which corresponds to Adams weight. Let $R\in\{\mathrm{MU}_{W(\field)},k(n)_{\field},W(\field),\field\}$. From this discussion, we know that $v_{m+1}$ is in Adams weight zero and it acts on 
\[
\mathrm{gr}_{\textup{mot}}^*F(R)/(p,\cdots ,v_m) := \mathrm{gr}_{\textup{mot}}^*F(R)_p\otimes_{\mathrm{gr}_{\textup{ev}}^*\mathbb{S}}\mathrm{gr}_{\textup{ev}}^*\mathbb{S}/(p,\cdots ,v_m)
\]
for $F\in \{\mathrm{THH},\mathrm{TC}^{-},\mathrm{TP},\mathrm{TC}\}$ and $m\ge 0$. 
\end{rema} 

\begin{defi}
For $R\in \{\mathrm{MU}_{W(\field)},k(n)_{\field},W(\field),\field\}$ and $m\ge 0$, we refer to 
\[ \pi_*\grmot \mathrm{TC}(R)/(p,\cdots ,v_m)\]
as the \emph{mod $(p,\cdots ,v_m)$ syntomic cohomology} of $R$.\footnote{This is justified by~\cite[Theorem~5.0.3]{hahn2022motivic}.} 
\end{defi}

\subsection{Syntomic cohomology}
We first recall the computation of the mod $(p,\cdots,v_{n+1})$ syntomic cohomology of $k(n)_{\field}$ and $\field$. First, we need a preliminary definition. 

\begin{defi}\label{def:epsilon}
Let $i\ge 1$ be an integer. 
Suppose $X^{*}$ is a $\mathrm{gr}_{\textup{ev}}^*\mathbb{S}$-module such that $v_i$ acts trivially on $X^{*}/(p,\cdots ,v_{i-1})$ and 
\[\pi_*(X^*/(p,\cdots ,v_{i-1}))\] 
vanishes in degree $2p^{i}-1$ and Adams weight $-1$. 
Then we define $\overline{\varepsilon}_i$ to be the unique class that maps to $1$ under the boundary map 
\[ \pi_*X^{*}/(p,\cdots ,v_{i})\to \pi_{*-2p^i+1}X^{*}/(p,\cdots ,v_{i-1})\]
of the long exact sequence induced by the cofiber sequence 
\[\Sigma^{2p^i-2}\mathrm{gr}_{\textup{ev}}^*\mathbb{S}/(p,\cdots ,v_{i-1})\overset{v_i}{\longrightarrow}  \mathrm{gr}_{\textup{ev}}^*\mathbb{S}/(p,\cdots ,v_{i-1})\to  \mathrm{gr}_{\textup{ev}}^*\mathbb{S}/(p,\cdots ,v_{i}) \,.
\]
\end{defi}

\begin{nota}
For a field $k$, when we write $k\langle x_1,\cdots,x_m\rangle$ we mean the exterior algebra over $k$ with generators $x_1,\cdots,x_m$. Given a $k$-vector space $M$, we write $M\langle x_1,\cdots ,x_m\rangle:=M\otimes_{k}k\langle x_1,\cdots ,x_m\rangle$. 
\end{nota}
\begin{nota}
Let $\field_{\mathrm{Frob}}$ denote the coinvariants of the action of Frobenius on $\field$. 
\end{nota}
\begin{rema}\label{Z-algebra}
Note that the $\mathrm{gr}_{\textup{ev}}^*\mathbb{S}$-module structure on $\mathrm{gr}_{\textup{mot}}^*\mathrm{TC}(\field)$ factors through  $\mathrm{gr}_{\textup{ev}}^*\mathbb{Z}_p$ since $\mathrm{K}(\mathbb{F}_p)_p\simeq \mathbb{Z}_p$ and consequently $v_i$ acts trivially on  $\mathrm{gr}_{\textup{mot}}^*\mathrm{TC}(\field)/(p,v_1,\cdots ,v_{i-1})$ for all $i\ge 1$. 
\end{rema}

\begin{prop}[{\cite{HM97},\cite{bhatt2019thhandintegralpadichodge}}]\label{syntomic field}
The mod $p$ syntomic cohomology of $\field$ is $\mathbb{F}\oplus \field_{\mathrm{Frob}}\{\partial\}$. Moreover, the mod 
$(p,\cdots ,v_{n+1})$ syntomic cohomology of $\field$ is 
\[ 
\mathbb{F}_p\langle \overline{\varepsilon}_1,\overline{\varepsilon}_2,\cdots ,\overline{\varepsilon}_{n+1} \rangle \oplus \mathbb{F}_{\mathrm{Frob}}\langle \overline{\varepsilon}_1,\overline{\varepsilon}_2,\cdots ,\overline{\varepsilon}_{n+1} \rangle \{\partial\}
\] 
where $\|\partial\|=(-1,1)$ and $\|\overline{\varepsilon}_i\|=(2p^i-1,-1)$. 
In the $v_i$-Bockstein spectral sequence there are differentials $d_1(\overline{\varepsilon}_i)=v_i$ and differentials $d_1(b_{\alpha}\partial\overline{\varepsilon}_i)=b_{\alpha}\partial v_i$ for each basis element $b_{\alpha}$ of $\mathbb{F}_{\mathrm{Frob}}$ as a $\mathbb{F}_p$-vector space
for each $1\leq i\leq n+1$ and no further differentials except those generated by the Leibniz rule.
\end{prop}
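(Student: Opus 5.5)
We first settle the mod $p$ statement and then reach the mod $(p,\cdots,v_{n+1})$ statement by inductively coning off $v_1,\dots,v_{n+1}$. Since this is essentially \cite{HM97} together with \cite{bhatt2019thhandintegralpadichodge}, we only need to put the known computation into the present grading conventions.

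\textbf{Mod $p$.} By \cite{bhatt2019thhandintegralpadichodge}, the motivic filtration on $\mathrm{TC}(\field)$ has associated graded $\mathbb{Z}_p(i)(\field)[2i]$. For perfect $\field$ one has
\[\mathbb{Z}_p(i)(\field)=\mathrm{fib}\big(W(\field)\xrightarrow{\varphi/p^i-1}W(\field)\big)\quad (i\geq 1).\]
The map $\varphi/p^i-1$ is bijective for $i\ge1$, because $\varphi/p^i$ is topologically nilpotent after a Frobenius twist. So the positive weights vanish, and weight $0$ is $\mathrm{fib}(F-1\colon W(\field)\to W(\field))$. We read off the mod $p$ version directly:
\[\mathrm{gr}^0_{\mathrm{mot}}\mathrm{TC}(\field)/p=\mathrm{fib}\big(F-1\colon \field\to\field\big),\]
with $\pi_0=\field^{F=1}=\mathbb{F}_p$ and $\pi_{-1}=\mathrm{coker}(F-1)=\field_{\mathrm{Frob}}$. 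In the bigrading $\|x\|=(s,2w-s)$ these classes sit at $(0,0)$ and $(-1,1)$. This gives $\mathbb{F}_p\oplus\field_{\mathrm{Frob}}\{\partial\}$. The statement writes $\mathbb F$ for the first summand; this should be read as $\field^{F=1}=\mathbb F_p$.

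\textbf{Inductive Bocksteins.} By \autoref{Z-algebra}, $v_i$ acts trivially on $\mathrm{gr}_{\mathrm{mot}}^*\mathrm{TC}(\field)/(p,\dots,v_{i-1})$. Assume inductively that the mod $(p,\dots,v_{i-1})$ syntomic cohomology is $E_{i-1}:=(\mathbb{F}_p\oplus\field_{\mathrm{Frob}}\{\partial\})\langle\overline\varepsilon_1,\dots,\overline\varepsilon_{i-1}\rangle$. This group vanishes in degree $2p^i-1$ and Adams weight $-1$, since all its classes have degree at most $\sum_{j<i}(2p^j-1)<2p^i-1$. So \autoref{def:epsilon} applies and defines $\overline\varepsilon_i$, of bidegree $(2p^i-1,-1)$ because $\|v_i\|=(2p^i-2,0)$.

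Since $v_i$ acts by zero, the long exact sequence of the cofiber sequence in \autoref{def:epsilon} splits into short exact sequences. The mod $(p,\dots,v_i)$ cohomology is therefore an extension of $E_{i-1}$ by $\Sigma^{2p^i-1}E_{i-1}$. Multiplicativity (the target is a module over the mod $(p,\dots,v_{i-1})$ ring, and $\overline\varepsilon_i$ lifts the unit) identifies it additively with $E_{i-1}\langle\overline\varepsilon_i\rangle$. The one point to check is that $\overline\varepsilon_i^2=0$, with no multiplicative extension. We verify it by degree: $\overline\varepsilon_i^2$ would lie in degree $4p^i-2$ and Adams weight $-2$, which is empty in $E_{i-1}\langle\overline\varepsilon_i\rangle$.

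\textbf{The Bockstein spectral sequence.} Again because $v_i$ acts trivially, the $v_i$-Bockstein spectral sequence, which starts from the mod $(p,\dots,v_i)$ groups tensored with $\mathbb{F}_p[v_i]$, must kill every positive $v_i$-power. The boundary characterization of $\overline\varepsilon_i$ says exactly $d_1(\overline\varepsilon_i)=v_i$. For each basis element $b_\alpha$, the Leibniz rule and the module structure over the $\partial$-free part give $d_1(b_\alpha\partial\overline\varepsilon_i)=b_\alpha\partial v_i$.

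What remains is to show that no further differentials occur. The $E_2$-page computed from these $d_1$'s is already $E_{i-1}$ with no $v_i$-towers, and this matches the known abutment. There is therefore no room for longer differentials, and the spectral sequence collapses at $E_2$.

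The main obstacle is the first step: identifying the mod $p$ graded pieces precisely, in particular the vanishing of all positive weights and the placement of $\partial$ in the stated bidegree. Everything after that is bookkeeping with the cofiber sequences.
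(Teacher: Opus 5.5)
Your proposal is correct and follows essentially the paper's route---the mod $p$ graded pieces from \cite{bhatt2019thhandintegralpadichodge} (the paper also quotes \cite{HM97} for $\mathrm{TC}(\field)$), then $\overline{\varepsilon}_i$ built inductively from \autoref{Z-algebra} and \autoref{def:epsilon} for bidegree reasons, with the Bockstein differentials forced by the known abutment---and it supplies more detail than the paper, including correctly reading the first summand of the mod $p$ statement as $\mathbb{F}_p$. One small slip: for $i\geq 1$ the weight $i$ piece is $\mathrm{fib}\big(\varphi/p^i-1\colon p^iW(\field)\to W(\field)\big)$, i.e.\ $\varphi-p^i$ on $W(\field)$, and it is bijective because $p^i\varphi^{-1}$ (not $\varphi/p^i$) is topologically nilpotent.
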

\begin{proof}
By~\cite[Theorem~B]{HM97}, we know that 
\[ \mathrm{TC}(\field)\simeq \mathbb{Z}_p\oplus \Sigma^{-1}\mathrm{W}(\field)_{\mathrm{Frob}}\]
where $\mathrm{W}(\field)_{\mathrm{Frob}}$ is the coinvariants of the action of the Frobenius map on $\mathrm{W}(\field)$ 
and it is a $\mathbb{Z}_p$-module so that 
$\pi_*\mathrm{TC}(\field)/p\cong \mathbb{F}_p\oplus \field_{\mathrm{Frob}}\{\partial\}$ where $|\partial|=-1$. By \cite[\S~6]{bhatt2019thhandintegralpadichodge}, we know that 
\[ 
\grmot\mathrm{TC}(\field)/p\cong \mathbb{F}_p\oplus \mathbb{F}_{\mathrm{Frob}}\{\partial\}
\]
where $\|\partial \|=(-1,1)$. 
By \autoref{Z-algebra} and \autoref{def:epsilon}, we can determine that there is a unique class, which we call 
$\overline{\varepsilon}_i$, that maps to $1$ via the boundary map
\[ 
\pi_*\grmot\mathrm{TC}(\field_q)/(p,\cdots ,v_{i})\to \pi_{*-2p^i+1}\grmot\mathrm{TC}(\field_q)/(p,\cdots ,v_{i-1})
\]
of the long exact sequence induced by the cofiber sequence 
\[ \Sigma^{2p^i-2}\mathrm{gr}_{\textup{ev}}^*\mathbb{S}/(p,\cdots ,v_{i-1})\to \mathrm{gr}_{\textup{ev}}^*\mathbb{S}/(p,\cdots ,v_{i-1})\to \mathrm{gr}_{\textup{ev}}^*\mathbb{S}/(p,\cdots ,v_{i})
\]
of $\mathrm{gr}_{\textup{ev}}^*\mathbb{S}$-modules for bidegree reasons. Consequently, the bidegree of $\overline{\varepsilon}_i$ is $(2p^i-1,-1)$. The argument for differentials in the Bockstein spectral is implied since we know the computation mod $p$. 
\end{proof}

\begin{thm}[{\cite[Theorem~3.5.7,~Theorem~4.5.3]{angelini2024syntomic}}]\label{AKHW}
The mod $(p,\cdots ,v_{n+1})$ syntomic cohomology of $k(n)_{\field}$  is isomorphic to
\begin{align}\label{eq: syntomic}
\mathbb{F}_p\langle \bar{\varepsilon}_{1},\cdots,  \bar{\varepsilon}_{n} , \lambda_{n+1} \rangle \oplus \field_{\textup{Frob}}\langle \bar{\varepsilon}_{1},\cdots,  \bar{\varepsilon}_{n} , \lambda_{n+1} \rangle\{\partial\}\oplus  \bigoplus_{S\subset \{1,\cdots,n\}}M_S
\end{align}
as a $\mathbb{F}_p\langle \lambda_{n+1} \rangle$-module where $\field_{\textup{Frob}}$ is the coinvariants of the action of Frobenius on $\field$. Given $S\subset \{1,\cdots ,n\}$, we write 
\[ 
M_{S}:=
	\field\{t^d\bar{\varepsilon}_S\lambda_{n+1} : -f(S) < d\le p^{n+1}-p^n-f(S)\}
\]
where $f(S)=\sum_{s\in S} p^{s-1}-p^s$. 
The bidegrees are 
\begin{align*}
\|\lambda_{n+1}\|& =(2p^{n+1}-1,1)\\
\| \partial \| & = (-1,1)  \\
\| \bar{\varepsilon}_i\| & = (2p^i-1,-1)  \,, \text{ for } 1\le i \le n \\
\|t^d\bar{\varepsilon}_S\lambda_{n+1}\| & =(\sum_{s\in S}2p^s-|S|-2d+2p^{n+1}-1,1-|S|) 
\end{align*}
for $S\subset \{1,\cdots ,n\}$. Moreover, the map from the mod $(p,\cdots ,v_{n+1})$ syntomic cohomology of $k(n)_{\field}$ to the  mod $(p,\cdots ,v_{n+1})$ syntomic cohomology of $\field$ sends all classes in $\bigoplus_{S\subset \{1,\cdots,n\}}M_S$ to zero, all $\lambda_{n+1}$-multiples to zero and it sends products of classes of the form $b_\alpha \cdot \partial^{a_0}\bar{\varepsilon}_{1}^{a_1}\cdot \dots \bar{\varepsilon}_{n+1}^{a_{n+1}}$ for $a_i\in \{0,1\}$ for $0\le i\le n+1$ to the classes with the same name where $b_{\alpha}$ runs over basis elements for $\field_{\mathrm{Frob}}$ as an $\mathbb{F}_p$-vector space.
\end{thm}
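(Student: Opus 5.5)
The statement is cited from \cite[Theorem~3.5.7,~Theorem~4.5.3]{angelini2024syntomic}, so the plan is to reduce to the case $\field=\fp$ (or a general perfect $\field$) treated there and explain how $\field$-coefficients enter. I would argue in three steps.

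\textbf{Step 1: base change along $\sphwf$.} The first step is to show that the motivic filtrations of \autoref{defin-motivic} for $k(n)_{\field}=\sphwf\otimes k(n)$ are obtained from those for $k(n)$ by base change along $\sphwf$. On $\mathrm{THH}$ this is the analogue of \autoref{lemm twisted cyclotomic moving witt out}: $\mathrm{THH}(\sphwf)\to\sphwf$ is a $p$-adic equivalence by \autoref{rema spherical witts as cyclotomic base}, and $\sphwf$ is flat, so tensoring with it preserves evenness and the bounded $p$-torsion used in the translation remark. On $\mathrm{TC}^{-}$ and $\mathrm{TP}$ one uses that the relevant cosimplicial terms are even with bounded $p$-torsion, so the assembly maps are $p$-adic equivalences. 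This gives
\[\mathrm{gr}_{\mot}^*\mathrm{TC}^{-}(k(n)_{\field})/(p,\cdots,v_{n+1})\simeq \field\otimes_{\fp}\mathrm{gr}_{\mot}^*\mathrm{TC}^{-}(k(n))/(p,\cdots,v_{n+1}),\]
and likewise for $\mathrm{TP}$.

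\textbf{Step 2: the syntomic computation with $\field$-coefficients.} Under the identification of Step 1, $\mathrm{can}$ becomes $\mathrm{id}\otimes\mathrm{can}$ and $\varphi$ becomes $F\otimes\varphi$, exactly as in the diagrams preceding \autoref{lemm moving witt out of fixed points}. Syntomic cohomology is then the fiber of $F\otimes\varphi-\mathrm{id}\otimes\mathrm{can}$. One runs the computation of \cite{angelini2024syntomic}, which proceeds through successive $v_i$-Bockstein spectral sequences starting from the mod $(p,v_1,\dots,v_{n+1})$ prismatic cohomology. Where $\varphi$ is an isomorphism and $\mathrm{can}$ is zero, the map $x\mapsto F(x)$ is bijective on $\field$, and this produces the $M_S$ summands with $\field$-coefficients. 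Where $\varphi$ and $\mathrm{can}$ agree in the lowest filtration, one gets $x^p-x$ on $\field$: its kernel is $\fp$ and its cokernel is $\field_{\mathrm{Frob}}$. This produces the $\fp\langle\cdots\rangle\oplus\field_{\mathrm{Frob}}\langle\cdots\rangle\{\partial\}$ part, compatible with \autoref{syntomic field}.

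\textbf{Step 3: the map to $\field$.} The final clause follows from naturality along $k(n)_{\field}\to\field$. The classes $M_S$ and the $\lambda_{n+1}$-multiples live in positive $x$-adic or weight filtration and vanish in the target for degree reasons. The unit classes $\bar{\varepsilon}_i$ and $\partial$ are characterized by boundary maps as in \autoref{def:epsilon}, so they map to the classes of the same name.

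The main obstacle is the Bockstein bookkeeping in Step 2, particularly tracking where $\lambda_{n+1}$ arises from $\bar{\varepsilon}_{n+1}$ and pinning down the precise range of $d$ in $M_S$. For this I would import the computation of \cite{angelini2024syntomic} verbatim rather than redo it, checking only that each step is $\field$-linear up to Frobenius twist.
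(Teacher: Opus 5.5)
The paper gives no argument for this statement; it imports it wholesale from \cite{angelini2024syntomic}. Reconstructing it by base change along $\sphwf$ plus naturality is therefore a reasonable plan, but two of your steps fail as written, and the first leaves part of the statement unproved.

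\textbf{Step 3: the vanishing of the $M_S$ classes is not forced by bidegrees.} Take $S=\{n\}$ and $d=p^n$. This $d$ lies in the allowed range $p^n-p^{n-1}<d\le p^{n+1}-p^{n-1}$. The stated formula gives $\|t^{p^n}\bar{\varepsilon}_n\lambda_{n+1}\|=(2p^{n+1}-2,0)$, which is exactly $\|\partial\bar{\varepsilon}_{n+1}\|$. The class $\partial\bar{\varepsilon}_{n+1}$ is nonzero in the syntomic cohomology of $\field$ whenever $\field_{\mathrm{Frob}}\neq 0$, for instance for finite $\field$, which is the case used in \autoref{syntomic}. In fact every $t^{p^s}\bar{\varepsilon}_s\lambda_{n+1}\in M_{\{s\}}$ with $1\le s\le n$ lands in this bidegree.

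Knowing that the corresponding prismatic classes die for $\field$ does not suffice either. Naturality of the kernel--cokernel sequence for $\varphi-\mathrm{can}$ then only places the image among the cokernel classes, i.e.\ the $\partial$-multiples, which is precisely where $\partial\bar{\varepsilon}_{n+1}$ lives. So this clause needs an argument that uses how the $M_S$ classes are constructed in \cite{angelini2024syntomic}. One possible route: by \autoref{syntomic field} the $v_{n+1}$-Bockstein boundary is injective on $\field_{\mathrm{Frob}}\{\partial\bar{\varepsilon}_{n+1}\}$, so it would suffice to know these classes lift to mod $(p,\cdots,v_n)$ syntomic cohomology. By contrast, for $\lambda_{n+1}$ and its products with $\partial$ and the $\bar{\varepsilon}_i$, a bidegree count does work, by uniqueness of base-$p$ expansions.

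\textbf{Step 1 is justified incorrectly when $\field$ is infinite.} Evenness and bounded $p$-torsion do not make the assembly maps for $(-)^{hS^1}$ and $(-)^{tS^1}$ into $p$-adic equivalences. These limits involve infinite products, and mod $p$ the map $\field\otimes_{\fp}\prod_k V_k\to\prod_k\field\otimes_{\fp}V_k$ is not surjective once infinitely many $V_k$ are nonzero. This is exactly why the paper only commutes $\sphwf$ past $(-)^{hS^1}$ and $(-)^{tS^1}$ after applying the finite truncations $Q_e$ (\autoref{lemm moving witt out of fixed points}). For finite $\field$ there is no issue, since $\sphwf$ is then a finite free $\sph_p$-module. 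In general you must work on the mod $(p,\cdots,v_{n+1})$ associated graded and use finiteness in each bidegree to commute $\field\otimes_{\fp}-$ with the $t$-adic limit.

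\textbf{The dictionary in Step 2 is backwards.} Where $\varphi$ is an isomorphism and $\mathrm{can}=0$, the map $F\otimes\varphi-\mathrm{id}\otimes\mathrm{can}$ is bijective and contributes nothing. The correct picture is different. The $\field$-coefficient summands come from components where the Frobenius twist does not affect kernels and cokernels, because $F$ is bijective on the perfect field $\field$. The $\fp$ and $\field_{\mathrm{Frob}}$ summands come from components where $\varphi$ and $\mathrm{can}$ send a class to the same target. There one gets $F-\mathrm{id}$, whose kernel is $\fp$ and whose cokernel is $\field_{\mathrm{Frob}}$.
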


\begin{rema}
It will be crucial that the syntomic cohomology above is concentrated in Adams weights $\leq 2$ and the Adams weight $2$ component is given by $\field_{\textup{Frob}}\{\partial\lambda_{n+1}\}$. Moreover, the lowest degree element in the syntomic cohomology is $\partial$ that is of bidegree $(-1,1)$.
\end{rema}

\begin{nota}
For $F\in \{\TC^{-},\TP,\TC\}$, we define 
\[ \fil_{\mot}^* F_{\textup{rel}}(k(n)_{\field})=\fib  \big ( \fil_{\mot}^*F(k(n)_{\field})\to  \fil_{\mot}^*F({\field}) \big )
\]
with associated graded 
\[ \grmot F_{\textup{rel}}(k(n)_{\field})=\fil_{\mot}^* F_{\textup{rel}}(k(n)_{\field})/\fil_{\mot}^{*+1} F_{\textup{rel}}(k(n)_{\field})\,.
\]
\end{nota}

\begin{defi}\label{defi motivic spectral sequence}
For $R\in \{\mathrm{MU}_{W(\field)},k(n)_{\field},W(\field),\field\}$ and $F\in \{\THH,\TC^{-},\TP,\TC\}$, we refer to the spectral sequence 
\[ \pi_*\grmot F(R)\implies \pi_*F(R)
\]
associated to the filtered spectrum  $\fil_{\mot}^* F(R)$ as the motivic spectral sequence. We define similar spectral sequences for $F_{\textup{rel}}$. The differential $d_r$ raises Adams weight by $r$ and decreases degree by one.  Moreover, by construction it only has differentials for odd $r \geq 3$. 
\end{defi}

\begin{cor}\label{Adams weights}
The bigraded homotopy groups 
\[ 
\pi_*\grmot\TC^{-}_{\rel}(k(n)_{\field}) \qquad \text{ and } \qquad \pi_*\grmot\TP_{\rel}(k(n)_{\field})
\]
are concentrated in Adams weight $1$. Consequently, we can conclude that $\pi_*\grmot\TC_{\rel}(k(n)_{\field})$ is concentrated in Adams weights $[1,2]$. Moreover, the map 
\[\pi_*\grmot\TC_{\rel}(k(n)_{\field})\to \pi_*\grmot\TC(k(n)_{\field}) \]
is an isomorphism in Adams weight $2$. 
\end{cor}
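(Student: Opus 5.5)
The plan is to work with the motivic filtration on the relative invariants directly, using the even filtration's control of Adams weights. The key claim is that $\pi_*\grmot\TC^{-}_{\rel}(k(n)_{\field})$ and $\pi_*\grmot\TP_{\rel}(k(n)_{\field})$ are concentrated in Adams weight $1$. By definition, $\fil_{\mot}^*F(R)$ is the limit of $\tau_{\ge 2*}F(R/\mathrm{MU}^{\otimes\bullet+1})_p$. The associated graded in weight $w$ is therefore computed as a totalization of $\Sigma^{2w}\pi_{2w}$ of even spectra, which is a cosimplicial abelian group shifted by $2w$. This makes the degree-$s$, weight-$w$ homotopy equal to a cohomology group in cohomological degree $2w-s$, namely the Adams weight. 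So Adams weight equals the cohomological degree of the descent (cobar) complex.

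To pin this down, I would compare the relative even filtration with the Whitehead/May picture. The paper shows via \autoref{oddness} that $\pi_*\rntc(k(n)_\field)$ and $\pi_*\rtp(k(n)_\field)$ are concentrated in odd degrees. An odd class in degree $s=2w-1$ sits in Adams weight $1$ if and only if the descent spectral sequence is concentrated in a single cohomological degree.

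I would argue this using the known structure. \autoref{AKHW} and \autoref{syntomic field} give the mod $(p,\dots,v_{n+1})$ answer. Under the known computations (\cite[\S4]{angelini2024syntomic}), $\grmot\TC^{-}(k(n)_\field)/(p,\dots,v_{n+1})$ and $\grmot\TP(k(n)_\field)/(p,\dots,v_{n+1})$ are computed, and the relative part, i.e.\ the fiber of the map to $\field$, lies in Adams weight $1$ after the reduction. One then lifts along the $v_i$-Bocksteins and $p$-Bockstein: each Bockstein cofiber sequence preserves Adams weight, because $v_i$ has Adams weight $0$ (\autoref{rema:vn}). If the mod-$(p,\dots,v_{n+1})$ reduction is concentrated in Adams weights $\{1\}$ (together with weight $2$ after tensoring the exterior generators, which must be checked to cancel), then so is the integral $p$-complete object by a completeness/Nakayama argument. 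Completeness holds because each $F(k(n)_\field/\mathrm{MU}^{\otimes q+1})_p$ has bounded $p$-power torsion.

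Next, the fiber sequence $\grmot\TC_{\rel}\to\grmot\TC^-_{\rel}\xrightarrow{\varphi-\can}\grmot\TP_{\rel}$ shows that $\grmot\TC_{\rel}$ has homotopy only in Adams weights $1$ (kernel) and $2$ (cokernel, since the boundary map shifts degree by one while keeping weight, raising Adams weight by one).

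For the last claim, use the fiber sequence $\grmot\TC_{\rel}(k(n)_\field)\to\grmot\TC(k(n)_\field)\to\grmot\TC(\field)$. By \autoref{syntomic field} and the $v_i$-Bocksteins, $\pi_*\grmot\TC(\field)$ lies in Adams weights $0$ and $1$ (classes $1$ and $\partial$). The long exact sequence then gives an isomorphism in Adams weight $2$: the neighbouring terms in Adams weight $2$ of the base and Adams weight $1$ shifted (the boundary raises Adams weight by one, so Adams weight $1$ of $\field$ could hit Adams weight $2$ of the relative term) must be handled. This last point is the main obstacle. I would resolve it using the stated fact that the map $k(n)_\field\to\field$ is surjective on syntomic cohomology onto the $\partial$-classes (\autoref{AKHW}). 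This forces the boundary map from the Adams weight $1$ part of $\field$ to vanish, giving the isomorphism.
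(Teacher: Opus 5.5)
The gap is in your first, and main, step. Your claim that the relative parts of $\grmot\TC^-(k(n)_\field)/(p,\dots,v_{n+1})$ and $\grmot\TP(k(n)_\field)/(p,\dots,v_{n+1})$ lie in Adams weight $1$ is false, so the Nakayama argument has nothing to start from. Reducing by $p$ or by $v_i$ (both of Adams weight $0$) turns a torsion class of Adams weight $c$ into a new class of Adams weight $c-1$; this is how $\bar\varepsilon_i$, of Adams weight $-1$, arises. Concretely, reduction is exact. If both reductions were concentrated in Adams weight $1$, the fiber sequence for $\varphi-\can$ would put the relative mod $(p,\dots,v_{n+1})$ syntomic cohomology in Adams weights $[1,2]$. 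But by \autoref{AKHW} the classes $t^d\bar\varepsilon_S\lambda_{n+1}$ with $S\neq\emptyset$ map to zero in $\field$, so they lift to relative classes of Adams weight $1-|S|\le 0$. The ``cancellation'' you postpone therefore does not happen. Even with a correct mod $(p,\dots,v_{n+1})$ description, you would still have to track the Adams weight shift through every Bockstein to recover the integral statement. Nor does \autoref{oddness} help on its own: an abutment concentrated in odd degrees does not exclude even-degree (even Adams weight) classes on the $\mathrm{E}_2$-page that die by differentials.

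What is missing is an a priori upper bound on Adams weights before any reduction. By \cite[Corollary~2.2.11]{angelini2024syntomic}, $\pi_*\grmot\THH(k(n)_\field)$ lies in Adams weights $[0,1]$. Since $t$ in the $t$-Bockstein (homotopy fixed point) spectral sequence has Adams weight $0$, the same holds for $\pi_*\grmot\TC^-(k(n)_\field)$ and $\pi_*\grmot\TP(k(n)_\field)$. Because $\THH(\field)$ is even, its motivic filtration is the (double speed) Whitehead tower, so $\grmot\THH(\field)$, $\grmot\TC^-(\field)$ and $\grmot\TP(\field)$ sit in Adams weight $0$. Hence the relative terms lie in Adams weights $[0,1]$. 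The Adams weight $0$ line is in even degrees and the weight $1$ line in odd degrees, and differentials raise Adams weight by at least $3$ (\autoref{defi motivic spectral sequence}). So the motivic spectral sequence collapses, and \autoref{oddness} kills the Adams weight $0$ line. With this in place, your second step is the paper's argument, and your third step also works. There you get surjectivity onto the $\partial$-classes from \autoref{AKHW}, which needs a short lift from mod $(p,\dots,v_{n+1})$ to integral coefficients; this is harmless in degree $-1$, since nothing lies below it. The paper instead uses that $\pi_\ell\TC(k(n)_\field)\to\pi_\ell\TC(\field)$ is an isomorphism for $\ell\le 0$ and that the target is concentrated in weight $0$.
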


\begin{proof}
By \cite[Corollary~2.2.11]{angelini2024syntomic} and the fact that the generator $t$ of the group cohomology $H^2(S^1,\mathbb{F}_p)$  is in Adams weight zero, we also know that 
\[ 
\pi_*\grmot\TC^{-}(k(n)_{\field}) \qquad \text{ and } \qquad \pi_*\grmot\TP(k(n)_{\field})
\]
are concentrated in Adams weights $[0,1]$ using the algebraic $t$-Bockstein spectral sequence~\cite[Construction~2.1.14]{hahn2022motivic} (cf.~\cite[\S 3]{AKAR23}). 
Since $\thh(\field)$ is an even $\bE_\infty$-ring, its motivic filtration is given by its Whitehead tower. Hence,  $\grmot\thh(\field)$ is concentrated in Adams weight $0$, see also \cite[Proposition 2.2.8]{angelini2024syntomic}, and the same holds for $\grmot \ntc(\field)$ and $\grmot \tp(\field)$, so we deduce that 
\[
\pi_*\grmot\TC^{-}_{\textup{rel}}(k(n)_{\field}) \qquad \text{ and } \qquad \pi_*\grmot\TP_{\textup{rel}}(k(n)_{\field})
\]
are also concentrated in Adams weights $[0,1]$. Since the $0$-line is concentrated in even degrees and the $1$-line is concentrated in odd degrees  by the definition of Adams weight and the motivic spectral sequence collapses at the $\mathrm{E}_2$-page, see~\autoref{defi motivic spectral sequence}, we therefore know that the $\mathrm{E}_2$-page of the motivic spectral sequence for $\TC^{-}_{\textup{rel}}(k(n)_{\field})_p$ and $\TP_{\textup{rel}}(k(n)_{\field})$ must vanish in Adams weight zero by \autoref{oddness}. This proves the first statement. 

Since there is a fiber sequence 
\[ \grmot\TC_{\textup{rel}}(k(n)_{\field})\to \grmot\TC^{-}_{\textup{rel}}(k(n)_{\field})\overset{\varphi-\mathrm{can}}{\longrightarrow} \grmot\TP_{\textup{rel}}(k(n)_{\field})\]
the second to last statement follows.

By the case of the fiber sequence above for $\grmot \tc(\field)$, we deduce that  $\pis \grmot \tc(\field)$ is concentrated in Adams weights $[0,1]$. Hence, this motivic spectral sequence collapses and since $\pis \tc(\field) \cong W(\field) \oplus W(\field)_{\mathrm{Frob}}\{\partial\}$, we must also have $\pis \grmot \tc(\field) \cong W(\field) \oplus W(\field)_\mathrm{Frob}\{\partial\}$ where $\|\partial \| = (-1,1)$, in particular, $\partial$ is of weight $0$. 

 Since $\pi_\ell \tc(k(n)_\field) \to \pi_\ell \tc(\field)$ is an isomorphism for $\ell \leq 0$, we deduce that the map of  spectral sequences computing this map $\pi_\ell \grmot \tc(k(n)_\field) \to \pi_\ell \grmot \tc(\field)$ is surjective as the target is concentrated in $\ell\leq 0$ and  weight $0$. Then the last statement follows since $\grmot \rtc(k(n)_{\field})$ is the fiber of the map 
\[ \grmot\TC(k(n)_{\field})\to \grmot\TC(\field)
\]
which is surjective in $\pis$ as mentioned and whose target is trivial in Adams weights $>1$.
\end{proof}

\subsection{Bockstein spectral sequences}
We begin with some preliminary results regarding syntomic cohomology. The goal is to show that certain classes act on the Bockstein spectral sequences for $k(n)_{\field}$.

\begin{lemm}\label{THH-MU-lem}
There is an isomorphism
\[ \pi_*\mathrm{THH}(\MU_{W(\field)})/p\cong\pi_*\mathrm{THH}(\MU)/p\otimes \field
\]
and consequently class $\lambda_k$ for $k\ge 1$, which generate $\pi_{2p^k-1}\grmot\mathrm{THH}(\MU_{W(\field)})/p$  as a $\field$-module and are in the image of the class $\lambda_k\in \pi_{2p^k-1}\grmot\mathrm{THH}(\mathrm{MU})/(p)$ from~\cite[Definition~2.3.2]{angelini2024syntomic}. 
\end{lemm}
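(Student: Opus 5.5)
The plan is to reduce to the known computation for $\MU$ by a Künneth-type argument, using the spherical Witt vectors as in \autoref{rema passage to witt vector coefficients for formal} and \autoref{rema spherical witts as cyclotomic base}.

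\textbf{Step 1: splitting off $\sphwf$.} Since THH is symmetric monoidal, there is an equivalence
\[\thh(\MU_{W(\field)})\simeq \thh(\sphwf)\otimes \thh(\MU).\]
After reducing mod $p$, \autoref{rema spherical witts as cyclotomic base} identifies $\thh(\sphwf)/p\simeq \sphwf/p$. This spectrum is the Eilenberg--MacLane spectrum of $\field$, because $\sphwf$ is flat over $\bS_p$ with $\pi_0=W(\field)$. We therefore get
\[\thh(\MU_{W(\field)})/p\simeq \sphwf\otimes \thh(\MU)/p.\]
Flatness of $\sphwf$ over $\bS_p$ then gives
\[\pi_*\big(\sphwf\otimes \thh(\MU)/p\big)\cong W(\field)\otimes_{\zp}\pi_*\thh(\MU)/p\cong \field\otimes_{\fp}\pi_*\thh(\MU)/p,\]
which is the first isomorphism.

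\textbf{Step 2: motivic associated graded.} We run the same argument on the motivic associated graded. The filtration of \autoref{defin-motivic} is built from $\tau_{\ge 2*}$ of the cosimplicial object $\thh(\MU_{W(\field)}/\MU^{\otimes\bullet+1})$, and this object equals $\sphwf\otimes \thh(\MU/\MU^{\otimes\bullet+1})$ after $p$-completion. Tensoring with the flat $\sphwf$ commutes with Postnikov truncations and with the totalization, which is levelwise even and has bounded $p$-torsion. This gives
\[\grmot\thh(\MU_{W(\field)})/p\simeq \sphwf\otimes\grmot\thh(\MU)/p,\]
and on homotopy
\[\pi_*\grmot\thh(\MU_{W(\field)})/p\cong \field\otimes_{\fp}\pi_*\grmot\thh(\MU)/p.\]
In degree $2p^k-1$ the $\fp$-module $\pi_{2p^k-1}\grmot\thh(\MU)/p$ contains $\lambda_k$ from \cite[Definition~2.3.2]{angelini2024syntomic} as a generator, in the relevant Adams weight. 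Extending scalars, the image of $\lambda_k$ under the unit map $\MU\to\MU_{W(\field)}$ generates the corresponding $\field$-module, and it is the class we call $\lambda_k$.

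\textbf{Expected obstacle.} The main difficulty is justifying that $\sphwf\otimes-$ commutes with the totalization in the definition of $\fil_{\mot}$, which is a limit over $\Delta$. I would handle this as follows:
\begin{itemize}
\item Use that $\sphwf$ is a filtered colimit of finite étale extensions of $\bS_p$ after $p$-completion. Then $\sphwf\otimes-$, followed by $p$-completion, preserves limits of uniformly bounded-below diagrams.
\item Alternatively, use the boundedness of $p$-torsion to argue on homotopy groups degreewise, as in \autoref{lemm sphwf otimes thhwtwa is pcomplete}.
\end{itemize}
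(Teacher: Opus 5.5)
Your route is the paper's. Step 1 is exactly its argument: monoidality of $\thh$, the $p$-adic equivalence $\thh(\sphwf)\to\sphwf$ from \autoref{rema spherical witts as cyclotomic base}, then flatness of $\sphwf$. Step 2 spells out what the paper only says can be ``easily upgraded'' to $\grmot$ by adapting the computations for $\MU$.

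First, a small correction. $\sphwf/p$ is \emph{not} the Eilenberg--MacLane spectrum $H\field$: by flatness $\pi_*(\sphwf/p)\cong W(\field)\otimes_{\z}\pi_*(\bS/p)$, which is nonzero in positive degrees (e.g.\ degree $2p-3$ for odd $p$). What is true, and what the paper uses, is $H\fp\otimes\sphwf\simeq H\field$. The false claim is not load-bearing: your next equivalence and the flatness computation do not use it, and using it would produce the mod $p$ homology $H\field\otimes\thh(\MU)$. So just delete it. Also, replacing $W(\field)\otimes_{\zp}-$ by $\field\otimes_{\fp}-$ requires $\pi_*\thh(\MU)/p$ to be killed by $p$. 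This follows from $\thh_*(\MU)$ being free over $\MU_*$, and it matters at $p=2$.

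The genuine gap is in how you handle the obstacle you identified. Principle (a) is false. A totalization of uniformly bounded-\emph{below} cosimplicial spectra is in general not a finite limit in any fixed degree, and neither filtered colimits nor $\sphwf\otimes-$ commute with it. For example, stable Dold--Kan gives a cosimplicial spectrum $X^\bullet$ with each $X^q$ a finite sum of $\Sigma^jH\fp$, $0\le j\le q$, and $\mathrm{Tot}_mX^\bullet\simeq\prod_{q\le m}H\fp$. Then $\mathrm{Tot}(\sphwf\otimes X^\bullet)\simeq\prod_q H\field$, while $\sphwf\otimes\mathrm{Tot}(X^\bullet)\simeq H(\field\otimes_{\fp}\prod_q\fp)$. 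Both are already $p$-complete, and for $\field=\fpc$ (which \emph{is} ind-finite-\'etale) the comparison map is not surjective. Moreover, a general perfect field, such as the perfection of $\fp(t)$, is not a filtered colimit of finite fields at all. Suggestion (b) does not help either: the problem is that infinitely many cosimplicial degrees contribute to a fixed homotopy group, and bounded torsion does not control that.

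The repair is to bypass $\fil_{\mot}$ and work on the associated graded directly. Each $\thh(\MU/\MU^{\otimes q+1})_p$ is even with bounded $p$-torsion (remark following \autoref{defin-motivic}), hence so is its base change along $\sphwf$. Therefore $\mathrm{gr}^w_{\mot}\thh(R)_p\simeq\lim_\Delta\Sigma^{2w}H\pi_{2w}\thh(R/\MU^{\otimes\bullet+1})_p$, so $\pi_{2w-s}\mathrm{gr}^w_{\mot}\thh(R)_p$ is the $s$-th cohomology of a cochain complex of abelian groups. This is a totalization of uniformly bounded-\emph{above} objects, whose $\mathrm{Tot}$-tower converges in each degree, so an exact $t$-exact functor commutes with it. For $R=\MU_{W(\field)}$, flatness of $\sphwf$ shows the cochain complex is the (derived) $p$-completion of $W(\field)\otimes_{\zp}$ of the one for $\MU$. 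After reducing mod $p$ (a finite colimit, which also makes the completion invisible), this becomes levelwise $\field\otimes_{\fp}-$ applied to the mod $p$ reduction for $\MU$. Since $\field\otimes_{\fp}-$ is exact, you get $\pi_*\grmot\thh(\MU_{W(\field)})/p\cong\field\otimes_{\fp}\pi_*\grmot\thh(\MU)/p$, which is all the lemma needs.
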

\begin{proof}
By \autoref{rema spherical witts as cyclotomic base}, the map $\mathrm{THH}(\mathbb{S}_{W(\field)})\to \sphwf$ exhibits  $\sphwf$ as the $p$-completion of $\mathrm{THH}(\mathbb{S}_{W(\field)})$. Consequently, there is an equivalence
\[
\mathrm{THH}(\MU_{W(\field)})/p\simeq \mathrm{THH}(\mathrm{MU})/p\otimes \mathbb{S}_{W(\field)}\,.\]
Since $\mathbb{F}_p\otimes \mathbb{S}_{W(\field)}\simeq \field$ and 
$\mathrm{THH}_*(\mathrm{MU})\cong \mathrm{MU}_*\langle \lambda_i' : i\ge 1\rangle$
the homotopy groups of $\mathrm{THH}(\mathrm{MU})/(p)\otimes \mathbb{S}_{W(\field)}$ are 
$\pi_*\THH(\mathrm{MU})/p\otimes \field$. 
For the last statement, we note that $\lambda_i=\lambda_{p^i-1}'$ and the identification 
\[ \pi_*\THH(\MU_{W(\field)})/p\simeq \pi_*\THH(\MU)/p\otimes \field
\]
can be easily upgraded to an isomorphism
\[ \pi_*\grmot\THH(\MU_{W(\field)})/p\simeq \pi_*\grmot\THH(\MU)/p\otimes \field,
\]
adapting the proofs of \cite[Proposition~2.2.6,~Proposition~2.3.1]{angelini2024syntomic}. 
\end{proof}

\begin{prop}\label{lambda-class}
There is a class 
\[
\lambda_k\in \pi_{2p^k-1}\grmot\TC(\MU_{W(\field)})/p
\]
in bidegree $(2p^k-1,1)$ that is a lift of the class 
$\lambda_k\in \pi_{2p^{k}-1}\grmot\THH(\mathrm{MU}_{W(\field)})/p$ defined in \autoref{THH-MU-lem} and is in the image of the class $\lambda_k\in \pi_{2p^k-1}\grmot\mathrm{TC}(\mathrm{MU})/p$ from \cite[Definition 3.2.4]{angelini2024syntomic}.
\end{prop}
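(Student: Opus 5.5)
The plan is to obtain the class by pushing forward the class $\lambda_k\in \pi_{2p^k-1}\grmot\mathrm{TC}(\mathrm{MU})/p$ of \cite[Definition~3.2.4]{angelini2024syntomic} along the unit map $\mathrm{MU}\to \MU_{W(\field)}=\sphwf\otimes\mathrm{MU}$. This map of $\bE_\infty$-rings is induced by $\mathbb{S}\to\sphwf$, and it induces a map of motivic filtrations. The reason is that \autoref{defin-motivic} is functorial in $R$: we have $R/\mathrm{MU}^{\otimes\bullet+1}$, the Whitehead truncations $\tau_{\ge 2*}$ and the limit over $\Delta$, and the maps $\varphi$, $\mathrm{can}$ are natural in $R$. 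Hence there are induced maps on $\grmot\mathrm{TC}^{-}$, $\grmot\mathrm{TP}$, and therefore on the equalizer $\grmot\mathrm{TC}$. Since these are maps of $\mathrm{gr}_{\textup{ev}}^*\mathbb{S}$-modules (the $\mathrm{fil}_{\textup{ev}}^*\mathbb{S}$-action of \autoref{rema:vn} is natural), they survive reduction mod $p$. We then define $\lambda_k\in\pi_{2p^k-1}\grmot\TC(\MU_{W(\field)})/p$ to be the image. It is in the image of the class for $\mathrm{MU}$ by construction, and its bidegree is $(2p^k-1,1)$ because the induced map preserves degree and weight.

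The remaining claim is that this class lifts the $\THH$-class of \autoref{THH-MU-lem}. We use the commutative square formed by the natural maps $\grmot\TC(-)/p\to\grmot\TC^{-}(-)/p\to\grmot\THH(-)/p$ for $\mathrm{MU}$ and for $\MU_{W(\field)}$. By \cite[Definition~3.2.4]{angelini2024syntomic}, the class $\lambda_k$ for $\mathrm{MU}$ maps to $\lambda_k\in\pi_{2p^k-1}\grmot\THH(\mathrm{MU})/p$. Going around the square, our class maps to the image of that $\THH$-class under $\grmot\THH(\mathrm{MU})/p\to\grmot\THH(\MU_{W(\field)})/p$. By \autoref{THH-MU-lem}, this image is precisely the class named $\lambda_k$ there, because that lemma identifies the map with $x\mapsto x\otimes 1$ under $\pi_*\grmot\THH(\MU_{W(\field)})/p\cong\pi_*\grmot\THH(\MU)/p\otimes\field$.

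The step requiring the most care is naturality of the motivic filtrations and of the maps $\varphi$, $\mathrm{can}$ in $R$, together with the compatibility of the forgetful maps $\TC\to\TC^{-}\to\THH$ on associated graded. I would check these directly from the definitions in \cite{hahn2022motivic}: the evenness and bounded $p$-torsion conditions of the preceding remark hold for both $\mathrm{MU}$ and $\MU_{W(\field)}$, so the identifications with the even filtrations are natural. I would also check that \autoref{THH-MU-lem} is compatible with the base-change map itself rather than being only an abstract isomorphism. That compatibility holds because the equivalence $\THH(\MU_{W(\field)})/p\simeq\THH(\MU)/p\otimes\sphwf$ is obtained from the map $\THH(\mathrm{MU})\to\THH(\MU_{W(\field)})$ tensored with $\THH(\sphwf)\to\sphwf$.
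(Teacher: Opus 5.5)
Your proposal is correct and follows essentially the same route as the paper: push the class $\lambda_k\in\pi_{2p^k-1}\grmot\mathrm{TC}(\mathrm{MU})/p$ forward along $\mathrm{MU}\to\MU_{W(\field)}$, then use the commutative square comparing $\grmot\mathrm{TC}$ with $\grmot\THH$ together with \autoref{THH-MU-lem}. The paper phrases that last step as injectivity of the bottom map $\grmot\THH(\mathrm{MU})/p\to\grmot\THH(\MU_{W(\field)})/p$. Your identification of this map with $x\mapsto x\otimes 1$ gives that injectivity directly. Note also that the paper cites Lemma 3.2.5 of the reference, rather than Definition 3.2.4, for the fact that the $\mathrm{TC}$-class lifts the $\THH$-class.
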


\begin{proof}
By~\cite[Lemma~3.2.5]{angelini2024syntomic}, there is a lift $\lambda_k\in \pi_{2p^k-1}\grmot\mathrm{TC}(\MU)/p$ of the generator $\lambda_k\in \pi_{2p^k-1}\grmot\mathrm{THH}(\mathrm{MU})/p$. Therefore, by the commutative diagram 
\[
\begin{tikzcd}
\pi_{2p^k-1}\grmot\mathrm{TC}(\mathrm{MU})/p\arrow{d} \arrow{r} &  \pi_{2p^k-1}\grmot\mathrm{TC}(\MU_{W(\field)})/p \arrow{d} \\ 
\pi_{2p^k-1}\grmot\mathrm{THH}(\mathrm{MU})/p \arrow{r} & \pi_{2p^k-1}\grmot\mathrm{THH}(\mathrm{MU}_{W(\field)})/p \,,
\end{tikzcd}
\]
and \autoref{THH-MU-lem}, which implies that the bottom arrow is injective, the result follows. 
\end{proof}

\begin{prop}\label{lambdas} 
There is a class 
$\lambda_{n+1}\in \pi_{2p^{n+1}-1}\grmot\TC(k(n)_{\field})/(p)$ in bidegree $(2p^{n+1}-1,1)$ in the image of the $\mathbb{E}_0$-algebra structure map 
\[\grmot\TC(\mathrm{MU}\otimes \mathbb{S}_{W(\field)})/p\to\grmot\TC(k(n)_{\field})/p\] 
and maps to the nontrivial class $\lambda_{n+1}\in \pi_{2p^{n+1}-1}\grmot\TC(k(n)_{\field})/(p,\cdots v_{n+1})$ from \autoref{AKHW} up to a unit in $\mathbb{F}_p$. 
\end{prop}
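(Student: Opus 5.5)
The plan is to define the class $\lambda_{n+1}\in \pi_{2p^{n+1}-1}\grmot\TC(k(n)_{\field})/p$ as the image of the class $\lambda_{n+1}\in \pi_{2p^{n+1}-1}\grmot\TC(\MU_{W(\field)})/p$ from \autoref{lambda-class} under the $\mathbb{E}_0$-algebra (unit) map $\MU_{W(\field)}\to k(n)_{\field}$. This map is obtained by base-changing the unit $\MU\to k(n)$ along $\sphwf\otimes -$. Functoriality of the motivic filtration on $\TC$ for $\mathbb{E}_0$-maps of the rings listed in \autoref{defin-motivic} gives a map $\grmot\TC(\MU_{W(\field)})/p\to\grmot\TC(k(n)_{\field})/p$ that preserves bidegree. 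The image therefore lies in bidegree $(2p^{n+1}-1,1)$ and is in the image of the structure map by construction. The only remaining claim is that, after further reduction to mod $(p,\cdots,v_{n+1})$, this class agrees up to an $\mathbb{F}_p^\times$-unit with the class $\lambda_{n+1}$ named in \autoref{AKHW}.

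For that comparison I would use base change from $\mathbb{F}_p$ to $\field$. \autoref{lambda-class} says the $\MU_{W(\field)}$-class is the image of $\lambda_{n+1}\in\pi_*\grmot\TC(\MU)/p$. The square of $\mathbb{E}_0$-maps
\[\MU\to k(n),\qquad \MU_{W(\field)}\to k(n)_{\field}\]
commutes. Consequently our class is the image, under $\grmot\TC(k(n))\to\grmot\TC(k(n)_{\field})$, of the image of $\lambda_{n+1}$ in $\pi_*\grmot\TC(k(n))/p$.

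In the $\mathbb{F}_p$ case, \cite{angelini2024syntomic} define the class $\lambda_{n+1}$ in the mod $(p,\cdots,v_{n+1})$ syntomic cohomology of $k(n)$ precisely as the image of $\lambda_{n+1}$ from $\MU$; I would cite the relevant construction there (around \cite[Theorem~3.5.7]{angelini2024syntomic}). It then remains to check that the comparison map $\mathbb{F}_p\langle\cdots,\lambda_{n+1}\rangle\to\field\langle\cdots,\lambda_{n+1}\rangle$ of syntomic cohomologies sends $\lambda_{n+1}$ to the class of the same name in \autoref{AKHW}. I would argue this via the identification of \autoref{AKHW} for $\field$ as the base change of the $\mathbb{F}_p$ computation, in the same way that \autoref{THH-MU-lem} upgrades to an isomorphism $\pi_*\grmot\THH(\MU_{W(\field)})/p\cong\pi_*\grmot\THH(\MU)/p\otimes\field$.

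The main obstacle is nontriviality and the unit ambiguity. If the naming in \autoref{AKHW} is only pinned down up to the relevant group, I would detect the class in $\THH$. Its image in $\pi_{2p^{n+1}-1}\grmot\THH(k(n)_{\field})/(p,\cdots,v_{n+1})$ is the image of $\lambda_{n+1}\in\grmot\THH(\MU_{W(\field)})$, which is the standard generator $\lambda_{n+1}$ of $\THH_*(k(n))$ tensored with $\field$. That generator is nonzero by the known computation $\THH_*(k(n))/(p,\cdots,v_{n+1})$. Since in this bidegree the group is one-dimensional over $\mathbb{F}_p$ on the $\lambda_{n+1}$-summand and the $\lambda_{n+1}$ of \autoref{AKHW} maps to it, the two classes agree up to a unit in $\mathbb{F}_p$.
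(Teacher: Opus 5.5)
Your proposal is correct and, in its final paragraph, is essentially the paper's proof. The paper likewise takes the image of $\lambda_{n+1}$ from $\mathrm{MU}_{W(\field)}$ and shows it is nonzero in $\pi_{2p^{n+1}-1}\grmot\THH(k(n)_{\field})/(p,\cdots,v_{n+1})$, using the comparison with $\mathrm{MU}\to k(n)$ \cite[Theorem~2.5.12]{angelini2024syntomic} and the base change $-\otimes_{\mathbb{F}_p}\field$; it then concludes because the syntomic cohomology of \autoref{AKHW} is one-dimensional over $\mathbb{F}_p$ in bidegree $(2p^{n+1}-1,1)$. Your earlier detour, which identifies the syntomic cohomology of $k(n)_{\field}$ as a base change of that of $k(n)$, is unnecessary and not literally right: for $\field$ the $\lambda_{n+1}$-summand stays $\mathbb{F}_p\langle\cdots\rangle$ rather than becoming $\field\langle\cdots\rangle$. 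You can simply drop it.
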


\begin{proof}
We know that $\lambda_{n+1}$ maps to $\lambda_{n+1}$ in the left vertical map of the commutative diagram
\[
\begin{tikzcd}
\pi_{2p^{n+1}-1}\grmot\mathrm{TC}(\mathrm{MU}_{W(\field)})/p\arrow{dd} \arrow{r} &  \pi_{2p^{n+1}-1}\grmot\mathrm{TC}(k(n)_{\field})/p \arrow{d} \\ 
 &  \pi_{2p^{n+1}-1}\grmot\mathrm{TC}(k(n)_{\field})/(p,\cdots ,v_{n+1}) \arrow{d} \\ 
\pi_{2p^{n+1}-1}\grmot\mathrm{THH}(\mathrm{MU}_{W(\field)})/(p,\cdots,v_{n+1}) \arrow{r} & \pi_{2p^{n+1}-1}\grmot\mathrm{THH}(k(n)_{\field})/(p,\cdots ,v_{n+1}) \,,
\end{tikzcd}
\]
by \autoref{lambda-class} and the fact that $\lambda_{n+1}$ is nontrivial in the image of the map 
\[ \pi_{2p^{n+1}-1}\grmot\mathrm{THH}(\mathrm{MU}_{W(\field)})/p\to \pi_{2p^{n+1}-1}\grmot\mathrm{THH}(\mathrm{MU}_{W(\field)})/(p,\cdots,v_{n+1})\]
by \autoref{THH-MU-lem} and \cite[Proposition~2.2.6]{angelini2024syntomic}\,. 

We also have a commutative diagram 
\[
\begin{tikzcd}
\pi_{2p^{n+1}-1}\grmot\mathrm{THH}(\mathrm{MU}_{p})/(p,\cdots ,v_{n+1})\arrow{d} \arrow{r} &  \pi_{2p^{n+1}-1}\grmot\mathrm{THH}(k(n))/(p,\cdots, v_{n+1}) \arrow{d} \\ 
\pi_{2p^{n+1}-1}\grmot\mathrm{THH}(\mathrm{MU}_{W(\field)})/(p,\cdots ,v_{n+1}) \arrow{r} & \pi_{2p^{n+1}-1}\grmot\mathrm{THH}(k(n)_{\field})/(p,\cdots ,v_{n+1}) \,,
\end{tikzcd}
\]
where the vertical maps are given by applying the natural transformation $-\otimes_{\fp} \fp \Rightarrow - \otimes_{\fp} \field$ to the top arrow in the diagram by \autoref{THH-MU-lem} and  \cite[Theorem~4.5.2]{angelini2024syntomic}. 
By \cite[Theorem~2.5.12]{angelini2024syntomic} the class $\lambda_{n+1}\in \pi_{2p^{n+1}-1}\grmot\mathrm{THH}(\mathrm{MU})/(p,\cdots ,v_{n+1})$ maps to the class $\lambda_{n+1}\in \pi_{2p^{n+1}-1}\grmot\mathrm{THH}(k(n))/(p,\cdots ,v_{n+1})$ and consequently the result follows from the fact that there is a unique class of bidegree $(2p^{n+1}-1,1)$ up to a unit in $\field_p$ in the mod $(p,\cdots,v_{n+1})$ syntomic cohomology of $k(n)_{\field}$. 
Here, note that that $v_{n+1}$-acts trivially on $\grmot\mathrm{THH}(k(n))$ and therefore 
\[\pi_*\grmot \mathrm{THH}(k(n))/(p,\cdots ,v_{n+1})\cong \pi_*\grmot \mathrm{THH}(k(n))/(p,\cdots ,v_{n})\langle \varepsilon_{n+1}\rangle  
\]
where $\|\varepsilon_{n+1}\|=(2p^{n+1}-1,-1)$, see the proof of \cite[Theorem~2.5.14]{angelini2024syntomic} for details. 
\end{proof}

\begin{lemm}\label{lem-partial}
Let $\mathbb{F}$ be finite. 
We have an inclusion 
$\field_p\subset \pi_{-1}\mathrm{gr}_{\textup{mot}}^0\mathrm{TC}(\mathrm{MU}_{W(\field_q)})/p$ such that the composite 
\[\field_p\subset \pi_{-1}\mathrm{gr}_{\textup{mot}}^0\mathrm{TC}(\mathrm{MU}_{W(\field)})/p\to \pi_{-1}\mathrm{gr}_{\textup{mot}}^0\mathrm{TC}(W(\field))/p\]
is an isomorphism. We call the generator of this sub-$\mathbb{F}_p$-vector space $\partial$. 
\end{lemm}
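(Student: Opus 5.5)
The plan is to reduce to the case $\field=\fp$, where the class should come from the unit map $\mathbb{S}\to \mathrm{MU}$, and then push forward along $\mathrm{MU}\to \MU_{W(\field)}$.

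First, I compute the target. Since $W(\field)$ is $p$-torsion free with even $\mathrm{THH}$ after $p$-completion, its motivic filtration is well behaved. By the Bhatt--Morrow--Scholze description (\cite[\S 6]{bhatt2019thhandintegralpadichodge}), $\pi_{-1}\mathrm{gr}^0_{\mot}\mathrm{TC}(W(\field))/p$ is computed from the weight-zero part of syntomic cohomology, namely the fiber of $\varphi-\mathrm{can}$ on $\mathrm{gr}^0\mathrm{TC}^{-}\to\mathrm{gr}^0\mathrm{TP}$ mod $p$. In weight $0$ this is $\field\xrightarrow{F-\mathrm{id}}\field$, so $\pi_{-1}$ is the cokernel $\field_{\mathrm{Frob}}$. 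For $\field$ finite, $F-\mathrm{id}$ is $\fp$-linear on $\field$ with kernel $\fp$, so the cokernel is $\fp$. Thus the target is a one-dimensional $\fp$-vector space, generated by the class $\partial$ of \autoref{syntomic field} (compare $\mathrm{TC}(W(\field))$ versus $\mathrm{TC}(\field)$ in weight $0$).

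Second, I produce a class in the source. Consider the unit maps $\mathbb{S}_p\to \sphwf\to \MU_{W(\field)}\to W(\field)$. The key input is that $\mathrm{gr}^0_{\mot}\mathrm{TC}(\mathbb{S}_p)/p$, or even $\mathrm{gr}^0_{\mot}\mathrm{TC}(\MU)/p$, already has a class $\partial$ in degree $-1$ mapping to the generator $\partial$ for $\mathbb{Z}_p$. Concretely, $\mathrm{TC}(\mathbb{S})_p\simeq \mathbb{S}_p\oplus\Sigma\mathbb{CP}^\infty_{-1}$ contains the $\Sigma^{-1}\mathbb{Z}_p$ summand, and $\pi_{-1}\mathrm{TC}(\mathbb{Z}_p)/p\cong\fp$. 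Better, I work directly with $\MU$: the weight-zero piece of $\mathrm{gr}_{\mot}\mathrm{TC}(\MU)$ is the fiber of $\varphi-\mathrm{can}$ on $\mathrm{gr}^0\mathrm{TC}^{-}(\MU)\to \mathrm{gr}^0\mathrm{TP}(\MU)$. Since $\pi_0$ of both is $\mathbb{Z}_p$ (mod $p$: $\fp$), with $\varphi=\mathrm{id}=\mathrm{can}$ on the unit, the cokernel gives $\fp\{\partial\}$ in $\pi_{-1}$, as in \cite{angelini2024syntomic}. Tensoring with $\sphwf$, $\mathrm{gr}^0$ mod $p$ becomes $\field\xrightarrow{F-\mathrm{id}}\field$ on $\pi_0$, with cokernel $\field_{\mathrm{Frob}}\cong\fp$. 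This is naturally compatible with the map to $W(\field)$, whose weight-zero part is the same map $F-\mathrm{id}$ on $\field$.

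Third, I compare. The map $\MU_{W(\field)}\to W(\field)$ induces an isomorphism on $\pi_0$, hence on weight-zero $\mathrm{gr}^0_{\mot}\mathrm{TC}^{-}$ and $\mathrm{TP}$ in degree $0$ mod $p$, both being $\field$ from $\pi_0\mathrm{THH}$. Taking cokernels of $F-\mathrm{id}$ then gives an isomorphism of the $\field_{\mathrm{Frob}}\cong\fp$ pieces. So the degree $-1$ piece contributed by this cokernel is the desired sub-$\fp$-vector space, and the composite to the target is an isomorphism; I name its generator $\partial$. The main obstacle is justifying that the weight-zero graded pieces are as described, i.e.\ that $\pi_0\mathrm{gr}^0_{\mot}\mathrm{TC}^{-}(\MU_{W(\field)})/p\cong\field$ with Frobenius acting as $F$ and can as the identity. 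I would deduce this from the even-filtration description, using that $\tau_{\ge0}$ of the cosimplicial even objects has $\pi_0$ given by the $\mathrm{MU}_*\mathrm{MU}$-cobar complex on $\pi_0$. This is concentrated as $\field$ in cohomological degree $0$, with the Frobenius compatibility taken from \autoref{cyclotomic-structure-sphwf}.
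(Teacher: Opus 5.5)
Your route differs from the paper's. You compute the weight-zero pieces directly from the fiber sequence $\mathrm{gr}^0_{\mathrm{mot}}\TC\to\mathrm{gr}^0_{\mathrm{mot}}\TC^{-}\xrightarrow{\varphi-\mathrm{can}}\mathrm{gr}^0_{\mathrm{mot}}\TP$ mod $p$. The paper instead works with honest $\TC$: Keenan's comparison gives $\pi_{-1}\TC(\MU_{W(\field)})/p\cong\pi_{-1}\TC(W(\field))/p$, Hesselholt identifies this group with $\field_{\mathrm{Frob}}\cong\fp$, Liu--Wang place it in weight $0$, and a filtration argument lifts the generator to $\mathrm{gr}^0_{\mathrm{mot}}$.

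\emph{The gap.} Your argument rests on the claim that, for $R=\MU_{W(\field)}$ and $R=W(\field)$, the groups $\pi_0\mathrm{gr}^0_{\mathrm{mot}}\TC^{-}(R)/p$ and $\pi_0\mathrm{gr}^0_{\mathrm{mot}}\TP(R)/p$ are just $\field$, with $\varphi=F$, $\mathrm{can}=\mathrm{id}$, and nothing else relevant in degrees $0,-1$ (``both being $\field$ from $\pi_0\THH$''). This is false. The weight-zero piece of $\TC^{-}$ is not controlled by $\pi_0\THH$: it collects $t^{k}$-multiples of all weight-$k$ classes of $\mathrm{gr}_{\mathrm{mot}}\THH$, odd ones included.
\begin{itemize}
\item Already for $W(\field)$, your premise that $\THH$ is even after $p$-completion fails, since $\THH_{2j-1}(W(\field))\cong W(\field)/j$.
\item The classes $t^{j}\cdot\THH_{2j-1}$ lie in degree $-1$ and weight $0$. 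So $\pi_{-1}\mathrm{gr}^0_{\mathrm{mot}}\TC^{-}(W(\field))$, which is $H^1$ of Nygaard-completed prismatic cohomology, is a nonzero pro-$p$ group filtered with graded pieces $W(\field)/j$ ($j\ge 1$). It survives mod $p$, and the same holds for $\TP$.
\item The odd classes of $\THH(\MU)$ cause the same phenomenon for $\MU_{W(\field)}$.
\end{itemize}
Your cobar justification treats the cosimplicial $\pi_0$'s as if they only saw $\pi_0$ of the rings, and asserts cohomology concentrated in degree $0$. In fact the higher cohomology of that descent complex is exactly $\pi_{-s}$ of the weight-zero piece, and it is nonzero. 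Consequently the fiber sequence only yields an extension
\[0\to\mathrm{coker}\,\pi_0(\varphi-\mathrm{can})\to\pi_{-1}\mathrm{gr}^0_{\mathrm{mot}}\TC(R)/p\to\ker\pi_{-1}(\varphi-\mathrm{can})\to 0,\]
and both outer terms involve this extra material. So neither of your steps follows:
\begin{itemize}
\item Step 1 (the target is exactly $\field_{\mathrm{Frob}}$) has the right answer but needs a real input, such as Liu--Wang or $\mathbb{Z}_p(0)\simeq R\Gamma_{\mathrm{et}}(-,\mathbb{Z}_p)$.
\item Step 3 (matching cokernels of $F-\mathrm{id}$ on $\field$ gives the isomorphism) is unsupported.
\end{itemize}
Your opening plan of pushing $\partial$ forward from $\MU$ or $\mathbb{S}_p$ also fails in general. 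Under the trace isomorphism $\field_{\mathrm{Frob}}\cong\fp$, the class coming from $\fp$ maps to $[\field:\fp]$ times a generator, which is $0$ when $p\mid[\field:\fp]$.

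\emph{A repair.} You can avoid computing any $\pi_0$ for $\MU_{W(\field)}$ or $W(\field)$:
\begin{itemize}
\item Choose $b\in\field$ with $\mathrm{Tr}_{\field/\fp}(b)\neq 0$, and view it, via $W(\field)=\pi_0\sphwf$, as a class in $\pi_0\mathrm{gr}^0_{\mathrm{mot}}\TP(\MU_{W(\field)})/p$. Let $\partial$ be its image under the boundary map of the fiber sequence.
\item By naturality along $\MU_{W(\field)}\to W(\field)\to\field$, the image of $\partial$ is again the boundary of $b$.
\item For $\field$, $\THH(\field)$ is even. So there the weight-zero pieces really are $W(\field)$ in degree $0$ with $\varphi-\mathrm{can}=F-1$, and the boundary of $b$ is $[b]\neq0$ in $\field_{\mathrm{Frob}}$.
\item Hence $\partial$ maps to a nonzero element, i.e.\ a generator, of $\pi_{-1}\mathrm{gr}^0_{\mathrm{mot}}\TC(W(\field))/p$, which is $\fp$ by Liu--Wang.
\end{itemize}
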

\begin{proof}
By~\cite[Proposition~3.22]{Kee25} for example, the map $\mathrm{TC}(\mathrm{MU}_{W(\field)})/p\to \mathrm{TC}(W(\field))/p$ is also an isomorphism in degrees $\le 1$. By Hesselholt~\cite[Theorem~A]{Hes97}, we know $\pi_{-1}\mathrm{TC}(W(\field))/p\cong W(\field)_{\mathrm{Frob}}/p$ and it is well-known that $\pi_{-s} \tc(W(\field))/p = 0$ for $s\le 2$. This implies that \[\pi_{-1}\mathrm{TC}(\mathrm{MU}_{W(\field)})/p=\mathbb{F}_p\]
since $W(\field)_{\mathrm{Frob}}/p=\mathbb{F}_{\mathrm{Frob}}$ and $\mathbb{F}_{\mathrm{Frob}}=\mathbb{F}_p$ when $\mathbb{F}$ is finite. By~\cite[Theorem~1.5]{LW22}, we know $\pi_{-1}(\grmot\mathrm{TC}(W(\field))/p)=\mathbb{F}_p$, for which we fix a generator and denote it  as $\partial$, concentrated in Adams weight $1$, i.e.\ in weight zero\footnote{The class $\partial$ is called $\lambda$ in their notation, but that conflicts with the other $\lambda$ classes that appear in our work, so we choose the more standard name}. This implies the desired result as $\pis \grmot \tc(R)/p$ is the first page of a spectral sequence computing $\tc(R)/p$ for $R \in \{\MU_{W(\field)}, W(\field)\}$. 

In more detail, let 
\[ 
\mathrm{Fil}_{\mot}^{f}\pi_s(\mathrm{TC}(R)/p) := \mathrm{im}\left (\pi_s\mathrm{fil}_{\textup{mot}}^f \mathrm{TC}(R)/p\to \pi_s\mathrm{TC}(R)/p\right ) \]
for $R\in \{ \mathrm{MU}_{W(\field)}, W(\field)\}$. 
Since $\pi_{-1}\grmot\mathrm{TC}(W(\field))$ is concentrated in weight $0$~\cite[Theorem~1.5]{LW22}, there is an isomorphism
\[ 
\mathrm{Fil}_{\textup{mot}}^0\pi_{-1}\mathrm{TC}(W(\field))/p\xrightarrow{\cong} \pi_{-1}\mathrm{gr}_{\textup{mot}}^0\mathrm{TC}(W(\field))/p\,.
\] 
We know that 
\[ 
\pi_{-1}\mathrm{TC}(\MU_{W(\field)})/p\cong \pi_{-1}\mathrm{TC}(W(\field))/p
\]
So we have a commutative diagram 
\[ 
\begin{tikzcd}
\Fil_{\mot}^1\pi_{-1}\mathrm{TC}(\mathrm{MU}_{W(\field)})/p   \arrow{d}  \arrow{r} & \Fil_{\mot}^0\pi_{-1}\mathrm{TC}(\mathrm{MU}_{W(\field)})/p=\pi_{-1}\mathrm{TC}(\mathrm{MU}_{W(\field)})/p \arrow{d}{\cong} \\ 
0 \arrow{r} & \Fil_{\mot}^0\pi_{-1}\mathrm{TC}(W(\field))/p=\pi_{-1}\mathrm{TC}(W(\field))/p
\end{tikzcd}
\] 
and therefore $\partial$ on the bottom right corner does not lift  to a class in higher weight. Therefore, we have shown that $\partial$ lifts to a a generator of a sub $\mathbb{F}_p$-vector space of $\pi_{-1}\mathrm{gr}_{\textup{mot}}^{0}\mathrm{TC}(\mathrm{MU}_{W(\field)})$.
\end{proof}

\begin{prop}\label{lambda-linearity}
Let  $\field$ be finite. The differentials in the $v_{i+1}$-Bockstein spectral sequence 
\[ \pi_*\grmot\TC(k(n)_{\field})/(p,\cdots ,v_{i+1})[v_{i+1}]\implies \pi_*\grmot\TC(k(n)_{\field})/(p,\cdots ,v_{i})\]
are $\partial$ and $\lambda_{n+1}$-linear for $i\ge 0$. 
\end{prop}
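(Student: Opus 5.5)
The plan is to realize the $v_{i+1}$-Bockstein spectral sequence as the spectral sequence of a filtered module over a ring whose homotopy contains $\partial$ and $\lambda_{n+1}$, so that linearity follows from a pairing of spectral sequences. Concretely, $\grmot\TC(k(n)_{\field})$ is a module over $\grmot\TC(\MU_{W(\field)})$. This uses the $\mathbb{E}_0$-algebra structure map from \autoref{lambdas}, promoted to a module structure via the $\bE_\infty$-structure of $\MU_{W(\field)}$ and the lax symmetric monoidality of $\fil^*_{\mot}\TC$. Since $\MU$ is an $\bE_\infty$-ring and the motivic filtration is lax symmetric monoidal in the $\bE_\infty$ setting, this is a module structure over an $\bE_\infty$-algebra in graded spectra. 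The module structure is also compatible with the $\mathrm{gr}_{\textup{ev}}^*\mathbb{S}$-action, because both factor through the unit map $\mathrm{gr}_{\textup{ev}}^*\mathbb{S}\to\grmot\TC(\MU_{W(\field)})$.

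First, the $v_{i+1}$-Bockstein spectral sequence will be identified with the spectral sequence of the $v_{i+1}$-adic tower
\[\cdots\to \Sigma^{2(2p^{i+1}-2)}X/(p,\dots,v_i)\xrightarrow{v_{i+1}}\Sigma^{2p^{i+1}-2}X/(p,\dots,v_i)\xrightarrow{v_{i+1}} X/(p,\dots,v_i),\]
where $X=\grmot\TC(k(n)_{\field})$. Tensoring this tower over $\mathrm{gr}_{\textup{ev}}^*\mathbb{S}$ is functorial in the module $X$.

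Next, I would exploit that $X/(p,\dots,v_i)=X\otimes_{\grmot\TC(\MU_{W(\field)})}\bigl(\grmot\TC(\MU_{W(\field)})/(p,\dots,v_i)\bigr)$. This exhibits the whole tower as a tower of modules over $\grmot\TC(\MU_{W(\field)})/p$, granting that $\mathrm{gr}_{\textup{ev}}^*\mathbb{S}/p$ is an $\bE_\infty$-algebra by \autoref{rema:vn}, since it comes from the commutative comodule algebra $\mathrm{BP}_*/p$. Multiplication by a class $y\in\pi_*\grmot\TC(\MU_{W(\field)})/p$ then gives a self-map of filtered objects of the tower, shifting degree but preserving the filtration. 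Hence $y$ commutes with all differentials up to the sign $(-1)^{|y|}$, which is exactly the asserted linearity. It remains to take $y=\lambda_{n+1}$, using the class of \autoref{lambda-class} whose image is $\lambda_{n+1}$ by \autoref{lambdas}, and $y=\partial$, using \autoref{lem-partial}. Finiteness of $\field$ enters only through \autoref{lem-partial}.

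The main obstacle I expect is the multiplicative bookkeeping in the second step. The quotient $\mathrm{gr}_{\textup{ev}}^*\mathbb{S}/(p,\dots,v_i)$ need not be a ring for $i\ge1$, so I would avoid multiplying two such quotients together. Instead I would only use the $\mathrm{gr}_{\textup{ev}}^*\mathbb{S}/p$-module structure, viewing $\mathrm{gr}_{\textup{ev}}^*\mathbb{S}/(p,\dots,v_i)$ as an iterated cofiber of $v_j$-multiplications on $\mathrm{gr}_{\textup{ev}}^*\mathbb{S}/p$-modules. Equally, the action of a class $y$ of $\grmot\TC(\MU_{W(\field)})/p$ on an $\grmot\TC(\MU_{W(\field)})/p$-module commutes with $v_{i+1}$-multiplication because $v_{i+1}$ acts through the central unit map from $\mathrm{gr}_{\textup{ev}}^*\mathbb{S}$. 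This commutation suffices to make $y$ a map of towers, and hence of spectral sequences.
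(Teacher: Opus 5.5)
Your proposal is correct and is essentially the paper's argument. The paper's proof just notes that $\partial$ (by \autoref{lem-partial}) and $\lambda_{n+1}$ (by \autoref{lambdas}) come from $\pi_*\grmot\TC(\MU_{W(\field)})/p$, a ring acting on the $v_{i+1}$-Bockstein spectral sequence, and you have spelled out that action on the $v_{i+1}$-adic tower. One small remark: your caution that $\mathrm{gr}_{\textup{ev}}^*\mathbb{S}/(p,\dots,v_i)$ might not be a ring is unnecessary, since \autoref{rema:vn} gives it an $\bE_\infty$-structure, and that structure is also what makes $v_{i+1}$ a self-map of $X/(p,\dots,v_i)$ in the first place.
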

\begin{proof}
It suffices to show that $\partial$ and $\lambda_{n+1}$ are in a ring that acts on this spectral sequence. For $\partial$, this follows from \autoref{lem-partial}. 
For $\lambda_{n+1}$ this follows from \autoref{lambdas}. 
\end{proof}

\begin{rema}
In the $v_i$-Bockstein spectral sequences for syntomic cohomology, such as the ones in the proposition above, the classes $v_i$ are of Adams weight $0$, see \autoref{rema:vn}. 
\end{rema}

\begin{lemm}\label{vn-bockstein}
Let $\field$ be finite. 
The classes in
$\pi_{*}\grmot \mathrm{TC}(k(n)_{\field})/(p,\cdots,v_n)$ in Adams weight $2$ exactly consist of classes in $\mathbb{F}_p[v_{n+1}]\{\partial \lambda_{n+1}\}$. 
\end{lemm}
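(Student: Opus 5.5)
We run the $v_{n+1}$-Bockstein spectral sequence
\[ \pi_*\grmot\TC(k(n)_{\field})/(p,\cdots ,v_{n+1})[v_{n+1}]\implies \pi_*\grmot\TC(k(n)_{\field})/(p,\cdots ,v_{n}) \]
and track only the Adams weight $2$ part. By \autoref{AKHW} and the remark after it, the input is concentrated in Adams weights $\le 2$, and its Adams weight $2$ part is exactly $\field_{\textup{Frob}}\{\partial\lambda_{n+1}\}$. Since $\field$ is finite, $\field_{\textup{Frob}}=\fp$, so this part is the single class $\partial\lambda_{n+1}$. Because $v_{n+1}$ has Adams weight $0$ (\autoref{rema:vn}), the Adams weight $2$ part of the $\mathrm{E}_1$-page is $\fp[v_{n+1}]\{\partial\lambda_{n+1}\}$. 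The claim therefore reduces to two facts: no Bockstein differential hits $v_{n+1}^j\partial\lambda_{n+1}$, and $\partial\lambda_{n+1}$ is a permanent cycle.

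\textbf{Key step: nothing in Adams weight $2$ is a boundary.} A $v_{n+1}$-Bockstein differential is induced by the boundary of the cofiber sequence for multiplication by $v_{n+1}$ on $\mathrm{gr}_{\textup{ev}}^*\bS$-modules. It therefore preserves Adams weight (compare $d_1(\overline{\varepsilon}_i)=v_i$ in \autoref{syntomic field}, where both sides have Adams weight $0$ once the $-1$ of $\overline{\varepsilon}_i$ is corrected by the connecting map). Any source of a differential hitting $v_{n+1}^j\partial\lambda_{n+1}$ would then itself lie in Adams weight $2$, i.e.\ be a multiple of $\partial\lambda_{n+1}$. Differentials landing in positive $v_{n+1}$-powers from lower Adams weight could only arise if the weight bookkeeping allowed a shift. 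I plan to pin down the exact bigrading of $d_r$ as $(-1,\text{shift})$ from the bidegree of $\overline{\varepsilon}_{n+1}$, and then use that the input vanishes in Adams weight $>2$ and is only $\partial\lambda_{n+1}$ in weight $2$.

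\textbf{Permanence.} We use \autoref{lambda-linearity}: differentials are $\partial$- and $\lambda_{n+1}$-linear. Since $1$ is a permanent cycle, $d_r(\partial\lambda_{n+1})=\partial\lambda_{n+1}\,d_r(1)=0$. Hence every $v_{n+1}^j\partial\lambda_{n+1}$ survives and none is hit, so it is nonzero at $\mathrm{E}_\infty$.

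\textbf{Assembly and main obstacle.} The $\mathrm{E}_\infty$-page in Adams weight $2$ is $\fp[v_{n+1}]\{\partial\lambda_{n+1}\}$, and all other $\mathrm{E}_\infty$ classes have Adams weight $\le 1$. Since the Bockstein filtration is compatible with the Adams weight grading, there are no extension issues within a fixed Adams weight beyond $v_{n+1}$-multiplication, and convergence holds because each bidegree is finite. The main obstacle is the precise Adams-weight bookkeeping of the Bockstein differentials, i.e.\ ruling out that a lower-weight class, such as a $\lambda_{n+1}$-multiple in $M_S$, supports a differential into the $\partial\lambda_{n+1}$-tower. If that weight argument fails, I would fall back on $\partial$-linearity: the image of $\partial\cdot$ applied to weight-$1$ classes, together with the comparison map to $\field$ in \autoref{AKHW}, shows that any such target would map nontrivially to the syntomic cohomology of $\field$, which has no Adams weight $2$ classes.
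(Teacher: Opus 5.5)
\textbf{There is a genuine gap in your key step.} It rests on the claim that $v_{n+1}$-Bockstein differentials preserve Adams weight, and that claim is false.

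A Bockstein differential preserves the weight $w$ and lowers the degree by one. It therefore raises the Adams weight $2w-s$ by exactly $1$. Your own example shows this: $d_1(\overline{\varepsilon}_i)=v_i$ goes from bidegree $(2p^i-1,-1)$ to $(2p^i-2,0)$.

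So a differential hitting $v_{n+1}^k\partial\lambda_{n+1}$ does not originate from a multiple of $\partial\lambda_{n+1}$. It originates from an Adams weight $1$ class of the $\mathrm{E}_1$-page, namely one of $\partial$, $\lambda_{n+1}$, $\partial\overline{\varepsilon}_i\lambda_{n+1}$ for $1\le i\le n$, the classes of $M_{\emptyset}$, or their $v_{n+1}$-multiples. Weight considerations alone rule out none of these.

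Your fallback does not close the gap either. By Theorem~\ref{AKHW}, every $\lambda_{n+1}$-multiple maps to zero in the syntomic cohomology of $\field$, so comparing with $\field$ cannot detect anything in the $\partial\lambda_{n+1}$-tower. Nor does $\partial$-linearity help, since possible sources such as $\lambda_{n+1}$ or $t^d\lambda_{n+1}\in M_{\emptyset}$ are not $\partial$-divisible.

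\textbf{The missing ingredient is a degree count, which is how the paper argues.} By $v_{n+1}$-linearity, a differential into the tower has the form $d_r(x)=v_{n+1}^r\partial\lambda_{n+1}$ with $r\ge 1$ and $x$ not divisible by $v_{n+1}$. Then $x$ has Adams weight $1$ and degree $(r+1)(2p^{n+1}-2)+1\ge 4p^{n+1}-3$. However, the highest-degree non-$v_{n+1}$-divisible class in Adams weight $1$ is $\partial\overline{\varepsilon}_n\lambda_{n+1}$, in degree $2p^{n+1}+2p^n-3<4p^{n+1}-3$; the classes of $M_{\emptyset}$ lie in degrees at most $2p^{n+1}-3$. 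Hence no such $x$ exists.

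With the correct bookkeeping, permanence of the tower is also immediate: the $\mathrm{E}_1$-page vanishes in Adams weights $\ge 3$ and differentials raise Adams weight by $1$. Your permanence argument via $\partial$- and $\lambda_{n+1}$-linearity applied to the unit is fine as well.
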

\begin{proof}
The classes $\partial \lambda_{n+1}v_{n+1}^k$ are cycles in the $v_{n+1}$-Bockstein spectral sequence because they are in Adams weight $2$, $\mathrm{E}_1$-page vanishes in Adams weights $\ge 3$ and the differentials increase Adams weight by $1$. 
It therefore suffices to show that they are not boundaries. 
The differentials are $v_{n+1}$-linear and are of the form $d_r(x)=v_{n+1}^ry$ where $x$ and $y$ are not $v_{n+1}$-multiples. Therefore, there cannot be $v_{n+1}$-Bockstein spectral sequence differentials hitting $\partial \lambda_{n+1}v_{n+1}^k$ for bidegree reasons. To see this, note that the highest degree class in Adams weight $1$ that is not a $v_{n+1}$-multiple in the $\mathrm{E}_1$-page of the $v_{n+1}$-Bockstein spectral sequence is $\partial \overline{\varepsilon}_n\lambda_{n+1}$ in degree $2p^{n}-2+2p^{n+1}-1$, which is strictly less than the degree of $\partial \lambda_{n+1}v_{n+1}^k$ for all $k>0$ since the degree of $\partial \lambda_{n+1}v_{n+1}^k$ is $2p^{n+1}-2+(2p^{n+1}-2)k$ for all primes $p$ and $n\ge 1$.  
\end{proof}

\begin{lemm}
Let $\field$ be finite. 
The classes $\overline{\varepsilon}_1^{a_1}\cdot \ldots \cdot \overline{\varepsilon}_{i-1}^{a_{i-1}}\overline{\varepsilon}_{i}v_i^{t}$ and $\partial\overline{\varepsilon}_1^{a_1}\cdot \ldots \cdot \overline{\varepsilon}_{i-1}^{a_{i-1}}\overline{\varepsilon}_{i}v_i^{t}$ survive the $v_j$-Bockstein spectral sequence  computing $\grmot \tc(k(n)_{\field})/(p,...,v_{j-1})$ for $i<j\le n+1$ where $a_k\in \{0,1\}$ for $1\le k\le i-1$. 
\end{lemm}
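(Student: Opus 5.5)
Throughout, $a_k\in\{0,1\}$. I would prove the statement by transporting the classes from $\field$, where everything is known, and then checking by hand that they stay permanent in the Bockstein spectral sequences for $k(n)_{\field}$.

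\textbf{Step 1 (existence of the classes).} Fix $i$ and $j$ with $i<j\le n+1$. The classes $x=\overline{\varepsilon}_1^{a_1}\cdots\overline{\varepsilon}_{i-1}^{a_{i-1}}\overline{\varepsilon}_i$ and $\partial x$ are defined on $\mathrm{E}_1$ of the $v_j$-Bockstein spectral sequence for $k(n)_{\field}$. Indeed, by \autoref{AKHW} they lie in the mod $(p,\dots,v_{n+1})$ syntomic cohomology, and they map to the classes with the same name for $\field$. By the $\mathrm{gr}_{\mathrm{ev}}^*\mathbb{S}$-module structure, the classes $v_i^t$ act. To make sense of $xv_i^t$ one must first show that $x$ lifts to mod $(p,\dots,v_{j-1})$ syntomic cohomology, so that $xv_i^t$ is a class there. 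I would handle this by an induction on $j$ from $n+1$ down to $i+1$.

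\textbf{Step 2 (cycles).} For the cycle part I would compare with $\field$ via the map of Bockstein spectral sequences induced by $k(n)_{\field}\to\field$. By \autoref{syntomic field} and \autoref{Z-algebra}, for $\field$ the only $v_j$-Bockstein differential relevant here is $d_1(\overline{\varepsilon}_j)=v_j$, together with its Leibniz consequences. Since $x$ contains no $\overline{\varepsilon}_j$ factor and $i<j$, the images of $xv_i^t$ and $\partial xv_i^t$ in the $\field$ spectral sequence are permanent and nonzero. This alone does not rule out differentials in the $k(n)_{\field}$ spectral sequence that map to zero in $\field$, so it only serves as a consistency check.

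Next I would use multiplicativity. The classes $\overline{\varepsilon}_k$ for $k<j$ are defined by the boundary characterization of \autoref{def:epsilon}, which is natural in the module and compatible with the $\mathrm{gr}_{\mathrm{ev}}^*\mathbb{S}/(p,\dots,v_{j-1})$-algebra structure. Hence they are images of the unit $1$ under boundary maps, and they are permanent. Then $\partial$-linearity from \autoref{lambda-linearity} gives the statement for $\partial x v_i^t$ once it holds for $xv_i^t$.

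\textbf{Step 3 (no differentials out of these classes; the main obstacle).} The main obstacle is excluding differentials $d_r(xv_i^t)=v_j^r y$ with $y$ not a $v_j$-multiple. A differential raises Adams weight by $1$. The class $x v_i^t$ has Adams weight $-(\text{number of }\overline{\varepsilon}\text{'s})$. So a target $y$ would have Adams weight one higher and degree $|x|+2(p^i-1)t-1-(2p^j-2)r$.

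I would try first a degree/weight count against the explicit $\mathrm{E}_1$-page of \autoref{AKHW}. The candidate targets are the generators $\overline{\varepsilon}_S$, the $\partial\overline{\varepsilon}_S$ with one fewer $\overline{\varepsilon}$, the $\lambda_{n+1}$-multiples, and the classes in $M_S$. For each type, show that no class of the right bidegree is non-$v_j$-divisible. Large $t$ is the delicate case, and there I would use $v_i$-linearity of the differentials to reduce to $t=0$. If the count fails for the $M_S$ or $\lambda_{n+1}$ summands, the fallback is naturality along $k(n)_{\field}\to\field$. That map kills $M_S$ and all $\lambda_{n+1}$-multiples, so such targets are allowed; I would then rule them out via $\lambda_{n+1}$-linearity and a weight argument as in \autoref{vn-bockstein}.
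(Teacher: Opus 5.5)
\textbf{There is a gap.} As written, the proposal proves neither that these classes are permanent cycles nor that they are not boundaries.

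\textbf{Step 2 assumes what is to be proved.} Your argument that $\overline{\varepsilon}_k$ is permanent essentially presupposes the lemma. For $k(n)_{\field}$, these classes are only supplied by \autoref{AKHW}, in mod $(p,\dots,v_{n+1})$ syntomic cohomology. You can apply \autoref{def:epsilon} to $X=\grmot\mathrm{TC}(k(n)_{\field})$ only once you know that $v_k$ acts trivially on $X/(p,\dots,v_{k-1})$, together with the vanishing condition there. You do not verify this, and it cannot be read off without running the very Bockstein spectral sequences in question. A lift of $\overline{\varepsilon}_k$ to mod $(p,\dots,v_k)$ is exactly what the lemma, iterated over $j$, provides.

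\textbf{Step 3 is left open.} You rightly call this the main obstacle, but you do not resolve it. You anticipate that the $M_S$ and $\lambda_{n+1}$-summands may defeat the count. Your fallback via $\lambda_{n+1}$-linearity says nothing about a differential from a class that is not a $\lambda_{n+1}$-multiple. Also, \autoref{AKHW} only describes the $\mathrm{E}_1$-page for $j=n+1$.

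\textbf{The non-boundary half is missing.} The comparison with $\field$ that you call a consistency check is in fact this half of the proof, but you never use it that way. If one of these classes were $d_r(w)$, naturality would make its image in the Bockstein spectral sequence for $\field$ equal to $d_r$ of the image of $w$. That contradicts \autoref{syntomic field}, where these classes survive.

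\textbf{The missing idea is a crude degree bound,} which is how the paper argues.
\begin{itemize}
\item The groups $\pi_*\grmot\mathrm{TC}(k(n)_{\field})/(p,\dots,v_j)$ vanish below degree $-1$; the lowest class is $\partial$ in bidegree $(-1,1)$. This holds for $j=n+1$ by \autoref{AKHW}, and each abutment inherits it on the way down because $|v_j|>0$.
\item A differential $d_r(z)=v_j^r y$ forces $|y|=|z|-1-r(2p^j-2)$. For $z=\overline{\varepsilon}_s$ with $s\le i<j$ this is at most $2p^s-2p^j\le -2$. For $z=\partial$ it is at most $-4$. Hence $y=0$.
\item The paper then passes to the products via the Leibniz rule.
\end{itemize}
Your direct count on the whole product would also succeed, and would even avoid the Leibniz rule. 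The reason is that $|\overline{\varepsilon}_1^{a_1}\cdots\overline{\varepsilon}_i|\le 2p^{i+1}-2p-i<2p^j-2$, so the high-degree summands $M_S$ and the $\lambda_{n+1}$-multiples never occur as targets. There is also no genuine ``large $t$'' case: for $i<j$ the element $v_i$ already vanishes in $\mathrm{BP}_*/(p,\dots,v_{j-1})$.
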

\begin{proof}
The differentials are $v_j$-linear and there is a Leibniz rule for differentials on classes of the form $\overline{\varepsilon}_1^{a_1}\cdot \ldots \overline{\varepsilon}_{i-1}^{a_{i-1}}\overline{\varepsilon}_{i}$ and  $\partial\overline{\varepsilon}_1^{a_1}\cdot \ldots \overline{\varepsilon}_{i-1}^{a_{i-1}}\overline{\varepsilon}_{i}$ by mapping to $\mathbb{F}$ and noting that the map to the $\mathbb{F}$ case is injective in the relevant bidegrees. Note that the differentials in the $v_j$ Bockstein spectral sequence only hits elements divisible by $v_j$. We then apply \autoref{syntomic field} to determine that there are no possible differentials on $v_j$-Bockstein spectral sequence differentials on $\varepsilon_s$ and for $1\le s\le i$ and $\partial$ for bidegree reasons as the lowest degree class in $\pis \grmot \tc(k(n)_{\field})/(p,\cdots ,v_j)$ is $\partial$ in bidegree $(-1,1)$.
Moreover, these classes cannot be hit by differentials since that would imply the same for the $v_j$-Bockstein spectral sequence computing $\grmot \tc(\field)/(p,...,v_{j-1})$ which we know  is not the case due to \autoref{syntomic field}.
\end{proof}

\begin{prop}\label{d1-differentials}
Let $\field$ be finite. 
Let $a_k\in \{0,1\}$ for $0\le k\le i-1$. There are differentials 
\[ 
d_1(\partial^{a_0}\overline{\varepsilon}_1^{a_1}\cdot \ldots \cdot \overline{\varepsilon}_{i-1}^{a_{i-1}}\overline{\varepsilon}_{i}v_i^{j})
=\partial^{a_0}\overline{\varepsilon}_1^{a_1}\cdot \ldots \cdot \overline{\varepsilon}_{i-1}^{a_{i-1}}v_i^{j+1}
\]
in the $v_i$-Bockstein spectral sequence
\begin{equation}\label{BSS-k(n)}
\pi_*\grmot\TC(k(n)_{\field})/(p,\cdots ,v_{i})[v_i]\implies 
\pi_*\grmot\TC(k(n)_{\field})/(p,\cdots ,v_{i-1})
\end{equation}
for each $1\le i\le n$. 
\end{prop}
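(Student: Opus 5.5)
We plan to deduce these $d_1$-differentials by naturality, comparing with the residue field $\field$, where \autoref{syntomic field} already gives $d_1(\overline{\varepsilon}_i)=v_i$ and $d_1(b_\alpha\partial\overline{\varepsilon}_i)=b_\alpha\partial v_i$, together with the Leibniz rule.

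\textbf{Reduction to the class $\overline{\varepsilon}_i$.} The $\mathbb{E}_1$-map $k(n)_{\field}\to \field$ (through $\pi_0$) induces a map of $v_i$-Bockstein spectral sequences \eqref{BSS-k(n)} to the corresponding one for $\grmot\TC(\field)$. We use \autoref{AKHW} with $\{p,\dots,v_{n+1}\}$ replaced by $\{p,\dots,v_i\}$; for $i\le n$, $v_i$ acts trivially modulo $(p,\dots,v_{i-1})$, since the associated graded of $k(n)$ kills $v_i$ for $i\neq n$ at the relevant Adams weights. Thus the $\mathrm{E}_1$-page contains the classes $\partial^{a_0}\overline{\varepsilon}_1^{a_1}\cdots\overline{\varepsilon}_{i}$, and these map to the classes of the same name for $\field$. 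By the preceding lemma, the classes $\partial^{a_0}\overline{\varepsilon}_1^{a_1}\cdots\overline{\varepsilon}_{i-1}^{a_{i-1}}$ are permanent cycles in the $v_i$-Bockstein spectral sequence. Differentials are $v_i$-linear and, by \autoref{lambda-linearity}, $\partial$-linear. Since $\overline{\varepsilon}_s$ for $s<i$ are permanent, it suffices to prove $d_1(\overline{\varepsilon}_i)=v_i$ and then apply a Leibniz rule (or multiplication by the permanent cycles).

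\textbf{The key differential.} For $d_1(\overline{\varepsilon}_i)=v_i$, note that by \autoref{def:epsilon} the class $\overline{\varepsilon}_i$ maps to $1$ under the boundary map of the cofiber sequence defining $/v_i$. The $d_1$ in a $v_i$-Bockstein spectral sequence is exactly this boundary followed by reduction, so $d_1(\overline{\varepsilon}_i)=v_i\cdot 1$, and the same holds for products with permanent cycles. Alternatively, map to $\field$: the target class $\partial^{a_0}\overline{\varepsilon}_1^{a_1}\cdots v_i^{j+1}$ maps to a nonzero class for $\field$. By \autoref{syntomic field} the differential there is nontrivial, so the source differential is nonzero. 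In that bidegree and Adams weight the $\mathrm{E}_1$-page of \eqref{BSS-k(n)} is one-dimensional (by the Adams-weight constraints in \autoref{AKHW}, weights $\le 2$, with $M_S$ classes carrying $\lambda_{n+1}$ and hence of large degree), so the differential equals this class up to a unit, which we normalize.

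\textbf{Main obstacle.} The hard step is justifying the Leibniz rule, or equivalently the injectivity of the comparison map to $\field$ in the target bidegree, because $\grmot\TC(k(n)_{\field})/(p,\dots,v_{i-1})$ is not obviously a ring. We would use the argument of the previous lemma: the map to the $\field$ case is injective on the span of the $\partial^{a_0}\overline{\varepsilon}$-monomials times powers of $v_i$, by \autoref{AKHW}. The target is then detected after mapping to $\field$, and naturality gives the formula. Multiplying by $v_i^j$ uses $v_i$-linearity.
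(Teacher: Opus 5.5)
Once the detours are stripped away, your argument is the paper's: map the $v_i$-Bockstein spectral sequence for $k(n)_{\field}$ to the one for $\field$, where \autoref{syntomic field} gives the differentials, and conclude by naturality, since the reduction map is injective in the target bidegree by \autoref{AKHW}. Drop the side routes rather than rely on them: \autoref{AKHW} only describes the mod $(p,\dots,v_{n+1})$ groups, so the source classes on the mod $(p,\dots,v_i)$ page should come from the lemma just before this proposition (survival through the $v_j$-Bockstein spectral sequences for $i<j\le n+1$), not from applying \autoref{AKHW} with $v_{n+1}$ replaced by $v_i$; and the \autoref{def:epsilon}/Leibniz route is unavailable because $\grmot\TC(k(n)_{\field})/(p,\dots,v_{i-1})$ has no multiplication through which the $\overline{\varepsilon}_s$ act, as you yourself note.
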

\begin{proof}

We map to the Bockstein spectral sequence 
\begin{equation}\label{BSS-Fp}
\pi_*\grmot\TC(\field)/(p,\cdots ,v_{i})[v_i]\implies 
\pi_*\grmot\TC(\field)/(p,\cdots ,v_{i-1})
\end{equation}
where we have the differentials 
\[ d_1(\partial^{a_0}\overline{\varepsilon}_1^{a_1}\cdot \ldots \cdot \overline{\varepsilon}_{i-1}^{a_{i-1}}\overline{\varepsilon}_{i})
=\partial^{a_0}\overline{\varepsilon}_1^{a_1}\cdot \ldots \cdot \overline{\varepsilon}_{i-1}^{a_{i-1}}v_i
\]
for all $a_0,\cdots ,a_{i-1}\in \{0,1\}$ by \autoref{syntomic field} and note that in the relevant bidegrees the reduction map is injective by \autoref{AKHW}. 
\end{proof}

\begin{rema}
From the proposition above, we only use the fact that $\overline{\varepsilon}_i$ survives the $v_j$-Bockstein spectral sequence for $j>i$ and carries a differential $d_1(\partial \overline{\varepsilon}_iv_i^k) = \partial v_i^{k+1}$ for $k\geq 0$ on the $v_i$-Bockstein spectral sequence. 
\end{rema}

\begin{prop}\label{2-line}
Let $\field$ be finite. In Adams weight $2$ the classes of $\pi_*\grmot\TC(k(n)_{\field})/p$ are contained in 
$\field_p\{\partial \lambda_{n+1}\}\oplus \field_p[v_1,\cdots ,v_{n+1}]\{\partial\lambda_{n+1}v_{n+1}\}$\,. Here, note that $|\partial \lambda_{n+1}|=2p^{n+1}-2$ and $|\partial \lambda_{n+1}v_{n+1}|=4p^{n+1}-4$. 
\end{prop}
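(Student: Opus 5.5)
We run the Bockstein spectral sequences downward from mod $(p,\dots,v_n)$ to mod $p$, one $v_i$ at a time for $i=n,n-1,\dots,1$, and track Adams weight $2$ at each step. \autoref{vn-bockstein} is the starting point: in $\pi_*\grmot\TC(k(n)_{\field})/(p,\dots,v_n)$ the Adams weight $2$ classes are exactly $\field_p[v_{n+1}]\{\partial\lambda_{n+1}\}$. Since $v_i$ has Adams weight $0$ (\autoref{rema:vn}), the Adams weight $2$ part of the $\mathrm{E}_1$-page of the $v_i$-Bockstein spectral sequence \eqref{BSS-k(n)} is the Adams weight $2$ part of the mod $(p,\dots,v_i)$ syntomic cohomology tensored with $\field_p[v_i]$. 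The Adams weight $2$ part of the abutment mod $(p,\dots,v_{i-1})$ is then filtered by the $\mathrm{E}_\infty$-page in Adams weight $2$, so it is generated as a $\field_p[v_i]$-module by lifts of the surviving classes.

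For the induction we claim that in Adams weight $2$ the mod $(p,\dots,v_{i-1})$ syntomic cohomology is contained in the span of $\partial\lambda_{n+1}$ together with $\field_p[v_i,\dots,v_{n+1}]\{\partial\lambda_{n+1}v_{n+1}\}$. The key input is \autoref{d1-differentials} together with the $\partial$- and $\lambda_{n+1}$-linearity of \autoref{lambda-linearity}. The class $\partial\bar{\varepsilon}_i$ survives the higher Bockstein spectral sequences and satisfies $d_1(\partial\bar\varepsilon_i v_i^k)=\partial v_i^{k+1}$. Multiplying by $\lambda_{n+1}$ then gives $d_1(\partial\bar\varepsilon_i\lambda_{n+1}v_i^k)=\partial\lambda_{n+1}v_i^{k+1}$, because $\partial\bar\varepsilon_i\lambda_{n+1}$ is a well-defined class acted on by $\lambda_{n+1}$. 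Hence every $v_i^{k+1}$-multiple of $\partial\lambda_{n+1}$ is a boundary. In the $\mathrm{E}_\infty$-page, Adams weight $2$ therefore retains only $\partial\lambda_{n+1}$ itself and the $v_i$-multiples of the classes $\partial\lambda_{n+1}v_{n+1}^{k}v_{i+1}^{a_{i+1}}\cdots$ with $k\ge 1$. These last classes are never hit: we check by the same degree comparison as in \autoref{vn-bockstein} that they lie above every Adams weight $1$ source that could hit them. Lifting to the abutment gives the claimed containment. Hidden $v_i$-extensions may only enlarge the module by $v_i$-multiples, and the target span is closed under these.

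The main obstacle is precisely this bookkeeping. We must make sure the $\lambda_{n+1}$-multiplied $d_1$ is genuinely nonzero; this requires $\partial\lambda_{n+1}v_i^{k+1}$ to be nonzero in the $\mathrm{E}_1$-page, which holds by the structure of the $\mathrm{E}_1$-page. We must also rule out that extensions or lifts introduce classes outside the span. For the latter we only need containment, not equality, so we argue purely on generators of the associated graded: any Adams weight $2$ class in the abutment reduces to a surviving $\mathrm{E}_\infty$ class, and we subtract a lift from the proposed span and induct on $v_i$-divisibility. Carrying this down to $i=1$ yields the stated containment in $\field_p\{\partial\lambda_{n+1}\}\oplus\field_p[v_1,\dots,v_{n+1}]\{\partial\lambda_{n+1}v_{n+1}\}$. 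The degrees follow from $\|\partial\|=(-1,1)$, $\|\lambda_{n+1}\|=(2p^{n+1}-1,1)$ and $|v_{n+1}|=2p^{n+1}-2$.
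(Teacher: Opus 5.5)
Your argument is essentially the paper's: starting from \autoref{vn-bockstein}, you combine the $\lambda_{n+1}$-linearity of \autoref{lambda-linearity} with \autoref{d1-differentials} to get $d_1(\overline{\varepsilon}_i\partial\lambda_{n+1}v_i^j)=\partial\lambda_{n+1}v_i^{j+1}$ for $1\le i\le n$. These differentials remove the pure $v_i$-multiples of $\partial\lambda_{n+1}$ and leave exactly the stated containment, and your ``subtract a lift and induct on $v_i$-divisibility'' step is the right way to pass from $\mathrm{E}_\infty$ to the abutment.

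You should, however, delete the claim that the surviving classes $v_i\partial\lambda_{n+1}v_{n+1}^k\cdots$ are never hit. The degree comparison in \autoref{vn-bockstein} used that the mod $(p,\dots,v_{n+1})$ groups are bounded in degree, and this fails at the mod $(p,\dots,v_i)$ level. A class such as $\overline{\varepsilon}_i\partial\lambda_{n+1}v_{n+1}^k$, if present, would sit in exactly the bidegree needed to hit $v_i\partial\lambda_{n+1}v_{n+1}^k$; whether it does is precisely the $v_{n+1}$-linearity question the paper leaves open. That claim is not needed for a containment statement. The same goes for the inaccurate remark that the span is closed under $v_i$, since $v_i\partial\lambda_{n+1}$ does not lie in it.
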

\begin{proof}
By \autoref{vn-bockstein}, we know that the $2$-line is contained in 
\[ \mathbb{F}_p\{\partial \lambda_{n+1}\}[v_1,\cdots ,v_{n+1}] \,.\]
Since the $d_1$-differential in the $v_i$-Bockstein spectral sequence is $v_i$-linear, it follows from \autoref{AKHW} as well as \autoref{lambda-linearity} and \autoref{d1-differentials} that there are differentials 
\[ d_1(\overline{\varepsilon}_i \partial \lambda_{n+1}  v_i^j) = \partial \lambda_{n+1} v_i^{j+1}  \,,
\]
for each $1\le i\le n$ however, we do not know if the differentials are $v_{n+1}$-linear. 
\end{proof}

\subsection{Cardinality of algebraic K-theory}
We are now prepared to prove \autoref{cardinality-k-theory} and \autoref{cardinality-k-theory-periodic}. 

We first argue that if we consider relative algebraic K-theory then the $p$-local statement captures all the necessary information in positive degrees. 
\begin{prop}\label{relative-p-local}
Let $\field_q$ be a finite field of characteristic $p$. The canonical map 
\[ \pi_s\mathrm{K}_{\textup{rel}}(k(n)_{\field_q})\to \pi_s\mathrm{K}(k(n)_{\field_q})_{(p)}
\]
is an isomorphism for all $s\ge 1$. 
\end{prop}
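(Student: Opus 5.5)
We consider the fiber sequence
\[\mathrm{K}_{\rel}(k(n)_{\field_q}) \to \mathrm{K}(k(n)_{\field_q}) \to \mathrm{K}(\field_q)\]
and $p$-localize it; $p$-localization is exact, so it remains a fiber sequence. We then compare the relative term with its $p$-localization, and the middle term with the relative term.

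The first step is to show that $\mathrm{K}_{\rel}(k(n)_{\field_q})$ is already $p$-local, i.e.\ that $\mathrm{K}_{\rel}(k(n)_{\field_q})\to \mathrm{K}_{\rel}(k(n)_{\field_q})_{(p)}$ is an equivalence. By the Dundas--Goodwillie--McCarthy theorem~\cite[Theorem~7.2.2.1]{DGM13} applied to the $1$-connected map $k(n)_{\field_q}\to \field_q$, the trace gives $\mathrm{K}_{\rel}(k(n)_{\field_q})\simeq \rtc(k(n)_{\field_q})$. Here we use that this relative $\tc$ is the usual ($p$-complete) one, and that the integral relative statement holds because the rational/$\ell$-adic relative K-theory of a nilpotent extension of $\fp$-algebras vanishes. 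The target is $p$-complete: $\rthh(k(n)_{\field_q})$ is $p$-complete by \autoref{lemm thh is p complete when thh gr is}, and \autoref{rema tcp and then hs1 agree with ts1 when p complete} covers the $-^{hS^1}$ and $-^{tS^1}$ terms. Its homotopy groups are finite by \autoref{theo even trivial gives odd}, so they are finite $p$-groups and hence $p$-local. Therefore $\pi_s\mathrm{K}_{\rel}(k(n)_{\field_q})\cong \pi_s\mathrm{K}_{\rel}(k(n)_{\field_q})_{(p)}$ for all $s$.

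The second step uses Quillen's computation~\cite{Qui72}: $\mathrm{K}_s(\field_q)$ is $\mathbb{Z}$ for $s=0$, is $\mathbb{Z}/(q^i-1)$ for $s=2i-1>0$, and vanishes otherwise. Since $p\nmid q^i-1$, we get $\mathrm{K}(\field_q)_{(p)}\simeq H\mathbb{Z}_{(p)}$, concentrated in degree $0$. The long exact sequence of the $p$-localized fiber sequence then gives isomorphisms $\pi_s\mathrm{K}_{\rel}(k(n)_{\field_q})_{(p)}\to \pi_s\mathrm{K}(k(n)_{\field_q})_{(p)}$ for all $s\ge 2$. For $s=1$ we get an isomorphism as well, provided the boundary term vanishes, i.e.\ provided $\pi_1\mathrm{K}(k(n)_{\field_q})_{(p)}\to \pi_1\mathrm{K}(\field_q)_{(p)}=0$ and $\pi_0\mathrm{K}_{\rel}$ injects. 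For injectivity it suffices to note that $\pi_0\mathrm{K}_{\rel}(k(n)_{\field_q})=0$, since relative K-theory of a $1$-connected map is $2$-connected by Waldhausen~\cite{Wal78} (cf.\ the proof of \autoref{mainthmevenvanish}). Alternatively, $\mathrm{K}_1(k(n)_{\field_q})\to\mathrm{K}_1(\field_q)$ is surjective because $\pi_0$ is an isomorphism. Composing the two steps yields the claim.

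The main obstacle is the first step, namely justifying that the integral relative K-theory agrees with $p$-complete relative $\tc$ and has no prime-to-$p$ or rational part. I would argue as follows. Rationally, the relative K-theory is $\Sigma$ of relative cyclic homology over $\mathbb{Q}$ (Goodwillie), which vanishes since $k(n)_{\field_q}\otimes\mathbb{Q}\simeq 0$ and $\field_q\otimes\mathbb{Q}\simeq0$. Mod $\ell$ for $\ell\ne p$, relative K-theory vanishes by rigidity for nilpotent extensions, or again via DGM since $\tc/\ell$ of $\fp$-modules vanishes. Hence the fiber is $p$-complete with finite homotopy groups, and in particular $p$-local.
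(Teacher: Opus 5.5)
Your argument is correct and is essentially the paper's proof: you compare the long exact sequence of $\mathrm{K}_{\rel}(k(n)_{\field_q})\to \mathrm{K}(k(n)_{\field_q})\to \mathrm{K}(\field_q)$ with its $p$-localization, use Quillen's $\mathrm{K}(\field_q)_{(p)}\simeq \mathbb{Z}_{(p)}$, and get $p$-locality of the relative term from Dundas--Goodwillie--McCarthy together with the $p$-completeness of $\rtc(k(n)_{\field_q})$ (\autoref{lemm thh is p complete when thh gr is}). Two of your steps are unnecessary. The separate treatment of $s=1$ can be dropped, because $\pi_2$ and $\pi_1$ of $\mathrm{K}(\field_q)_{(p)}$ both vanish, so the isomorphism holds for every $s\ge 1$ directly; and the finiteness input from \autoref{theo even trivial gives odd} is not needed, since $p$-completeness already implies $p$-locality.
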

\begin{proof}
Consider the map of long exact sequences 
\[
\begin{tikzcd}
\cdots \arrow{r} & \pi_{s+1}\mathrm{K}(\field_q) \arrow{d} \arrow{r} & \pi_s\mathrm{K}_{\textup{rel}}(k(n)_{\field_q})\arrow{r} \arrow{d}&  \pi_s\mathrm{K}_{s}(k(n)_{\field_q}) \arrow{r} \arrow{d} & \pi_s\mathrm{K}(\field_q) \arrow{d}\arrow{r} & \cdots \\ 
\cdots \arrow{r} & 0 \arrow{r} & \pi_s\mathrm{K}_{\textup{rel}}(k(n)_{\field_q})_{(p)} \arrow{r}{\cong} & \pi_s\mathrm{K}_{s}(k(n)_{\field_q})_{(p)} \arrow{r} & 0 \arrow{r} & \cdots 
\end{tikzcd}
\] 
for $s\ge 1$ where we use the fact that  $\mathrm{K}(\field_q)_{(p)}\simeq \mathbb{Z}_{(p)}$ by \cite{Qui72}. Since 
\[\mathrm{K}_{\textup{rel}}(k(n)_{\field_q})\simeq \mathrm{TC}_{\textup{rel}}(k(n)_{\field_q})\]
by~\cite[Theorem~7.0.0.2]{DGM13} and $\mathrm{TC}_{\textup{rel}}(k(n)_{\field_q})$ is $p$-local, by \autoref{lemm thh is p complete when thh gr is}, we know that the localization map $\pi_s\mathrm{K}_{\textup{rel}}(k(n)_{\field_q})\to \pi_s\mathrm{K}_{\textup{rel}}(k(n)_{\field_q})_{(p)} $ is an isomorphism, proving the claim. 
\end{proof}

We now prove the $p$-local version of the theorem. 

\begin{thm}\label{p-local-thm}
Let $\mathbb{F}_q$ be a finite perfect field of characteristic $p$. In nonnegative degrees, there are isomorphisms 
\[ 
\mathrm{K}_{2r}(k(n)_{\field_q
})_{(p)}\cong \begin{cases} \mathbb{Z}_{(p)} & r=0 \,,\\ 
    0  &  p^{n+1}-1\nmid r\,, \ 0<r\leq 2p^{n+1}-2  \,,\\
          \mathbb{Z}/p^{m(r)} &  p^{n+1}-1\mid r\,, \ 0<r\le 2p^{n+1}-2  \,, \\
    0 & p-1 \nmid r\,, \ r> 2p^{n+1}-2 \,, \\
     A_{r}  &  p-1\mid r\,, \ r> 2p^{n+1}-2  \,,
     \end{cases}
\]
and we have identifications 
\[ 
|\mathrm{K}_{2r+1}(k(n)_{\field_q})_{(p)}|=\begin{cases}
         
            |\mathbb{W}_{\lfloor \frac{r}{p^{n}-1}\rfloor}(\field_q)| & p^{n+1}-1\nmid r \,, \ 0\le r\le 2p^{n+1}-2 \,,\\
               |\mathbb{W}_{\lfloor \frac{r}{p^{n}-1}\rfloor}(\field_q)|\cdot p^{m(r)} & p^{n+1}-1\mid r \,, \ 0\le r\le  2p^{n+1}-2 \,, \\
         |\mathbb{W}_{\lfloor \frac{r}{p^{n}-1}\rfloor}(\field_q)| & p-1\nmid r \,, \ r > 2p^{n+1}-2 \,,\\
    |\mathbb{W}_{\lfloor \frac{r}{p^{n}-1}\rfloor}(\field_q)|\cdot \lvert A_r\rvert 
    & p-1\mid r\,, \ r>2p^{n+1}-2 \,, 
\end{cases}
\]
of cardinalities in odd degrees where $\mathbb{W}_n$ denotes the truncated big Witt vectors of length $n$, $m(r)$ is a positive integer that we leave undetermined and $A_{r}$ is a finite $p$-group that we leave undetermined. 
\end{thm}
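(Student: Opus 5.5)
We first reduce to relative TC. By \autoref{relative-p-local}, $\mathrm{K}_s(k(n)_{\field_q})_{(p)}\cong \pi_s\mathrm{K}_{\rel}(k(n)_{\field_q})\cong\pi_s\rtc(k(n)_{\field_q})$ for $s\ge 1$, using \cite[Theorem~7.0.0.2]{DGM13}. In degree $0$ we get $\mathbb{Z}_{(p)}$, since $\mathrm{K}_0(\field_q)=\mathbb{Z}$ and $\pi_0\mathrm{K}_{\rel}=0$ by connectivity ($\pi_* A$ is concentrated in degrees $0$ and $\ge 2p^n-2$).

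Next we apply \autoref{theo even trivial gives odd} with $m=p^n-1$ (note $p\nmid p^n-1$). It gives finiteness and
\[ \lvert \pi_{2r+1}\rtc(k(n)_{\field_q})\rvert=\lvert\mathbb{W}_{\floor*{\frac{r}{p^n-1}}}(\field_q)\rvert\cdot\lvert\pi_{2r}\rtc(k(n)_{\field_q})\rvert. \]
With this identity, every odd-degree statement follows from the even-degree one. So the task becomes computing $\pi_{2r}\rtc(k(n)_{\field_q})$: we must show it vanishes unless $p^{n+1}-1\mid r$ (for $r\le 2p^{n+1}-2$) or $p-1\mid r$ (for larger $r$), and that otherwise it is cyclic, respectively a finite $p$-group, in the first range.

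For the even degrees we use the motivic spectral sequence for $\rtc(k(n)_{\field_q})$. By \autoref{Adams weights}, $\pi_*\grmot\rtc$ lies in Adams weights $1$ and $2$. Adams weight $1$ sits in odd degrees and Adams weight $2$ in even degrees, and the differentials $d_r$ with $r\ge 3$ raise Adams weight by $r$, so the spectral sequence degenerates. Hence $\pi_{2r}\rtc$ is exactly the Adams weight $2$ part of $\pi_{2r}\grmot\rtc$. By the last part of \autoref{Adams weights}, that part equals the Adams weight $2$ part of $\pi_{2r}\grmot\TC(k(n)_{\field_q})$, and it is $p$-complete and finite.

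It remains to understand the Adams weight $2$ part of $\pi_*\grmot\TC(k(n)_{\field_q})_p$ integrally. Reducing mod $p$ and using \autoref{2-line}, the mod $p$ Adams weight $2$ classes lie in $\field_p\{\partial\lambda_{n+1}\}\oplus\field_p[v_1,\dots,v_{n+1}]\{\partial\lambda_{n+1}v_{n+1}\}$, with $|v_i|=2p^i-2$. Every such degree is a multiple of $2p-2$. For $s\le 4p^{n+1}-4$, the only degrees are $2p^{n+1}-2$ and $4p^{n+1}-4$, since $v_1,\ldots,v_n$ would push the degree beyond $4p^{n+1}-4$ except via the single class $\partial\lambda_{n+1}v_{n+1}$.

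Now let $X$ be the weight $2$ part. A finite $p$-group $X$ with $X/p=0$ vanishes. Because Adams weight $3$ is zero, the long exact sequence for multiplication by $p$ shows that $X/p$ injects into the mod $p$ weight $2$ line. This yields the vanishing claims. In degrees $2p^{n+1}-2$ and $4p^{n+1}-4$ the mod $p$ group is one-dimensional, so $X/p$ has rank $\le 1$ and $X$ is cyclic, $\mathbb{Z}/p^{m(r)}$. Positivity of $m(r)$ holds because $\partial\lambda_{n+1}$ survives: it detects a nonzero class by \autoref{vn-bockstein}, and similarly for $\partial\lambda_{n+1}v_{n+1}$.

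The main obstacle is this last integral step. We need to make the injectivity of $X/p$ into the mod $p$ syntomic cohomology precise, using the $\mathbb{Z}_p$-structure on $\grmot\TC$ and the vanishing above weight $2$, and to confirm that the candidate classes are nonzero integrally. Both rely on \autoref{2-line} and \autoref{vn-bockstein}.
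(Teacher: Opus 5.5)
Your proposal is correct and follows essentially the same route as the paper: reduce to $\rtc$ via \autoref{relative-p-local} and \cite{DGM13}, obtain the odd degrees from \autoref{theo even trivial gives odd} with $m=p^n-1$, and obtain the even degrees from the collapse of the motivic spectral sequence together with \autoref{Adams weights} and \autoref{2-line}. Your long exact sequence for multiplication by $p$ is an explicit form of the paper's $v_0$-Bockstein step. Because the boundary map lands in Adams weight $3$, where everything vanishes, it gives an isomorphism between $X/p$ and the mod $p$ Adams weight $2$ line, not merely an injection; this surjectivity is exactly what makes positivity of $m(r)$ follow from the nonvanishing of $\partial\lambda_{n+1}$ and $\partial\lambda_{n+1}v_{n+1}$ mod $p$.
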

\begin{proof}
 Since $\pi_0k(n)_{\field_q}=\field_q$, we know that $\mathrm{K}_0(k(n)_{\field_q})_{(p)}\cong \mathrm{K}_0(\field_q)_{(p)}=\mathbb{Z}_{(p)}$.
 
Recall that due to \autoref{theo even trivial gives odd}, \autoref{relative-p-local} and~\cite[Theorem~7.0.0.2]{DGM13}, 
\[\pi_s\rtc(k(n)_{\field_q})\cong \pi_s\mathrm{K}_{\rel}(k(n)_{\field_q})\cong  \pi_s\mathrm{K}(k(n)_{\field_q})_{(p)}\] 
is finite for $s>0$.

We first compute the even degree homotopy groups of $\rtc(k(n)_{\field_q})$ and consequently even degree homotopy groups of $\mathrm{K}(k(n)_{\field_q})_{(p)}$. We first note that the motivic spectral sequence computing $\rtc(k(n)_\field)$ collapse due to \autoref{Adams weights} and  \autoref{defi motivic spectral sequence}.

By \autoref{Adams weights}, $\pi_*\grmot \rtc(k(n)_{\field_q})$ is concentrated in Adams weights $[1,2]$ and the map $\pis\grmot \rtc(k(n)_{\field_q})\to \pis\grmot \tc(k(n)_{\field_q})$ is an isomorphism in Adams weight $2$. Therefore, by construction of the motivic spectral sequence, the only possible contribution to $\pi_{2r}\rtc(k(n)_{\field_q})$ comes from the part of $\pi_{*}\grmot\tc(k(n)_{\field_q})$ in Adams weight two.  Consequently, by \autoref{2-line} we know that 
\[\pi_{2r}\rtc(k(n)_{\field_q})=0\]
if $p^{n+1}-1\nmid r$ and $r\leq 2p^{n+1}-2$ or $p-1\nmid r$ when $r> 2p^{n+1}-2$. 
Applying the $v_0$-Bockstein spectral sequence, \autoref{2-line} and the fact that $\field_{\mathrm{Frob}}=\mathbb{F}_p$ when $\field$ is a finite field, we know 
$\pi_{r(2p^{n+1}-2)}\rtc(k(n)_{\field_q})$ is equipped with a finite filtration with associated graded given by a finite $\fp[v_{0}]$-module on a single generator from which we deduce that it is given by $\z/p^{m(r)}$ for some positive integer $m(r)$ when $0<r\le 2$. In the case of $\pi_{2(p-1)r}\rtc(k(n)_{\field_q})$ for $(p-1)r> 2p^{n+1}-2$, we can make a similar argument except that we have not ruled out differentials
\[ d_{1}(v_{n+1}^{i_{n+1}}\partial \lambda_{n+1}\varepsilon_i)=v_{n+1}^{i_{n+1}}\partial \lambda_{n+1}v_i
\]
and we only know that abutment is a finite $p$-group in these degrees.

The cardinalities of the odd degrees homotopy groups of $\rtc(k(n)_{\field_q})$ and consequently $\mathrm{K}(k(n)_{\field_q})$ then follow from \autoref{theo even trivial gives odd}.
\end{proof}

We can now  prove the main theorem from this section. 
\mainthmcardinality*
\begin{proof}
Note that 
\[ 
\mathrm{K}_s(\mathbb{F}_q)=\begin{cases} \mathbb{Z} & s=0 \\
0 &  s=2k>0\\
\mathbb{Z}/q^{k}-1 & s=2k-1> 0 
\end{cases}
\]
by~\cite{Qui72}, so in particular, the groups vanish $p$-locally in positive degrees. Therefore, the boundary map $\partial$ in the fiber sequence 
\begin{equation}\label{K-theeory-finite-fields}
\mathrm{K}_{\textup{rel}}(k(n)_{\mathbb{F}_q})\to \mathrm{K}(k(n)_{\mathbb{F}_q}) \to \mathrm{K}(\mathbb{F}_q)\overset{\partial}{\to} \Sigma\mathrm{K}_{\textup{rel}}(k(n)_{\mathbb{F}_q})
\end{equation}
is zero in homotopy since the target is $p$-local by \autoref{relative-p-local}. The result then follows from \eqref{K-theeory-finite-fields}  and \autoref{p-local-thm}. 
\end{proof}

We thank Maxime Ramzi for suggesting the argument used in the following corollary.

\begin{coro}\label{cardinality-k-theory-periodic}
Suppose $\field_q$ is a finite field of characteristic $p$.
In nonnegative degrees, there are isomorphisms 
\[ 
\mathrm{K}_{2r}(K(n)_{\field_q
})\cong \begin{cases} \mathbb{Z} & r=0 \,, \\ 
    \mathbb{Z}/(q^r-1)  &  p^{n+1}-1\nmid r \,, \ 0<r\leq 2p^{n+1}-2 \,,\\
         \mathbb{Z}/(q^r-1) \times  \mathbb{Z}/p^{m(r)}   &  p^{n+1}-1\mid r\,,\ 0<r\le 2p^{n+1}-2 \,,\\
      \mathbb{Z}/(q^r-1) & p-1 \nmid r\,, \ r> 2p^{n+1}-2 \,, \\
     \mathbb{Z}/(q^{r}-1)\times A_{r}  &  p-1\mid r\,,  \ r> 2p^{n+1}-2 \,,
     \end{cases}
\]
there is an isomorphism $\mathrm{K}_1(K(n))\cong \mathbb{Z}\times \mathbb{Z}/(q-1)$ and we have identifications 
\[ 
|\mathrm{K}_{2r+1}(K(n)_{\field_q})|=\begin{cases}
  |\mathbb{W}_{\lfloor \frac{r}{p^{n}-1}\rfloor}(\field_q)|(q^{r+1}-1) & p^{n+1}-1\nmid r , \ 0 <r\le 2p^{n+1}-2 \,,\\
            |\mathbb{W}_{\lfloor \frac{r}{p^{n}-1}\rfloor}(\field_q)|(q^{r+1}-1)\cdot p^{m(r)}
    &  p^{n+1}-1\mid r , \ 0< r\le 2p^{n+1}-2 \,,\\
         |\mathbb{W}_{\lfloor \frac{r}{p^{n}-1}\rfloor}(\field_q)|(q^{r+1}-1) & p-1\nmid r , \ r>2p^{n+1}-2 \,,\\
    |\mathbb{W}_{\lfloor \frac{r}{p^{n}-1}\rfloor}(\field_q)|(q^{r+1}-1)\cdot \lvert A_r\rvert 
    & p-1\mid r, \ r>2p^{n+1}-2\,,
\end{cases}
\]
of cardinalities in odd degrees where $\mathbb{W}_n$ denotes the truncated big Witt vectors of length $n$, $m(r)$ is a positive integer that we leave undetermined and $A_{r}$ is a finite $p$-group that we leave undetermined. 
\end{coro}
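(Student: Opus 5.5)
The plan is to compare $K(n)_{\field_q}$ with its connective cover $k(n)_{\field_q}$ through a localization (dévissage) fiber sequence, and then read off the periodic answer from \autoref{cardinality-k-theory}. Inverting $v_n$ in $k(n)_{\field_q}$ gives $K(n)_{\field_q}$ (or its $2$-periodic variant). The fiber of $\mathrm{K}(k(n)_{\field_q})\to \mathrm{K}(K(n)_{\field_q})$ is the K-theory of $v_n$-power-torsion perfect modules. Since $\pi_*k(n)_{\field_q}=\field_q[v_n]$ is a polynomial ring, I would use a theorem of heart-of-t-structure type (Antieau--Gepner--Heller / Burklund--Levy dévissage). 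It identifies this fiber with $\mathrm{K}(\field_q)$, because the torsion modules admit a bounded t-structure with heart the finite $\field_q$-vector spaces (the $\pi_0$-modules). This should be Maxime Ramzi's suggested argument. The result is a fiber sequence
\[\mathrm{K}(\field_q)\xrightarrow{i} \mathrm{K}(k(n)_{\field_q})\to \mathrm{K}(K(n)_{\field_q}).\]

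Next I would analyze the transfer $i$. The composite $\mathrm{K}(\field_q)\to \mathrm{K}(k(n)_{\field_q})\to \mathrm{K}(\field_q)$ sends $\field_q$, viewed as a $k(n)_{\field_q}$-module, to $\field_q\otimes_{k(n)_{\field_q}}\field_q$. The homotopy of this tensor product is $\field_q\langle\sigma\rangle$ with $|\sigma|=2p^n-1$, so its class in $\mathrm{K}$ is $1-1=0$. This makes the composite nullhomotopic at least on $\pi_*$. Combined with the splitting $\mathrm{K}(k(n)_{\field_q})\simeq \mathrm{K}_{\rel}\times \mathrm{K}(\field_q)$ on homotopy from the proof of \autoref{cardinality-k-theory} (the boundary map there is zero), $i$ then lands in $\mathrm{K}_{\rel}(k(n)_{\field_q})$ in positive degrees. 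That target is $p$-local by \autoref{relative-p-local}, while $\mathrm{K}_s(\field_q)$ is prime-to-$p$ torsion for $s>0$. Hence $i$ is zero on $\pi_s$ for $s\ge1$. In degree $0$, $i$ is $\mathbb{Z}\to\mathbb{Z}$, multiplication by the Euler characteristic of $\field_q$ over $k(n)$, namely $0$.

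With $i_*=0$ in all degrees, the long exact sequence splits into short exact sequences
\[0\to \mathrm{K}_s(k(n)_{\field_q})\to \mathrm{K}_s(K(n)_{\field_q})\to \mathrm{K}_{s-1}(\field_q)\to 0.\]
For $s=2r$ the cokernel is $\mathrm{K}_{2r-1}(\field_q)=\mathbb{Z}/(q^r-1)$, which is prime to $p$, while $\mathrm{K}_{2r}(k(n)_{\field_q})$ is a $p$-group from \autoref{cardinality-k-theory}. The extension therefore splits, giving the stated products. For $s=2r+1$ the cokernel is $\mathrm{K}_{2r}(\field_q)=0$ for $r>0$, so the cardinalities agree with those in \autoref{cardinality-k-theory}. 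For $s=1$ we get $0\to \mathbb{Z}/(q-1)\to \mathrm{K}_1(K(n))\to \mathbb{Z}\to0$, using $\mathrm{K}_1(k(n)_{\field_q})\cong \mathrm{K}_1(\field_q)$ because $\mathbb{W}_0$ is trivial. This sequence splits, and $s=0$ gives $\mathbb{Z}$.

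The main obstacle is the dévissage identification of the torsion part with $\mathrm{K}(\field_q)$ for an $\mathbb{E}_1$ (non-commutative) $k(n)$. My first attempt would be the Burklund--Levy theorem for the localization at the Ore-type element $v_n$, which is central in homotopy. A secondary concern is checking that the transfer is zero on $\pi_0$.
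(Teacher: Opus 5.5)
Your proposal is correct and follows the paper's proof. Both use the localization sequence $\mathrm{K}(\field_q)\to\mathrm{K}(k(n)_{\field_q})\to\mathrm{K}(K(n)_{\field_q})$ and the vanishing of the transfer--reduction composite. Both then note that the image of the prime-to-$p$ groups $\mathrm{K}_s(\field_q)$ must lie in the $p$-local relative part, hence is zero, so the long exact sequence breaks into split short exact sequences.

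The only difference is how the composite is shown to vanish. You derive it from the cofiber sequence $\Sigma^{2p^n-1}\field_q\to\field_q\otimes_{k(n)_{\field_q}}\field_q\to\field_q$, whereas the paper cites Land--Mathew--Meier--Tamme. To get vanishing in all degrees, apply additivity to this sequence as a filtration of $\field_q$-bimodules; a class in $\mathrm{K}_0$ alone only gives degree $0$.
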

\begin{proof}
Consider the localization sequence 
\[ \mathrm{K}(\mathbb{F}_q)\to \mathrm{K}(k(n)_{\field_q})\to \mathrm{K}(K(n)_{\field_q})
\]
from \cite{BM08,AGH19}. 
The map $\mathrm{K}(k(n)_{\field_q})\to  \mathrm{K}(\field_q)$ is a $\pi_s$ isomorphism for $s\le 2p^n-2$, so in particular it is nontrivial. 
However, the composite 
\[ \mathrm{K}(\field_q)\to \mathrm{K}(k(n)_{\field_q})\to  \mathrm{K}(\field_q)\]
of the reduction map with the transfer map is zero by \cite[Remark~4.10]{land2020purity} (cf.~\cite[p.~43]{AR09}). This implies that the left hand map above is trivial in homotopy since in homotopy, every element of the middle term above that is not $p$-power torsion maps to a nontrivial element by the second map above. 
Consequently, the long exact sequence induced by the localization sequence splits into short exact sequences 
\[ 0\to \pi_*\mathrm{K}(k(n)_{\field_q})\to \pis \mathrm{K}(K(n)_{\field_q})\to \pi_{*-1}\mathrm{K}(\mathbb{F}_q)\to  0,.
\]
and the rest follows from \autoref{cardinality-k-theory}  and~\cite{Qui72}. Note that when $*=1$, the short exact sequence 
\[ 0\to \mathbb{Z}/(q-1)\to \mathrm{K}_1(K(n)_{\field_q})\to \mathbb{Z} \to 0\]
splits. 
\end{proof}

\section{On the \texorpdfstring{$K(1)$}{K(1)}-local algebraic K-theory of \texorpdfstring{$K(1)$}{K(1)}}\label{K1local}
Here we record the fact that the $K(1)$-local algebraic K-theory groups of $K(1)$ are concentrated in even degrees. This is a direct consequence of \cite{angeltveit2008thhofainftyringspectra} and \cite{DR25}. Here we use the the forms of $K(1)$ considered by Angeltveit in~\cite[Theorem~5.17]{angeltveit2008thhofainftyringspectra}, though we expect that the result holds more generally. 
\begin{prop}\label{even K(1)-local}
There are isomorphisms
\[  \pi_{2m+1}L_{K(1)}\mathrm{K}(k(1))\cong \pi_{2m+1}L_{K(1)}\mathrm{K}(K(1))\cong 0\]
for all integers $m$. Moreover, there are isomorphisms 
\[  
\pi_{2m}L_{K(1)}\mathrm{K}(k(1)) \cong \pi_{2m}L_{K(1)}\mathrm{K}(K(1)) \cong \left (\bigoplus_{i=0}^{\infty}\mathbb{Z}_p \right )_p
\] 
for all integers $m$. Here, the nonderived $p$-completion agrees with the derived $p$-completion.  
\end{prop}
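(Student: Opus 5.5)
The plan is to reduce to topological cyclic homology $K(1)$-locally and then feed in the two cited inputs: Angeltveit's computation of $\thh$ of $K(1)$ and $k(1)$, and the results of~\cite{DR25} on $K(1)$-local $\tc$.

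\emph{Step 1: the connective and periodic cases agree.} There is a localization sequence
\[ \mathrm{K}(\fp)\to \mathrm{K}(k(1))\to \mathrm{K}(K(1)) \]
from~\cite{BM08,AGH19}. Since $L_{K(1)}\mathrm{K}(\fp)\simeq 0$ by Quillen~\cite{Qui72}, because $\mathrm{K}(\fp)_p\simeq \mathbb{Z}_p$ is $K(1)$-acyclic, applying $L_{K(1)}$ gives
\[ L_{K(1)}\mathrm{K}(k(1))\simeq L_{K(1)}\mathrm{K}(K(1)). \]
It therefore suffices to treat $k(1)$.

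\emph{Step 2: pass to $\tc$.} Use the Dundas--Goodwillie--McCarthy fiber square~\cite{DGM13} for $k(1)\to \fp$. After $p$-completion, the fiber of $\mathrm{K}(k(1))\to \mathrm{K}(\fp)$ agrees with the fiber of $\tc(k(1))\to\tc(\fp)$. By the same argument as in Step 1, $L_{K(1)}\tc(\fp)=0$, since $\tc(\fp)\simeq \mathbb{Z}_p\oplus\Sigma^{-1}\mathbb{Z}_p$. Hence
\[ L_{K(1)}\mathrm{K}(k(1))\simeq L_{K(1)}\tc(k(1)). \]

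\emph{Step 3: the main computation.} The key result I would import is from~\cite{DR25}: for suitable $p$-complete ring spectra, the Frobenius induces an equivalence $L_{K(1)}\tc^{-}\simeq L_{K(1)}\tp$. Under this equivalence, $L_{K(1)}\tc$ becomes the fiber of $\varphi^{-1}\mathrm{can}-\mathrm{id}$ on $L_{K(1)}\tp$, or an equivalent description in terms of $L_{K(1)}\thh^{tC_p}$.

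I would then compute $\pi_*L_{K(1)}\tp(k(1))$ from Angeltveit's formula~\cite[Theorem~5.17]{angeltveit2008thhofainftyringspectra}. His result gives $\thh(K(1))$, equivalently $L_{K(1)}\thh(k(1))$, as $K(1)\vee\Sigma K(1)_{\mathbb{Q}}$-type pieces: concretely, $\pi_*$ is $K(1)_*$ plus one copy in odd degree with a $p$-adically divergent tower. One then runs the Tate spectral sequence.

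The expected outcome is that the Frobenius-twisted map is surjective in odd degrees, with kernel $(\bigoplus_{i\ge0}\mathbb{Z}_p)_p$ in even degrees. The infinite sum should come from the countably many $\mathbb{Z}_p$-classes in the Angeltveit description, one for each unit-root eigen-summand of the $S^1$-Tate construction. In particular, the odd groups should be cokernels that vanish and the even groups kernels.

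\emph{Main obstacle.} The hard part is identifying the $\varphi$ and $\mathrm{can}$ maps explicitly enough on $\pi_*L_{K(1)}\tp(k(1))$ to show two things: that $\varphi-\mathrm{can}$ is surjective in odd degrees, and that its kernel is exactly the $p$-completed countable sum. I would first try to use $\tc^-/\tp$ evenness via $v_1$-periodicity, with every group being a $p$-completed free module in even degrees, to deduce odd vanishing formally. After that, I would count rank to pin down the even groups.

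Finally, since each group is the $p$-completion of a free $\mathbb{Z}_p$-module, it is bounded-torsion-free. This is why the derived and nonderived $p$-completions agree.
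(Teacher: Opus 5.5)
Your Steps~1 and~2 are fine, and Step~1 is exactly how the paper passes from $k(1)$ to $K(1)$. Step~3, however, carries all of the content, and it is not a proof, for three reasons.

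First, the input is misremembered. Angeltveit's computation \cite{angeltveit2008thhofainftyringspectra}, as used in the paper, is
\[ \THH_*(K(1))\cong\bigoplus_{i=0}^{p-2}\Sigma^{2i}\mathbb{Q}_p/\mathbb{Z}_p[v_1^{\pm1}], \]
that is, a copy of $\mathbb{Q}_p/\mathbb{Z}_p$ in every even degree and nothing in odd degrees. There is no $K(1)_*$ summand and no odd class, and $K(1)$ rationalizes to zero anyway. Since its homotopy is divisible, this spectrum is not $K(1)$-local, so it is not ``equivalently $L_{K(1)}\THH(k(1))$''.

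Second, even with the correct input, $L_{K(1)}$ does not in general commute with $(-)^{hS^1}$ or $(-)^{tS^1}$. So a Tate spectral sequence built from $\THH(K(1))$ does not compute $L_{K(1)}\TP(k(1))$ without a separate comparison theorem.

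Third, even granting the Frobenius equivalence you attribute to \cite{DR25}, you leave open the only hard step: determining $\varphi$ and $\mathrm{can}$ explicitly. The outcome you expect is also not coherent. For the fiber of a self-map $f$, the odd homotopy is built from $\ker(\pi_{\mathrm{odd}}f)$ and $\mathrm{coker}(\pi_{\mathrm{even}}f)$, so surjectivity in odd degrees does not give odd vanishing. Evenness of source and target does not give it ``formally'' either. Counting ranks of infinitely generated $p$-complete modules cannot pin down a kernel. Finally, the countable sum does not come from eigen-summands of a Tate construction.

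The missing idea is that no analysis of $\varphi$ and $\mathrm{can}$ is needed. The paper computes $(\Sigma\THH(K(1))_{hS^1})_p=(\Sigma\TC^{+}(K(1)))_p$ directly and imports \cite[Theorem~0.4.2]{DR25} to identify its homotopy with $\pi_*L_{K(1)}\mathrm{K}(k(1))$. The computation runs as follows.
\begin{enumerate}
\item Because $\THH_*(K(1))$ is even, the homotopy orbit spectral sequence collapses with $\mathrm{E}_2$-page $\bigoplus_{i=0}^{p-2}\Sigma^{2i}\mathbb{Q}_p/\mathbb{Z}_p[v_1^{\pm1},z]$, where $|z|=2$. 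Injectivity of $\mathbb{Q}_p/\mathbb{Z}_p$ rules out additive extensions.
\item Hence $\pi_*\Sigma\THH(K(1))_{hS^1}$ is $\bigoplus_{i\ge0}\mathbb{Q}_p/\mathbb{Z}_p$ in each odd degree and zero in even degrees. There is one copy for each power of $z$, and this is the source of the countable sum.
\item Reducing mod $p^n$ gives $\bigoplus\mathbb{Z}/p^n$ in even degrees and $0$ in odd degrees.
\item The Milnor sequence, whose towers are Mittag--Leffler, then gives $(\bigoplus_{i\ge0}\mathbb{Z}_p)_p$ in even degrees and $0$ in odd degrees. The parity flips because a copy of $\mathbb{Q}_p/\mathbb{Z}_p$ in degree $d$ contributes $\mathbb{Z}_p$ in degree $d+1$ after $p$-completion.
\end{enumerate}
Your Step~1 then transfers the answer to $K(1)$, and your closing remark on derived versus nonderived completion is fine.
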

\begin{proof}
By~\cite[Theorem~5.16,~pp.~1025--1026]{angeltveit2008thhofainftyringspectra}, we know that 
\[ 
\THH_*(K(1))=\THH_*^{L_p}(K(1))=\bigoplus_{i=0}^{p-2}\Sigma^{2i}\mathbb{Q}_p/\mathbb{Z}_p[v_1,v_1^{-1}]
\]
which is concentrated in even degrees, where $L_p$ is the $p$-complete periodic Adam summand. Therefore, the homotopy orbit spectral sequence collapses so the associated graded of a filtration on $\pis (\THH(K(1))_{hS^1})$ can be identified with 
\[ \bigoplus_{i=0}^{p-2}\Sigma^{2i}\mathbb{Q}_p/\mathbb{Z}_p[v_1,v_1^{-1},z]
\]
where $|z|=2$ and since $\mathbb{Q}_p/\mathbb{Z}_p$ is injective there are no additive extensions. We conclude that
\[ 
\pi_{2k}(\Sigma(\THH(K(1))_{hS^1})=0
\]
and 
\[ 
\pi_{2k+1}(\Sigma(\THH(K(1))_{hS^1})= \bigoplus_{i=0}^{\infty} \Q_p/\mathbb{Z}_p . 
\]
To compute the homotopy groups of the $p$-completion, we start by tensoring with $\mathbb{S}/p^n$ to determine that 
\[ 
\pi_{2k}(\Sigma(\THH(K(1))_{hS^1}/p^n)=\bigoplus_{i=0}^{\infty} \mathbb{Z}/p^n . 
\]
and 
\[ 
\pi_{2k+1}((\Sigma(\THH(K(1))_{hS^1})/p^n)=0
\]
from the long exact sequence associated to the cofiber sequence $\mathbb{S}\to\mathbb{S}\to \mathbb{S}/p^n$. We then conclude that  \[ 
\pi_{2k}(\Sigma(\THH(K(1))_{hS^1})_p=\lim_n \bigoplus_{i=0}^{\infty} \mathbb{Z}/p^n = \left ( \bigoplus_{i=0}^{\infty} \mathbb{Z}_p \right )_p 
\]
and
\[ 
\pi_{2k-1}(\Sigma(\THH(K(1))_{hS^1})_p=0
\]
using a Milnor sequence since the sequence is Mittag--Leffler. 

By~\cite[Theorem~0.4.2]{DR25}, we determine the groups $\pi_*L_{K(1)}\mathrm{K}(k(1))$. By the localization sequence 
\[
\mathrm{K}(\mathbb{F}_p)\to \mathrm{K}(k(1)) \to \mathrm{K}(K(1)) \]
of~\cite{BM08,AGH19} and the $K(1)$-local vanishing of $\mathrm{K}(\mathbb{F}_p)$ of~\cite{Qui72}, we know 
\[
\pi_*L_{K(1)}\mathrm{K}(k(1))\cong \pi_*L_{K(1)}\mathrm{K}(K(1)) \,. \qedhere
\] 
\end{proof}
\begin{rem}
\autoref{even K(1)-local} does not appear to improve on or contradict our computations.  Neverless, we record this fact for completeness. 
\end{rem}

\begin{rem}
Combining this result with \cite[Corollary~D~(a)]{land2020purity} and \cite[Theorem~4.4.6]{angelini2024syntomic} this shows that $L_{T(m)}K(k(1))=0$ if and only if $m\ne 0,1,2$. This answers a question of Markus Land about whether $L_{T(n)}\mathrm{K}(k(n))$ is nontrivial, in the case $n=1$. This also provides another example of the sharpness of the purity theorem of~\cite{land2020purity}. 
\end{rem}

{\providecommand{\bysame}{\leavevmode\hbox to3em{\hrulefill}\thinspace}
\providecommand{\MR}{\relax\ifhmode\unskip\space\fi MR }
% \MRhref is called by the amsart/book/proc definition of \MR.
\providecommand{\MRhref}[2]{%
  \href{http://www.ams.org/mathscinet-getitem?mr=#1}{#2}
}
\providecommand{\href}[2]{#2}

}
\end{document}